\theoremstyle{plain}
\newtheorem{thm}{Theorem}[section]
\newtheorem{lem}[thm]{Lemma}
\newtheorem{cor}[thm]{Corollary}
\newtheorem{prop}[thm]{Proposition}
\theoremstyle{definition}
\newtheorem{defn}[thm]{Definition}
\newtheorem{ex}[thm]{Example}
\newtheorem{conj}[thm]{Conjecture}
\theoremstyle{remark}
\newtheorem{rk}[thm]{Remark}
\numberwithin{equation}{section}
\numberwithin{figure}{section}
\numberwithin{table}{section}
\newcommand{\scalingconstant}{.559}
\renewcommand{\geq}{\geqslant}
\renewcommand{\leq}{\leqslant}
\renewcommand{\ge}{\geqslant}
\title
{Diagrammatic description of c-vectors and d-vectors of cluster algebras of
	finite type\\
{\centering \small{To the memory of Andrei Zelevinsky}}
}
\author{Tomoki Nakanishi\\
	\small Graduate School of Mathematics\\[-0.8ex]
	\small Nagoya University\\[-0.8ex]
	\small Chikusa-ku, Nagoya, Japan\\
	\small\tt nakanisi@math.nagoya-u.ac.jp
	\and
	Salvatore Stella\thanks{Partially supported by A. Zelevinsky's NSF grant DMS-1103813 and Northeastern University}\\
	\small Department of Mathematics\\[-0.8ex]
	\small North Carolina State University\\[-0.8ex]
	\small Raleigh, NC, USA\\
	\small\tt sstella@ncsu.edu
}
\date{Jan 12, 2014}
\begin{document}

\maketitle

\begin{abstract}
	We provide an explicit Dynkin diagrammatic description of the $c$-vectors and
	the $d$-vectors (the denominator vectors) of any cluster algebra of finite
	type with principal coefficients and any initial exchange matrix.  We use the
	surface realization of cluster algebras for types $A_n$ and $D_n$, then we
	apply the folding method to $D_{n+1}$ and $A_{2n-1}$ to obtain types $B_n$ and
	$C_n$.  Exceptional types are done by direct inspection with the help of a
	computer algebra software. We also propose a conjecture on the root property
	of $c$-vectors for a general cluster algebra.
\end{abstract}

%%%%%%%%%%%%%%%%%%%%%%%%%%%%%%%%%%%%%%%%%%%%%%%%%%%%%%%
\section{Introduction}
\subsection{Background}

For a given skew-symmetrizable integer matrix $B$, let $\mathcal{A}_\bullet(B)$
be the cluster algebra with {\em principal coefficients\/} whose initial
exchange matrix is $B$ \cite{Fomin02,Fomin07}.  Note that
$\mathcal{A}_\bullet(B)$ depends on $B$ itself (not on its mutation equivalence
class) due to the presence of principal coefficients. There are two important
families of integer vectors associated with $\mathcal{A}_\bullet(B)$: {\em
$c$-vectors\/} and {\em $d$-vectors}.  The former are the column vectors in the
bottom half square matrices ({\em $C$-matrices\/}) of the {\em extended exchange
matrices\/} of $\mathcal{A}_\bullet(B)$.  The latter are also called the {\em
denominator vectors}; they are the tuples of the exponents in the denominators
of the Laurent expansions of the cluster variables of $\mathcal{A}_\bullet(B)$
in terms of the initial cluster.  An alternative way to introduce them is:
$c$-vectors are the tropicalized versions of coefficients ($y$-variables) and
$d$-vectors are the tropicalized version of cluster variables ($x$-variables),
respectively.  See Section \ref{sect:back.vectors} for details.  

Fix an indexing set $I$. Following \cite{Fomin03a}, to each skew-symmetrizable
matrix $B=(b_{ij})_{i,j\in I}$, we assign a symmetrizable matrix
$A(B)=(a_{ij})_{i,j\in I}$ called the {\em Cartan counterpart\/} of $B$, by
setting 
\begin{align}
	a_{ij}=
	\begin{cases}
	2 & i=j\\
	-|b_{ij}|& i \neq j.
	\end{cases}
\end{align}
The matrix $A(B)$ is a symmetrizable (generalized) Cartan matrix in the sense of
Kac \cite{Kac90}.  It has been partially recognized and proved that, the $c$-
and $d$-vectors of $\mathcal{A}_\bullet(B)$ are  roots of the root system of the
Cartan matrix $A(B)$.  When $B$ is skew-symmetric, thanks to Kac's theorem
\cite{Kac80}, it is enough to prove that the vectors (or their negatives) are
identified with the {\em dimension vectors\/} of some indecomposable modules of
the path algebra $kQ(B)$ for the quiver $Q(B)$ corresponding to $B$.  In fact,
this is a common method of proving many known cases.  We are going to discuss
this subject in more detail in Section \ref{sect:back}.

Cluster algebras of {\em finite type}, i.e., the ones with finitely many seeds,
form one of the most basic and important classes of cluster algebras
\cite{Fomin03a}. They have been intensively studied in particular in the cases
when $B$ is skew-symmetric, i.e. when $\mathcal{A}_\bullet(B)$ is of one of the
{\em simply-laced types} $A_n$, $D_n$, $E_6$, $E_7$, $E_8$ according to the
classification of \cite{Fomin03a}.  In these cases the {\em cluster-tilted algebra
$\Lambda(B)$}, introduced in \cite{Buan04} as a certain quotient of the path
algebra $kQ(B)$, plays a key role in the study of $\mathcal{A}_\bullet(B)$
\cite{Caldero04,Caldero04b,Buan06,Buan04b,Buan05}. A $c$-vector is said to be
{\em positive} if it is a nonzero vector and its components are all nonnegative.
A $d$-vector is {\em non-initial} if it is the $d$-vector of a non-initial
cluster variable.  It was proved by \cite{Caldero04b,Buan04} that the set of all
the non-initial $d$-vectors of $\mathcal{A}_\bullet(B)$ coincides with the set
of the dimensions vectors of all the indecomposable $\Lambda(B)$-modules.
Moreover, it was recently proved by \cite{Najera12,Najera12b} that the set of
all the positive $c$-vectors of $\mathcal{A}_\bullet(B)$  also coincides with
the same set.  See Theorems \ref{thm:d-fin} and \ref{thm:c-fin}.

In spite of this beautiful and complete, representation-theoretic description of
$c$- and $d$-vectors for finite type, little is known about their {\em
explicit} form, except for type $A_n$  \cite{Caldero04,Parsons11,Tran}.  The
purpose of this paper is to fill this gap and to provide an {\em explicit Dynkin
diagrammatic description} of the $c$- and $d$-vectors of cluster algebras
of any finite type with any initial exchange matrix.

It is our hope that the lists presented here will be useful for studying cluster
algebras, as the appendix of \cite{Bourbaki02} is for studying Lie algebras.

\subsection{Main results}
We present here the main results of the paper.  Recall
that,  for a skew-symmetrizable matrix $B$, the cluster algebra $\mathcal{A}_\bullet(B)$
is of finite type if and only if $B$ is mutation equivalent to a matrix $B'$
whose Cartan counterpart $A(B')$ is a Cartan matrix of finite type, $A_n$,
$B_n$, $C_n$, $D_n$, $E_6$, $E_7$, $E_8$, $F_4$, $G_2$ \cite{Fomin03a}.  We say
that such a skew-symmetrizable matrix $B$  is of {\em cluster finite type}, and
also, more specifically,  of {\em cluster type $Z$}, according to the type $Z$
of $A(B')$ above.  For any skew-symmetrizable matrix $B$ of cluster finite type, we
present the Cartan matrix $A(B)$ as a Dynkin diagram $X(B)$ in the usual way
following \cite{Kac90}. Note that, in general, $X(B)$ is not a finite type
Dynkin diagram.

For each finite type $Z$,
we provide the following two lists explicitly:
\par
$\bullet$
the list $\mathcal{X}(Z)$ of the Dynkin diagrams $X(B)$ of all the
skew-symmetrizable matrices $B$ of cluster type $Z$ (for each $B$ the vertices
of $X(B)$ are naturally identified with elements of $I$),
\par
$\bullet$
the list $\mathcal{W}(Z)$ of the ``templates'' of positive $c$-vectors and
non-initial $d$-vectors in the form of  {\em
weighted Dynkin diagrams}, namely, Dynkin diagrams with a positive integer
attached to each vertex.
\par
 
For a pair $X(B)\in \mathcal{X}(Z)$ and $ W\in \mathcal{W}(Z)$, an embedding of
the diagram part of $W$ into $X(B)$ as a full sub-diagram is denoted by
$W\subset X(B)$. Such an embedding is not necessarily unique if it exists; we
distinguish them up to isomorphism of $W$. To each embedding $W\subset X(B)$ 
corresponds an integer vector $v=(v_i)_{i\in I}$: its $i$-th component $v_i$ 
is the weight of $W$ at $i$.  

For each skew-symmetrizable matrix $B$ of cluster type $Z$, let us introduce the sets
\begin{align}
 \begin{split}
  \mathcal{V}(B):=& \{ \,
  W\subset X(B)  \mid 
  W\in \mathcal{W}(Z)
 \,
  \},\\
  \mathcal{C}(B):=& \{ \,
\mbox{all $c$-vectors of $\mathcal{A}_\bullet(B)$}
 \,
  \},\\
    \mathcal{C}_+(B):=& \{ \,
\mbox{all positive $c$-vectors of $\mathcal{A}_\bullet(B)$}
 \,
  \},\\
   \mathcal{D}(B):=& \{ \,
\mbox{all non-initial $d$-vectors  of $\mathcal{A}_\bullet(B)$}
 \,
  \}.
  \end{split}
\end{align}
For finite type cluster algebras, it turns out that
\begin{align}
	\label{eqn:sign-coherence}
	\mathcal{C}(B)=  \mathcal{C}_+(B)\sqcup   (-\mathcal{C}_+(B)),
\end{align}
therefore, we can concentrate on $ \mathcal{C}_+(B)$. Our main result is stated
as follows.
\begin{thm}
\label{thm:main}
Let $B$ be any skew-symmetrizable matrix of cluster finite type.  
Then, the sets $\mathcal{C}_+(B)$, $\mathcal{D}(B)$, and
$\mathcal{V}(B)$ coincide.
\end{thm}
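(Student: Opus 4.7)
The plan is a case analysis by cluster type $Z$. For skew-symmetric $B$ of finite type the identification $\mathcal{C}_+(B)=\mathcal{D}(B)$ is already in hand, since both sides coincide with the set of dimension vectors of indecomposable modules over the cluster-tilted algebra $\Lambda(B)$ by the results recalled above \cite{Caldero04b,Buan04,Najera12,Najera12b}. The substantive task is therefore to identify this common set with $\mathcal{V}(B)$ in the simply-laced case, and then to transport the whole statement to the non-simply-laced case by folding.

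For types $A_n$ and $D_n$ the natural tool is the surface model. Cluster variables correspond to (possibly tagged) arcs in a disk with $n+3$ marked points (type $A$) or in a once-punctured disk with $n$ marked points (type $D$), and an initial seed is encoded by an ideal triangulation $T$: the diagram $X(B)$ is read off the adjacencies of $T$, while the $d$-vector of an arc $\gamma$ records its intersection numbers with the arcs of $T$. I would enumerate the local configurations that $\gamma$ can make against $T$ and verify, case by case, that the resulting tuple of intersection numbers is exactly the weight tuple of some $W\in\mathcal{W}(Z)$ embedded as a full subdiagram of $X(B)$; the converse direction is handled by reconstructing an arc from each embedding $W\subset X(B)$. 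For the exceptional simply-laced types $E_6$, $E_7$, $E_8$ the list $\mathcal{X}(Z)$ is finite up to isomorphism, so the coincidence $\mathcal{D}(B)=\mathcal{V}(B)$ reduces to a finite check performed on a computer algebra system.

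For the non-simply-laced cluster types $B_n$, $C_n$, $F_4$, $G_2$ I would fold from $D_{n+1}$, $A_{2n-1}$, $E_6$, and $D_4$, respectively. The key intermediate lemma is that the diagrammatic data descend correctly under folding: a folding-invariant seed of the ambient simply-laced algebra projects to a seed of the folded one, a folding-invariant embedding $W\subset X(B')$ projects to an embedding in the folded diagram whose weights are the orbit sums of the original ones, and the $c$- and $d$-vectors transform compatibly under the same orbit-summation. Granting this lemma, the already-established simply-laced cases propagate to their folded counterparts; $F_4$ and $G_2$ could in any case be checked directly by computer.

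The principal obstacle I anticipate is the folding step itself: principal coefficients are not preserved by folding in a naive sense, so identifying the $c$- and $d$-vectors of the folded algebra with the orbit-sums of those on the simply-laced side requires a careful compatibility argument rather than a formal one. A secondary difficulty lies in the surface analysis for $D_n$, where the puncture forces the introduction of tagged arcs and one must verify that the ``wrapping'' configurations produce precisely the heavier weighted subdiagrams predicted by $\mathcal{W}(D_n)$, and not spurious vectors outside any $W\in\mathcal{W}(D_n)$.
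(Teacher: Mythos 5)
Your overall architecture --- surfaces for $A_n$ and $D_n$, folding for the non-simply-laced classical types, computer verification for the exceptional ones --- matches the paper's, and importing $\mathcal{C}_+(B)=\mathcal{D}(B)$ from Corollary \ref{cor:C=D} in the skew-symmetric case is legitimate (the paper itself uses it to shorten the $E_8$ computation, though for $A_n$ and $D_n$ it reproves the equality combinatorially via the bipartite belt, in part because Theorem \ref{thm:c-fin} rests on an unpublished result). One small correction: the paper handles $F_4$ and $G_2$ by direct inspection, not by folding $E_6$ and $D_4$. The substantive problem is that your folding step contains two genuine gaps, not just an anticipated difficulty. First, the compatibility of $d$-vectors with orbit-summation is not merely delicate: it is \emph{false} in general. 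Proposition \ref{prop:condition-to-fold-d} shows the folded $D$-matrices obey the correct mutation recursion only if the sign of $\sum_{t\in I}d_{st}b_{tk}$ is independent of the representative $s$ of each $\sigma$-orbit, and Felikson and Tumarkin have a counterexample in cluster affine type $D$. The paper must verify this sign condition by hand for $D_{n+1}$ with $\sigma=(n,n+1)$ and for $A_{2n-1}$ with the diagram involution (Lemmas \ref{lem:folding-d-tupe_D} and \ref{lem:folding-d-tupe_A}), exploiting the string structure of type-$A$ $d$-vectors; your plan needs this argument, whereas the $c$-vector side is cheap because the analogous condition in Proposition \ref{prop:folding-c} is exactly sign-coherence within orbits.

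Second, folding only yields the inclusions $\mathcal{C}_+(\overline{B})\subset\pi\left(\mathcal{C}_+(B)\right)$ and $\mathcal{D}(\overline{B})\subset\pi\left(\mathcal{D}(B)\right)$, because only seeds reachable by orbit mutations descend to the folded algebra. To obtain the reverse inclusion $\mathcal{V}(\overline{B})=\pi\left(\mathcal{V}(B)\right)\subset\mathcal{C}_+(\overline{B})\cap\mathcal{D}(\overline{B})$ you must realize each template upstairs by a $c$-vector (resp.\ $d$-vector) occurring in a $\sigma$-invariant seed. The paper achieves this by showing that every positive $c$-vector and every non-initial $d$-vector of types $A_n$ and $D_n$ already occurs in a bipartite seed (Proposition \ref{prop:bipartite} and equation (\ref{eqn:bipartite-D})) and that any two bipartite matrices of cluster type $D_{n+1}$ or $A_{2n-1}$ are connected by orbit mutations (Lemma \ref{lem:folding-bipartite}). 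This ``bipartite belt'' device is absent from your outline and is not optional; without it your ``key intermediate lemma'' establishes only one of the two inclusions needed in the folded types.
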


Let us illustrate the content of Theorem \ref{thm:main} by mean of a
baby example; the reader can find slightly bigger examples at the end of Section
\ref{sect:sets}.
\begin{ex}
	The matrix 
	\[
		B= 
		\left( 
			\begin{array}[h]{ccccc} 
				0 & 1 & -1 \\ 
				-1 & 0 & 1 \\ 
				1 & -1 & 0 \\ 
			\end{array} 
		\right) 
	\]
	is of cluster type $A_3$ and the Dynkin diagram $X(B)$ corresponding to it is
	\begin{center}
		\includegraphics[scale=\scalingconstant]{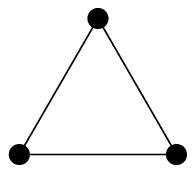}
	\end{center}
	There are precisely three templates in $\mathcal{W}(A_3)$:
	\begin{center}
		\includegraphics[scale=\scalingconstant]{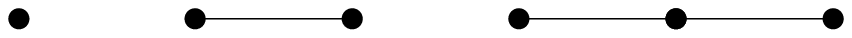}
	\end{center}
	The first and second of them can be embedded as full sub-diagram into $X(B)$ in
	three different ways each while the third one can't be embedded into $X(B)$. 
	We get therefore 6 vectors:
	\[
		\mathcal{V}(B)=
		\left\{(1,0,0),(0,1,0),(0,0,1),(1,1,0),(1,0,1),(0,1,1)\right\}
	\]
	they are both the positive $c$-vectors and the non-initial $d$-vectors of
	$\mathcal{A}_\bullet(B)$.
\end{ex}

An immediate and important corollary of Theorem \ref{thm:main} is that, for
simply laced types, the set $\mathcal{V}(B)$ also coincides with the set of the
dimension vectors of all the indecomposable modules of the cluster-tilted
algebra $\Lambda(B)$, thereby yielding a representation-theoretic result.

To prove Theorem \ref{thm:main} we use the surface realization of cluster
algebras \cite{Fock,Fomin08, Fomin08b} for types $A_n$ and $D_n$.  The case $A_n$ is
easy, but the case $D_n$ is (much) more involved.  Then we apply the folding
method \cite{Dupont08,Demonet11} to types $D_{n+1}$ and $A_{2n-1}$ to obtain types $B_n$
and $C_n$, respectively.  Exceptional types are
studied by direct inspection with the help of the software by Keller
\cite{Keller08c} and the cluster algebra package \cite{Musiker10} of Sage
\cite{sage} written by Musiker and Stump; we rely on Corollaries \ref{cor:C=D}
and \ref{cor:independent1} to simplify computations in type $E_8$. 
In classical types our derivation is
purely combinatorial and does not refer to any results from representation
theory.  On the one side, this may be unsatisfactory due to the lack of a direct
representation-theoretic explanation; on the other side, this is the reason why
we get the result easily. In particular, we obtain an alternative proof of  the
known equality  $\mathcal{C}_+(B)=\mathcal{D}(B)$ for types $A_n$ and $D_n$, and
also several results on non-simply laced types, for which the
representation-theoretic method is not yet fully available.  

From the explicit list of positive $c$-vectors and non-initial $d$-vectors
provided by Theorem \ref{thm:main} we deduce the following result. The
statements (\ref{thm:corollaries-1}) and (\ref{thm:corollaries-3}) generalize
to all finite types properties known only for simply-laced types (cf.
Corollaries \ref{cor:schur} and \ref{cor:independent1}).
\begin{thm}
	\label{thm:corollaries}
	Let $B$ be any skew-symmetrizable matrix of cluster finite type.
	\begin{enumerate}
		\item 
			\label{thm:corollaries-1}
			All $c$-vectors and $d$-vectors of $\mathcal{A}_\bullet(B)$ are roots of
			the root system of $A(B)$. For simply-laced types they are Schur roots.
	
		\item
			\label{thm:corollaries-2}
			A $c$-vector ($d$-vector) of $\mathcal{A}_\bullet(B)$ is a real root if
			and only if its support in $X(B)$ is a tree.

		\item
			\label{thm:corollaries-3}
			The cardinality $|\mathcal{C}_+(B)|=|\mathcal{D}(B)|$ depends only on the
			cluster type $Z$ of $B$ and it is equal to the number of positive roots in
			the root system of type $Z$. Explicitly it is equal to $nh/2$, where $n$
			and $h$ are the rank and the Coxeter number of type $Z$ (see Table
			\ref{tab:coxeter_numbers}).

		\item
			\label{thm:corollaries-4}
			The set
			$\mathcal{C}_+(B)=\mathcal{D}(B)$ only depends on $A(B)$, the Cartan
			counterpart of $B$.
	\end{enumerate}
\end{thm}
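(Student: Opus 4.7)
The plan is to derive all four statements from the explicit templates provided by Theorem \ref{thm:main}. Since $\mathcal{C}_+(B) = \mathcal{D}(B) = \mathcal{V}(B)$ and $\mathcal{C}(B) = \mathcal{V}(B) \sqcup (-\mathcal{V}(B))$ by \eqref{eqn:sign-coherence} and the main theorem, every claim translates into a claim about $\mathcal{V}(B)$, which in turn is controlled by the explicit weighted diagrams in $\mathcal{W}(Z)$. So each part of the corollary is to be read off the template lists, combined with a small amount of Kac-Moody root-system theory.

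For \eqref{thm:corollaries-1}, I would go template by template through $\mathcal{W}(Z)$ for each cluster finite type $Z$. Each template $W$ has finite support, so the vector associated to any embedding $W \subset X(B)$ is supported on a finite full sub-diagram of $X(B)$; it suffices to check that the weight vector is a positive root of the finite root system attached to that sub-diagram, since the latter embeds into the root system of $A(B)$. This is a direct inspection of the list. For the Schur root claim in simply-laced types, I would invoke the identification of $\mathcal{D}(B)$ with the set of dimension vectors of indecomposable modules of the cluster-tilted algebra $\Lambda(B)$ (Caldero-Keller, Buan-Marsh-Reiten-Todorov) together with the fact that such dimension vectors are Schur roots of $kQ(B)$.

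For \eqref{thm:corollaries-2}, recall that, under the symmetrized invariant form on the root lattice, a positive root $\alpha$ of a symmetrizable Kac-Moody algebra is real iff $(\alpha|\alpha) > 0$ and imaginary iff $(\alpha|\alpha) \le 0$. I would then verify, for each template, the dichotomy: when the support in $X(B)$ is a tree, the template vector coincides with a positive root of a finite-type sub-root-system (so it is real); when the support contains a cycle, the support is either an affine diagram or of indefinite type, and the symmetrized pairing on the weight vector is non-positive (so it is imaginary). Again, this is a finite case analysis driven by the explicit templates.

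For \eqref{thm:corollaries-3}, the count $|\mathcal{V}(B)|$ is manifestly combinatorial given the lists. In simply-laced types, the count $|\mathcal{D}(B)|=nh/2$ is already known from the identification with indecomposable $\Lambda(B)$-modules. For $B_n$ and $C_n$ it follows from the counts for $D_{n+1}$ and $A_{2n-1}$ via the folding correspondence used already to prove Theorem \ref{thm:main}. For $F_4$ and $G_2$ one tallies the templates directly. In each case independence of $B$ is visible because the answer depends only on $Z$, and the explicit formula $nh/2$ reads off Table \ref{tab:coxeter_numbers}. Finally, \eqref{thm:corollaries-4} is a formal consequence of Theorem \ref{thm:main}: $\mathcal{V}(B)$ is built from $\mathcal{W}(Z)$ together with the embeddings into $X(B)$, and $X(B)$ encodes precisely $A(B)$; furthermore, since the lists $\mathcal{X}(Z)$ are pairwise disjoint, the cluster type $Z$ is itself determined by $X(B)$, so $\mathcal{V}(B)$ depends only on $A(B)$. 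The main obstacle is endurance rather than ingenuity: the lists for the exceptional types are long, and the dichotomy in \eqref{thm:corollaries-2} in particular requires checking every template individually, with the folding arguments for $B_n, C_n$ needing special care so that the classification of real versus imaginary roots transports correctly across the folding.
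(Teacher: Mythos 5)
Your overall strategy --- reduce everything to the explicit template lists via Theorem \ref{thm:main} --- is the paper's strategy, and your treatments of part (\ref{thm:corollaries-4}) and of the Schur-root claim (via Corollary \ref{cor:schur}) are fine. But there are two genuine gaps.

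First, roothood. You propose to check that each template weight vector ``is a positive root of the finite root system attached to that sub-diagram.'' The supports of the templates are in general \emph{not} of finite type: cases IV--VIII of $\mathcal{W}(D_n)$ have supports containing cycles, so the attached generalized Cartan matrix is affine or indefinite and the corresponding vectors are imaginary roots. ``Direct inspection'' does not decide membership in an indefinite Kac--Moody root system, and the criterion you invoke in part (\ref{thm:corollaries-2}), namely $(\alpha|\alpha)\le 0$, distinguishes real from imaginary \emph{among roots} but does not certify that a vector is a root at all. What is actually needed (and what the paper does in Proposition \ref{prop:roots_simply_laced}) is to apply simple reflections to ``trim the branches,'' reducing each cyclic-support template to one of a handful of core vectors $c$, and then verify that $Ac$ has all components non-positive, which is the hypothesis of \cite[Lemma 5.3]{Kac90} guaranteeing an imaginary root. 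For types $B_n$ and $C_n$ one additionally needs the fact that the folding map sends roots to roots (Proposition \ref{prop:folding_of_roots}, proved by induction on height using orbit reflections); your plan does not supply either ingredient.

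Second, the count in part (\ref{thm:corollaries-3}) for types $B_n$ and $C_n$ does \emph{not} ``follow from the counts for $D_{n+1}$ and $A_{2n-1}$ via the folding correspondence.'' The folding map $\pi:\mathcal{V}(B)\to\mathcal{V}(\overline{B})$ is surjective but far from injective: $D_{n+1}$ has $n(n+1)$ positive roots and $A_{2n-1}$ has $n(2n-1)$, while $B_n$ and $C_n$ each have $n^2$, so the cardinality cannot transfer without a fiber analysis you have not provided. The paper explicitly flags this (``This claim does not follow directly from folding'') and instead redoes the embedding count directly on the diagrams of $\mathcal{X}(B_n)$ and $\mathcal{X}(C_n)$, by the same pair-of-vertices argument as in Proposition \ref{prop:counting-AD}.
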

\begin{table}[htbp]
	\caption{Coxeter numbers and numbers of positive roots.}
	\label{tab:coxeter_numbers}
	\centering
	\begin{tabular}{|c||c|c|c|c|c|c|c|c|c|}
		\hline
		Type	&	$A_n$	&	$B_n$	&	$C_n$	&	$D_n$	&	$E_6$	&	$E_7$	&	$E_8$	&	$F_4$	& $G_2$ \\
		\hline
		$h$		&	$n+1$	&	$2n$	&	$2n$	&	$2n-2$&	$12$	&	$18$	&	$30$	&	$12$	&	$6$		\\
		\hline
		$nh/2$&	$n(n+1)/2$	& $n^2$	&	$n^2$	&	$n(n-1)$	&	$36$	&	$63$	&	$120$	& $24$	&	$6$	\\
		\hline
	\end{tabular}
\end{table}

While proving Theorem \ref{thm:main} we also obtain the following interesting
result. A skew-symmetrizable
integer matrix $B$ is said to be \emph{bipartite} if the corresponding valued quiver
has only sinks and sources; by extension a seed whose $B$-matrix is
bipartite is also called bipartite.

\begin{thm}
	Let $B$ be any skew-symmetrizable matrix of cluster finite type. Any
	$c$-vector ($d$-vector) of $\mathcal{A}_\bullet(B)$ occurs in a bipartite
	seed.
	\label{thm:bipartite}
\end{thm}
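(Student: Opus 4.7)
The plan is to extract this result as a byproduct of the case-by-case proof of Theorem \ref{thm:main}. By Theorem \ref{thm:main}, every positive $c$-vector and every non-initial $d$-vector of $\mathcal{A}_\bullet(B)$ corresponds to a template embedding $W\subset X(B)$ with $W\in\mathcal{W}(Z)$, and in the course of proving Theorem \ref{thm:main} one constructs, for each such embedding, an explicit sequence of mutations from the initial seed that realizes the corresponding cluster variable. The idea is to arrange these mutation sequences so that the terminal seed is bipartite.

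First I would reduce to positive $c$-vectors via the sign-coherence relation \eqref{eqn:sign-coherence}. A single mutation of a bipartite seed at any vertex $k$ (necessarily a sink or a source) produces another bipartite seed, and has the effect of negating the $k$-th column of the $C$-matrix. Hence if every $v\in\mathcal{C}_+(B)$ occurs in some bipartite seed, so does every element of $-\mathcal{C}_+(B)$. For $d$-vectors the statement is parallel, since $\mathcal{D}(B)=\mathcal{C}_+(B)$ by Theorem \ref{thm:main}, and the non-initial cluster variable realizing $v$ as its $d$-vector belongs precisely to the seed in which $v$ appears as a positive column of $C$.

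For the classical types $A_n$ and $D_n$ I would use the surface realization employed in the proof of Theorem \ref{thm:main}: seeds correspond to (tagged) triangulations of a polygon or once-punctured polygon, and bipartiteness of the exchange matrix is a purely combinatorial condition on the triangulation. Each template embedding corresponds to an arc $\gamma$, and the key step is to exhibit a bipartite triangulation containing $\gamma$; this can always be achieved by completing $\gamma$ along a suitable zig-zag pattern of fan arcs. The non-simply-laced types $B_n$ and $C_n$ follow by folding from $D_{n+1}$ and $A_{2n-1}$, since the folding of a bipartite triangulation yields a bipartite valued quiver. The exceptional types $E_6,E_7,E_8,F_4,G_2$ are then handled by direct computer enumeration of the (finitely many) bipartite seeds in each mutation class, along the same lines as the proof of Theorem \ref{thm:main} in these types.

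The main obstacle I anticipate is the type $D_n$ step: tagged arcs incident to the puncture interact with bipartiteness in a subtle way, because switching tags at the puncture alters the orientations of arrows of the associated quiver, and one needs a careful case analysis to guarantee that every such arc, for every ambient $X(B)\in\mathcal{X}(D_n)$, admits a completion to a bipartite tagged triangulation. Once this geometric lemma is established, the remaining verifications are essentially bookkeeping running in parallel with the proof of Theorem \ref{thm:main}.
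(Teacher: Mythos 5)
Your global architecture --- surfaces for $A_n$ and $D_n$, folding for $B_n$ and $C_n$, computer enumeration for the exceptional types --- is the paper's, and your treatment of $d$-vectors (every arc extends to a bipartite triangulation, which is \eqref{eqn:bipartite-D}) is the right idea. But the $c$-vector half has a genuine gap. In the surface model a positive $c$-vector is $c_{\gamma,\Gamma}=\left(b^{\Gamma}_{\lambda_i,\gamma}\right)_{i\in I}$: it is attached to the \emph{pair} $(\gamma,\Gamma)$, and its value is computed from the quadrilateral of $\Gamma$ enclosing $\gamma$, not from $\gamma$ alone. So your key step, ``exhibit a bipartite triangulation containing $\gamma$,'' does not do the job for $c$-vectors: keeping $\gamma$ and replacing $\Gamma$ by a bipartite $\Gamma''$ changes the shear coordinates, and in general $c_{\gamma,\Gamma''}\neq c_{\gamma,\Gamma}$. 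What is actually needed --- and what Proposition \ref{prop:bipartite} does, via the case analysis of Appendix \ref{app:type-Dn} --- is to replace the pair $(\gamma,\Gamma)$ by a new pair $(\gamma',\Gamma')$ with $\Gamma'$ bipartite whose quadrilateral has the \emph{same} intersections with the fixed multilamination $\Lambda_0$; both the arc and the triangulation change.

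Two of your reductions are also false as stated. The cluster variable whose $d$-vector is $v$ does \emph{not} sit in the seed where $v$ is a column of $C$: in Proposition \ref{prop:c=d_simply_laced} the arc $\gamma$ with $d_\gamma=v$ and the arc $\gamma'$ with $c_{\gamma',\Gamma'}=v$ are different arcs that cross each other, so $\gamma\notin\Gamma'$ and $x_\gamma$ does not belong to that seed; this is why the paper proves $\mathcal{D}^b(B)=\mathcal{D}(B)$ and $\mathcal{C}^b_+(B)=\mathcal{C}_+(B)$ as two separate statements. Likewise, mutating a bipartite seed at a sink or source does not in general yield a bipartite seed (mutate $1\to 2\leftarrow 3$ at the source $1$: only the arrows at $1$ flip, giving the linear orientation $3\to 2\to 1$), so your passage from $\mathcal{C}_+(B)$ to $-\mathcal{C}_+(B)$ needs a different justification. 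Finally, in the folding step one must also check that the bipartite seed produced upstairs is reachable by orbit mutations; the paper obtains this from the fact that any two bipartite seeds of type $D_{n+1}$ or $A_{2n-1}$ are connected by orbit mutations (Lemma \ref{lem:folding-bipartite}).
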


This paper is structured as follows. In Section \ref{sect:back} we give more
background and a short survey of the known results on $c$- and $d$-vectors and
their consequences in order to connect our result  to representation theory of
quivers.
In Section \ref{sect:sets} we describe the sets $\mathcal{X}(Z)$ and
$\mathcal{W}(Z)$ for all the classical finite type $Z$ (i.e. for $A_n$, $B_n$,
$C_n$ and $D_n$). We postpone the exceptional types to Appendix
\ref{app:exceptional} due to their length.

The proofs of Theorems \ref{thm:main} and \ref{thm:bipartite} 
for classical types are split into several
Propositions and use different techniques. In Section \ref{sect:surfaces-ad} we
use the surface realization (\cite{Fock,Fomin08,Fomin08b})
of cluster algebras to prove the results for
types $A_n$ and $D_n$. In Section \ref{sect:folding-bc} we extend the folding
construction of \cite{Dupont08} to deal with types $B_n$ and $C_n$.

The paper is concluded by Section \ref{sect:corollaries} where we prove Theorem
\ref{thm:corollaries}. In Appendix \ref{app:type-Dn} we add the complete analysis needed in the
proof of Propositions \ref{prop:An_Dn-support} and \ref{prop:bipartite}.

%%%%%%%%%%%%%%%%%%%%%%%%%%%%%%%%%%%%%%%%%%%%%%%%%%%%%%%
\section{More background}
\label{sect:back}
Let us give more background and a short survey of the known results on $c$- and
$d$-vectors and their consequences in order to connect our result  to
representation theory of quivers.  We also propose a conjecture on the root
property of $c$-vectors.

\subsection{\texorpdfstring{$c$}{c}-vectors and \texorpdfstring{$d$}{d}-vectors}
\label{sect:back.vectors}
We quickly recall the definitions and the basic properties of $c$-vectors and
$d$-vectors, which are the main subject of this paper.  All the formulas are
taken from \cite{Fomin07}.

Let $\mathbb{Q}(x)$ be the rational function field of algebraically independent
variables $x=\{x_i\}_{i\in I}$ over $\mathbb{Q}$, and let
$\mathbb{Q}_+(x)$ be the subset of $\mathbb{Q}(x)$ which consists
of the functions having subtraction-free expressions.  The set
$\mathbb{Q}_+(x)$ is a semifield, and it is called the {\em
universal semifield of $x$.} We also introduce the {\em tropical semifield
$\mathbb{P}_{\mathrm{trop}}(x)$ of $x$} as the multiplicative free abelian group
generated by $x$ with the addition $\oplus$ defined by
\begin{align}
\prod_{i\in I} x_i ^ {a_i}
\oplus 
\prod_{i\in I} x_i ^ {b_i}
:=
\prod_{i\in I}x_i ^ {\min(a_i,b_i)}.
\end{align} 
Let $\pi_{\mathrm{trop}}: \mathbb{Q}_+(x)\rightarrow
\mathbb{P}_{\mathrm{trop}}(x)$ be the canonical homomorphism, $x_i \mapsto x_i$,
$c \mapsto 1$ ($c\in \mathbb{Q}_{+}$).

We first describe the $d$-vectors.  Since the presence of coefficients is
irrelevant, for simplicity, we describe them for a cluster algebra with trivial
coefficients.  As usual, we start from the initial seed $(B,x)$ with a given
skew-symmetrizable integer matrix $B$ and a tuple of algebraically independent
variables $x=\{x_i\}_{i\in I}$ called the {\em initial cluster variables}.  We
obtain a new seed $(B',x')$ by the mutation at $k$,
\begin{align}
 \label{eqn:b-mutation}
 b'_{ij}&=
 \begin{cases}
 -b_{ij} & \mbox{$i=k$ or $j=k$}\\
 b_{ij}
 + b_{ik}[b_{kj}]_+
 +[-b_{ik}]_+  b_{kj}  & i,j\neq k,
 \end{cases}
 \\
 \label{eq:xrel}
 x'_i &=
 \begin{cases}
 \displaystyle
  x_k{}^{-1}
  \left(
\prod_{j\in I}
 x_j^{[b_{jk}]_+}
 +
 \prod_{j\in I}
 x_j^{[-b_{jk}]_+}
 \right)
 & i=k\\
 x_i
 & i\neq k,
 \end{cases}
\end{align}
where $[a]_+=a$ for $a>0$ and $0$ otherwise.  The elements
obtained by sequences of mutations from $x$ are called {\em cluster variables}.  They
are in $\mathbb{Q}_+(x)$ since the right hand side of (\ref{eq:xrel}) is
subtraction-free. For any cluster variable $x_j'$ in some cluster
$x'=\{x'_i\}_{i\in I}$, we define the corresponding $d$-vector
$d'_j=(d'_{ij})_{i\in I}$ by
\begin{align}
	\pi_{\mathrm{trop}}(x_j')=\prod_{i\in I} x_i^{-d'_{ij}}.
	\label{eqn:tropicalization-x}
\end{align}
The matrix $D'=(d'_{ij})_{i,j\in I}$ is called the \emph{$D$-matrix} of $x'$.
This definition of the $d$-vector $d'_j$ agrees with an alternative and more
familiar definition as the tuple of the exponents of the ``denominator'' of the
Laurent polynomial expression of $x'_j$,
\begin{align}
 x'_j
 =
 \frac{P(x)}{\prod_{i\in I}x_i^{d'_{ij}}},
\end{align}
where $P(x)$ is a polynomial in $x=\{x_i\}_{i\in I}$ not divisible by any $x_i$.
(Note that the celebrated Laurent phenomenon \cite{Fomin02} does not necessarily
imply that the components of the $d$-vector for a non-initial cluster variable
are all nonnegative.) For cluster variables $x''_j$ and $x'_j$ which are
connected by a mutation $(B'',x'')=\mu_k (B',x')$, we have a recursion relation
for the corresponding $d$-vectors, which is the tropicalization of
\eqref{eq:xrel},
\begin{align}
	\label{eqn:d-recursion}
  d''_{ij} &=
 \begin{cases}
 \displaystyle
  -d'_{ik}
  +
\max  \left(
\sum_{\ell\in I}
d'_{i\ell}{}  {[b'_{\ell k}]_+},
 \sum_{\ell\in I}
d'_{i\ell}  {[-b'_{\ell k}]_+}
\right)
 & j=k\\
 d'_{ij}
 & j\neq k.
 \end{cases}
\end{align}
 
Next we describe the $c$-vectors.  We need another tuple of
algebraically independent variables $y=(y_i)_{i\in I}$ called the {\em initial
coefficients}.  They mutate, along with the mutation of  the exchange
matrix $B$, with the exchange relation at $k$ given by
\begin{align}
 \label{eq:yrel}
 y'_{i}&=
 \begin{cases}
 y_{i}^{-1} &i=k\\
 \displaystyle
 y_{i}
 \frac{(1+y_k)^{[-b_{ki}]_+ }}
  {(1+y_k^{-1})^{[b_{ki}]_+}}
 & i \neq k.
 \end{cases}
\end{align}
The elements of $\mathbb{Q}_+(y)$ obtained by successive mutations 
are called {\em coefficients}. For any coefficient $y_j'$ in a
coefficient tuple $y'=(y'_i)_{i\in I}$, we define the corresponding $c$-vector
$c'_j=(c'_{ij})_{i\in I}$ by
\begin{align}
	\pi_{\mathrm{trop}}(y_j')=\prod_{i\in I} y_i^{c'_{ij}}.
	\label{eqn:tropicalization-y}
\end{align}
The matrix $C'=(c'_{ij})_{i,j\in I}$ is called the \emph{$C$-matrix} of $y'$.

For coefficients $y''_j$ and $y'_j$ which are connected by a mutation
$(B'',y'')=\mu_k (B',y')$, we have a recursion relation for the corresponding
$c$-vectors, which is the tropicalization of \eqref{eq:yrel},
\begin{align}
	\label{eqn:c-recursion}
 c''_{ij}&=
 \begin{cases}
 -c'_{ij} & j=k\\
 c'_{ij}
 + c'_{ik}[b'_{kj}]_+
 +[-c'_{ik}]_+  b'_{kj}  & j\neq k.
 \end{cases}
\end{align}
This definition of $c$-vectors agrees with an alternative and more familiar
definition as column vectors of the bottom half square matrix of the extended
exchange matrices of $\mathcal{A}_\bullet(B)$ (cf. (\ref{eqn:b-mutation})).

\subsection{Sign-coherence Conjecture}
Fomin and Zelevinsky made the following fundamental conjecture
on $c$- and $d$-vectors,
which plays an important role in the structure theory of
cluster algebras (e.g., \cite{Fomin07,Nakanishi11a}).

\begin{conj}[Sign-coherence Conjecture] 
\label{conj:sign}
Let $B$ be any skew-symmetrizable matrix.
\par
(i) \cite[Conjecture 5.5 \& Proposition 5.6]{Fomin07} Any $c$-vector
of $\mathcal{A}_\bullet(B)$ is a nonzero vector,
and its components are either all non-negative
or all non-positive.

(ii) \cite[Conjectures 7.4 \& 7.5]{Fomin07}
Any non-initial $d$-vector of $\mathcal{A}_\bullet(B)$ is a nonzero vector,
and its components are all nonnegative.
\end{conj}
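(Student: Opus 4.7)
The plan is to attack this conjecture via categorification. In the skew-symmetric case, where the representation-theoretic machinery is fully developed, I would follow the Derksen--Weyman--Zelevinsky approach: associate to $B$ the quiver $Q(B)$, fix a generic potential $W$, and work in the category of decorated representations of $(Q(B),W)$ with its mutation operation. Each seed $(B',x',y')$ obtained by a mutation sequence is categorified by a decorated representation $\mathcal{R}'$, and the key structural fact is that the $j$-th column of the $C$-matrix equals $\pm \dim \mathcal{R}'_j$, the sign being determined by whether the $j$-th summand lies in the ``module'' part or the ``decoration'' part of $\mathcal{R}'$. Since dimension vectors are non-negative by construction and the two parts are separated, sign-coherence (i) would follow at once.

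For (ii) the strategy would be parallel: non-initial $d$-vectors should be recovered as dimension vectors of modules over the Jacobian algebra of $(Q(B),W)$ (in finite type, the cluster-tilted algebra $\Lambda(B)$), which are automatically non-negative. Non-vanishing is the easier half: $x'_j$ being non-initial forces $\pi_{\mathrm{trop}}(x'_j) \neq x_i^{-1}$ for every $i$, so the associated module must be non-zero.

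To pass from skew-symmetric to general skew-symmetrizable, I would use the folding method that this paper already deploys for $B_n$ and $C_n$: realize $B$ as the ``quotient'' of a skew-symmetric $\tilde{B}$ by a diagram automorphism $\sigma$, and transfer sign-coherence from $\tilde{B}$ to $B$ by checking that folding respects signs of c-vectors and non-negativity of d-vectors component by component. The principal obstacle is that not every skew-symmetrizable matrix arises globally as such a folding, and even when it does one has to verify compatibility of folding with every mutation sequence, not just the ones near the initial seed; for matrices outside the reach of folding a categorification-free route, such as scattering diagrams, appears unavoidable. A fallback for the finite-type version actually needed in this paper is to deduce both statements directly from Theorem~\ref{thm:main}: once $\mathcal{C}_+(B)$ and $\mathcal{D}(B)$ are identified with $\mathcal{V}(B)$, sign-coherence of c-vectors follows from the mutation symmetry of $\mathcal{V}(B)$, while non-negativity of non-initial $d$-vectors follows from the positivity of the weights attached to the templates in $\mathcal{W}(Z)$.
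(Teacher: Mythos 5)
The statement you are asked to prove is labeled a \emph{Conjecture} in the paper, and the paper does not prove it: it is the Fomin--Zelevinsky sign-coherence conjecture for an arbitrary skew-symmetrizable $B$, and the authors merely record the known partial results --- part (i) for skew-symmetric $B$ via \cite{Derksen10,Nagao10,Plamondon10b} and for a large class of skew-symmetrizable matrices via \cite{Demonet10}, part (ii) for surface types via \cite{Fomin08}. Your proposal is, in substance, a restatement of exactly these partial results (the DWZ decorated-representation argument for skew-symmetric $B$, folding for some skew-symmetrizable $B$), and you yourself concede that matrices outside the reach of folding require some further, unspecified input (``scattering diagrams appears unavoidable''). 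That concession is the whole point: what you have written is a survey of why the conjecture is believed and in which cases it is known, not a proof of the statement as given, which quantifies over \emph{every} skew-symmetrizable matrix. There is no argument here that closes the general case, so the proposal has a genuine and acknowledged gap coinciding with the reason the statement is still called a conjecture.

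Two further cautions about your finite-type fallback. First, deducing sign-coherence from Theorem \ref{thm:main} is circular against the paper's own logic: the paper proves sign-coherence for types $A_n$ and $D_n$ \emph{directly} from the surface model (Lemma \ref{lemma:c-vectors_are_sign_coerent}) and then \emph{uses} it as an input both to reduce to $\mathcal{C}_+(B)$ via \eqref{eqn:sign-coherence} and to verify the hypothesis of Proposition \ref{prop:folding-c} needed for the folding step in types $B_n$ and $C_n$. Second, the identification $\mathcal{C}_+(B)=\mathcal{V}(B)$ only concerns the \emph{positive} $c$-vectors; it says nothing by itself about whether an arbitrary $c$-vector is sign-coherent, which is precisely the content of \eqref{eqn:sign-coherence}. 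So even restricted to finite type, the fallback as written does not yield part (i) without first establishing what it is supposed to prove.
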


The first part of the conjecture is equivalent to the fact that {\em the
constant term of any $F$-polynomial of $\mathcal{A}_\bullet(B)$ is one\/}
\cite{Fomin07}, which is proved for any skew-symmetric matrix $B$
\cite{Derksen10,Nagao10,Plamondon10b}, and also for a large class of
skew-symmetrizable matrices \cite{Demonet10}, in particular, for any
skew-symmetrizable matrix which is mutation equivalent to an acyclic one.

The second part of the conjecture is proved, for example, for any skew-symmetric
matrix $B$ arising from a surface \cite{Fomin08}, and more cases follow from
the results in the rest of this section.

\subsection{Root Conjecture}
Recall that a skew-symmetric matrix $B=(b_{ij})_{i,j\in I}$ can be identified
with a quiver $Q(B)$ without loops and 2-cycles by attaching $b_{ij}$ arrows
from vertex $i$ to vertex $j$ if $b_{ij}>0$.  This correspondence can be
extended to the one between {\em skew-symmetrizable\/} matrices and {\em valued
quivers} (see \cite{Dlab76}).
 
Let $\Delta(A)$ be the {\em root system\/} associated with a symmetrizable Cartan matrix $A$,
and let $\{\alpha_i\}_{i\in I}$ be its simple roots \cite{Kac90}.  A root
$\alpha=\sum_{i\in I}c_i\alpha_i$ of $\Delta(A)$ is naturally identified with,
either all nonnegative or all non-positive, nonzero integer vector
$(c_i)_{i\in I}$. 
It is said to be {\em real\/} if there is an element
$w$ of the Weyl group of $\Delta(A)$ such that $w(\alpha)$ is a simple root; otherwise it
is said to be {\em imaginary}.  It is known that a root $\alpha$ is 
real if and only if $(\alpha,\alpha)_{TA}= {}^t\alpha TA \alpha >0$, where $T$
is any diagonal matrix with positive diagonal entries such that $TA$ is symmetric.
See \cite{Kac90} for details.

In the study of cluster algebras, it becomes more and more apparent that there
is some intimate interplay among three kinds of algebras, namely, cluster
algebras, path algebras, and  (quantized) Kac-Moody algebras.  Naturally, root
systems provide the common underlying structure.  The starting point of the
interplay is Kac's theorem, which generalizes celebrated Gabriel's theorem.  Let
$k$ be an algebraically closed field below.

\begin{thm}[{{\bf Kac's Theorem} \cite{Kac80,Kac82}}]
\label{thm:Kac}
Let $B$ be any skew-symmetric matrix.  Then, there exists an indecomposable
module of the path algebra $kQ(B)$ with dimension vector $\alpha$ if and only if
$\alpha$ is a positive root of $ \Delta(A(B))$.
\end{thm}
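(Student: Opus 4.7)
The plan is to establish both implications by combining the theory of BGP reflection functors with a careful analysis of the Tits quadratic form $q(\alpha) = \tfrac{1}{2}(\alpha,\alpha)_{TA(B)}$ on the space of dimension vectors. The central ingredient is Ringel's homological interpretation: for a finite-dimensional representation $M$ of $kQ(B)$ of dimension vector $\alpha$,
\[
q(\alpha) \;=\; \dim \End(M) \;-\; \dim \operatorname{Ext}^1(M,M).
\]
Since the endomorphism ring of an indecomposable module over a finite-dimensional algebra is local, $\dim \End(M)\geq 1$, which gives the first handle on $q(\alpha)$ whenever $M$ is indecomposable.

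For the "only if" direction, let $M$ be indecomposable with $\dim M = \alpha$. I would apply the BGP reflection functors $S_i^{\pm}$ at sinks and sources of $Q(B)$ to drive $\alpha$ around its Weyl group orbit. At each step the functor either annihilates $M$ (precisely when $M$ is the simple module at the vertex $i$ being reflected, so that $\alpha=\alpha_i$ is already a simple root) or produces an indecomposable representation of the reflected quiver with dimension vector $s_i\alpha$. Iterating, either some simple root is reached and $\alpha$ is a real positive root, or the process stabilizes inside the fundamental region $\{\beta \mid (\beta,\alpha_i)_{TA(B)} \leq 0 \text{ for all } i\in I\}$; in that case, a standard argument using connectedness of the support together with Ringel's formula yields $q(\alpha) \leq 0$, which by Kac's criterion places $\alpha$ among the imaginary positive roots.

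For the "if" direction, any real positive root can be written as $w\alpha_i$ for some $w$ in the Weyl group and some simple $\alpha_i$; applying the corresponding sequence of reflection functors to the simple module $S_i$ produces an indecomposable of dimension vector $w\alpha_i=\alpha$. The construction of indecomposables with imaginary-root dimension vectors is the main obstacle, and this is where the argument genuinely leaves the elementary realm: the approach I would take is geometric. Consider the affine representation variety $R(Q(B),\alpha)$ with the natural action of $G(\alpha)=\prod_{i\in I}GL(\alpha_i,k)$; the identity $\dim R(Q(B),\alpha)-\dim G(\alpha) = -q(\alpha)\geq 0$ for imaginary $\alpha$ forces that no $G(\alpha)$-orbit is dense, hence there are infinitely many isomorphism classes of representations with dimension vector $\alpha$. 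A decomposition analysis at a generic point, combined with an induction on $\alpha$ smaller than the given one, then forces at least one indecomposable summand of dimension vector exactly $\alpha$. For a skew-symmetrizable $B$ one replaces $kQ(B)$ by the tensor algebra of a $k$-species so that reflection functors and the Tits form continue to behave as required, while the imaginary-root step demands the same algebro-geometric input as in the skew-symmetric case and remains the hardest part of the proof.
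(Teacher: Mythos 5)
First, note that the paper does not prove this statement: it is quoted as Kac's theorem with references to \cite{Kac80,Kac82}, so there is no in-paper argument to compare against and your proposal must stand on its own. As a reconstruction of the standard proof it contains one genuine gap. Your whole ``only if'' direction, and the real-root half of the ``if'' direction, run on BGP reflection functors applied ``at sinks and sources of $Q(B)$''. But the theorem is stated for an arbitrary skew-symmetric $B$, so $Q(B)$ may contain oriented cycles and in particular may have no sinks or sources at all; this is not a marginal case here, since the quivers the paper actually needs (the cyclic $A_3$ quiver of the introduction, the Markov quiver of Example \ref{ex:markov}, most exchange matrices of finite cluster type) are exactly of this kind. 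A reflection functor does not exist at a vertex that is neither a sink nor a source, so your induction never starts. Kac's proof circumvents this by first establishing that the set of dimension vectors of indecomposables is independent of the orientation of the arrows --- proved in \cite{Kac80} by counting absolutely indecomposable representations over finite fields and invoking polynomiality of those counts --- and only then reducing to an acyclic orientation where the reflection-functor machinery applies. Some such orientation-independence input is indispensable and is entirely absent from your outline.

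A second step is asserted rather than proved. For imaginary roots you argue that $-q(\alpha)\geq 0$ forces the absence of a dense $G(\alpha)$-orbit, hence infinitely many isomorphism classes in dimension vector $\alpha$, hence (after ``a decomposition analysis at a generic point'') an indecomposable of dimension vector exactly $\alpha$. The last implication is the hard content of the theorem and does not follow from having infinitely many orbits: a priori every representation of dimension vector $\alpha$ could decompose, with all moduli carried by smaller summands. The correct argument compares the number of parameters
\[
\mu(\alpha)=\dim R\bigl(Q(B),\alpha\bigr)-\max_{M}\dim\bigl(G(\alpha)\cdot M\bigr)
\]
with $\sum_t\mu(\beta_t)$ over the generic decomposition $\alpha=\sum_t\beta_t$, using $\operatorname{Ext}^1$-vanishing between generic summands, and derives a contradiction from $(\alpha,\alpha_i)\leq 0$ on a connected support; none of this is supplied. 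Two minor points: when $Q(B)$ has oriented cycles the path algebra $kQ(B)$ is infinite dimensional, so ``indecomposable module over a finite-dimensional algebra'' should read ``indecomposable module of finite length'' (Fitting's lemma still gives a local endomorphism ring); and the closing sentence about species is out of scope, since the statement is only for skew-symmetric $B$.
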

In the above correspondence, if a positive root is the dimension vector of some
indecomposable $kQ(B)$-module $M$ such that $\mathrm{End}_{kQ(B)}(M)=k$, then it
is called a {\em Schur\/} root.  We use this notion later.

In view of cluster algebras, the extension of Theorem \ref{thm:Kac} to the
valued quivers is desired and expected.  Unfortunately, it is not fully
achieved yet \cite{Hubery02,Deng08}.  Nevertheless, the perspective presented
above guides us to  the following natural refinement of Conjecture
\ref{conj:sign}, jointly proposed with Andrei Zelevinsky.

\begin{conj}[Root Conjecture] 
\label{conj:root}
For any skew-symmetrizable matrix $B$ any $c$-vector
of $\mathcal{A}_\bullet(B)$ is a root of $\Delta(A(B))$.
\end{conj}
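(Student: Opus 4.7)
The plan is to attack the conjecture by combining the representation-theoretic perspective sketched in the preceding paragraphs with folding from the skew-symmetric case. The overall outline is: first, prove the skew-symmetric case by identifying positive $c$-vectors with dimension vectors of indecomposable objects in an appropriate category, then invoke Kac's theorem; second, descend from a skew-symmetric cover to any skew-symmetrizable $B$ by folding. The finite-type portion of the conjecture is already in hand via Theorem \ref{thm:corollaries}(\ref{thm:corollaries-1}), so the target is really the infinite-rank (or wild) regime.

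For the skew-symmetric case, I would begin by invoking part (i) of the Sign-coherence Conjecture (known when $B$ is skew-symmetric, by Derksen-Weyman-Zelevinsky, Nagao, and Plamondon), so that each $c$-vector is either positive or negative and I may concentrate on positive ones. Using a nondegenerate quiver with potential $(Q(B),W)$, the decorated-representation categorification of the Jacobian algebra $J(Q(B),W)$ expresses $F$-polynomials as generating series of submodule Grassmannians of certain indecomposable $J(Q(B),W)$-modules; tropicalizing the $Y$-variable formulas then identifies positive $c$-vectors with the dimension vectors of precisely these modules. Since any indecomposable $J(Q(B),W)$-module is in particular an indecomposable $kQ(B)$-module, Kac's theorem (Theorem \ref{thm:Kac}) finishes the skew-symmetric case.

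For the general skew-symmetrizable case I would choose a skew-symmetric cover $\tilde B$ together with a finite group $G$ of diagram automorphisms whose fixed-point folding yields $B$; such covers exist whenever $B$ is integer-symmetrizable. The folding framework of Dupont and Demonet cited in the introduction shows that $G$-equivariant mutation sequences for $\mathcal{A}_\bullet(\tilde B)$ descend to mutation sequences for $\mathcal{A}_\bullet(B)$, with $G$-orbit sums of $c$-vectors upstairs mapping to $c$-vectors downstairs. The root property survives the folding: the $G$-orbit sum of a positive root of $\Delta(A(\tilde B))$ is a positive root of $\Delta(A(B))$. It remains to check that \emph{every} $c$-vector of $\mathcal{A}_\bullet(B)$ arises from some $G$-equivariant sequence, which should follow from the functoriality of folding on the level of seeds.

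The hard part, I expect, is twofold. First, the precise identification in the skew-symmetric step between $c$-vectors and dimension vectors of specific decorated representations is established cleanly only in restricted settings (acyclic initial seed, finite or finite mutation type, surface type, etc.); pinning the dictionary down in full generality — including wild $Q(B)$ and arbitrary initial seeds — is the key representation-theoretic input that is still missing. Second, although folding behaves well on \emph{invariant} mutation sequences, transporting the statement to arbitrary mutations requires tracking the $C$-matrix through non-equivariant steps, which is the subtler half of the Dupont-Demonet machinery. Both obstacles vanish in finite type, which is exactly why the present paper succeeds in establishing the portion of the Root Conjecture recorded in Theorem \ref{thm:corollaries}(\ref{thm:corollaries-1}) without needing to enter the categorification apparatus at all.
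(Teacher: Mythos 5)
The statement you are asked to prove is labelled a \emph{conjecture} in the paper, and the paper offers no proof of it: the authors only establish the finite-type case (Theorem \ref{thm:corollaries}(\ref{thm:corollaries-1})) and point to N\'ajera Ch\'avez's result (Theorem \ref{thm:c-general}) for the skew-symmetric case. Your proposal is, by your own account, a research programme rather than a proof, and the two places where you say the ``hard part'' lies are precisely where genuine, currently unclosed gaps sit. The skew-symmetric half of your plan is essentially a re-derivation of Theorem \ref{thm:c-general}; that part is sound (and could simply be cited), since the identification of positive $c$-vectors with dimension vectors of indecomposable $J(Q(B),W)$-modules, combined with Kac's theorem, does settle the conjecture for skew-symmetric $B$. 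But it does not need the restricted settings you worry about: N\'ajera Ch\'avez's theorem is stated for arbitrary skew-symmetric $B$ and arbitrary initial seed.

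The real gap is in the folding step. Your claim that a skew-symmetric cover with a suitable automorphism group ``exists whenever $B$ is integer-symmetrizable'' is not known; the existence of an unfolding with a \emph{stable} admissible automorphism (i.e.\ one whose admissibility survives every orbit-mutation sequence) is itself an open problem for general skew-symmetrizable matrices, and the paper only verifies stability in the finite simply-laced cases (Proposition \ref{prop:dynkin_stable} and Remark \ref{rk:our_cases}). Moreover, even granting an unfolding, the compatibility of folding with the $c$-vector recursion is conditional: Proposition \ref{prop:folding-c} shows it requires a sign-coherence condition on every representative of every orbit at every step, and the paper's Remark about the Felikson--Tumarkin example (for $d$-vectors in affine type $D$) illustrates that such compatibility conditions can genuinely fail outside finite type. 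Finally, surjectivity of the folding map onto the set of all $c$-vectors downstairs (your ``every $c$-vector arises from a $G$-equivariant sequence'') is exactly the content of Corollary \ref{cor:folding}, which is proved only for the specific finite-type foldings $D_{n+1}\to B_n$ and $A_{2n-1}\to C_n$. None of these obstructions is removed by your outline, so the conjecture remains open beyond the skew-symmetric and acyclic-skew-symmetrizable cases already covered by the results surveyed in Section \ref{sect:back}.
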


As for $d$-vectors, they also satisfy the same root property in many known
cases. However Marsh and Reiten recently found, in cluster affine type $A$, an
example of a $d$-vector which is not a root of $\Delta(A(B))$ \cite{Marsh12}. We
thank Robert Marsh and Idun Reiten for sharing with us this counterexample.

\subsection{Results for finite type}
Cluster algebras of finite type were studied in detail by various authors.  Here
we collect some of the known properties of their $c$- and $d$-vectors along with
some consequences which are relevant to the present paper.  For simplicity, we
assume that a skew-symmetrizable matrix $B$ is {\em indecomposable\/} in this
section.

The connection between the $d$-vectors and the root systems of finite type was
first discovered by Fomin and Zelevinsky \cite{Fomin03a}. Recall that a skew-symmetrizable
integer matrix $B$ is said to be \emph{bipartite} if the corresponding valued quiver
has only sinks and sources.

\begin{thm}[{\cite[Theorem 1.9]{Fomin03a}}]
\label{thm:dvecfin}
For any skew-symmetrizable bipartite matrix $B$ whose Cartan counterpart $A(B)$
is of finite type, the set  $\mathcal{D}(B)$ coincides with the set of all the
positive roots of $\Delta(A(B))$.
\end{thm}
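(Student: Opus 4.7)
The plan is to exploit the bipartite structure of $B$ to reduce the $d$-vector dynamics to a piecewise-linear model of the action of a Coxeter element on the root system of $A(B)$.

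First, use the bipartition $I = I_+ \sqcup I_-$ (with $b_{ij}\ne 0$ only across the two parts) to form the composite mutations $\mu_+$ and $\mu_-$, each obtained by mutating at all vertices of the corresponding part in any order. These are well-defined because simple mutations at vertices of one part commute: in \eqref{eqn:b-mutation}, whenever $i$ or $j$ lies in the same part as $k$, one of $b_{ik}$, $b_{kj}$ vanishes, so $b'_{ij} = b_{ij}$. A direct verification then gives $\mu_\pm(B) = -B$, so the alternating compositions $\ldots\mu_-\mu_+\mu_-\mu_+$ cycle through just the pair $\pm B$.

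Next, translate the $d$-vector recursion \eqref{eqn:d-recursion} under $\mu_\pm$ into a pair of piecewise-linear operators $\tau_+, \tau_-$ on $\mathbb{Z}^I$. Because vertices inside a single part are non-adjacent, the formulas decouple: for $i\in I_\varepsilon$,
\begin{equation*}
[\tau_\varepsilon(d)]_i = -d_i + \max\left(\sum_{j\in I_{-\varepsilon}}[\varepsilon b_{ij}]_+\, d_j,\ \sum_{j\in I_{-\varepsilon}}[-\varepsilon b_{ij}]_+\, d_j\right),
\end{equation*}
while entries indexed by $I_{-\varepsilon}$ are unchanged. One checks $\tau_\varepsilon^2 = \mathrm{id}$ and, crucially, that on positive roots $\alpha \in \Phi_+ \subset \mathbb{Z}^I$ the max collapses by sign-coherence so that $\tau_\varepsilon$ reduces to the ordinary linear reflection $s_\varepsilon = \prod_{i\in I_\varepsilon}s_{\alpha_i}$, the product of the commuting simple reflections attached to the vertices of $I_\varepsilon$. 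A direct computation on the initial $d$-vectors $-\alpha_i$ also shows $\tau_\varepsilon(-\alpha_i) = \alpha_i$ for $i\in I_\varepsilon$, producing the first layer of positive $d$-vectors. By the Fomin--Zelevinsky theory of almost positive roots, the orbit of $\{-\alpha_i\}_{i\in I}$ under $\langle \tau_+, \tau_-\rangle$ then equals $\{-\alpha_i\}_{i\in I}\cup \Phi_+$ and visits each positive root exactly once, the key input being that the Coxeter element $s_+s_-$ has finite order in the finite-type Weyl group; since every cluster in finite type is reachable by $\mu_\pm$-iteration starting from the bipartite seed, $\mathcal{D}(B) = \Phi_+$.

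The main obstacle is the piecewise-linear identity linking the tropicalized $x$-mutation to the reflection $s_\varepsilon$: one must prove sign-coherence of $d$-vectors throughout the alternating pattern, so that the max in \eqref{eqn:d-recursion} consistently selects the correct branch, and then match the resulting affine formula with the classical reflection formula in the basis of simple roots of $A(B)$. The finite-type hypothesis is essential both for bounding the $\tau$-orbit and for guaranteeing sign-coherence persists; in general type the latter is the still-open part of the Sign-coherence Conjecture \ref{conj:sign}.
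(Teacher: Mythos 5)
The paper does not prove this statement: Theorem \ref{thm:dvecfin} is imported verbatim from \cite[Theorem 1.9]{Fomin03a}, so the comparison has to be made against the original Fomin--Zelevinsky argument. Your skeleton is in fact the same mechanism that drives that proof: the composite mutations $\mu_\pm$ at the two parts of the bipartition, the observation $\mu_\pm(B)=-B$, the tropicalization of the exchange relation into piecewise-linear operators that agree with the Fomin--Zelevinsky operators $\tau_\pm$ on almost positive roots (note that $\tau_\varepsilon$ is \emph{not} $s_\varepsilon$ globally --- it fixes $-\alpha_j$ for $j\in I_{-\varepsilon}$, which your $\max$ formula does reproduce correctly), and the fact that the $\langle\tau_+,\tau_-\rangle$-orbit of $\{-\alpha_i\}_{i\in I}$ sweeps out $\Phi_{\geq -1}$.

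There is, however, one step that fails as written and it is the one carrying all the weight. You assert that ``every cluster in finite type is reachable by $\mu_\pm$-iteration starting from the bipartite seed.'' This is false: the bipartite belt is a single alternating strip of seeds inside a much larger exchange graph, and most seeds do not lie on it. What you actually need is the weaker but still substantial statement that every cluster \emph{variable} occurs in some seed of the bipartite belt. That statement is true in finite type, but it is essentially equivalent to the main content of \cite{Fomin03a} --- the bijection between cluster variables and almost positive roots established via the compatibility-degree machinery --- so it cannot be invoked as an afterthought; it is the theorem. Without it, your orbit computation proves only the inclusion $\Phi_+\subseteq\mathcal{D}(B)$ and says nothing about the $d$-vectors of cluster variables living off the belt, so the equality $\mathcal{D}(B)=\Phi_+$ does not follow. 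Relatedly, the ``main obstacle'' you name --- propagating sign-coherence of $d$-vectors along the belt so that the $\max$ collapses to the linear reflection $s_\varepsilon$ --- is precisely the inductive heart of the original proof and is left unproved here; in \cite{Fomin03a} it is handled jointly with the compatibility-degree induction rather than as a separable lemma. So the strategy is the right one, but the two ingredients you defer are together the entire proof.
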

The requirement of $B$ being bipartite was lifted later on in \cite{shih}.
In particular, in the  skew-symmetric case, combining the above result  with
Gabriel's theorem, we get that the set $\mathcal{D}(B)$ also coincides with the
set of all the dimension vectors of the path algebra $kQ(B)$.  This result
triggered the intensive representation-theoretic study of cluster algebras in
the past decade.

For a skew-symmetric matrix $B$ of cluster finite type, let $\Lambda(B)$ be the
corresponding cluster-tilted algebra, which is the path algebra of the quiver
$Q(B)$ modulo the relations described by \cite[Theorem 4.2]{Buan05}.  Note that any
indecomposable $\Lambda(B)$-module can also be regarded as an indecomposable
$kQ(B)$-module.  Let $\mathrm{Dim}(\Lambda(B))$ be the set of the dimension
vectors of all the indecomposable $\Lambda(B)$-modules.

The following theorem by Caldero, Chapoton, and Schiffler \cite{Caldero04b}, and
by Buan, Marsh, and Reiten \cite{Buan04}, extended Theorem  \ref{thm:dvecfin} to
any skew-symmetric matrix $B$ of cluster finite type.

\begin{thm}[{\cite[Theorem 4.4 \& Remark 4.5] {Caldero04b},
\cite[Theorem 2.2]{Buan04}}]
\label{thm:d-fin}
For any skew-symmetric matrix $B$ of cluster finite type, the sets
$\mathcal{D}(B)$ and  $\mathrm{Dim}(\Lambda(B))$ coincide.
\end{thm}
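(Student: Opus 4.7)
The plan is to use the cluster category and the Caldero-Chapoton map to produce an explicit bijection between non-initial cluster variables of $\mathcal{A}_\bullet(B)$ and indecomposable $\Lambda(B)$-modules that matches $d$-vectors with dimension vectors. First, I would choose an acyclic quiver $Q'$ mutation-equivalent to $Q(B)$, which exists since $B$ is of cluster finite type, and work in the cluster category $\mathcal{C}_{Q'}$ of the path algebra $kQ'$. Inside $\mathcal{C}_{Q'}$ there is a cluster-tilting object $T=T_1\oplus\cdots\oplus T_n$ whose summands index the initial cluster of $\mathcal{A}_\bullet(B)$ and whose endomorphism algebra realises $\End_{\mathcal{C}_{Q'}}(T)^{\mathrm{op}}\cong\Lambda(B)$ by construction.

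Next I would invoke two structural results. By the theorem of Caldero--Keller and Buan--Marsh--Reiten--Reiten--Todorov (valid in cluster finite type), there is a bijection between indecomposable objects of $\mathcal{C}_{Q'}$ and cluster variables of $\mathcal{A}_\bullet(B)$, sending the shifts $T_i[1]$ to the initial cluster variables and sending every other indecomposable $M$ to the non-initial cluster variable $X_M$ defined by the Caldero-Chapoton formula. By the theorem of Buan--Marsh--Reiten, the functor $F=\operatorname{Hom}_{\mathcal{C}_{Q'}}(T,-)$ induces an equivalence $\mathcal{C}_{Q'}/\operatorname{add}(T[1])\xrightarrow{\sim}\operatorname{mod}\,\Lambda(B)$, so that $F$ restricts to a bijection between indecomposables $M\notin\operatorname{add}(T[1])$ and indecomposable $\Lambda(B)$-modules $F(M)$. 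Composing the two bijections yields a correspondence $X_M\leftrightarrow F(M)$ between non-initial cluster variables of $\mathcal{A}_\bullet(B)$ and indecomposable $\Lambda(B)$-modules; in particular the cardinalities of $\mathcal{D}(B)$ and $\mathrm{Dim}(\Lambda(B))$ already match.

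The crux is then to verify that $d(X_M)=\underline{\dim}\,F(M)$ for every such $M$. This will come from the explicit Caldero-Chapoton expansion of $X_M$ as a Laurent polynomial in the initial cluster: the summands are indexed by submodules of $F(M)$ with Euler-characteristic coefficients $\chi(\mathrm{Gr}_e(F(M)))$, and the exponent of $x_i$ in a typical monomial is of the form $-(\underline{\dim}\,F(M))_i$ plus a non-negative correction depending on $e$. Extracting the tropicalisation then gives $d(X_M)$ as $\underline{\dim}\,F(M)$ provided no cancellation occurs that could lower some $x_i$-exponent below $-(\underline{\dim}\,F(M))_i$. The main obstacle is precisely this denominator computation: one has to confirm that the $x_i$-extremal monomial contributes with a strictly positive coefficient. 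In cluster finite type this is guaranteed by positivity of the Caldero-Chapoton coefficients (all relevant quiver Grassmannians admit affine pavings, so their Euler characteristics are non-negative, and the extremal term corresponds to $e=0$ or $e=\underline{\dim}\,F(M)$, where the Grassmannian is a point). Once the $d$-vector formula is established, the theorem follows immediately: as $M$ ranges over indecomposables not in $\operatorname{add}(T[1])$, the vectors $d(X_M)=\underline{\dim}\,F(M)$ exhaust $\mathcal{D}(B)$ on the one side and $\mathrm{Dim}(\Lambda(B))$ on the other.
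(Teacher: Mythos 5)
First, a remark on scope: the paper does not prove this statement at all --- it is imported verbatim from \cite{Caldero04b} and \cite{Buan04} as background (it is only used to derive Corollary \ref{cor:C=D}), so there is no internal proof to compare yours against. Your outline is essentially a summary of the strategy of those cited works: pass to the cluster category $\mathcal{C}_{Q'}$, use the bijection between indecomposable objects and cluster variables together with the equivalence $\mathcal{C}_{Q'}/\operatorname{add}(T[1])\simeq \operatorname{mod}\Lambda(B)$, and reduce everything to the denominator identity $d(X_M)=\underline{\dim}\,F(M)$ with $F=\operatorname{Hom}_{\mathcal{C}_{Q'}}(T,-)$.

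The gap is in your justification of that identity. Non-negativity of the Euler characteristics only rules out cancellation; the real issue is whether the minimal exponent $-(\underline{\dim}\,F(M))_i$ is \emph{attained}, i.e.\ whether for each $i$ in the support of $F(M)$ some submodule dimension $e$ has zero correction in the $i$-th coordinate. Your claim that this happens at $e=0$ or $e=\underline{\dim}\,F(M)$ is false in general: the correction in coordinate $i$ vanishes only for a submodule that is full on the neighbours of $i$ on one side of the arrows and zero on those on the other side (a mixed condition), and neither endpoint satisfies it once $i$ has neighbours of both kinds inside the support of $F(M)$. Producing such a submodule is precisely the nontrivial content of the denominator theorem, and it is exactly what fails beyond finite type: the paper's own example in cluster type $A^{(1)}_2$ records a cluster variable with $d$-vector $(1,1,1)$ whose associated module has dimension vector $(1,2,1)$, even though the relevant Euler characteristics are positive there as well. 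So finiteness must enter your argument at this step in an essential way, and positivity alone does not supply it; you need either the explicit extremal-submodule analysis of \cite{Caldero04b} or the induction on exchange relations of \cite{Buan04}.
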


On the other hand, N\'ajera Ch\'avez recently proved a parallel theorem for
$c$-vectors.

\begin{thm}[{\cite[Theorem 4] {Najera12},\cite{Najera12b}}]
\label{thm:c-fin}
For any skew-symmetric matrix $B$ of cluster finite type, the sets
$\mathcal{C}_+(B)$ and $\mathrm{Dim}(\Lambda(B))$ coincide.
\end{thm}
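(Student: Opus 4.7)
My plan is to combine tropical duality between $c$-vectors and $g$-vectors with the categorification of $\mathcal{A}_\bullet(B)$ via the cluster category $\mathcal{C}_Q$ of the quiver $Q = Q(B)$, using Theorem~\ref{thm:d-fin} as a model. The strategy is to transport the established description of $\mathrm{Dim}(\Lambda(B))$ as a set of dimension vectors into the world of $c$-vectors by means of the index map and sign-coherence.

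The first step is to invoke sign-coherence of $c$-vectors for skew-symmetric $B$, proved by Derksen--Weyman--Zelevinsky using decorated representations of the Jacobian algebra of $(Q,W)$. This yields (\ref{eqn:sign-coherence}) and reduces the problem to the equality $\mathcal{C}_+(B) = \mathrm{Dim}(\Lambda(B))$. The second step is to apply Nakanishi--Zelevinsky's tropical duality: at each seed $t$, the $C$-matrix and $G$-matrix are related by $C^t = (G^t)^{-T}$. By the Palu/Plamondon categorification, the $g$-vector of a non-initial cluster variable is the index (in $K_0(\mathcal{C}_Q)$ relative to a cluster-tilting object $T$) of the corresponding indecomposable object $M\in\mathcal{C}_Q$. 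Choosing $T$ so that $\End_{\mathcal{C}_Q}(T)^{op}\cong\Lambda(B)$, applying $\mathrm{Hom}_{\mathcal{C}_Q}(T,-)$ to the exchange triangle defining the index translates the $T$-index into the difference $\underline{\dim}\,\mathrm{Hom}_{\mathcal{C}_Q}(T,M)-\underline{\dim}\,\mathrm{Ext}^1_{\mathcal{C}_Q}(T,M)$. For rigid $M$, which is automatic in the cluster-finite case, this reduces to the dimension vector of the indecomposable $\Lambda(B)$-module $\mathrm{Hom}_{\mathcal{C}_Q}(T,M)$, yielding an embedding $\mathcal{C}_+(B)\hookrightarrow \mathrm{Dim}(\Lambda(B))$.

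The third step is a counting argument to upgrade the embedding to a bijection. By Theorem~\ref{thm:d-fin} combined with the Fomin--Zelevinsky classification, $|\mathrm{Dim}(\Lambda(B))|=nh/2$, the number of positive roots of $\Delta(A(B))$. An independent argument---for instance via Reading--Speyer's theory of Coxeter-sortable elements, or by iterating the $c$-vector mutation rule (\ref{eqn:c-recursion}) starting from the initial seed where $C^{t_0}$ is the identity---shows that $|\mathcal{C}_+(B)|=nh/2$ as well, forcing the embedding to be surjective.

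The main obstacle will be the second step: pinning down the sign convention so that \emph{positive} $c$-vectors correspond to genuine $\Lambda(B)$-modules rather than to the shifted projectives $\Sigma P_i\in\mathcal{C}_Q$, which correspond to the negative $c$-vectors $-e_i$ dual to the initial cluster. This requires care in choosing the orientation of the exchange triangle and tracking how each indecomposable summand of $T$ contributes through the functor $\mathrm{Hom}_{\mathcal{C}_Q}(T,-)$; once this bookkeeping is in place, the rest of the argument is essentially a packaging of known categorical results.
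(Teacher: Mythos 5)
This statement is not proved in the paper: it is quoted from the literature, with the inclusion $\mathcal{C}_+(B)\subset\mathrm{Dim}(\Lambda(B))$ attributed to \cite[Theorem 4]{Najera12} (restated as Theorem \ref{thm:c-general}) and the reverse inclusion to the unpublished communication \cite{Najera12b}. Your attempt to supply an actual proof has two genuine gaps. The first is in your second step. The index $\mathrm{ind}_T(M)=[T_0]-[T_1]$, read in the basis of indecomposable summands of $T$, is the $g$-vector of the cluster variable attached to $M$; applying $\mathrm{Hom}_{\mathcal{C}_Q}(T,-)$ to the triangle $T_1\to T_0\to M\to\Sigma T_1$ produces a projective presentation of $\mathrm{Hom}_{\mathcal{C}_Q}(T,M)$ whose class in $K_0(\mathrm{proj}\,\Lambda(B))$ is again the $g$-vector, and converting that class into a dimension vector in $K_0(\mathrm{mod}\,\Lambda(B))$ goes through the Cartan matrix of $\Lambda(B)$, which is not the identity. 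So your assertion that for rigid $M$ the index ``reduces to the dimension vector of $\mathrm{Hom}_{\mathcal{C}_Q}(T,M)$'' is false: already in type $A_2$ the $g$-vectors of non-initial cluster variables have negative entries while the corresponding dimension vectors do not. Tropical duality also relates $C^{B;t_0}_t$ to a $G$-matrix taken in the opposite direction (from $t$ back to $t_0$, over the cluster-tilting object at $t$), so even granting the duality, no termwise identification of positive $c$-vectors with dimension vectors of $\Lambda(B)$-modules drops out. The published inclusion $\mathcal{C}_+(B)\subset\mathrm{Dim}(\Lambda(B))$ is instead obtained via Derksen--Weyman--Zelevinsky's decorated representations and the $E$-invariant, identifying each positive $c$-vector with the dimension vector of a rigid indecomposable module with trivial endomorphism ring over the Jacobian algebra with generic potential, which in finite type is $\Lambda(B)$.

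The second gap is the counting step, which is precisely the hard inclusion that the paper can only attribute to \cite{Najera12b}. To upgrade the embedding to a bijection you need the lower bound $|\mathcal{C}_+(B)|\ge nh/2$ established independently of the theorem; but the paper obtains $|\mathcal{C}_+(B)|=nh/2$ only as Corollary \ref{cor:independent1}, i.e.\ as a consequence of Theorems \ref{thm:d-fin}, \ref{thm:c-fin}, \ref{thm:independent}, and \ref{thm:ringel}, so invoking that count here is circular. Reading--Speyer's framework concerns acyclic exchange matrices and does not by itself enumerate the distinct positive $c$-vectors of a cyclic $B$ of finite cluster type, and ``iterating the recursion (\ref{eqn:c-recursion})'' is not an argument. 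Note also that even the count $|\mathrm{Dim}(\Lambda(B))|=nh/2$ requires Ringel's injectivity of the dimension map (Theorem \ref{thm:ringel}) on top of Theorem \ref{thm:independent}; you use only the latter.
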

The inclusion $\mathcal{C}_+(B)\subset \mathrm{Dim}(\Lambda(B))$ is a special
case of  \cite[Theorem 4] {Najera12} (see Theorem \ref{thm:c-general}), while
the opposite inclusion is due to a yet unpublished result communicated to us by
Alfredo N\'ajera Ch\'avez  \cite{Najera12b}.

We have the following immediate corollary of Theorems \ref{thm:d-fin} and
\ref{thm:c-fin}.

\begin{cor} 
\label{cor:C=D}
For any skew-symmetric matrix $B$ of cluster finite type, the sets
$\mathcal{C}_+(B)$ and $\mathcal{D}(B)$ coincide.
\end{cor}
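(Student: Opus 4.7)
The proof is essentially a one-step chain through the two preceding theorems, so the plan is elementary. Both Theorem \ref{thm:d-fin} and Theorem \ref{thm:c-fin} identify a distinguished set of integer vectors attached to $\mathcal{A}_\bullet(B)$ with $\mathrm{Dim}(\Lambda(B))$, the set of dimension vectors of the indecomposable modules over the cluster-tilted algebra. My strategy is simply to use $\mathrm{Dim}(\Lambda(B))$ as a bridge between $\mathcal{C}_+(B)$ and $\mathcal{D}(B)$, without any further input.

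Concretely, under the standing hypothesis that $B$ is skew-symmetric and of cluster finite type, Theorem \ref{thm:c-fin} gives
\[
\mathcal{C}_+(B) = \mathrm{Dim}(\Lambda(B)),
\]
while Theorem \ref{thm:d-fin} gives
\[
\mathcal{D}(B) = \mathrm{Dim}(\Lambda(B)).
\]
Combining these by transitivity of equality yields $\mathcal{C}_+(B) = \mathcal{D}(B)$, which is exactly the conclusion of the corollary.

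I do not anticipate any genuine obstacle. The only points worth checking are that both theorems are being invoked with matching hypotheses — $B$ skew-symmetric so that $kQ(B)$, and hence the cluster-tilted algebra $\Lambda(B)$, is defined in the usual unvalued sense, and $B$ of cluster finite type so that the representation-theoretic descriptions apply — and that $\mathcal{C}_+(B)$ is a well-defined subset encoding the full information of $\mathcal{C}(B)$ via the decomposition \eqref{eqn:sign-coherence}. Both conditions are met here, since the sign-coherence part of Conjecture \ref{conj:sign} is known in the skew-symmetric case. It is also worth remarking in passing that this bridging argument does not extend to genuinely skew-symmetrizable $B$, for which $\Lambda(B)$ is not available, which partly motivates the combinatorial surface-and-folding approach pursued in the rest of the paper to reach Theorem \ref{thm:main} in full generality.
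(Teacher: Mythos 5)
Your proof is correct and matches the paper exactly: the authors state Corollary \ref{cor:C=D} as an immediate consequence of Theorems \ref{thm:d-fin} and \ref{thm:c-fin}, i.e.\ precisely the chain $\mathcal{C}_+(B)=\mathrm{Dim}(\Lambda(B))=\mathcal{D}(B)$ you describe. Your side remarks on the hypotheses and on why this argument does not extend to the skew-symmetrizable case are also consistent with the paper's discussion.
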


It is known that, for any indecomposable $\Lambda(B)$-module $M$,
$\mathrm{End}_{\Lambda(B)}(M)=k$ holds (and therefore
$\mathrm{End}_{kQ(B)}(M)=k$) \cite[Section 8]{Buan06}.  Thus, we have
another corollary of Theorems \ref{thm:d-fin} and \ref{thm:c-fin}.
\begin{cor} 
\label{cor:schur}
For any skew-symmetric matrix $B$ of cluster finite type, all positive
$c$-vectors and all non-initial $d$-vectors are Schur roots of $\Delta(A(B))$.
\end{cor}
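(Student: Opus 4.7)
The plan is to chain Theorems \ref{thm:d-fin}, \ref{thm:c-fin}, and Kac's Theorem \ref{thm:Kac} together, using the endomorphism-ring fact cited from \cite[Section 8]{Buan06} as the bridge between the cluster-tilted algebra and the path algebra.

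First, I would use the identifications already stated in the section: by Theorems \ref{thm:d-fin} and \ref{thm:c-fin}, every $v\in \mathcal{C}_+(B)\cup\mathcal{D}(B)$ equals the dimension vector $\underline{\dim}\,M$ of some indecomposable $\Lambda(B)$-module $M$. So it suffices to show that each such $\underline{\dim}\,M$ is a Schur root of $\Delta(A(B))$ in the sense recalled after Theorem \ref{thm:Kac}.

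Next, I would translate the statement from $\Lambda(B)$-modules to $kQ(B)$-modules. Since $\Lambda(B)$ is a quotient of $kQ(B)$, the module $M$ carries a natural $kQ(B)$-module structure with the same underlying vector space and the same dimension vector, and a $k$-linear endomorphism of $M$ is $\Lambda(B)$-linear if and only if it is $kQ(B)$-linear. Consequently $\mathrm{End}_{kQ(B)}(M)=\mathrm{End}_{\Lambda(B)}(M)$, and in particular the indecomposability of $M$ descends to the $kQ(B)$-module structure (this is exactly the observation already recorded in the excerpt, that any indecomposable $\Lambda(B)$-module is also indecomposable as a $kQ(B)$-module).

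Finally, the cited result of Buan--Marsh--Reiten yields $\mathrm{End}_{\Lambda(B)}(M)=k$, hence $\mathrm{End}_{kQ(B)}(M)=k$. Applying Kac's Theorem \ref{thm:Kac} to the indecomposable $kQ(B)$-module $M$ shows that $\underline{\dim}\,M$ is a positive root of $\Delta(A(B))$; together with the triviality of its endomorphism ring, this exhibits $\underline{\dim}\,M$ as a Schur root, which is what was required. I do not anticipate any real obstacle: the full content of the corollary is already packaged inside Theorems \ref{thm:d-fin}, \ref{thm:c-fin}, \ref{thm:Kac} and the endomorphism-ring fact, and the only mild verification, the identification $\mathrm{End}_{kQ(B)}(M)=\mathrm{End}_{\Lambda(B)}(M)$ for modules annihilated by the defining ideal of $\Lambda(B)$, is standard.
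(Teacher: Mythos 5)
Your proposal is correct and follows exactly the route the paper takes: Theorems \ref{thm:d-fin} and \ref{thm:c-fin} identify the vectors with dimension vectors of indecomposable $\Lambda(B)$-modules, and the endomorphism-ring fact from \cite[Section 8]{Buan06} (together with the observation that $\Lambda(B)$-module endomorphisms coincide with $kQ(B)$-module endomorphisms for modules annihilated by the defining ideal) yields $\mathrm{End}_{kQ(B)}(M)=k$, so that Kac's Theorem \ref{thm:Kac} exhibits each such vector as a Schur root. No gaps.
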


For any skew-symmetric matrix $B$ of cluster finite type, let us introduce the
set
\begin{align}
\mathrm{Ind}(\Lambda(B))
=
\{\,
\mbox{all indecomposable $\Lambda(B)$-modules}
\,\}.
\end{align}

The following remarkable fact holds.

\begin{thm}[{\cite[Corollary 2.4]{Buan04}}]
\label{thm:independent}
For any skew-symmetric matrix $B$ of cluster finite type, the cardinality
$|\mathrm{Ind}(\Lambda(B))|$ only depends on the cluster type $Z$ of $B$; it
is equal to the number of positive roots of the root system of type $Z$.
\end{thm}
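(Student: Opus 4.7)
The plan is to categorify $\mathrm{Ind}(\Lambda(B))$ inside the cluster category of $Q(B)$ and then reduce the count to Gabriel's theorem.

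First I would realise $\Lambda(B)\cong \mathrm{End}_{\mathcal{C}_{Q(B)}}(T)^{\mathrm{op}}$, where $\mathcal{C}_{Q(B)}$ is the cluster category of $Q(B)$ and $T=T_1\oplus\cdots\oplus T_n$ is a cluster-tilting object with indecomposable summands $T_i$. Invoking the foundational theorem of cluster-tilting theory, the functor $\mathrm{Hom}_{\mathcal{C}_{Q(B)}}(T,-)$ gives a bijection between isomorphism classes of indecomposable objects of $\mathcal{C}_{Q(B)}$ other than $T_1[1],\ldots,T_n[1]$ and isomorphism classes of indecomposable $\Lambda(B)$-modules. This yields
\[
|\mathrm{Ind}(\Lambda(B))| = |\mathrm{ind}(\mathcal{C}_{Q(B)})| - n.
\]

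Next I would compute $|\mathrm{ind}(\mathcal{C}_{Q(B)})|$. Since $B$ has cluster finite type $Z$, some matrix in its mutation class has Cartan counterpart of Dynkin type $Z$; mutation-equivalent quivers yield triangle-equivalent cluster categories, so I may assume $Q(B)$ itself is an orientation of the Dynkin diagram of $Z$. Gabriel's theorem then puts the indecomposable $kQ(B)$-modules in bijection with the positive roots of $\Delta(Z)$, producing $|\Phi_+(Z)|$ isomorphism classes. Presenting $\mathcal{C}_{Q(B)}$ as the orbit category $D^b(kQ(B))/\langle \tau^{-1}[1]\rangle$, a standard choice of fundamental domain shows that the indecomposables are represented by the indecomposable $kQ(B)$-modules together with the shifts $P_1[1],\ldots,P_n[1]$ of the $n$ indecomposable projectives, giving $|\mathrm{ind}(\mathcal{C}_{Q(B)})| = |\Phi_+(Z)| + n$.

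Combining the two steps yields $|\mathrm{Ind}(\Lambda(B))| = |\Phi_+(Z)|$, which manifestly depends only on $Z$ and equals the number of positive roots of the root system of type $Z$. The main obstacle is the first step: establishing the $\mathrm{Hom}(T,-)$ bijection, and in particular that no two distinct cluster-category indecomposables outside $\mathrm{add}(T[1])$ produce isomorphic $\Lambda(B)$-modules. This rests on the cluster-tilting property $\mathrm{Ext}^1_{\mathcal{C}_{Q(B)}}(T,T)=0$ together with the $2$-Calabi--Yau structure and the almost split triangles of $\mathcal{C}_{Q(B)}$; once it is granted, the remaining count is essentially bookkeeping built on Gabriel's classification.
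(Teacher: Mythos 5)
This theorem is not proved in the paper: it is imported verbatim as \cite[Corollary 2.4]{Buan04}, so there is no internal argument to compare against. Your proposal is correct and is essentially the proof given in that reference: Buan--Marsh--Reiten establish the equivalence $\mathcal{C}_{Q}/\mathrm{add}(\tau T)\simeq \operatorname{mod}\End_{\mathcal{C}_Q}(T)^{\mathrm{op}}$ induced by $\mathrm{Hom}(T,-)$ (with $\tau=[1]$ on the cluster category), and combine it with the count $|\mathrm{ind}(\mathcal{C}_Q)|=|\Phi_+(Z)|+n$ coming from the fundamental domain $\mathrm{ind}(kQ)\cup\{P_i[1]\}$; the only cosmetic caveat is that when $Q(B)$ is not acyclic one should form the cluster category from a Dynkin orientation in the mutation class and take $T$ to be the correspondingly mutated cluster-tilting object, exactly as you indicate.
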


The dimension map
\begin{align}
	\label{eq:dim}
 	\underline{\mathrm{dim}}:
 	\mathrm{Ind}(\Lambda(B))
 	\rightarrow
	\mathrm{Dim}(\Lambda(B))
\end{align}
is surjective by definition.  Actually, it is bijective by the following
theorem.

\begin{thm} \cite[Theorem 1]{Ringel09}
\label{thm:ringel}
For any skew-symmetric matrix $B$ of cluster finite type, the map
$\underline{\mathrm{dim}}$ in \eqref{eq:dim} is injective.
\end{thm}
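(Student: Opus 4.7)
The map $\underline{\mathrm{dim}}$ is surjective by the very definition of $\mathrm{Dim}(\Lambda(B))$, so only injectivity requires proof. The plan is to lift the problem into the cluster category of $Q = Q(B)$ and then reduce to the injectivity of the Fomin--Zelevinsky compatibility pairing on almost positive roots.

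First I would pass to the cluster category $\mathcal{C}_Q = D^b(kQ)/\tau^{-1}[1]$. Since $B$ is of cluster finite type, $\mathcal{C}_Q$ has only finitely many indecomposables, canonically indexed by the almost positive roots $\Phi_{\geq -1}$ of $\Delta(A(B))$ (Buan--Marsh--Reineke--Reiten--Todorov). Fix the cluster-tilting object $T = \bigoplus_{i \in I} T_i$ corresponding to the initial seed: then $\Lambda(B) = \mathrm{End}_{\mathcal{C}_Q}(T)^{\mathrm{op}}$ and the functor $F = \mathrm{Hom}_{\mathcal{C}_Q}(T, -)$ induces a bijection between the indecomposables of $\mathcal{C}_Q$ outside $\mathrm{add}(T[1])$ and the indecomposable $\Lambda(B)$-modules. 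By construction, the $i$-th entry of $\underline{\mathrm{dim}}\,F(X)$ equals $\dim_k \mathrm{Hom}_{\mathcal{C}_Q}(T_i, X)$, which in the finite-type AR-theoretic picture of $\mathcal{C}_Q$ coincides with the Fomin--Zelevinsky compatibility degree $(\beta_i \parallel \gamma)$, where $\beta_i$ and $\gamma$ are the almost positive roots labelling $T_i$ and $X$.

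Injectivity then reduces to showing that, for any cluster $\{\beta_i\}_{i \in I} \subset \Phi_{\geq -1}$, the tuple $\bigl((\beta_i \parallel \gamma)\bigr)_{i \in I}$ determines $\gamma \in \Phi_{\geq -1}$ uniquely. The main obstacle is precisely this last root-theoretic step: one must rule out the possibility that two distinct almost positive roots produce identical compatibility profiles against a fixed cluster. I would attack it by induction on the distance in the cluster exchange graph from the negative simple cluster $\{-\alpha_i : i \in I\}$. In the base case the degree $(-\alpha_i \parallel \gamma)$ recovers the $i$-th coefficient of $\gamma$ in the basis of simple roots, which manifestly separates all elements of $\Phi_{\geq -1}$; the inductive step is then handled by the mutation recursion \eqref{eqn:d-recursion}, which tracks how each $d$-vector coordinate evolves when one cluster element is replaced by its exchange partner, so that injectivity is transported from one cluster to its neighbours throughout the exchange graph.
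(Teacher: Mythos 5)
First, be aware that the paper contains no proof of Theorem \ref{thm:ringel}: it is quoted verbatim from \cite[Theorem 1]{Ringel09}, where it is established via the theory of cluster-concealed algebras, so there is no internal argument to compare yours against. Judged on its own terms, your reduction is sound up to a point. Granting the pointwise form of the finite-type denominator theorem (Theorem \ref{thm:d-fin}, due to \cite{Caldero04b,Buan04}) together with the identification of $d$-vectors with compatibility degrees against an \emph{arbitrary} reference cluster (itself a nontrivial input: Theorem \ref{thm:dvecfin} gives it only for the bipartite seed), injectivity of $\underline{\mathrm{dim}}$ is indeed equivalent to the statement that the profile $\bigl((\beta_i\parallel\gamma)\bigr)_{i\in I}$ determines $\gamma$. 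Your base case is essentially correct, with the caveat that the profile against the negative simple cluster separates only the positive roots --- all negative simples have identically zero profile --- but those are exactly the roots excluded from the domain of $\underline{\mathrm{dim}}$, so this is harmless.

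The genuine gap is the inductive step. The recursion \eqref{eqn:d-recursion} computes the $d$-vectors, \emph{with respect to a fixed reference cluster}, of the variables obtained by mutating the \emph{other} cluster; it says nothing about how the $d$-vector of a fixed cluster variable transforms when the \emph{reference} cluster is mutated. Since your induction runs over the distance of the reference cluster $\{\beta_i\}$ from the negative simple cluster, what you actually need is a change-of-reference formula relating $\bigl((\beta_i\parallel\gamma)\bigr)_{i}$ to $\bigl((\beta'_i\parallel\gamma)\bigr)_{i}$ for adjacent clusters, and \eqref{eqn:d-recursion} supplies no such formula; the behaviour of $d$-vectors under change of initial seed is notoriously not governed by any simple piecewise-linear rule. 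Even if such a transformation rule were available, the phrase ``injectivity is transported'' would require that rule to be injective on profiles, which is essentially the statement being proved. This final step is precisely the nontrivial content of Ringel's theorem, and your argument does not close it.
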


We have an immediate corollary of Theorems \ref{thm:d-fin}, \ref{thm:c-fin},
\ref{thm:independent}, and \ref{thm:ringel}.  

\begin{cor}
\label{cor:independent1}
For any skew-symmetric matrix $B$ of cluster finite type, the cardinality
$|\mathcal{C}_+(B)|= |\mathcal{D}(B)|$ only depends on the cluster type $Z$ of $B$,
and it is equal to the number of positive roots of the root system of type $Z$.
\end{cor}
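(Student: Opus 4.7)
The plan is to chain the four cited theorems together; this corollary is purely a bookkeeping exercise, with no new ideas needed.

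First, I would rewrite both quantities in terms of the representation-theoretic data of the cluster-tilted algebra $\Lambda(B)$. By Theorem \ref{thm:d-fin}, $\mathcal{D}(B) = \mathrm{Dim}(\Lambda(B))$, and by Theorem \ref{thm:c-fin}, $\mathcal{C}_+(B) = \mathrm{Dim}(\Lambda(B))$. In particular this already yields
\[
|\mathcal{C}_+(B)| = |\mathcal{D}(B)| = |\mathrm{Dim}(\Lambda(B))|,
\]
so both sides of the desired equality have been reduced to a single cardinality that depends only on $\Lambda(B)$.

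Second, I would pass from $\mathrm{Dim}(\Lambda(B))$ to $\mathrm{Ind}(\Lambda(B))$ via the dimension map \eqref{eq:dim}. This map is surjective by the very definition of $\mathrm{Dim}(\Lambda(B))$, and Theorem \ref{thm:ringel} supplies its injectivity. Hence it is a bijection, and $|\mathrm{Dim}(\Lambda(B))| = |\mathrm{Ind}(\Lambda(B))|$.

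Finally, I would invoke Theorem \ref{thm:independent} to conclude that $|\mathrm{Ind}(\Lambda(B))|$ depends only on the cluster type $Z$ of $B$, and equals the number of positive roots of the root system of type $Z$. Combining the three identifications then gives the statement. There is no genuine obstacle here: every step is a direct citation, and the only care needed is to record the four-term chain $\mathcal{C}_+(B) = \mathrm{Dim}(\Lambda(B)) = \mathrm{Ind}(\Lambda(B))$ (via $\underline{\mathrm{dim}}$) and $\mathcal{D}(B) = \mathrm{Dim}(\Lambda(B))$ in the correct order.
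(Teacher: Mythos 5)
Your proof is correct and is exactly the argument the paper intends: the corollary is stated there as an immediate consequence of Theorems \ref{thm:d-fin}, \ref{thm:c-fin}, \ref{thm:independent}, and \ref{thm:ringel}, chained precisely as you describe. Nothing is missing.
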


\subsection{More general results}
For completeness, we summarize some general results on $c$- and $d$-vectors
beyond finite type and also give some examples, though we do not use them in the
rest of the paper.

A skew-symmetrizable matrix $B$ is {\em acyclic\/} if the corresponding valued
quiver $Q(B)$ is acyclic, i.e., without oriented cycles.  Let us first discuss
the case of an acyclic skew-symmetric matrix $B$.  Under this hypothesis, the
cluster tilted algebra $\Lambda(B)$ is the path algebra $kQ(B)$ itself because
there is no relation to be imposed.  A $kQ(B)$-module $M$ is said to be {\em
rigid\/} if $\mathrm{Ext}^1_{kQ(B)}(M,M)=0$.

The following two theorems completely describe the $c$- and $d$-vectors in this
case:

\begin{thm}[{\cite[Theorem 4] {Caldero05},
\cite[Theorem 2.3]{Buan05c}}]
\label{thm:d-acyclic}
For any acyclic skew-symmetric matrix $B$, the set  $\mathcal{D}(B)$ coincides
with the set of the dimension vectors of all the rigid indecomposable
$kQ(B)$-modules.
\end{thm}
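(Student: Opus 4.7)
The plan is to establish a bijection between non-initial cluster variables of $\mathcal{A}_\bullet(B)$ and rigid indecomposable $kQ(B)$-modules via the Caldero--Chapoton cluster character, and then to read off the denominators explicitly.

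First, I would invoke the categorification of acyclic cluster algebras by the cluster category $\mathcal{C}_{Q(B)} = D^b(kQ(B))/\tau^{-1}[1]$: the indecomposable rigid objects of $\mathcal{C}_{Q(B)}$ partition into two classes, the shifted projectives $P_i[1]$ (corresponding to the initial cluster variables $x_i$) and the rigid indecomposable objects lying in $\mathrm{mod}\,kQ(B)$, and the latter are in bijection with non-initial cluster variables via the Caldero--Chapoton map $M\mapsto X_M$. This is where I would rely on the representation-theoretic results being cited.

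Second, I would write down the Caldero--Chapoton formula for a rigid indecomposable $kQ(B)$-module $M$ with dimension vector $\underline{\dim}\,M=(d_i)_{i\in I}$:
\begin{align*}
X_M = \frac{1}{\prod_{i\in I} x_i^{d_i}}\sum_{e} \chi\bigl(\mathrm{Gr}_e(M)\bigr) \prod_{i\in I} x_i^{\sum_{j\in I}\bigl([b_{ij}]_+ e_j + [-b_{ij}]_+ (d_j-e_j)\bigr)},
\end{align*}
where the sum runs over dimension vectors $e$ of submodules of $M$ and $\mathrm{Gr}_e(M)$ is the quiver Grassmannian. The prefactor exhibits $\prod x_i^{d_i}$ as a candidate denominator, so what remains is to verify that this candidate is actually the denominator in lowest terms, i.e.\ that the numerator polynomial is not divisible by any $x_i$.

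The main obstacle will be this last step: ruling out cancellation between the explicit denominator and the numerator. The strategy is a positivity argument: because $M$ is rigid, each quiver Grassmannian $\mathrm{Gr}_e(M)$ is either empty or has positive Euler characteristic (it is smooth and has a cell decomposition inherited from rigidity), so the coefficients in the sum are nonnegative integers that cannot cancel each other. For a fixed $i\in I$, one then picks the ``extremal'' subrepresentation — namely $e=(d_j)_{j\to i}$ on the tail of an arrow into $i$ and $e_j=0$ otherwise — and checks that the resulting monomial has exponent $0$ on $x_i$ and nonzero Euler-characteristic coefficient, witnessing non-divisibility by $x_i$. Running this for each $i$ then confirms that $\prod x_i^{d_i}$ is exactly $\pi_{\mathrm{trop}}(X_M)^{-1}$, which by \eqref{eqn:tropicalization-x} is the $d$-vector. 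Finally, I would verify the reverse inclusion — that every $d$-vector arises this way — by using that the Caldero--Chapoton map is injective on rigid indecomposables and exhausts all non-initial cluster variables, closing the loop with Conjecture \ref{conj:sign}(ii) (known in the acyclic skew-symmetric case).
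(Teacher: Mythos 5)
The first thing to say is that the paper itself contains no proof of Theorem \ref{thm:d-acyclic}: it is quoted verbatim from Caldero--Keller and Buan--Marsh--Reiten--Todorov as background, so the only meaningful comparison is with the strategy of those cited works. Your outline does follow the Caldero--Keller route (cluster category, Caldero--Chapoton character, explicit denominator), and the opening and closing steps --- the bijection between non-initial cluster variables and rigid indecomposable $kQ(B)$-modules, and the injectivity/surjectivity of $M\mapsto X_M$ --- are precisely the deep inputs you would have to cite in any case. That part is a legitimate reduction, not an independent argument.

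The genuine gap is in the ``no cancellation'' step, which is the only part you actually attempt to prove. Two problems. First, the extremal vector you exhibit --- $e_j=d_j$ on the tails of arrows into $i$ and $e_j=0$ elsewhere, in particular $e_i=0$ --- is in general \emph{not} the dimension vector of a subrepresentation: a submodule containing all of $M_j$ for an arrow $j\to i$ must contain the image of $M_j$ in $M_i$ and so cannot vanish at $i$. (Already for the quiver $1\to 2$ and $M$ the indecomposable of dimension vector $(1,1)$, your $e=(1,0)$ is a quotient of $M$, not a submodule.) With the submodule-Grassmannian convention you state, the correct witness is instead the submodule of $M$ generated by $\bigoplus_{k:\,i\to k}M_k$, and one must then invoke the \emph{acyclicity} of $Q(B)$ to see that this submodule vanishes on the in-neighbours of $i$ (a path from an out-neighbour of $i$ back to an in-neighbour would close an oriented cycle through $i$). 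Your argument never uses acyclicity at this point, which is a warning sign, since the denominator statement genuinely fails outside the rigid/acyclic setting --- compare the deficiency phenomenon for $d$-vectors recalled in Section 2 of the paper. Second, exhibiting the right $e$ is not enough: you need $\chi\bigl(\mathrm{Gr}_e(M)\bigr)\neq 0$, not merely $\mathrm{Gr}_e(M)\neq\emptyset$, and positivity of Euler characteristics of quiver Grassmannians of rigid modules over acyclic quivers is itself a nontrivial theorem (Caldero--Reineke and its later corrections), not a formal consequence of ``smooth with a cell decomposition inherited from rigidity''. So the skeleton is the right one, but the one step you carry out yourself does not work as written and needs both a different extremal submodule and an explicit appeal to acyclicity.
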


\begin{thm}[{\cite[Theorem 1] {Najera12}}]
\label{thm:c-acyclic}
For any acyclic skew-symmetric matrix $B$, the set $\mathcal{C}_+(B)$ coincides
with the set of the dimension vectors of all the rigid indecomposable
$kQ(B)$-modules.
\end{thm}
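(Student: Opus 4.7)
The plan is to deduce this from a categorification of the acyclic cluster algebra $\mathcal{A}_\bullet(B)$ by the cluster category $\mathcal{C}_Q$ of $Q=Q(B)$ in the sense of Buan--Marsh--Reineke--Reiten--Todorov. Since $B$ is acyclic and skew-symmetric, any potential can be chosen to be zero, so the Jacobian algebra / cluster tilted algebra coincides with $kQ(B)$ itself, and rigid indecomposables live naturally as summands of cluster-tilting objects in $\mathcal{C}_Q$. The target is to match the two bijections
\[
\{\text{rigid indec. } kQ\text{-modules}\}
\;\longleftrightarrow\;
\{\text{non-initial cluster variables}\}
\;\longleftrightarrow\;
\mathcal{C}_+(B),
\]
so that $M$ is sent to $\underline{\mathrm{dim}}\, M$.

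For the inclusion $\mathcal{C}_+(B)\subseteq\{\underline{\mathrm{dim}}\, M\}$, I would invoke the general result cited below as Theorem \ref{thm:c-general}: every positive $c$-vector arises as the dimension vector of an indecomposable module over the cluster tilted / Jacobian algebra of $B$. In the acyclic case this algebra is $kQ(B)$, and I would verify that the module occurring must in fact be rigid, either because it appears as an indecomposable summand of a cluster-tilting object in $\mathcal{C}_Q$ (all of which are rigid), or, equivalently, by showing that the associated $F$-polynomial has no higher Ext obstruction in the Derksen--Weyman--Zelevinsky formalism.

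For the reverse inclusion, I would proceed constructively. Given a rigid indecomposable $M\in kQ(B)\text{-mod}$, Theorem \ref{thm:d-acyclic} produces a non-initial cluster variable $x_M$ with $d$-vector $\underline{\mathrm{dim}}\, M$. I would then exhibit a cluster containing $x_M$ and a mutation sequence from the initial seed to this cluster along which all the relevant mutations are \emph{green}, so that the $c$-vector recorded in the final extended exchange matrix at the index of $x_M$ is positive. The match $c=\underline{\mathrm{dim}}\, M$ would come from the Nakanishi--Zelevinsky tropical duality between $C$- and $G$-matrices together with the Caldero--Chapoton description of $g$-vectors as indices of the corresponding rigid objects in $\mathcal{C}_Q$; once $g_j$ equals the index of $M$, tropical duality forces the positive $c_j$ to equal the dimension vector of $M$.

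The main obstacle is the existence of enough green mutation sequences outside finite type. In finite type one can cheat by counting (Theorem \ref{thm:independent}), but for a general acyclic quiver there are infinitely many rigid indecomposables and no cardinality shortcut is available. One needs to show that every rigid summand of some cluster-tilting object can be reached via a green sequence from the initial seed, and that along such a sequence the $C$-matrix column tracked by $x_M$ retains the correct positive value. Concretely this reduces to a statement about the $\tau$-tilting poset of $kQ(B)$: every support $\tau$-tilting module is reachable by left mutations from the initial one, a result for which the acyclicity of $Q$ (equivalently, the existence of a global sink-to-source Coxeter structure) is the essential input.
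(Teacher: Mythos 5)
First, a point of reference: the paper does not prove Theorem \ref{thm:c-acyclic}. It is quoted verbatim from \cite[Theorem 1]{Najera12} in the background survey of Section \ref{sect:back}, whose opening sentence states that the results collected there are not used in the rest of the paper. So there is no in-house argument to compare yours against; your proposal has to stand on its own as a proof of N\'ajera Ch\'avez's theorem. Your forward inclusion does stand, but only because you invoke Theorem \ref{thm:c-general}, which is Theorem 4 of the very same reference (for acyclic $B$ the Jacobian algebra with generic potential is $kQ(B)$ itself, and DWZ-rigidity is $\mathrm{Ext}^1$-rigidity there); logically legitimate, but it makes this half a specialization of a stronger cited result rather than an independent argument.

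The genuine gaps are in the reverse inclusion. The step ``once $g_j$ equals the index of $M$, tropical duality forces the positive $c_j$ to equal the dimension vector of $M$'' does not follow: the Nakanishi--Zelevinsky duality is an inverse-transpose relation between the \emph{entire} $G$- and $C$-matrices (taken, moreover, with respect to transposed initial exchange matrices), so it entangles all columns and does not convert ``column $j$ of $G_t$ is $\mathrm{ind}(M)$'' into ``column $j$ of $C_t$ is $\underline{\mathrm{dim}}\,M$''. The identification of a positive $C$-matrix column with a specific dimension vector is exactly the representation-theoretic content one must prove (via the DWZ mutation of decorated representations or the trailing terms of $F$-polynomials), and without it you cannot rule out that the positive column at position $k$ records $\underline{\mathrm{dim}}\,M^{*}$, the exchange partner, instead of $\underline{\mathrm{dim}}\,M$. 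Second, the green-sequence machinery you flag as the ``main obstacle'' is both unnecessary and unsupported: mutation at position $k$ simply negates the $k$-th column of the $C$-matrix, so once $M$ is a summand of \emph{some} cluster-tilting object reachable by (unoriented) mutation --- which is the classical transitivity result for acyclic quivers --- one of the two seeds of the exchange pair at $k$ already carries a positive $c$-vector there; no globally green path is needed. Conversely, the lemma you propose to reduce to, that every support $\tau$-tilting $kQ$-module is reachable by left mutations from $kQ$, is far stronger than that transitivity statement and you give no argument for it. Until the column identification is supplied by an actual computation of $c$-vectors in the categorification, the proposal does not close.
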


Recall that, when $Q(B)$ is acyclic, the following formula holds \cite{Assem06}:
\begin{align}
\frac{1}{2}(\underline{\dim}\,M, \underline{\dim}\,M)_{A(B)}
=\dim \mathrm{End}_{kQ(B)}(M)
-
\dim \mathrm{Ext}^1_{kQ(B)}(M,M).
\end{align}
It follows that $\alpha$ is the dimension vector of a rigid indecomposable
$kQ(B)$-module if and only if it is a real Schur root.  Therefore, we have an
alternative form of Theorems \ref{thm:d-acyclic} and \ref{thm:c-acyclic}.

\begin{cor}[{\cite[Theorem 1] {Najera12}}]
\label{cor:c-acyclic}
For any acyclic skew-symmetric matrix $B$, both the sets $\mathcal{D}(B)$ and
$\mathcal{C}_+(B)$ coincide with the set of all the real Schur roots of
$\Delta(A(B))$.
\end{cor}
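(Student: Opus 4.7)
The corollary is presented as an immediate consequence of Theorems \ref{thm:d-acyclic} and \ref{thm:c-acyclic} together with the Euler--Ringel formula
\[
\tfrac{1}{2}(\underline{\dim}\,M,\underline{\dim}\,M)_{A(B)}
= \dim \mathrm{End}_{kQ(B)}(M) - \dim \mathrm{Ext}^1_{kQ(B)}(M,M),
\]
so my plan is to deduce it in one step by matching the two descriptions. By the quoted theorems, both $\mathcal{D}(B)$ and $\mathcal{C}_+(B)$ already coincide with the set of dimension vectors of rigid indecomposable $kQ(B)$-modules; it therefore suffices to show that this latter set is exactly the set of real Schur roots of $\Delta(A(B))$.

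For the first inclusion, I would take a rigid indecomposable $kQ(B)$-module $M$ and set $\alpha = \underline{\dim}\,M$. By Theorem \ref{thm:Kac} (Kac), $\alpha$ is a positive root of $\Delta(A(B))$. Rigidity means $\mathrm{Ext}^1_{kQ(B)}(M,M)=0$, so the displayed formula gives $\tfrac{1}{2}(\alpha,\alpha)_{A(B)} = \dim \mathrm{End}_{kQ(B)}(M) > 0$. By the characterization of real roots recalled just above the statement of Theorem \ref{thm:Kac} (namely, that a root $\alpha$ is real iff $(\alpha,\alpha)_{TA}>0$), $\alpha$ is a real root. It remains to observe that $\dim \mathrm{End}_{kQ(B)}(M) = 1$: indeed, for real roots the Euler form forces $\tfrac{1}{2}(\alpha,\alpha)=1$, and since the right-hand side of the formula is a nonnegative integer minus a nonnegative integer equal to $1$ with the first summand at least $1$, one obtains $\dim \mathrm{End}_{kQ(B)}(M) = 1$ (equivalently, $\mathrm{End}_{kQ(B)}(M) = k$). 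Thus $\alpha$ is a real Schur root.

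For the converse inclusion, let $\alpha$ be a real Schur root. By definition of Schur there exists an indecomposable $M$ with $\underline{\dim}\,M = \alpha$ and $\mathrm{End}_{kQ(B)}(M) = k$. Since $\alpha$ is a real root, $\tfrac{1}{2}(\alpha,\alpha)_{A(B)} = 1 = \dim \mathrm{End}_{kQ(B)}(M)$, and plugging this into the Euler--Ringel formula yields $\dim \mathrm{Ext}^1_{kQ(B)}(M,M) = 0$, i.e.\ $M$ is rigid. Therefore $\alpha$ lies in the set of dimension vectors of rigid indecomposable $kQ(B)$-modules, completing the equality.

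The only subtle point is the equality $\tfrac{1}{2}(\alpha,\alpha)=1$ for real roots in the symmetric setting used here (the skew-symmetric case), which I would either invoke as a standard fact from \cite{Kac90} or verify by noting that the Weyl group preserves the bilinear form and sends every real root to a simple root, on which the form takes value $1$. Apart from this normalization check, the argument is bookkeeping around the Euler--Ringel formula; there is no genuine obstacle once Theorems \ref{thm:d-acyclic} and \ref{thm:c-acyclic} are in hand.
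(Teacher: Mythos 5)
Your argument is correct and is essentially the paper's own: the corollary is stated there as an immediate consequence of Theorems \ref{thm:d-acyclic} and \ref{thm:c-acyclic} via the Euler--Ringel formula, with the equivalence ``dimension vector of a rigid indecomposable module $\Leftrightarrow$ real Schur root'' left as the one-line observation you have spelled out. Your normalization check $\tfrac{1}{2}(\alpha,\alpha)=1$ for real roots in the symmetric setting is the right (and only) point needing care, and you handle it correctly.
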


Both Theorem \ref{thm:c-acyclic} and Corollary \ref{cor:c-acyclic} are partially
extended to the acyclic skew-symmetrizable matrices.  (The sign-coherence of
$c$-vectors is covered by \cite{Demonet10}.)

\begin{thm}[{\cite[Theorem 1.1]{Reading11}, \cite[Theorem 1] {Speyer12}}]
\label{thm:c-acyclic2}
For any acyclic skew-symmetrizable matrix $B$, any positive $c$-vector is a real
positive root of $\Delta(A(B))$; moreover, it is the dimension vector of a rigid
indecomposable representation of the valued quiver $Q(B)$.
\end{thm}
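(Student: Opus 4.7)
The plan is to combine the $c$-vector recursion \eqref{eqn:c-recursion} with sign-coherence (which, in the acyclic skew-symmetrizable setting, holds by \cite{Demonet10}) in order to identify $c$-vector mutation with reflections in the Weyl group $W(A(B))$, and then to interpret these reflections via BGP-type reflection functors on the category of representations of the valued quiver $Q(B)$.

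First I would exploit sign-coherence of each column $c'_{\cdot k}$ to simplify \eqref{eqn:c-recursion}. If $c'_{\cdot k}$ is positive then $[-c'_{ik}]_+=0$ uniformly in $i$; if it is negative then $[c'_{ik}]_+=0$ uniformly in $i$. In either case the recursion for $c''_{\cdot j}$ (with $j\neq k$) collapses to a single reflection-like expression, and for $j=k$ it reduces to a sign flip. Combined with the standard bookkeeping of the symmetrizer of $B$, this shows that the passage from the $C$-matrix of a seed to that of its mutation at $k$ is implemented by the reflection $s_{\beta}$ in $W(A(B))$, where $\beta$ is the $k$-th $c$-vector of the source seed (read as a real root). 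This is essentially the argument underlying \cite[Theorem 1.1]{Reading11} and \cite[Theorem 1]{Speyer12}.

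Second, I would run an induction on the length of a mutation sequence from the initial acyclic seed. Since the initial $C$-matrix is the identity, the initial $c$-vectors are precisely the simple roots $\alpha_i$, which are real. Each subsequent $c$-vector is obtained from a simple root by a composition of reflections in $W(A(B))$, hence is itself a real root of $\Delta(A(B))$. Acyclicity of the initial exchange matrix is what ensures that the reflections stay inside the fixed Weyl group $W(A(B))$ rather than drifting into the Weyl group of a mutation-equivalent Cartan counterpart; this is the step where the acyclic hypothesis is genuinely used.

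Third, for the representation-theoretic upgrade I would lift the reflections from the level of root lattices to the categorical level via reflection functors on $\mathrm{rep}(Q(B))$ (the Dlab--Ringel valued-quiver analogue of BGP functors, extended to arbitrary vertices using the tilting framework of Hubery). Simple modules at sinks are rigid indecomposable with dimension vector a simple root, and a reflection functor sends rigid indecomposables with dimension vector $\gamma\neq\alpha_i$ to rigid indecomposables with dimension vector $s_i(\gamma)$. Composing the functors along the same mutation sequence as in Step 2 produces, for each positive $c$-vector $c$, a rigid indecomposable representation of $Q(B)$ of dimension vector $c$. The main obstacle is the first step: precisely matching the combinatorial mutation rule \eqref{eqn:c-recursion} with a root-system reflection, because the Cartan counterpart $A(B')$ can differ from $A(B)$ after mutation, so one must verify that all reflections are interpreted consistently inside the single root system $\Delta(A(B))$; this reconciliation is exactly where acyclicity of the initial seed, together with sign-coherence, does the heavy lifting.
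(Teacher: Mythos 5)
You should first be aware that the paper does not prove Theorem \ref{thm:c-acyclic2} at all: it sits in the survey part of Section \ref{sect:back} and is quoted from \cite{Reading11} and \cite{Speyer12} (and explicitly not used later), so there is no in-paper proof to compare against; I can only assess your outline on its own terms.

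On those terms, the outline has a genuine gap at its first and central step. Sign-coherence does not collapse \eqref{eqn:c-recursion} to a reflection. When the $k$-th column $\beta=c'_{\cdot k}$ is positive, the rule reads $c''_{\cdot j}=c'_{\cdot j}+[b'_{kj}]_+\,c'_{\cdot k}$ for $j\neq k$, so every column with $b'_{kj}\leq 0$ is left \emph{unchanged}, whereas $s_\beta$ moves any column whose Cartan pairing with $\beta$ is nonzero. Already in type $A_2$ with $b_{12}=1$: mutating the initial seed at $2$ leaves the first column equal to $\alpha_1$, while $s_{\alpha_2}(\alpha_1)=\alpha_1+\alpha_2$. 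The correct statement is that mutation acts by $s_\beta$ only on the columns with $\epsilon b'_{kj}>0$ (for the appropriate sign $\epsilon$) and by the identity on the rest, and even on the affected columns one must prove that the combinatorial coefficient $[b'_{kj}]_+$ equals $-(c'_{\cdot j},\beta^\vee)$ --- an identity tying the \emph{mutated} skew-symmetrizable matrix $B'$ to the \emph{symmetric} form of $A(B)$ that does not follow from sign-coherence and is precisely the inductive content of the framework axioms of \cite{Reading11} and of the bookkeeping in \cite{Speyer12}. In other words, the ``reconciliation'' you defer to the end of Step 1 is not a verification; it is the theorem. Step 3 has the analogous problem one categorical level up: reflection functors exist only at sinks and sources, a generic mutation vertex is neither (and $Q(B')$ need not remain acyclic for rank at least $3$), so there is no functor to compose ``along the same mutation sequence''; \cite{Speyer12} replaces this with an induction on exceptional sequences in the module category of the initial valued quiver rather than a composition of BGP functors. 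Your Step 2 (induction from the identity $C$-matrix) is fine once Step 1 is repaired, but as written the proposal assumes the hard part.
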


Finally, beyond finite type and the acyclic case, the following result is so far
the most general result on $c$-vectors; in particular, it ensures and
strengthens Conjecture \ref{conj:root} for any skew-symmetric matrix $B$.

\begin{thm}[{\cite[Theorem 4] {Najera12}}]
\label{thm:c-general}
For any skew-symmetric matrix $B$, any positive $c$-vector of $\mathcal{A}_\bullet(B)$
is the dimension vector of some rigid indecomposable module $M$ of the Jacobian
algebra $J(Q(B),W)$ of the quiver $Q(B)$ with generic potential $W$ such that
$\mathrm{End}_{J(Q(B),W)}(M)=k$.  In particular, any positive $c$-vector of
$\mathcal{A}_\bullet(B)$ is a Schur root of $\Delta(A(B))$.
\end{thm}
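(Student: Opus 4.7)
The plan is to prove this via the theory of decorated representations of quivers with potential of Derksen-Weyman-Zelevinsky, together with the categorification of $c$-vectors through mutations of negative simple representations.

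First, for a generic potential $W$ on the quiver $Q(B)$, the Jacobian algebra $J(Q(B),W)$ admits a well-behaved theory of decorated representations, where each object is a pair $\mathcal{M}=(M,V)$ with $M$ a finite-dimensional $J(Q(B),W)$-module and $V=\bigoplus_{i\in I} V_i$ a grading. The DWZ mutation $\mu_k$ acts on such pairs and induces a bijection between seeds reachable from the initial one and (equivalence classes of) reachable decorated representations. I would begin by taking at the initial seed the negative simples $\mathcal{M}_i^0 = (0,S_i)$ for $i\in I$, whose $g$-vectors are $-e_i$.

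Second, given any mutation sequence producing the seed $(B',y')$ from $(B,y)$, apply the corresponding DWZ mutation sequence to each $\mathcal{M}_i^0$ to produce a new decorated representation $\mathcal{M}_i'=(M_i^+, M_i^-)$. The fundamental identity from the DWZ framework (and Plamondon's refinement) relates these to cluster algebra combinatorics by asserting that the $c$-vector $c_i'$ equals $\underline{\dim}\,M_i^+ - \underline{\dim}\,M_i^-$. Invoking the sign-coherence theorem in the skew-symmetric case (proved in \cite{Derksen10, Plamondon10b}), for each $i$ exactly one of $M_i^+$ or $M_i^-$ is nonzero. Thus the positive $c$-vectors $\mathcal{C}_+(B)$ are exactly the dimension vectors of the nonzero modules $M_i^+$ arising this way.

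Third, I would establish the rigidity and Schur property. The crucial input from DWZ is that if $W$ is generic, then the mutation of a rigid decorated representation remains rigid, and the initial $\mathcal{M}_i^0$ are vacuously rigid indecomposables with $\mathrm{End}(\mathcal{M}_i^0)=k$. Propagating these properties along the mutation sequence, each $M_i^+$ is a rigid indecomposable $J(Q(B),W)$-module with $\mathrm{End}_{J(Q(B),W)}(M_i^+)=k$, which gives the main assertion. The "in particular" statement then follows because any module with trivial endomorphism ring has a Schur root as its dimension vector in $\Delta(A(B))$.

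The main obstacle will be the rigorous verification, within the framework of quivers with potential, that the modules $M_i^+$ obtained at the end of an arbitrary mutation sequence are genuinely rigid, indecomposable, and have trivial endomorphism ring. This requires carefully choosing $W$ to be generic (so that the Jacobian algebra is finite-dimensional and the DWZ mutation is everywhere well-defined) and invoking the preservation of these properties under mutation; it also requires tying the sign-coherent decomposition produced by the categorification to the module-theoretic indecomposability on the nose, which is the technical heart of \cite{Najera12}.
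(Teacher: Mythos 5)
The paper does not prove this statement: Theorem \ref{thm:c-general} is quoted as background directly from \cite[Theorem 4]{Najera12}, so there is no internal proof to compare against. Your outline---mutating the negative simple decorated representations in the Derksen--Weyman--Zelevinsky framework, invoking sign-coherence to isolate a single nonzero module for each index, and propagating $E$-rigidity, indecomposability, and the trivial endomorphism ring along the mutation sequence---is essentially the strategy of the cited reference, and you correctly identify the verification of those three module-theoretic properties as the technical core.
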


On the other hand the behavior of the $d$-vectors is rather complicated as
studied in \cite{Buan07,Buan08b,Marsh12}.  What was observed therein is a
deficiency phenomenon: in some situations the $d$-vector of a cluster variable
$x'_i$ is smaller than  the  dimension vector of the rigid indecomposable
$\Lambda(B)$-module associated with $x'_i$.

We conclude this short survey by presenting  two illuminating examples beyond
finite type and  the acyclic case.

\begin{ex} {\em Type $A^{(1)}_2$.}
Consider the skew-symmetric matrix $B$ corresponding the following
non-acyclic quiver:
\[
\begin{xy}
(0,0)*\cir<2pt>{},
(5,8.7)*\cir<2pt>{},
(10,0)*\cir<2pt>{},
(-4,0)*{1},
(14,0)*{3},
(5,12)*{2},
\ar (4,7);(1,1.7)
\ar (9,1.7);(6,7)
\ar (2,0.7);(8,0.7)
\ar (2,-0.7);(8,-0.7)
\end{xy}
\]
It is mutation equivalent to the following acyclic quiver whose Cartan
counterpart is the Cartan matrix of affine type $A^{(1)}_2$.
\[
\begin{xy}
(0,0)*\cir<2pt>{},
(5,8.7)*\cir<2pt>{},
(10,0)*\cir<2pt>{},
(-4,0)*{1},
(14,0)*{3},
(5,12)*{2},
\ar (1,1.7);(4,7)
\ar (6,7);(9,1.7)
\ar (2,0);(8,0)
\end{xy}
\]
This cluster algebra $\mathcal{A}_\bullet(B)$ is studied in detail by \cite{Cerulli09}.
In particular, the non-initial $d$-vectors of $\mathcal{A}_\bullet(B)$ are given by
\cite[Lemma 3.3]{Cerulli09}:
\begin{align}
\label{eq:d-vecA12}
(0,1,0), (1,1,1), (a,0,a+1), (a+1,0,a), (a,1,a+1), (a+1,1,a), \quad a\geq 0.
\end{align}
Moreover, it is not difficult to show that the positive $c$-vectors of
$\mathcal{A}_\bullet(B)$ are also given by the same list.  Therefore, in this case
$\mathcal{C}_+(B)= \mathcal{D}(B)$ holds, even though $B$ is not acyclic.  Thus,
any non-initial $d$-vector is a Schur root of $\Delta(A(B))$ by Theorem
\ref{thm:c-general}.  Note that the $d$-vector $(1,1,1)$ in \eqref{eq:d-vecA12}
is the simplest example which shows the deficiency phenomenon \cite[Example
7.2]{Buan07}, where the dimension vector of the corresponding representation is $(1,2,1)$.  Nevertheless,
the $d$-vector $(1,1,1)$ is still a Schur root.  We also note that among the vectors in
\eqref{eq:d-vecA12}, the last two are imaginary roots for $a\geq 1$.
\end{ex}

\begin{ex} {\em Markov quiver.}
\label{ex:markov}
We consider the skew-symmetric matrix $B$ such that the corresponding quiver is
the following non-acyclic one.
\[
\begin{xy}
(0,0)*\cir<2pt>{},
(5,8.7)*\cir<2pt>{},
(10,0)*\cir<2pt>{},
(-4,0)*{1},
(14,0)*{3},
(5,12)*{2},
\ar (0.3,1.9);(3.3,7.2)
\ar (1.6,1.4);(4.6,6.7)
\ar (5.4,6.7);(8.4,1.3)
\ar (6.7,7.2);(9.7,1.9)
\ar (8,0.7);(2,0.7)
\ar (8,-0.7);(2,-0.7)
\end{xy}
\]
This is known as the {\em Markov quiver}, and the positive $c$-vectors of
$\mathcal{A}_\bullet(B)$ are given by  the permutations of the following vectors
\cite[Theorem 3.1.2]{Najera11}:
\begin{align}
\label{eq:c-vecMar}
(1,2,2), (a+1,b+1,a+b+1), (a-1, b-1, a+b-1), 
\end{align}
where $1 \leq a\leq b$, and $a$ and $b$ are coprime.  The cluster algebra
$\mathcal{A}_\bullet(B)$ has a surface realization by a once-punctured torus.
Using the same technique as in Section \ref{sect:surfaces-ad}, it can be shown
that the non-initial $d$-vectors are given by the permutations of the vectors in
\eqref{eq:c-vecMar} of the form
\begin{align}
\label{eq:d-vecMar}
(a-1, b-1, a+b-1). 
\end{align}
So this gives the first example in which the sets $\mathcal{D}(B) $ and
$\mathcal{C}_+(B)$ do not coincide.  Nevertheless, $\mathcal{D}(B) \subset
\mathcal{C}_+(B)$ so any non-initial $d$-vector is still a Schur root of
$\Delta(A(B))$ by Theorem \ref{thm:c-general}.
\end{ex}

The above examples may suggest that the property
$\mathcal{D}(B)\subset\mathcal{C}_+(B)$ holds in general but this is not true
due to the counterexample of \cite{Marsh12}.

%%%%%%%%%%%%%%%%%%%%%%%%%%%%%%%%%%%%%%%%%%%%%%%%%%%%%%%
\section{The sets \texorpdfstring{$\mathcal{X}(Z)$}{X(Z)} and
	\texorpdfstring{$\mathcal{W}(Z)$}{W(Z)} for classical types} 
\label{sect:sets}
Let $Z$ be any type in one of the four infinite families (i.e. $Z$ is one of 
$A_n$, $B_n$, $C_n$, or $D_n$ for some positive integer $n$). In this section we provide a
description of all the diagrams in $\mathcal{X}(Z)$ and define the list
$\mathcal{W}(Z)$ of allowed weighted diagram for each type required by Theorem
\ref{thm:main}. The analogous sets for the remaining finite types will be
presented in Appendix \ref{app:exceptional}.

\subsection{Type \texorpdfstring{$A_n$}{An}}
The following is a direct consequence of Proposition 2.4 in \cite{buan-vatne}.
\begin{prop}
	\label{prop:dynkin-An}
	A diagram $X$ is in $\mathcal{X}(A_n)$ if and only if the following
	conditions are satisfied:
	\begin{itemize}
		\item 
			$X$ has $n$ vertices, is simply laced and connected;

		\item
			every cycle in $X$ is a triangle;

		\item
			each vertex in $X$ has at most four neighbours;
		
		\item
			if a vertex has three neighbours then exactly two of them are adjacent;

		\item
			if a vertex has four neighbours then they can be partitioned into two
			disjoint sets,	containing two elements each, and such that the two
			neighbouring vertices $i$ and $j$ are adjacent if and only if $\{i,j\}$ is
			one of those sets.
	\end{itemize}
\end{prop}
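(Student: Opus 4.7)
The statement is essentially an unoriented repackaging of Proposition 2.4 of Buan--Vatne, which classifies the quivers in the mutation class of an $A_n$ Dynkin quiver. My plan is to translate their oriented criterion into the unoriented criterion on $X(B)$.

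First I would recall that for a skew-symmetric $B$, the diagram $X(B)$ is nothing but the underlying unoriented (multi-)graph of the quiver $Q(B)$, with an edge between $i$ and $j$ of multiplicity $|b_{ij}|$. By definition $B$ is of cluster type $A_n$ iff $Q(B)$ is mutation equivalent to some orientation of the $A_n$ Dynkin diagram, and Buan--Vatne characterise such $Q(B)$ as the connected quivers in which every induced cycle is an oriented $3$-cycle, every vertex has at most four incident arrows, every vertex of valence three has two of its incident arrows lying in a common $3$-cycle, and every vertex of valence four has its four arrows partitioned into two oriented $3$-cycles; in particular no double arrows appear in the mutation class.

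Next I would forget orientations. Connectedness, the bound on the number of neighbours, and the requirement that every cycle be a triangle descend to $X(B)$ verbatim, and the absence of double arrows gives the simply-laced hypothesis. The condition that two of the three arrows at a trivalent vertex share a $3$-cycle becomes the statement that exactly two of its three neighbours in $X(B)$ are adjacent; for a vertex of valence four, the partition of the four arrows into two oriented $3$-cycles translates to the partition of its four neighbours into two disjoint pairs, each pair being adjacent in $X(B)$. This is exactly the last bullet of the proposition.

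The main subtlety, and the place I would be most careful, is the converse direction: starting from a diagram $X$ satisfying the five listed conditions, I must produce an orientation whose quiver $Q$ satisfies Buan--Vatne's hypotheses. I would orient each triangle of $X$ cyclically---the partition conditions at trivalent and tetravalent vertices ensure that distinct triangles overlap in at most a single edge, so these cyclic orientations can be chosen compatibly---and orient any remaining tree edges arbitrarily. The resulting quiver satisfies every clause of Buan--Vatne, so $B$ is of cluster type $A_n$ and the proposition is proved.
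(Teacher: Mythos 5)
Your argument is correct and takes essentially the same route as the paper, which offers no proof beyond the remark that the statement is a direct consequence of Proposition 2.4 of Buan--Vatne. In fact you supply more detail than the paper does: in particular the converse direction, where the observation that the trivalent/tetravalent conditions (or the ``every cycle is a triangle'' condition) force distinct triangles to share no edge, so that each triangle can be oriented cyclically and the remaining edges arbitrarily to produce a quiver satisfying the Buan--Vatne criterion.
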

An example of Dynkin diagram in $\mathcal{X}(A_n)$ 
is presented  in Figure \ref{fig:dynkin-An} to illustrate its ``quasi-tree''
nature.

\begin{figure}[htbp]
	\begin{center}
		\includegraphics[scale=\scalingconstant]{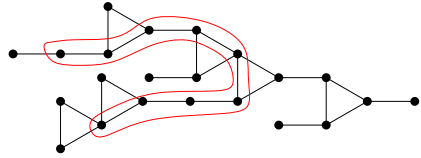}
	\end{center}
	\caption{A typical element $X$ in $\mathcal{X}(A_{22})$. The highlighted part is an
	element of $\mathcal{W}(A_{22})$ embedded in $X$. }
	\label{fig:dynkin-An}
\end{figure}

The set $\mathcal{W}(A_n)$ consists of type $A$ Dynkin diagrams (\emph{strings}) with at most $n$ vertices.  All
the multiplicities are $1$. Elements of $\mathcal{W}(A_n)$ are pictorially
presented as follows.
\begin{figure}[h]
	\begin{center}
		\includegraphics[scale=\scalingconstant]{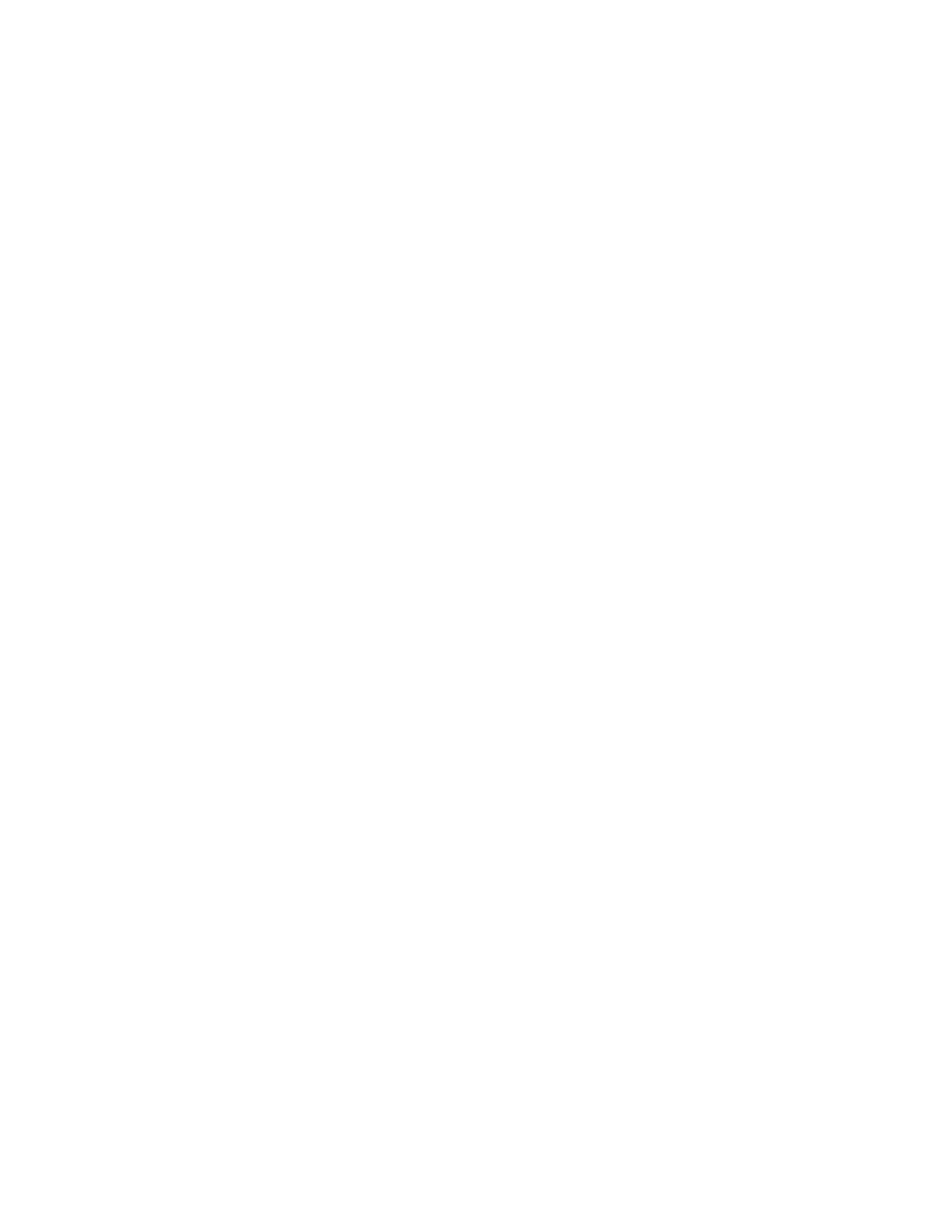}
	\end{center}
\end{figure}

An example of an embedding of such a string in a
diagram of $\mathcal{X}(A_n)$ is highlighted in Figure \ref{fig:dynkin-An}. Note
that, as explained in the introduction, an embedding of an element of
$\mathcal{W}(A_n)$ in a diagram $X$ is given by a full sub-diagram; therefore at
most two vertices of each triangle of $X$ can belong to it. It follows that an
embedding of a string is uniquely determined by the positions of its endpoints
\cite{Parsons11}.
Note that the equality $\mathcal{D}(B)=\mathcal{V}(B)$ is known in this case by
\cite{Caldero04,Parsons11,Tran}.

The building block of Dynkin diagrams for classical types is given by diagrams
of type $A_n$. 
While stating the analogous results for other types we will use the convention 
$\mathcal{X}(A_0)=\emptyset$.

\subsection{Type \texorpdfstring{$B_n$}{Bn}}
As usual for Dynkin diagrams we put  $a_{ij}a_{ji}$ edges between $i$ and $j$
and the inequality sign on the edges refers to the relation among the lengths of
the corresponding simple roots. This convention  agrees with \cite{Kac90} and it
is the opposite to the convention
used in \cite{Bourbaki02}. To make it more explicit the Cartan matrix
\[
	\left( 
	\begin{array}{cc}
		2 & -1 \\
		-2 & 2 \\
	\end{array} 
	\right)
\]
corresponds in this paper to the following Dynkin diagram (labels correspond to
the rows of $B$).
%\begin{figure}[h]
	\begin{center}
		\includegraphics[scale=.7]{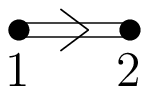}
	\end{center}
	\label{fig:convention}
%\end{figure} 

\begin{prop}
	\label{prop:dynkin-Bn}
	A diagram with $n$ vertices ($n\ge2$) is in $\mathcal{X}(B_n)$ if and only if
	it is one of the two in Figure \ref{fig:dynkin-Bn} where $X^{(i)}$ is any
	diagram in $\mathcal{X}(A_m)$ for a suitable $m\ge0$.
\end{prop}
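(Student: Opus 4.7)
The plan is to reduce the classification of $\mathcal{X}(B_n)$ to that of $\mathcal{X}(A_m)$ already established in Proposition \ref{prop:dynkin-An}. I would proceed in two steps, combined with the folding technique that Section \ref{sect:folding-bc} sets up for types $B_n$ and $C_n$.

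First, I would show that every diagram of one of the two forms in Figure \ref{fig:dynkin-Bn} actually belongs to $\mathcal{X}(B_n)$. Start from a standard Dynkin diagram of type $B_n$, which is the degenerate case where one of the $X^{(i)}$ is a linear chain and the other is empty. For any other candidate diagram, I would exhibit an explicit mutation sequence converting it back to the standard $B_n$ diagram. The key is that mutations at a vertex $k$ which is not incident to the double bond behave exactly like mutations in the type $A$ setting, since all entries of $B$ involved are $\pm 1$; therefore the internal rearrangements of each $X^{(i)}$ piece are controlled by the classification of $\mathcal{X}(A_m)$ and can be undone one side at a time.

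Second, I would establish the reverse inclusion: any $X(B)\in \mathcal{X}(B_n)$ must have one of the two shapes. The cleanest route is to use the folding correspondence between $D_{n+1}$ and $B_n$. Concretely, the mutation class of a matrix of cluster type $B_n$ lifts to a mutation class of a matrix of cluster type $D_{n+1}$ that is invariant under the $\mathbb{Z}/2$ diagram automorphism exchanging the two ``tail'' vertices of $D_{n+1}$. I would characterize such invariant diagrams inside $\mathcal{X}(D_{n+1})$ (whose own structure, as will be shown in the $D_n$ subsection, is built out of $A$-type pieces attached in a controlled way), and then compute the folded image vertex by vertex. The two listed configurations correspond to the two qualitatively different ways the double bond can sit in the folded diagram: either as an exposed edge whose endpoints carry two independent $A$-type tails, or as an edge adjacent to a triangle inherited from a symmetric pair of triangles in the $D_{n+1}$ lift. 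A finiteness argument (the two mutation classes have equal cardinality after folding, cf.\ the general machinery of Section \ref{sect:folding-bc}) guarantees that nothing else arises.

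The main obstacle will be the compatibility of mutations with folding at the vertex incident to the double bond, where the skew-symmetrizable entries $\pm 2$ interact nontrivially with neighbouring triangles. I would handle this by a case analysis of the mutation formula (\ref{eqn:b-mutation}) at such a vertex, checking by direct calculation that each outcome preserves the structure announced in Figure \ref{fig:dynkin-Bn}; equivalently, that each mutation downstairs lifts to a pair of commuting mutations upstairs in $D_{n+1}$ that preserve invariance under the automorphism. Together with the symmetry-preserving mutation analysis in $\mathcal{X}(D_{n+1})$, this closes the classification.
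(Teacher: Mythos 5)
Your proposal follows essentially the same route as the paper's proof: there, Proposition \ref{prop:dynkin-Bn} is obtained by observing (via Proposition \ref{prop:b_and_c_are_folded} and the unfolding statement) that every element of $\mathcal{X}(B_n)$ is the folding of an element of $\mathcal{X}(D_{n+1})$, and then checking which diagrams of Figure \ref{fig:dynkin-Dn} admit the admissible automorphism $\sigma=(n,n+1)$ --- the two exchanged vertices must be non-adjacent and attached identically to their common neighbours, which together with the cyclic orientation of chordless cycles rules out cases (c) and (d), leaving exactly the two foldings shown in Figure \ref{fig:dynkin-Bn}. Your additional first step with explicit mutation sequences and the closing cardinality argument are not needed once the folding machinery of Section \ref{sect:folding-bc} is in place, but the substance of the argument is the same.
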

We postpone the proof to Section \ref{sect:folding-bc-dynkin}.

The weighted diagrams in $\mathcal{W}(B_n)$ are those in Figure
\ref{fig:allowed_diagrams_Bn}. As we will see they are obtained from (some of) those in
$\mathcal{W}(D_{n+1})$ by folding.

\begin{figure}[htbp]
	\begin{center}
		\includegraphics[scale=\scalingconstant]{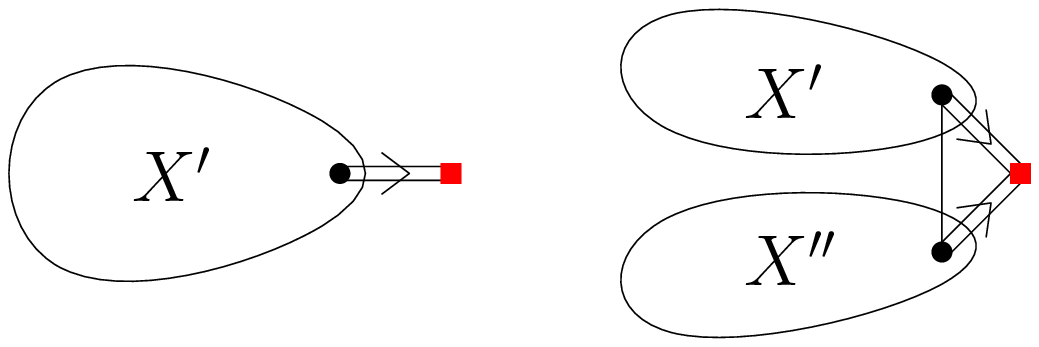}
	\end{center}
	\caption{Elements of $\mathcal{X}(B_n)$ for $n\ge2$; $X^{(i)}$ is any diagram
		in $\mathcal{X}(A_m)$ for a suitable $m\ge0$. The nodes marked as red
		squares  are the	images of those permuted by $\sigma$ in Proposition
		\ref{rk:our_cases}. }
	\label{fig:dynkin-Bn}
	\vspace{\baselineskip}
	\begin{center}
		\includegraphics[scale=\scalingconstant]{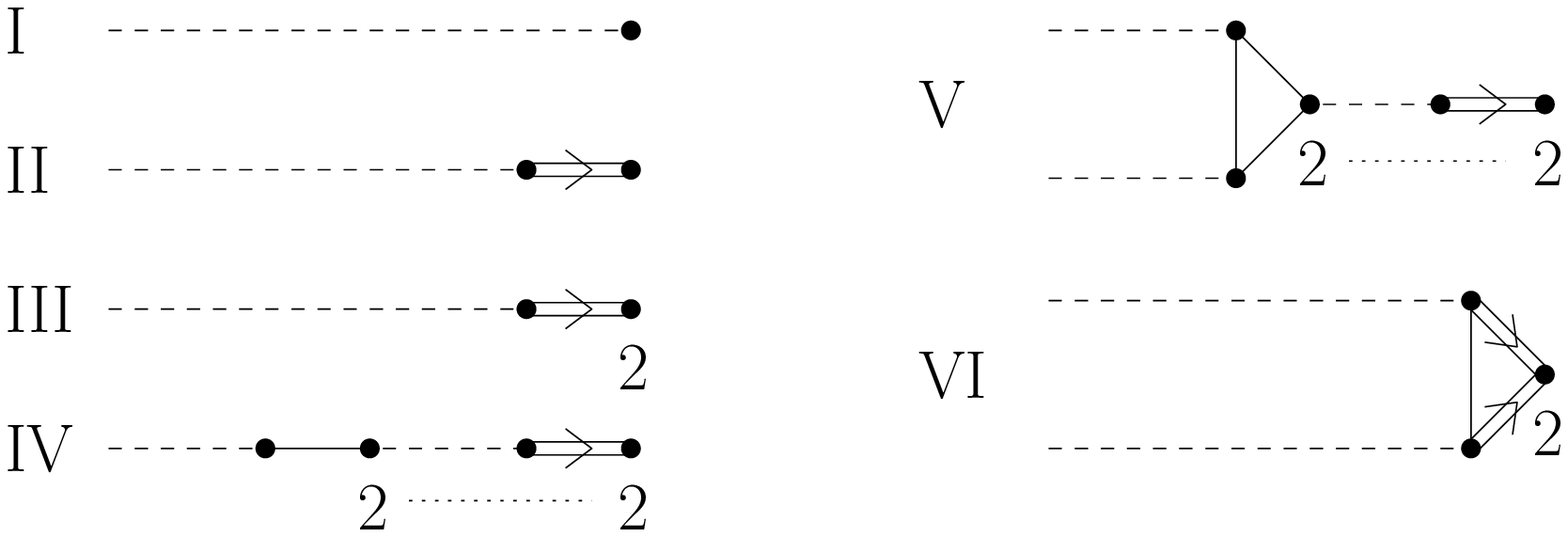}
	\end{center}
	\caption{The set $\mathcal{W}(B_n)$. Dotted lines are strings of any length;
	multiplicity of all the nodes of each such string are the same as their ending
	points. Solid lines can't be omitted. We will use the above drawing conventions
	thorough the rest of the paper. 
	}
	\label{fig:allowed_diagrams_Bn}
	% extraspace here to make it look nicer
	%\vspace{\baselineskip}
\end{figure} 

\subsection{Type \texorpdfstring{$C_n$}{Cn}}
\begin{prop}
	\label{prop:dynkin-Cn}
	A diagram with $n$ vertices ($n\ge2$) is in $\mathcal{X}(C_n)$ if and only if
	it is one of the two in Figure \ref{fig:dynkin-Cn} where $X^{(i)}$ is any
	diagram in $\mathcal{X}(A_m)$ for a suitable $m\ge0$. 
\end{prop}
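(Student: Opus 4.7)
The plan is to prove this proposition in parallel with Proposition \ref{prop:dynkin-Bn}, via the folding machinery set up in Section \ref{sect:folding-bc}. Concretely, any skew-symmetrizable matrix $B$ of cluster type $C_n$ is realized as the folding of a $\sigma$-invariant skew-symmetric matrix $\tilde B$ of cluster type $A_{2n-1}$, where $\sigma$ is the involution $i\mapsto 2n-i$ on the index set $\{1,\dots,2n-1\}$. This $\sigma$ has the unique fixed vertex $n$ and $n-1$ swapped pairs $\{i,2n-i\}$, so $\tilde B$ has $2n-1$ rows/columns and $B$ has $n$, as required.

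First, I would invoke the correspondence between mutation classes of cluster type $C_n$ matrices and mutation classes of $\sigma$-equivariant cluster type $A_{2n-1}$ matrices. This reduces the classification of $\mathcal{X}(C_n)$ to classifying the $\sigma$-symmetric diagrams $X(\tilde B)$ among the quasi-trees described in Proposition \ref{prop:dynkin-An}, and then computing their images under folding.

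Next, I would enumerate the $\sigma$-symmetric elements of $\mathcal{X}(A_{2n-1})$. Any such diagram is symmetric about the fixed vertex $n$, and its global shape is controlled by the local structure at $n$. A $\sigma$-stable triangle of $X(\tilde B)$ must contain the unique fixed vertex and a swapped pair $\{i,2n-i\}$, so exactly two cases arise: (a) $n$ belongs to no triangle, in which case it is a cut vertex joining two $\sigma$-conjugate copies of a diagram $X^{(i)}\in\mathcal{X}(A_m)$; (b) $n$ is a vertex of the unique $\sigma$-stable triangle $\{n,i,2n-i\}$, with a pair of $\sigma$-conjugate copies of some $X^{(i)}\in\mathcal{X}(A_m)$ attached at $i$ and $2n-i$. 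The neighbourhood bounds in Proposition \ref{prop:dynkin-An} (at most four neighbours, restrictions on how triangles can share vertices) rule out any further symmetric configurations and in particular prevent extra triangles from stacking onto the $\sigma$-fixed core.

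Finally, I would fold each case: a pair $\{i,2n-i\}$ collapses to a single vertex of $X(B)$, an edge between two such pairs descends to a single edge, and the edges joining $n$ to a pair $\{i,2n-i\}$ descend to a double edge oriented in the $C_n$ (rather than $B_n$) direction, as prescribed by the Langlands-dual folding conventions recalled in Section \ref{sect:folding-bc}. Cases (a) and (b) yield precisely the two shapes of Figure \ref{fig:dynkin-Cn}, and conversely each of those shapes admits an obvious $\sigma$-symmetric unfolding in $\mathcal{X}(A_{2n-1})$. The main obstacle is the combinatorial case analysis around the fixed vertex in Step 2: one must carefully apply the local quasi-tree conditions of Proposition \ref{prop:dynkin-An} to verify that no other $\sigma$-symmetric diagram is possible, in particular that no $\sigma$-conjugate pair of triangles can both meet the fixed vertex without violating the four-neighbour bound.
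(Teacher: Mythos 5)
Your overall strategy --- realize a cluster type $C_n$ matrix as the folding of a $\sigma$-invariant matrix of cluster type $A_{2n-1}$ and classify the $\sigma$-symmetric quasi-trees of Proposition \ref{prop:dynkin-An} --- is exactly the route the paper takes (its proof of Proposition \ref{prop:dynkin-Cn} is a two-line appeal to Proposition \ref{prop:b_and_c_are_folded} plus the symmetry requirement about the fixed point). However, your case analysis around the fixed vertex $n$ is wrong, and it would produce the wrong pair of diagrams. Your case (b), the $\sigma$-stable triangle $\{n,i,2n-i\}$, cannot occur: the vertices $i$ and $2n-i$ lie in the same $\sigma$-orbit and are adjacent, which violates the admissibility condition (\ref{eqn:no_1_paths}); equivalently, a chordless cycle of a finite type exchange matrix must be cyclically oriented, while $\sigma$-invariance forces $b_{n,i}=b_{\sigma(n)\sigma(i)}=b_{n,2n-i}$, which is incompatible with any cyclic orientation of that triangle. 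So no $\sigma$-invariant matrix of cluster type $A_{2n-1}$ has this diagram, and even formally the folding rule (\ref{eqn:folded_b_matrix}) degenerates on it (it would create a nonzero ``diagonal'' entry for the orbit $\{i,2n-i\}$).

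Conversely, the configuration you explicitly exclude --- a pair of $\sigma$-conjugate triangles $\{n,i,j\}$ and $\{n,2n-i,2n-j\}$ meeting at the fixed vertex --- does \emph{not} violate the four-neighbour condition of Proposition \ref{prop:dynkin-An}: the vertex $n$ acquires exactly four neighbours partitioned into the two adjacent pairs $\{i,j\}$ and $\{2n-i,2n-j\}$, which is precisely the allowed local picture. This ``bowtie'' admits a $\sigma$-invariant cyclic orientation, is admissible (the orbits $\{i,2n-i\}$ and $\{j,2n-j\}$ are non-adjacent pairs), is reachable from the linear $\sigma$-invariant seed by a single orbit mutation, and its folding is a triangle in which the image of $n$ is joined by double edges to the two orbit vertices, which are joined to each other by a single edge. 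This is the second diagram of Figure \ref{fig:dynkin-Cn} (compare Example \ref{ex:C5}, where the nodes $3$, $4$, $5$ form exactly such a triangle). The correct dichotomy is therefore: either $n$ lies in no triangle, or it is the common vertex of two $\sigma$-conjugate triangles. With your enumeration one of the two target diagrams is missing and a spurious, non-foldable one is included, so the argument as written does not establish the proposition.
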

We postpone the proof to Section \ref{sect:folding-bc-dynkin}.
\begin{rk}
	The results of Propositions \ref{prop:dynkin-Bn} and  \ref{prop:dynkin-Cn}
	were claimed in \cite{Musiker10} and encoded in the cluster algebra package of
	Sage. The details will appear in \cite{Stump}. The same result also appeared
	in \cite{Henrich}.
\end{rk}
The weighted diagrams in $\mathcal{W}(C_n)$ are those in Figure
\ref{fig:allowed_diagrams_Cn}; as we will see they are all the weighted diagrams that can be
obtained by folding a string embedded on a diagram in $\mathcal{X}(A_{2n-1})$.

\begin{figure}[htbp]
	\begin{center}
		\includegraphics[scale=\scalingconstant]{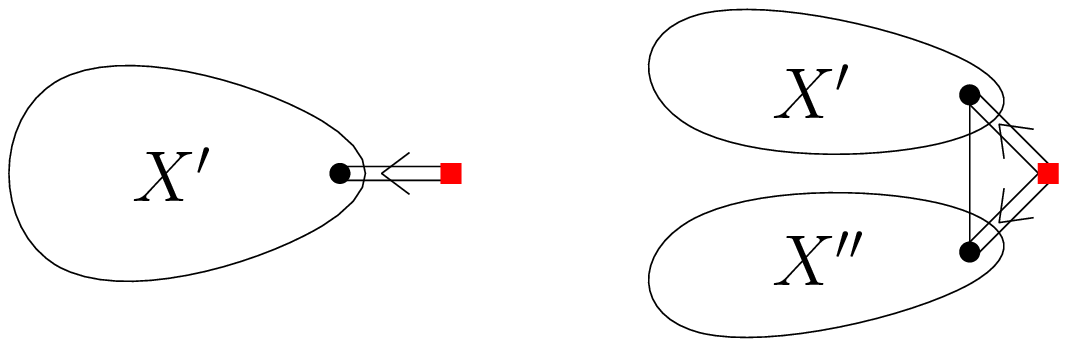}
	\end{center}
	\caption{Elements of $\mathcal{X}(C_n)$ for $n\ge2$; $X^{(i)}$ is any diagram
		in $\mathcal{X}(A_m)$ for a suitable $m\ge0$. The nodes marked as red
		squares are the images of the fixed point under the action of $\sigma$ in
		Proposition \ref{rk:our_cases}.}
	\label{fig:dynkin-Cn}
	\begin{center}
		\includegraphics[scale=\scalingconstant]{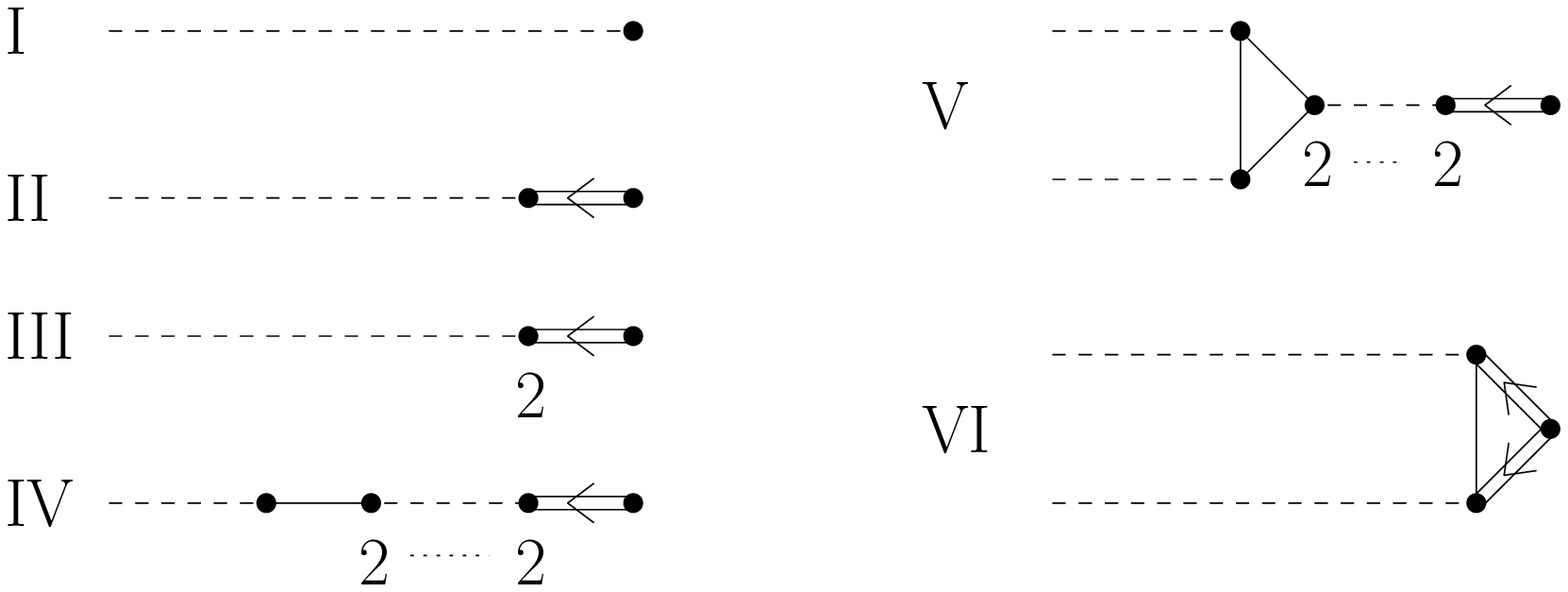}
	\end{center}
	\caption{The set $\mathcal{W}(C_n)$. We use the same drawing conventions of
	Figure \ref{fig:allowed_diagrams_Bn}.}
		\label{fig:allowed_diagrams_Cn}
\end{figure}

\subsection{Type \texorpdfstring{$D_n$}{Dn}}
From Theorem 3.1 in \cite{vatne} together with Proposition \ref{prop:dynkin-An}
we get the following description of $\mathcal{X}(D_n)$. Note that the same
result can also be obtained easily from the surface realization we use in
Section \ref{sect:surfaces-ad}.
\begin{prop}
	A diagram with $n$ vertices ($n\ge4$) is in $\mathcal{X}(D_n)$ if and only if
	it is one of the four in Figure \ref{fig:dynkin-Dn} where $X^{(i)}$ is any
	diagram in $\mathcal{X}(A_m)$ for a suitable $m\ge0$. 
\end{prop}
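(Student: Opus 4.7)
The plan is to use the surface realization of cluster algebras of type $D_n$ developed in Section \ref{sect:surfaces-ad}, under which the seeds of $\mathcal{A}_\bullet(B)$ correspond bijectively to tagged triangulations $T$ of a once-punctured disk $S_n$ with $n$ marked points on the boundary; the diagram $X(B)$ is then recovered from the signed adjacency of the tagged arcs of $T$. Under this bijection the statement becomes a classification of the shapes of $X(T)$ as $T$ varies.

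The first step is to observe that the structure of $T$ away from the puncture is locally the same as an ordinary triangulation of a polygon, which by Proposition \ref{prop:dynkin-An} contributes an arbitrary element of $\mathcal{X}(A_m)$ to $X(T)$. The behavior near the puncture is much more constrained: up to isomorphism there are only finitely many local configurations of arcs incident to (or enclosing) the puncture, and each produces a small ``core'' sub-diagram inside $X(T)$. A direct enumeration of these local pictures (two arcs meeting the puncture with equal or opposite tags, a loop around a self-folded triangle, and the configurations obtained by completing them with the adjacent boundary triangles) shows that exactly four such cores arise, matching the four shapes displayed in Figure \ref{fig:dynkin-Dn}; the rest of $X(T)$ is then obtained by attaching arbitrary type-$A$ sub-diagrams at the distinguished vertices of each core.

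The second step is the converse: for each of the four shapes I would exhibit an explicit tagged triangulation $T$ of $S_n$ realizing the prescribed core at the puncture. Triangulating the remaining sub-disks in arbitrary ways then produces any desired type-$A$ extension compatible with that core, proving that every diagram of one of the four shapes indeed belongs to $\mathcal{X}(D_n)$.

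The main obstacle is the careful handling of tagged arcs and self-folded triangles, since signed adjacency with these objects introduces edges in $X(T)$ that are easy to miscount; one must meticulously track which pairs of vertices are joined. As remarked in the text, an essentially equivalent route is to invoke Theorem 3.1 of \cite{vatne}, which classifies the mutation class of the $D_n$ quiver directly in terms of combinatorial local moves; combining that classification with Proposition \ref{prop:dynkin-An} yields the proposition at once, and this is the shortest path to the stated result.
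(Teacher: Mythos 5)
Your proposal is correct and matches the paper's approach: the paper derives this proposition by citing Theorem 3.1 of \cite{vatne} together with Proposition \ref{prop:dynkin-An}, and explicitly remarks that the same result can be obtained from the surface realization of Section \ref{sect:surfaces-ad} --- precisely the two routes you describe, with the emphasis reversed. Your more detailed sketch of the tagged-triangulation argument (classifying the local configurations at the puncture to obtain the four cores, then attaching type-$A$ pieces) is a sound elaboration of the alternative the paper only mentions in passing.
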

\begin{figure}[htbp]
	\begin{center}
		\includegraphics[scale=\scalingconstant]{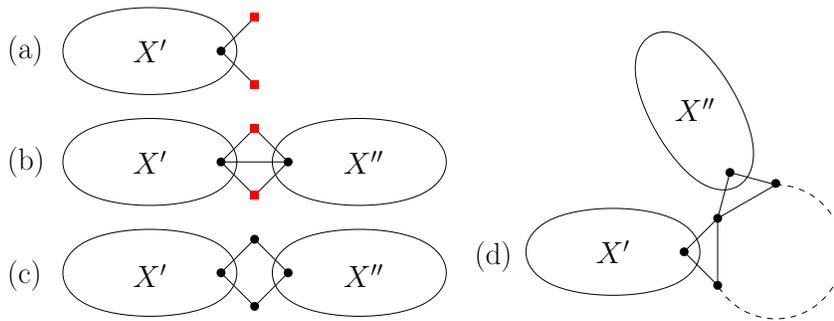}
	\end{center}
	\caption{Elements of $\mathcal{X}(D_n)$ for $n\ge4$; $X^{(i)}$ is any diagram
	in $\mathcal{X}(A_m)$ for a suitable $m\ge0$. The nodes marked as red
	squares are the one	permuted by $\sigma$ in Remark \ref{rk:our_cases}. Case
	(d) consists of a central cycle with, possibly, type-$A$ components attached
	to its sides.}
	\label{fig:dynkin-Dn}
\end{figure}

The set $\mathcal{W}(D_n)$ consists of all the weighted diagrams in Figure
\ref{fig:allowed_diagrams_Dn}.

\begin{figure}[htbp]
	\begin{center}
		\includegraphics[scale=\scalingconstant]{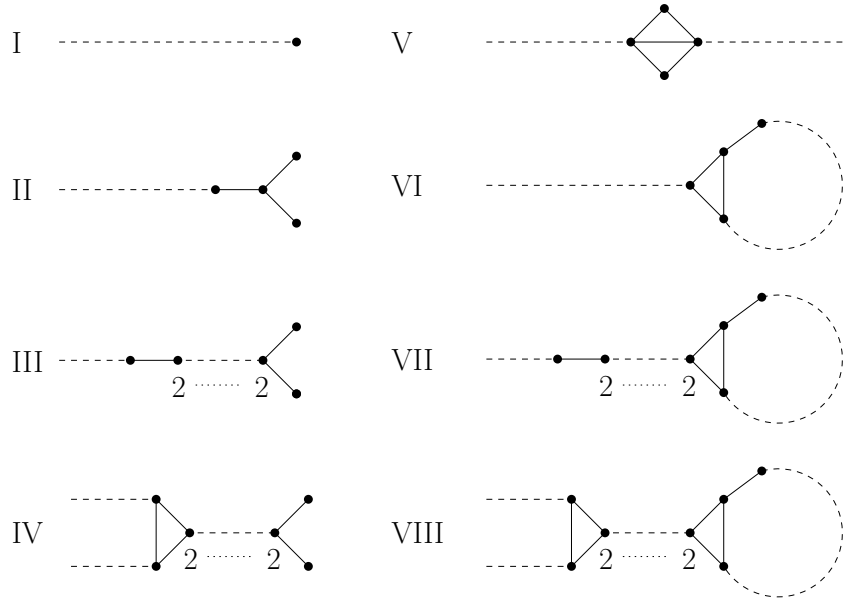}
	\end{center}
	\caption{The set $\mathcal{W}(D_n)$. We use the same drawing conventions of
	  Figure \ref{fig:allowed_diagrams_Bn}.}
		\label{fig:allowed_diagrams_Dn}
\end{figure}

\subsection{Examples}
To illustrate how to read the data presented in this section let us consider two
examples.
\begin{ex}
	Let $B$ be the matrix
	\[
	\left(
	\begin{array}[h]{ccccc}
		0 & 1 & 0 & 0 & 0 \\
		-1 & 0 & 1 & -1 & 0 \\
		0 & -1 & 0 & 1 & -1 \\
		0 & 1 & -1 & 0 & 1 \\
		0 & 0 & 1 & -1 & 0 \\
	\end{array}
	\right)
	\]
	of cluster type $D_5$. The diagram $X(B)$ and the set of positive
	$c$-vectors (and non-initial $d$-vectors) of $\mathcal{A}_\bullet(B)$ are
	shown in Figures \ref{fig:example_D5-dynkin} and \ref{fig:example_D5-diagrams}
	respectively. Note that any skew-symmetric matrix of cluster type $D_5$  whose
	entries are the same as the entries of $B$ in absolute value produces the same
	$X(B)$ and $\mathcal{V}(B)$.
	\label{ex:D5}
\end{ex}
\begin{figure}[htbp]
	\begin{center}
		\includegraphics[scale=\scalingconstant]{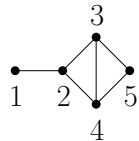}
	\end{center}
	\caption{$X(B)$ for Example \ref{ex:D5}. Labels refer to the rows of $B$.}
	\label{fig:example_D5-dynkin}
\end{figure}
\begin{figure}
	\begin{center}
      \includegraphics[scale=\scalingconstant]{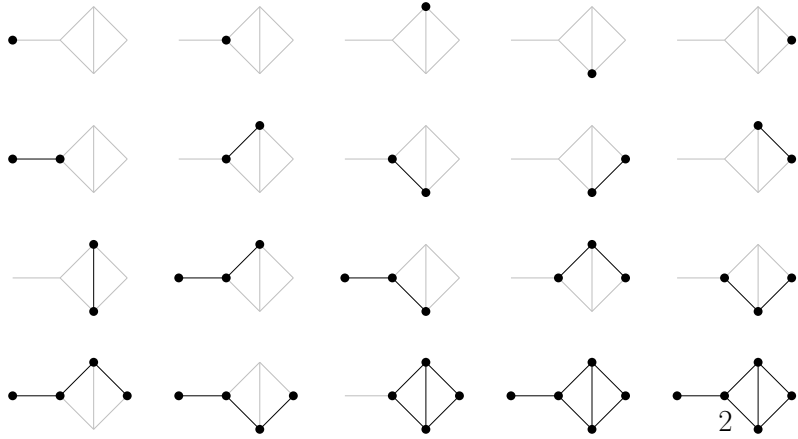}
    \end{center}
	\caption{$\mathcal{V}(B)$ for Example \ref{ex:D5}.}
	\label{fig:example_D5-diagrams}
\end{figure} 

\begin{ex}
	Let $B$ be the matrix
	\[
		\left(\begin{array}{rrrrr}
				2 & -1 & 0 & 0 & 0 \\
				-1 & 2 & -1 & 0 & 0 \\
				0 & -1 & 2 & -2 & -1 \\
				0 & 0 & -1 & 2 & -1 \\
				0 & 0 & -1 & -2 & 2
			\end{array}\right)
	\]
	of cluster type $C_5$. The diagram $X(B)$ and the set of positive
	$c$-vectors (and non-initial $d$-vectors) of $\mathcal{A}_\bullet(B)$ are
	shown in Figures \ref{fig:example_C5-dynkin} and \ref{fig:example_C5-diagrams}
	respectively. 
	\label{ex:C5}
\end{ex}
\begin{figure}[htbp]
	\begin{center}
		\includegraphics[scale=\scalingconstant]{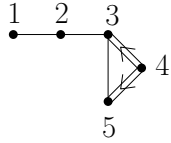}
	\end{center}
	\caption{$X(B)$ for Example \ref{ex:C5}. Labels refer to the rows of $B$.}
	\label{fig:example_C5-dynkin}
\end{figure}
\begin{figure}[htbp]
	\begin{center}
      \includegraphics[scale=\scalingconstant]{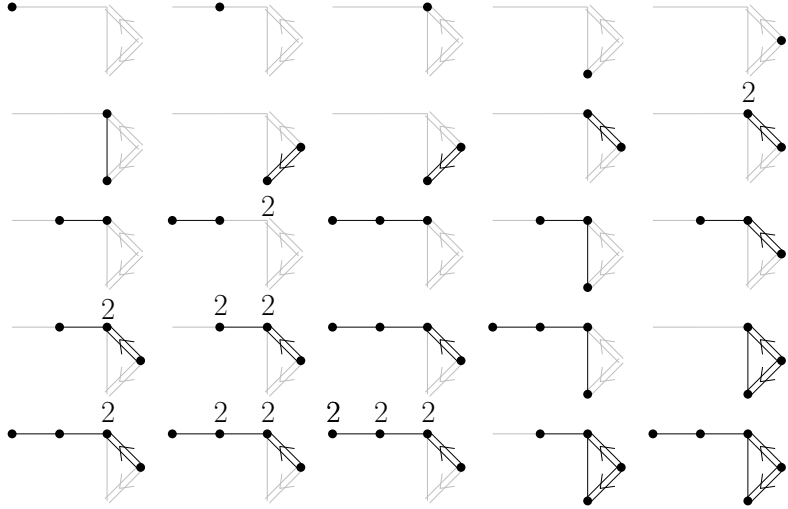}
    \end{center}
	\caption{$\mathcal{V}(B)$ for Example \ref{ex:C5}.}
	\label{fig:example_C5-diagrams}
\end{figure}

%%%%%%%%%%%%%%%%%%%%%%%%%%%%%%%%%%%%%%%%%%%%%%%%%%%%%%%
\section{Types \texorpdfstring{$A_n$}{An} and \texorpdfstring{$D_n$}{Dn}: the surface method}
\label{sect:surfaces-ad}
In this section we prove Theorem \ref{thm:main} for types $A_n$ and $D_n$.

\subsection{The surface method for types \texorpdfstring{$A_n$}{An} and
	\texorpdfstring{$D_n$}{Dn}.}
To describe $c$-vectors and $d$-vectors in types $A_n$ and $D_n$ we do
not need to use the construction of \cite{Fomin08} in its full generality so we can slightly
simplify the definitions; the reader interested in the general theory can find a
comprehensive review in \cite{msw-positivity_from_surfaces}.

Unless otherwise specified, by \emph{surface} $S$ we mean one of the following:
\begin{itemize}
	\item (type $A_n$)
		a disk with $n+3$ marked points on its boundary ($n\ge1$);
	\item (type $D_n$)
		a disk with $n$ marked points on the boundary ($n\ge4$) and one, the
		\emph{puncture}, in its	interior.
\end{itemize}
We denote the set of marked points by $M$.

\begin{defn}
	A \emph{(tagged) arc} is an homotopy class of curves $\gamma$ in the interior
	of $S\setminus M$ having no self intersections, connecting two distinct points
	of $M$, and not cutting out (together with a boundary component of $S$) an
	unpunctured bigon.  
	Due to the limitations imposed on the kinds of surfaces we consider there are
	only two possible types of arcs: \emph{chords}, connecting two marked point on
	the boundary of $S$, and \emph{radii}, connecting a point on the boundary with
	the puncture. Radii comes in two flavours: \emph{plain} and \emph{notched}; to
	distinguish them in figures we will put a cross on notched arcs.
	\label{defn:arc}
\end{defn}

\begin{rk}
	Note that this is not the usual definition of tagged arcs, in particular for
	general surfaces there is a tagging attached to each endpoint of any $\gamma$.
	Another difference from the general case is that we are not allowing loops
	(arcs with coinciding endpoints).
\end{rk}

We need not consider ideal arcs as defined by \cite{Fomin08} so we can drop the
adjective ``tagged'' without generating confusion.  To any pair of arcs $\gamma$
and $\delta$ we can associate an integer as follows.
\begin{defn}[{\cite[Definition 8.4]{Fomin08}}] 
	The \emph{intersection pairing} of $\gamma$ and $\delta$ is the integer
	$(\gamma|\delta)$ defined according to these rules:
	\begin{enumerate}
		\item 
			if $\gamma$ and $\delta$ coincide then $(\gamma|\delta)=-1$;
		\item
			if $\gamma$ and $\delta$ are homotopic radii with different tagging then
			$(\gamma|\delta)=0$;
		\item
			if $\gamma$ and $\delta$ are non-homotopic radii then $(\gamma|\delta)=0$
			if they are tagged in the same way and $(\gamma|\delta)=1$ if their
			tagging is different;
		\item
			in any other case, set $(\gamma|\delta)$ to be the minimal number of
			intersections between $\gamma$ and $\delta$.
	\end{enumerate}
	\label{defn:intersection_pairing}
\end{defn}

Two arcs $\gamma$ and $\delta$ are said to be \emph{compatible} if their
intersection pairing is non-positive.  A \emph{triangulation} $\Gamma$ of $S$ is
a maximal (by inclusion) set of pairwise compatible arcs.

\begin{rk}
	Definition \ref{defn:intersection_pairing} is symmetric; this is not the case
	for a general surface where loops are allowed (see \cite[Example
	8.5]{Fomin08}).
\end{rk}

In view of \cite[Theorem 7.9]{Fomin08} each triangulation of $S$
has $n$ arcs in it and given a triangulation $\Gamma$ and one of its arcs $\gamma$,
there is a unique other arc $\gamma'$ such that 
\[
\Gamma'=
\left(\Gamma\setminus\left\{ \gamma \right\}  \right)\cup\left\{ \gamma' \right\}
\]
is again a triangulation of $S$. The operation of replacing $\gamma$ with
$\gamma'$ is called a \emph{flip}.

To any triangulation $\Gamma$ associate a skew-symmetric  matrix
$B(\Gamma)=(b^\Gamma_{\gamma\delta})_{\gamma,\delta\in\Gamma}$ setting
\begin{equation}
	b^\Gamma_{\gamma\delta}:=\left\{
	\begin{array}[]{ll}
		1 & \mbox{if $\gamma$ rotates counterclockwise to $\delta$} \\
		-1 & \mbox{if $\gamma$ rotates clockwise to $\delta$} \\
		0 & \mbox{if both or none of the previous conditions hold}
  \end{array}
	\right.
	\label{eqn:b-matrix_from_triangulation}
\end{equation}
where $\gamma$ is said to rotate counterclockwise (resp. clockwise) to $\delta$
if they are not homotopic, they share an endpoint and, in a neighbourhood of this
point, $\gamma$ can be deformed counterclockwise (resp. clockwise), without
crossing any other arc of $\Gamma$, to coincide with $\delta$. 

\begin{figure}[htbp]
	\begin{center}
  	\includegraphics[scale=.6]{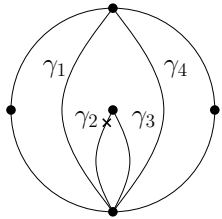}
	\end{center}
	\caption{In this triangulation $b_{21}=b_{31}=b_{14}=b_{42}=b_{43}=1$ while
	$b_{23}=0$.}
    \label{fig:b-matrix}
\end{figure}

By \cite[Theorem 7.11]{Fomin08} the above assignment
produces a bijection between triangulations of a type $A_n$ (resp. $D_n$)
surface and unlabeled seeds of the coefficient-free cluster algebra of the same
type. In particular cluster variables are in bijection with arcs and if two
seeds are obtained from one another exchanging the cluster variables $x_\gamma$
and $x_{\gamma'}$ then the corresponding triangulations are related by the flip
of $\gamma$ into $\gamma'$.

To keep track of principal coefficients 
we use \emph{laminations} as explained in \cite{Fomin08b}. 
For each marked point $p$ on the boundary of $S$
fix a neighbouring point $p'$ obtained sliding $p$ clockwise on the boundary.
\begin{defn}
	(see Figure \ref{fig:elementary-laminations}) The \emph{elementary lamination}
	$\lambda_\gamma$ corresponding to an arc $\gamma$ is the homotopy class of
	curves, contained in a
	neighbourhood of $\gamma$, defined as follows:
	\begin{itemize}
		\item 
			if $\gamma$ is a chord connecting $p$ and $q$ then $\lambda_\gamma$ connects
			$p'$ and $q'$;

		\item
			if $\gamma$ is a radius tagged plain (resp. notched) starting from $p$
			then $\lambda_\gamma$ starts from $p'$ and winds counterclockwise (resp.
			clockwise) infinitely many times around the puncture.
			
	\end{itemize}
  \begin{figure}[htbp]
		\begin{center}
    	\includegraphics[scale=.5]{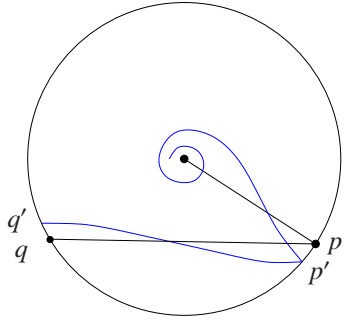}
		\end{center}
    \caption{Examples of elementary laminations.}
    \label{fig:elementary-laminations}
  \end{figure}
\end{defn}

The \emph{shear coordinates} of an elementary lamination $\lambda$ with respect
to a triangulation $\Gamma$ are the integers in the $n$-tuple
$(b_{\lambda,\gamma}^\Gamma)_{\gamma\in\Gamma}$ defined in terms of
intersections between $\lambda$ and the unique quadrilateral in $\Gamma$ of
which $\gamma$ is the diagonal. 

More precisely assume, at first, that $\Gamma$
contains at most one notched radius; each segment of $\lambda$ cutting through
the quadrilateral enclosing $\gamma$ as in Figure \ref{fig:shear-coordinates}
contributes either $+1$ or $-1$ to $b_{\lambda,\gamma}^\Gamma$. All other
crossings do not contribute. Note in particular that, if $\gamma$ is a radius of a
digon then, to have have nonzero shear coordinate, a	lamination as to "go around 
the puncture". When the digon has two non homotopic
radii this means that the lamination has to intersect both of them; in the other
case the lamination has to cross the dotted line joining the puncture to the
boundary. We will continue to draw this dotted line whenever we have a digon with
homotopic radii. Note also that flipping $\gamma$ interchanges positive
and negative crossings.  

To extend the definition to all possible triangulations
it suffices to impose that, if $\Gamma^\vee$ is obtained from $\Gamma$ by
changing all the tags at the puncture and $\lambda^\vee$ is obtained from
$\lambda$ inverting its winding direction (if any), then for any
$\gamma\in\Gamma$
\begin{align}
	b_{\lambda^\vee,\gamma^\vee}^{\Gamma^\vee}=
	b_{\lambda,\gamma}^\Gamma.
	\label{eqn:change_tagging}
\end{align}
\begin{figure}[htbp]
	\begin{center}
	  \includegraphics[scale=.6]{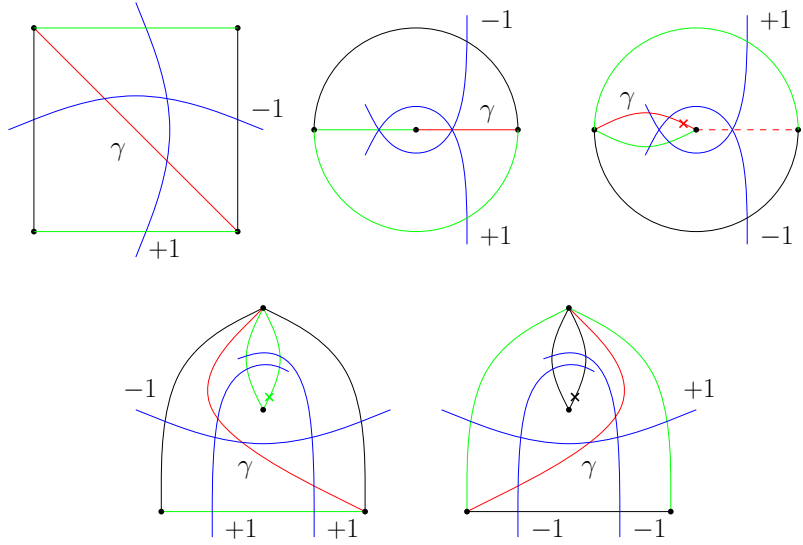}
	\end{center}
    \caption{Intersections giving non-zero shear coordinates. 
		The highlighted edges are those crossed by laminations $\lambda$ giving positive
		coordinates	$b^\Gamma_{\lambda,\gamma}$.}
    \label{fig:shear-coordinates}
\end{figure}

Given a triangulation $\Gamma$ let $\Lambda(\Gamma)=\left\{ \lambda_\gamma
\right\}_{\gamma\in \Gamma}$
be the \emph{multilamination} associated to it, i.e. the collection of the
elementary laminations corresponding to the arcs of $\Gamma$. Let
$\widetilde{B}_{\Gamma}(\Gamma')$ be the
extended $B$-matrix having top part $B(\Gamma')$ defined by
(\ref{eqn:b-matrix_from_triangulation}) and bottom part given by the shear
coordinates of $\Lambda(\Gamma)$ with respect to $\Gamma'$.

\begin{prop}[{\cite[Proposition 16.3]{Fomin08b}}]
	In the principal-coefficients cluster algebra
	$\mathcal{A}_\bullet\left(B(\Gamma)\right)$ the extended exchange matrix
	corresponding to the triangulation $\Gamma'$ is given by the above
	$\widetilde{B}_{\Gamma}(\Gamma')$.
\end{prop}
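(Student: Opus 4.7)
The claim is essentially that the shear coordinates of the multilamination $\Lambda(\Gamma)$ record the $C$-matrices of the principal coefficient system with initial seed at $\Gamma$. The plan is to reduce this to two statements: an initial condition and a mutation compatibility.

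First, I would establish the initial condition: when $\Gamma' = \Gamma$, the bottom block of $\widetilde{B}_\Gamma(\Gamma)$ is the identity matrix. This is a direct geometric check on elementary laminations. By construction $\lambda_\gamma$ lies in a small neighborhood of $\gamma$, crosses $\gamma$ exactly once through the quadrilateral with $\gamma$ as diagonal, and has no essential intersection with any other arc of $\Gamma$. In the chord case, the perturbation by sliding endpoints clockwise to $p'$ and $q'$ forces a single positive crossing; in the radius case, the winding around the puncture combined with the rule \eqref{eqn:change_tagging} reduces the verification to the chord case after toggling tags. A case analysis on the orientation convention in Figure \ref{fig:shear-coordinates} yields $b^\Gamma_{\lambda_\gamma,\delta} = \delta_{\gamma\delta}$, matching the definition of principal coefficients.

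Second, I would verify mutation compatibility: if $\Gamma''$ is obtained from $\Gamma'$ by flipping the arc $\gamma_k$ into $\gamma_k'$, then the shear coordinates transform according to \eqref{eqn:c-recursion}, namely
\[
b^{\Gamma''}_{\lambda,\gamma_j} =
\begin{cases}
-\,b^{\Gamma'}_{\lambda,\gamma_j} & j=k,\\[2pt]
b^{\Gamma'}_{\lambda,\gamma_j} + b^{\Gamma'}_{\lambda,\gamma_k}[b'_{kj}]_+ + [-b^{\Gamma'}_{\lambda,\gamma_k}]_+ b'_{kj} & j\neq k,
\end{cases}
\]
for every elementary lamination $\lambda \in \Lambda(\Gamma)$. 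The $j=k$ case is immediate from the convention depicted in Figure \ref{fig:shear-coordinates}: flipping the diagonal interchanges the two patterns that contribute $\pm1$, and so negates every crossing contribution. The $j\neq k$ case is the geometric heart of the argument. I would argue it by purely local case analysis: only the portion of $\lambda$ inside the quadrilateral around $\gamma_k$ can be affected, and the change in contribution to $b^{\cdot}_{\lambda,\gamma_j}$ depends only on (i) the sign of $b^{\Gamma'}_{\lambda,\gamma_k}$ (i.e., which of the two diagonal-crossing patterns $\lambda$ realizes) and (ii) whether $\gamma_j$ lies on a side shared with the "positive" or "negative" half of the quadrilateral, which is precisely encoded by the signs $[\pm b'_{kj}]_+$.

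The main obstacle is the second step. Digons around the puncture introduce the additional complication of tracking winding numbers of laminations, and the non-symmetric treatment of plain versus notched radii means one must check many local configurations. I would organize this by first handling the unpunctured chord case (where the quadrilateral is a genuine disk with four boundary marked points), then reduce the radius cases to the chord case using \eqref{eqn:change_tagging} to flip all tags at the puncture simultaneously — this exchanges $\Gamma'$ with $\Gamma'^\vee$ and each $\lambda$ with $\lambda^\vee$ while preserving the signed count, so the chord verification transports to the radius setting. Granted these two ingredients, the statement follows by induction on the length of a sequence of flips connecting $\Gamma$ to $\Gamma'$, using that any two triangulations of $S$ are connected by such a sequence (\cite[Theorem 7.9]{Fomin08}).
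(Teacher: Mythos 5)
Your outline is essentially correct, but note that the paper does not prove this proposition at all: it is imported verbatim from Fomin--Thurston \cite[Proposition 16.3]{Fomin08b}, and the two-step strategy you describe --- (a) check that the shear coordinates of $\Lambda(\Gamma)$ with respect to $\Gamma$ itself give the identity matrix, matching the initial condition of principal coefficients, and (b) verify that shear coordinates transform under a flip by the same rule \eqref{eqn:c-recursion} that governs the bottom rows of the extended exchange matrix, then conclude by connectivity of the flip graph --- is precisely the strategy of that reference. So there is nothing to compare against inside this paper; your proposal reconstructs the standard argument. The one place where your sketch is genuinely thin is step (b) for $j\neq k$: asserting that the change in $b^{\cdot}_{\lambda,\gamma_j}$ depends only on the sign of $b^{\Gamma'}_{\lambda,\gamma_k}$ and on which side of the flipped quadrilateral $\gamma_j$ lies is the entire content of the shear-coordinate mutation theorem, and in the punctured disk the case analysis involving digons, winding laminations, and the one-notched-radius configurations is where all the real work sits; the symmetry \eqref{eqn:change_tagging} only toggles \emph{all} tags at the puncture simultaneously, so it does not by itself dispose of the mixed configurations --- you still must check the digon with two non-homotopic radii directly. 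Provided that local verification is carried out, the induction closes as you describe.
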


We can now describe the sets $\mathcal{C}(B)$ and $\mathcal{D}(B)$.
For the rest of this section fix a skew-symmetric integer matrix $B$ of type
$A_n$ or $D_n$.  Let $\Lambda_0=\left\{ \lambda_i \right\}_{i\in I}$ be the
multilamination corresponding to a triangulation
$\Gamma_0=\left\{\gamma_i\right\}_{i\in I}$ of $S$ realizing $B$. In
view of the last Proposition the set of $c$-vectors of the principal-coefficients
cluster algebra $\mathcal{A}_\bullet(B)$ is
\[
\mathcal{C}(B)=\left\{ 
c_{\gamma,\Gamma}:=\left(b_{\lambda_i,\gamma}^{\Gamma}\right)_{i\in I}
\right\}
\]
as $\Gamma$ runs over all possible triangulations of $S$ and $\gamma$ is an arc
in $\Gamma$.
The parametrization of $\mathcal{C}(B)$ by pairs of arcs and triangulations is
not one to one; indeed for any given $c$-vector there are in
general many pairs $\gamma$, $\Gamma$ realizing it. We will see that
$\Gamma$ can always be chosen to be bipartite (see Proposition
\ref{prop:bipartite}). 

As we already noted, in a cluster algebra coming from a surface, cluster variables
are in bijection with tagged arcs. Their denominator vectors can be read
directly from the surface: they are given in terms of their intersection pairing
with the arcs of the initial triangulation.

\begin{thm}[{\cite[Theorem 8.6]{Fomin08},\cite[Theorem 3.4]{Gekhtman}}]
	\label{thm:description_of_d-vectors}
	Let $\mathcal{A}_\bullet(B)$ be any cluster algebra of type $A_n$ or $D_n$ and
	let $\Gamma_0=\{\gamma_i\}_{i\in I}$ be a triangulation corresponding to
	$B$.
	
	If $x_{\gamma}$ is the cluster variable corresponding to the tagged arc
	$\gamma$
	then its $d$-vector is 
	\[
	d_{\gamma}=\left((\gamma_i|\gamma)\right)_{i\in I}.
	\]
\end{thm}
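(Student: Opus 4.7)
The plan is to prove the theorem by induction on the distance (in the exchange graph) between the triangulation $\Gamma$ containing $\gamma$ and the initial triangulation $\Gamma_0$, i.e.\ on the minimal number of flips required to transform $\Gamma_0$ into a triangulation containing $\gamma$. For the base case, $\gamma\in\Gamma_0$ means $\gamma=\gamma_j$ for some $j\in I$, so $x_\gamma$ is an initial cluster variable and $\pi_{\mathrm{trop}}(x_\gamma)=x_j$ gives $d_\gamma=-e_j$. On the other hand, by Definition \ref{defn:intersection_pairing}(1) and the compatibility of distinct arcs of $\Gamma_0$, we have $(\gamma_j|\gamma_j)=-1$ and $(\gamma_i|\gamma_j)=0$ for $i\neq j$, matching the claim.

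For the inductive step, suppose $\gamma'$ is obtained by flipping some arc $\gamma$ of a triangulation $\Gamma$ containing $\gamma$, and assume the theorem holds for all arcs of $\Gamma$. The strategy is to show that the integer tuples $\bigl((\gamma_i|\delta)\bigr)_{i\in I}$, as $\delta$ ranges over $\Gamma$ together with $\gamma'$, satisfy the tropical recursion \eqref{eqn:d-recursion} with respect to $B(\Gamma)$ at the flip position. Since that recursion uniquely determines the $d$-vector of $x_{\gamma'}$ from the $d$-vectors of the variables attached to $\Gamma$, which by induction are given by intersection pairings, the conclusion will follow.

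The key ingredient is a tropical Ptolemy identity for intersection numbers: for any fixed arc $\eta=\gamma_i$ and any flippable $\gamma\in\Gamma$ sitting as the diagonal of a quadrilateral with sides $\alpha_1,\alpha_2,\beta_1,\beta_2$ (where the pairs $\{\alpha_1,\alpha_2\}$ and $\{\beta_1,\beta_2\}$ are the two pairs of opposite sides), I claim
\begin{align}
(\eta|\gamma)+(\eta|\gamma')=\max\bigl((\eta|\alpha_1)+(\eta|\alpha_2),\ (\eta|\beta_1)+(\eta|\beta_2)\bigr).
\end{align}
This is a topological statement about minimal intersection counts of curves crossing a quadrilateral and can be verified by a local analysis of how $\eta$ enters and leaves the quadrilateral. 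Translating via \eqref{eqn:b-matrix_from_triangulation}, the sides rotating counterclockwise to $\gamma$ contribute the terms with $[b_{\ell k}]_+$ and those rotating clockwise contribute the terms with $[-b_{\ell k}]_+$, so the Ptolemy identity is exactly the $j=k$ case of \eqref{eqn:d-recursion} once one identifies $d_{ik}$ with $(\eta|\gamma)$. The $j\neq k$ case is trivial since the other arcs of $\Gamma$ are unchanged by the flip.

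The main obstacle is the punctured case $D_n$: when $\gamma$ is a radius, the surrounding ``quadrilateral'' can degenerate into a once-punctured digon or involve sides that are themselves radii with opposite taggings, and the naive Ptolemy identity breaks down because of the special values in Definition \ref{defn:intersection_pairing}(2)--(3). The correct approach is to reduce these degenerate cases to the generic one via the tag-changing symmetry \eqref{eqn:change_tagging}, which lets us replace any triangulation containing more than one notched radius by its image under globally changing tags at the puncture without affecting the shear coordinates or the intersection numbers; after this reduction, the flip takes place either entirely in the unpunctured part of $S$ or in a configuration with at most one notched radius, where each case can be checked by an explicit picture. Once this case analysis is performed, the induction closes and gives $d_\gamma=\bigl((\gamma_i|\gamma)\bigr)_{i\in I}$ for every tagged arc $\gamma$.
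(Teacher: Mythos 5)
A point of comparison first: the paper does not prove Theorem \ref{thm:description_of_d-vectors} at all --- it is imported verbatim from \cite[Theorem 8.6]{Fomin08} and \cite[Theorem 3.4]{Gekhtman}, so there is no in-paper argument to measure yours against. What you have written is essentially a reconstruction of the standard proof from those sources: induction on flip distance, with a tropical Ptolemy identity for intersection numbers as the engine, matched against the recursion \eqref{eqn:d-recursion}. The skeleton is sound. The base case is correct; the identification of the two opposite-side pairs of the quadrilateral with the $[b'_{\ell k}]_+$ and $[-b'_{\ell k}]_+$ terms is correct (sides lying on the boundary simply contribute $0$, and the value $-1$ for coinciding arcs makes the identity consistent even when $\eta$ is itself a side or the flipped diagonal); and in type $A_n$ the local analysis you allude to genuinely closes the argument, since a segment of a simple chord crossing the quadrilateral either joins two opposite sides (crossing both diagonals) or cuts a corner (crossing exactly one), and two segments joining the two \emph{different} opposite pairs would have to intersect each other and so cannot coexist on a simple arc.

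There are, however, two soft spots. First, in type $D_n$ essentially all of the content of the theorem sits exactly where you stop: flips inside a once-punctured digon, radii with the two taggings, and the special values in Definition \ref{defn:intersection_pairing}(2)--(3) make the ``quadrilateral'' degenerate, and the naive Ptolemy identity must be re-verified configuration by configuration; declaring that ``each case can be checked by an explicit picture'' leaves the hard part of the $D_n$ statement unproved (compare the length of the case analysis the paper itself needs in Appendix \ref{app:type-Dn} for statements of a similar local flavour). Second, \eqref{eqn:change_tagging} is a normalizing convention for \emph{shear coordinates of laminations}; by itself it says nothing about $d$-vectors or about the Laurent expansions that define them. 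To use the simultaneous tag change at the puncture to reduce to triangulations with at most one notched radius, you also need the fact that this involution is induced by an automorphism of the cluster pattern of type $D_n$ (so that it carries $D$-matrices to $D$-matrices and intersection numbers to intersection numbers compatibly). That fact is true for the once-punctured disk, but it is an additional ingredient that has to be stated and justified rather than a consequence of \eqref{eqn:change_tagging}.
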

The set of non-initial $d$-vectors of $\mathcal{A}_\bullet(B)$ is therefore
\[
	\mathcal{D}(B)=\left\{
	d_{\gamma}=\left((\gamma_i|\gamma)\right)_{i\in I}
	\right\}
\]
as $\gamma$ runs over all arcs of $S$ not in $\Gamma_0$.

\subsection{Proof of Theorem \ref{thm:main}}
We begin by providing an alternative and immediate proof of
(\ref{eqn:sign-coherence}) for types $A_n$ and $D_n$.
\begin{lem}
	All the vectors in $\mathcal{C}(B)$ are sign-coherent. 
	\label{lemma:c-vectors_are_sign_coerent}
\end{lem}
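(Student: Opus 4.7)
My plan is to exploit the topological meaning of shear coordinates. By definition,
\[
c_{\gamma,\Gamma}=\bigl(b^{\Gamma}_{\lambda_i,\gamma}\bigr)_{i\in I},
\]
and each entry $b^{\Gamma}_{\lambda_i,\gamma}$ records a signed count of how the elementary lamination $\lambda_i$ traverses the quadrilateral (or digon) around $\gamma$ in $\Gamma$. Per Figure~\ref{fig:shear-coordinates}, positive contributions come from the ``Z-shaped'' crossings and negative contributions from the ``S-shaped'' crossings of that quadrilateral; these two patterns are topologically incompatible in the sense that if two curves realize opposite patterns inside the same quadrilateral, they must meet inside it.

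The key input I need, then, is that the laminations $\Lambda_0=\{\lambda_i\}_{i\in I}$ can be drawn so as to be pairwise non-crossing. First, I would prove this: the chord laminations are obtained by sliding both endpoints of each chord of $\Gamma_0$ clockwise along $\partial S$ into a neighbourhood of the chord; this preserves the pairwise non-crossing property of the chords, since an intersection of slid curves would force an intersection of the originals. For radii of $\Gamma_0$ sharing the puncture, the compatibility condition in Definition~\ref{defn:intersection_pairing} forces all radii with distinct tags to be mutually homotopic; hence the corresponding laminations wind infinitely around the puncture in a coherent way (all plain radii winding one direction, all notched radii the other), and local neighbourhoods can be chosen so that distinct $\lambda_i$'s never cross.

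With pairwise non-crossing in hand, the sign-coherence proof is immediate: fix $\gamma\in\Gamma$ and suppose for contradiction that $b^{\Gamma}_{\lambda_i,\gamma}>0$ while $b^{\Gamma}_{\lambda_j,\gamma}<0$ for some $i,j$. Then a positive-crossing segment of $\lambda_i$ and a negative-crossing segment of $\lambda_j$ both enter the quadrilateral around $\gamma$; by the diagonal geometry of these two patterns they must cross inside the quadrilateral, contradicting non-crossing of $\lambda_i$ and $\lambda_j$. Hence all nonzero components of $c_{\gamma,\Gamma}$ share a sign.

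The only subtlety is what happens when $\gamma$ is a radius enclosed by a digon (so that ``going around the puncture'' is what generates nonzero shear coordinates). To treat this uniformly, I would apply the tag-swapping relation~\eqref{eqn:change_tagging}: replacing $\Gamma$ and $\Lambda_0$ by $\Gamma^{\vee}$ and $\Lambda_0^{\vee}$ preserves all shear coordinates, so one may assume that at most one radius of $\Gamma$ is notched, which is the setting in which Figure~\ref{fig:shear-coordinates} directly applies. In this reduced setting the same ``opposite patterns force a crossing'' argument goes through. I expect the main obstacle to be verifying carefully these puncture/digon cases rather than the main non-crossing argument, but no calculation beyond tracking how $\lambda_i$ enters and exits each digon is required.
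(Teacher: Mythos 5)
Your overall strategy is the same as the paper's: opposite-sign shear coordinates with respect to the same arc $\gamma$ force the two responsible laminations to intersect, which is then played off against the compatibility of the arcs of $\Gamma_0$; the reduction to at most one notched radius via \eqref{eqn:change_tagging} also matches. However, your key input --- that the elementary laminations $\Lambda_0=\{\lambda_i\}_{i\in I}$ of a triangulation can be drawn pairwise non-crossing --- is false in exactly one case, and that case is not hypothetical. If $\Gamma_0$ contains a digon with two homotopic radii carrying opposite tags (the tagged-arc avatar of a self-folded triangle, which the paper uses repeatedly, e.g.\ in Proposition \ref{prop:inverse-AD}), then the two corresponding elementary laminations start near the same boundary point and spiral into the puncture in \emph{opposite} directions; two such spirals necessarily intersect, no matter how the neighbourhoods are chosen. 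Your parenthetical ``all plain radii winding one direction, all notched radii the other'' is precisely the configuration that forces a crossing rather than preventing one. Compatibility of tagged arcs is measured by the intersection pairing of Definition \ref{defn:intersection_pairing} (which assigns $0$ to homotopic radii with different tags), not by actual disjointness of the associated laminations, so your reduction of compatibility to non-crossing does not go through.

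Consequently your final contradiction fails for the pair $(\lambda_i,\lambda_j)$ coming from such a digon: they do cross, so no contradiction with ``non-crossing'' is available, and you must instead verify directly from the local pictures that two laminations spiralling in opposite directions about the puncture can never contribute shear coordinates of opposite sign to the same quadrilateral. This is exactly the extra clause in the paper's proof (``if they both spiral to the puncture then they do not come from homotopic radii''), after which the contradiction is phrased in terms of the \emph{arcs} having positive intersection pairing rather than the laminations crossing. Your argument is correct for all other pairs; to close the gap you need to add this one case check rather than assert global non-crossing of $\Lambda_0$.
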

\begin{proof}
	By contradiction let $c_{\gamma,\Gamma}$ be a $c$-vector that is not sign-coherent
	i.e. there are two elementary laminations in $\Lambda_0$, say $\lambda_i$ and
	$\lambda_j$ such that 
	$b^\Gamma_{\lambda_i,\gamma}>0$ and $b^\Gamma_{\lambda_j,\gamma}<0$. 
	
	Assume at first that $\Gamma$ contains at most one
	notched arc. From Figure \ref{fig:shear-coordinates} it is clear that
	$\lambda_i$ and $\lambda_j$ intersect and if they both spiral to the puncture
	then they do not come from homotopic radii. This is in contradiction with the
	hypothesis that $\Lambda_0$ came from a triangulation of $S$: the intersection
	pairing of the arcs	corresponding to $\lambda_i$ and $\lambda_j$ is positive.

	The results extends immediately to all the possible triangulation if we
	observe that changing the windings of all the laminations spiraling to the
	puncture does not affect the intersection relations among elements of
	$\Lambda_0$. 
\end{proof}

Note that, if $c_{\gamma,\Gamma}$ is a $c$-vector and $\Gamma'$ is the
triangulation obtained from $\Gamma$ by flipping $\gamma$ into $\gamma'$, then
\[
c_{\gamma,\Gamma}=-c_{\gamma',\Gamma'}.
\]
From now on we concentrate on the set $\mathcal{C}_+(B)$ of positive $c$-vectors
of $\mathcal{A}_\bullet(B)$.

\begin{lem}
	The weighted diagram of any positive $c$-vector in $\mathcal{A}_\bullet(B)$ is
	connected.
	\label{lemma:support_is_connected}
\end{lem}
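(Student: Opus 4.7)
The plan is to translate the positivity of $c_{\gamma,\Gamma}$ into a geometric crossing condition on the surface $S$, and then to exploit the fact that the curve $\gamma$, when viewed against the initial triangulation $\Gamma_0$, traces out a linear sequence of triangles. By the formula~(\ref{eqn:b-matrix_from_triangulation}), two arcs of $\Gamma_0$ that share a common triangle satisfy $b^{\Gamma_0}_{\gamma_i,\gamma_j}\neq 0$ and are therefore adjacent in $X(B)=X(B(\Gamma_0))$; so such a sequence of traversed arcs furnishes an explicit connected path in $X(B)$ in which the support of $c_{\gamma,\Gamma}$ will be confined.

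Specifically, for each index $i$ with $b^{\Gamma}_{\lambda_i,\gamma}>0$, Figure~\ref{fig:shear-coordinates} shows that $\lambda_i$ crosses the quadrilateral $Q_\gamma$ of $\gamma$ in $\Gamma$ through opposite sides in the positive configuration; sign-coherence (Lemma~\ref{lemma:c-vectors_are_sign_coerent}) excludes cancellation between positive and negative contributions, so each such $\lambda_i$ must in fact cross $\gamma$ inside $S$. Since $\lambda_i$ is a parallel shift of $\gamma_i$ along the boundary (and, in the radius case, along a spiral around the puncture), this crossing forces $\gamma$ either to cross $\gamma_i$ in $S$ or to share an endpoint with $\gamma_i$ in such a way that $\gamma$ passes on the opposite side of the shifted endpoint of $\lambda_i$. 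Now trace $\gamma$ through $\Gamma_0$: it passes through a sequence of triangles $T_1,\dots,T_m$, crossing arcs $\gamma_{j_1},\dots,\gamma_{j_{m-1}}$ between consecutive triangles. Consecutive arcs $\gamma_{j_\ell}, \gamma_{j_{\ell+1}}$ are two sides of the common triangle $T_{\ell+1}$, hence adjacent in $X(B)$, so the indices $j_1,\dots,j_{m-1}$ form a connected path. Every index in the support either appears in this list or corresponds to an arc of $\Gamma_0$ incident to $T_1$ or $T_m$ at an endpoint of $\gamma$, and such an additional index is still adjacent in $X(B)$ to the path; the support is therefore connected.

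The main obstacle is executing this endpoint analysis in type $D_n$, where $\gamma$ may itself be a radius, where $\Gamma$ or $\Gamma_0$ may contain notched radii, and where the laminations can spiral around the puncture; in these situations the ``endpoint triangle'' is replaced by a punctured monogon or bigon, and the positive/negative contribution rules of Figure~\ref{fig:shear-coordinates} must be applied more delicately. Using the symmetry~(\ref{eqn:change_tagging}) one first reduces to the case in which $\Gamma$ and $\Gamma_0$ each contain at most one notched radius. The remaining configurations near the puncture constitute a finite list of local pictures, each of which can be verified by direct inspection of the shear-coordinate rules; once these local checks are done, the path argument of the previous paragraph applies unchanged and the proof concludes.
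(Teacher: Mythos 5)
There is a genuine gap at the last step. What you actually establish is a \emph{containment}: the support of $c_{\gamma,\Gamma}$ is contained in the set of arcs of $\Gamma_0$ crossed by $\gamma$ (plus some arcs incident to its endpoints), and that larger set forms a connected path in $X(B)$. But a subset of a connected path need not be connected. If $\gamma$ crosses $\gamma_{j_1},\gamma_{j_2},\gamma_{j_3}$ in order and only $j_1$ and $j_3$ carry positive shear coordinates, your argument produces no contradiction, yet the support $\{j_1,j_3\}$ is disconnected whenever $j_1$ and $j_3$ are not adjacent. The point being glossed over is that for $c$-vectors, $\lambda_i$ crossing $\gamma$ is necessary but \emph{not} sufficient for $b^{\Gamma}_{\lambda_i,\gamma}\neq 0$: only segments traversing the quadrilateral $Q_\gamma$ through opposite sides as in Figure \ref{fig:shear-coordinates} contribute, while a segment crossing the diagonal but entering and leaving through the two sides meeting at an endpoint of $\gamma$ contributes $0$. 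This is also why the $c$-vector genuinely depends on the ambient triangulation $\Gamma$ and not just on the arc $\gamma$, whereas your crossing path depends only on $\gamma$ and $\Gamma_0$; matching $c$-vector supports with intersection patterns is precisely the nontrivial content of Proposition \ref{prop:c=d_simply_laced}, which is not available at this stage.

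The missing engine is the maximality of the triangulation $\Gamma_0$. The paper argues by contradiction: pick $i$ and $j$ in two different components of the support at minimal distance in $X(B)$, hence non-adjacent, and show that between $\lambda_i$ and $\lambda_j$ there must be a third elementary lamination $\lambda_k\in\Lambda_0$ that also crosses $Q_\gamma$ positively --- in the generic case because otherwise a compatible arc could be added to $\Gamma_0$, contradicting maximality, and in the punctured cases because of the bigon enclosing the two spiralling laminations. The resulting $k$ contradicts the minimality of $d(i,j)$. Your reductions via (\ref{eqn:change_tagging}) and the endpoint/tagging case analysis in type $D_n$ would still be needed on top of that, but without an argument of this kind your proof only bounds the support from above and does not prove connectivity.
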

\begin{proof}
	By contradiction assume that the weighted diagram of $c_{\gamma,\Gamma}$ has
	two disjoint components. Let $i$ be a node in one of them and $j$ a node
	in the other such that they are at minimal distance in $X(B)$. By hypothesis 
	$i$ and $j$ are not adjacent. Let $\lambda_i$ and $\lambda_j$ be the
	corresponding elementary laminations in $\Lambda_0$. 
	
	Three cases are possible (in type $A_n$ only the last one occurs).
	\begin{enumerate}
		\item 
			If $\lambda_i$ and $\lambda_j$ have two endpoints in common then they
			spiral to the puncture in opposite directions. In this case, since both
			$b^\Gamma_{\lambda_i,\gamma}\neq 0$ and $b^\Gamma_{\lambda_j,\gamma}\neq
			0$, the arc $\gamma$ cannot be incident to the puncture. The multilamination
			$\Lambda_0$ contains then a bigon enclosing $\lambda_i$ and $\lambda_j$;
			at least one side of this bigon (say $\lambda_k$) crosses positively the
			quadrilateral enclosing $\gamma$.

		\item
			If $\lambda_i$ and $\lambda_j$ share exactly one endpoint, since $i$ and
			$j$ are not adjacent, there are two possible configurations.
			If there is no other lamination sharing that endpoint then they 
			both spiral to the puncture and they are enclosed in a bigon; at
			least one side of this bigon (again say $\lambda_k$)
			intersects positively the quadrilateral enclosing
			$\gamma$. Otherwise at least one  lamination
			$\lambda_k$ among those sharing the same endpoint is such that 
			$b^\Gamma_{\lambda_k,\gamma}>0$.

		\item
			Finally if $\lambda_i$ and $\lambda_j$ do not share any endpoint then
			there is at least one lamination $\lambda_k$ starting from one of those
			four points,  lying in between $\lambda_i$ and $\lambda_j$ and
			crossing positively the quadrilateral that encloses $\gamma$ (otherwise
			such an elementary lamination could be added to $\Lambda_0$ in
			contradiction to the assumption that the multilamination corresponds to a
			triangulation). 

	\end{enumerate}
	In all of the cases there is a vertex $k$ in between $i$ and $j$ such that the
	$k$-th component of $c_{\gamma,\Gamma}$ is non-zero in contradiction with the
	assumption of minimal distance between $i$ and $j$.
\end{proof}

\begin{prop}
	In types $A_n$ and $D_n$ we have
	\[
		\mathcal{C}_+(B)\subset\mathcal{V}(B).
	\]
	\label{prop:An_Dn-support}
\end{prop}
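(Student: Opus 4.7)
My plan is to fix a positive $c$-vector $c_{\gamma,\Gamma}$ realized by an arc $\gamma$ in a triangulation $\Gamma$ of $S$, and to read off its weighted support in $X(B)$ directly from the geometry of $\gamma$ inside $S$. The $i$-th component is the shear coordinate $b^\Gamma_{\lambda_i,\gamma}$, which counts, with signs, the ``diagonal'' crossings of the elementary lamination $\lambda_i\in\Lambda_0$ with the quadrilateral $Q_\gamma$ enclosing $\gamma$ in $\Gamma$; by positivity all non-zero contributions must have the same (positive) sign, and by Lemma \ref{lemma:support_is_connected} the support is already connected in $X(B)$. The task reduces to checking that this support, with multiplicities given by the shear coordinates, is one of the templates listed in $\mathcal{W}(A_n)$ or $\mathcal{W}(D_n)$ of Section \ref{sect:sets}.

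First I would handle type $A_n$, where $S$ is an unpunctured disk: every arc and every lamination is a chord with two distinct boundary endpoints, so each $\lambda_i$ either misses $Q_\gamma$ or cuts it exactly once. Consequently every component of $c_{\gamma,\Gamma}$ lies in $\{-1,0,1\}$ and, for $c_{\gamma,\Gamma}$ positive, in $\{0,1\}$. A case analysis on the four sides of $Q_\gamma$ (each either a boundary segment or an arc of $\Gamma_0$) shows that the indices $i$ with $b^\Gamma_{\lambda_i,\gamma}=1$ correspond to arcs $\gamma_i\in\Gamma_0$ that, inside $X(B)$, assemble into a single full sub-string. Since $\mathcal{W}(A_n)$ consists precisely of such strings of length at most $n$ with all weights equal to one, this yields $\mathcal{C}_+(B)\subset\mathcal{V}(B)$ in type $A_n$.

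For type $D_n$, with $S$ a disk carrying one interior puncture, I would carry out a case analysis on the nature of $\gamma$ (chord, plain radius, or notched radius) and on the radii of $\Gamma$ incident to the puncture. Chord-type laminations still contribute at most once to any shear coordinate; weights larger than one arise only when $\lambda_i$ spirals around the puncture while $\gamma$ is itself a radius, so that $\lambda_i$ can wrap around the puncture and cut $Q_\gamma$ several times before escaping. By applying \eqref{eqn:change_tagging} I may assume $\Gamma$ contains at most one notched radius, which cuts down the configurations to a finite list. For each such configuration I would compute the shape and the weights of the resulting sub-diagram of $X(B)$, and verify by direct inspection that it matches exactly one of the templates in Figure \ref{fig:allowed_diagrams_Dn}; the full enumeration I would relegate to Appendix \ref{app:type-Dn}.

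The main obstacle is precisely this type $D_n$ case analysis: correctly tracking how a spiraling lamination wraps around the puncture and interacts with the two possible local structures near the puncture (a bigon whose two radii are homotopic, with a dotted spiraling line, versus a bigon whose two radii are non-homotopic) requires careful bookkeeping, because both the shear coordinates and the adjacencies in $X(B)$ depend on these choices. Once this enumeration is completed the support is forced to be one of the allowed weighted diagrams, establishing $\mathcal{C}_+(B)\subset\mathcal{V}(B)$.
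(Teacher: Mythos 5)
Your framework---reading the weighted support off the shear coordinates, invoking sign-coherence and Lemma \ref{lemma:support_is_connected} for connectivity, then matching against the templates---is the same as the paper's, and your type $A_n$ argument is essentially correct: the paper phrases the final step as the observation that no triangle of $X(B)$ can lie in the support (at least one side of each triangle of $\Lambda_0$ misses $\gamma$), which together with Proposition \ref{prop:dynkin-An} forces the support to be a string; your case analysis on the sides of $Q_\gamma$ amounts to the same thing.

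The type $D_n$ part, however, rests on a false premise. You assert that chord-type laminations contribute at most once to any shear coordinate, and that weights $\geq 2$ occur only when $\gamma$ is a radius and $\lambda_i$ spirals into the puncture. This is not true: in the once-punctured disk a chord lamination $\lambda_i$ can exit the quadrilateral $Q_\gamma$, travel around the puncture through the complement, and re-enter through the opposite side, producing two diagonal crossings and hence shear coordinate $2$. Indeed this must happen: templates VII and VIII of Figure \ref{fig:allowed_diagrams_Dn} carry strings of weight-$2$ nodes of arbitrary length, while $\Gamma_0$ contains at most two radii, so most weight-$2$ entries are contributed by chord laminations against a chord $\gamma$ whose enclosing quadrilateral wraps around the puncture (these are the $c$-vectors matching the roots $e_i+e_j$). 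Organizing the case analysis around your premise would miss exactly these configurations. A second, related gap: your enumeration runs over the shape of $\gamma$ and of $\Gamma$ near the puncture, but the resulting weighted diagram also depends on the initial multilamination $\Lambda_0$ (i.e.\ on $B$), which your list does not control. The paper makes the enumeration finite and demonstrably exhaustive by decomposing $S$ into pieces determined by $\Lambda_0$---exactly one piece containing the puncture, of one of the five types in Figure \ref{fig:wheel-decomposition}, all others containing only chord laminations---and observing that a quadrilateral can meet at most three pieces nontrivially, which yields the $87$ nontrivial cases of Appendix \ref{app:type-Dn}. Without some such structural device your ``finite list of configurations'' is neither finite nor complete as stated.
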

\begin{proof}
	We deal first with type $A_n$.
	It is clear that, having no puncture, any lamination $\lambda\in\Lambda_0$ can
	intersect any given	arc $\gamma$ at most once so
	$b^\Gamma_{\lambda,\gamma}\in\left\{ 0,1 \right\}$.  In view of Proposition
	\ref{prop:dynkin-An} it suffices to show that no $c$-vector can have a
	triangle in its weighted diagram. 
	But this follows directly from the fact that, since $S$ has no puncture, at
	least one of the sides of each triangle in $\Lambda_0$ does not intersect any
	given arc $\gamma$. 
	\begin{figure}[htbp]
		\begin{center}
			\includegraphics[scale=.35]{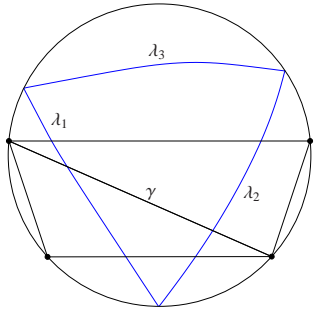}
		\end{center}
		\caption{Any triangle in a lamination of a surface of type $A_n$ intersects
			at most twice any arc $\gamma$.} 
		\label{fig:prop-An-support}
	\end{figure}

	For type $D_n$ the proof proceeds by case analysis. We need first some considerations. 
	In view of condition (\ref{eqn:change_tagging}) we can assume that the
	quadrilateral enclosing $\gamma$ is one of those in Figure
	\ref{fig:shear-coordinates}.

	Note that, given a multilamination $\Lambda_0$ coming from a triangulation, a once
	punctured disk can be decomposed into pieces: it will contain exactly one
	piece in which all the elementary laminations spiral to the puncture (one of
	the five in Figure \ref{fig:wheel-decomposition}); all the other pieces, if
	any, will contain only elementary laminations corresponding to chords.
	\begin{figure}[htbp]
		\begin{center}
	  	\includegraphics[scale=.7]{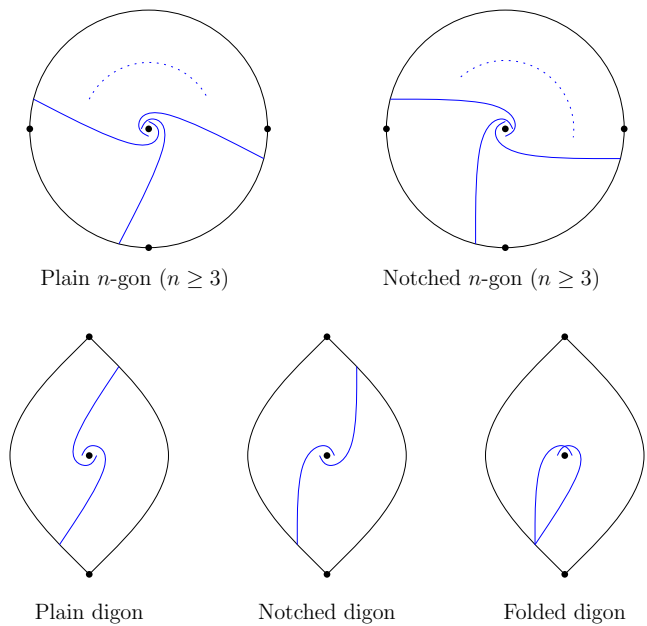}
		\end{center}
    \caption{Multilaminations with all elementary laminations spiralling to
			the puncture.}
    \label{fig:wheel-decomposition}
	\end{figure}
	Any such piece can only be glued to the one
	containing the puncture as shown in Figure
	\ref{fig:example_glueing}.
	\begin{figure}[htbp]
		\begin{center}
	  	\includegraphics[scale=.4]{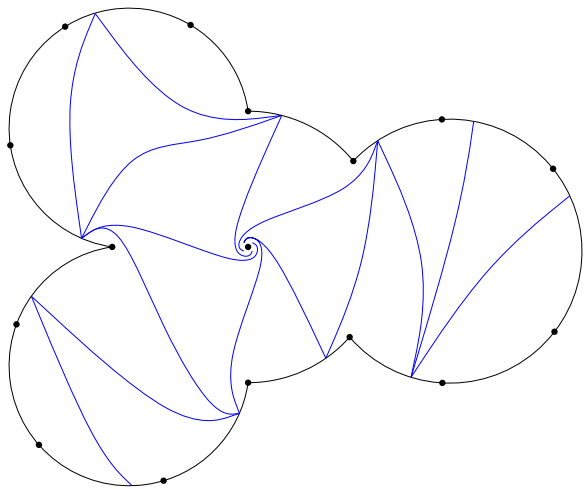}
		\end{center}
    \caption{Example of a decomposition of a surface of type $D_n$ according
			to a multilamination.}
    \label{fig:example_glueing}
	\end{figure}

	Any elementary lamination of $\Lambda_0$ not corresponding to a glued edge will
	be contained, up to a small neighbourhood of one endpoint, in exactly one piece
	in this decomposition. This implies that any given piece must contain at least
	a section of $\gamma$ and of two opposite sides of the quadrilateral enclosing
	$\gamma$ in order for any of the laminations it contains to give rise to a
	positive coordinate. In particular a quadrilateral of a triangulation can
	intersect non trivially at most three pieces in this decomposition. 
	
	We need therefore to consider all the possible ways a quadrilateral from Figure
	\ref{fig:shear-coordinates} can be fitted into a surface with at most three
	pieces. This is a straightforward but tedious check; a complete analysis of the
	various cases (87 nontrivial cases in total) is contained in Appendix
	\ref{app:type-Dn}.
\end{proof}

To connect $\mathcal{C}_+(B)$ with $\mathcal{D}(B)$ let us improve on the
parametrization of $c$-vectors of $\mathcal{A}_\bullet(B)$.
A triangulation $\Gamma$ of $S$ is said to be
\emph{bipartite} if every node of the corresponding quiver is either a sink or a
source. Note that, since in finite type any chordless cycle must be oriented
(\cite{Barot06} Theorem 1.2), bipartite triangulations correspond to bipartite
orientations of the Dynkin diagram of the given type.

Not every quadrilateral can appear in a bipartite triangulation; indeed it is
clear from the assignment (\ref{eqn:b-matrix_from_triangulation}) that the only
allowed one are those in Figure \ref{fig:bipartite_quadrilaterals}. Moreover,
given  any such quadrilateral, there exists a unique bipartite triangulation in 
which it appears.
\begin{figure}[htpb]
	\begin{center}
		\includegraphics[scale=.6]{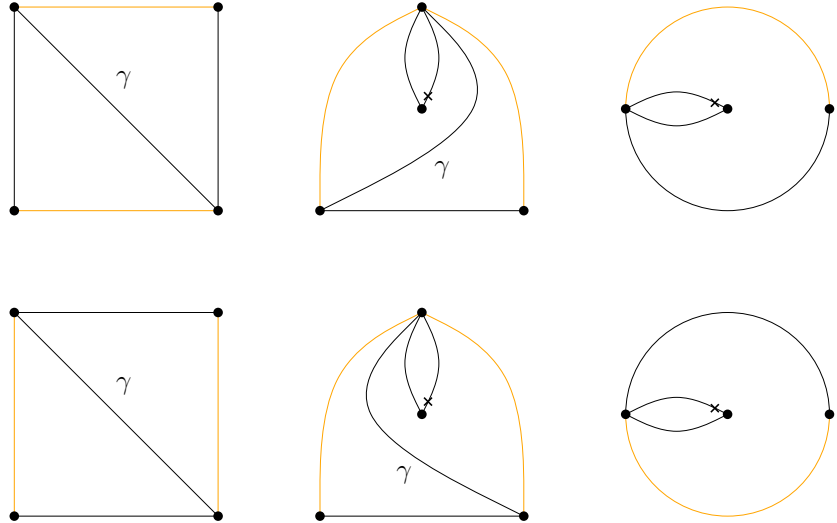}
	\end{center}
	\caption{The only quadrilaterals that can appear in a bipartite triangulation
	of a surface $S$. The edges on the boundary of $S$ are highlighted.
	When the quadrilateral is a digon any of the radii can be the diagonal
	$\gamma$.}
	\label{fig:bipartite_quadrilaterals}
\end{figure}

Let $\mathcal{C}^b_+(B)$ be the subset of $\mathcal{C}_+(B)$ consisting of
$c$-vectors $c_{\gamma,\Gamma}$ such that $\Gamma$ is bipartite.

\begin{prop}
	In types $A_n$ and $D_n$ 
	\[
		\mathcal{C}^b_+(B)=\mathcal{C}_+(B).
	\]
	\label{prop:bipartite}
\end{prop}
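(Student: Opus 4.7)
The plan is to establish the reverse inclusion $\mathcal{C}_+(B)\subset \mathcal{C}^b_+(B)$; the other inclusion is tautological. The starting observation is that the shear coordinate $b^\Gamma_{\lambda,\gamma}$ depends on $\Gamma$ only through the quadrilateral $Q_\gamma(\Gamma)$ of $\Gamma$ having $\gamma$ as diagonal, since its value is read off from the pattern of intersections of $\lambda$ with $Q_\gamma(\Gamma)$ as in Figure \ref{fig:shear-coordinates}. Consequently, the $c$-vector $c_{\gamma,\Gamma}$ depends only on the pair $(Q_\gamma(\Gamma),\Lambda_0)$ and not on the remainder of $\Gamma$. Combined with the remark preceding Figure \ref{fig:bipartite_quadrilaterals}, namely that each bipartite quadrilateral of $S$ extends uniquely to a bipartite triangulation, it suffices to show that for every $c_{\gamma,\Gamma}\in\mathcal{C}_+(B)$ there is a quadrilateral $Q^b$ of the shape depicted in Figure \ref{fig:bipartite_quadrilaterals} with a distinguished diagonal $\gamma^b$ whose intersections with $\Lambda_0$ reproduce the same shear coordinates as $(Q_\gamma(\Gamma),\gamma)$.

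The existence of such a $Q^b$ is established by case analysis on the local configuration of $Q_\gamma(\Gamma)$ relative to $\Lambda_0$. The relevant enumeration is precisely the one carried out in the proof of Proposition \ref{prop:An_Dn-support} and collected in Appendix \ref{app:type-Dn}: for each of the listed configurations one deforms the four sides of $Q_\gamma(\Gamma)$ through homotopies that do not sweep across any lamination of $\Lambda_0$, pushing them towards the boundary of $S$ or onto radii incident to the puncture, until $Q_\gamma(\Gamma)$ assumes the form of one of the quadrilaterals of Figure \ref{fig:bipartite_quadrilaterals}. Such homotopies preserve by construction the intersection pattern with $\Lambda_0$, and hence the $c$-vector; the unique bipartite triangulation extending the resulting $Q^b$ is then the desired $\Gamma^b$.

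In type $A_n$ this straightening is immediate: the punctureless disk admits such a deformation for every admissible quadrilateral, since in the absence of the puncture each side of $Q_\gamma(\Gamma)$ can be homotoped directly onto a boundary segment without crossing any lamination. In type $D_n$ one must in addition keep track of how the sides of $Q_\gamma(\Gamma)$ interact with the laminations spiraling to the puncture, and, when $Q_\gamma(\Gamma)$ is a digon or when $\gamma$ is itself a radius, invoke the symmetry \eqref{eqn:change_tagging} to realign the tags uniformly before performing the homotopy. I expect the main obstacle to be precisely this $D_n$ analysis: the interplay between the puncture, the spiraling laminations, the possible digons and the four sides of $Q_\gamma(\Gamma)$ produces a substantial number of sub-cases, each of which has to be checked explicitly against the list in Figure \ref{fig:bipartite_quadrilaterals}, and this bookkeeping is what justifies pushing the detailed verification into the appendix.
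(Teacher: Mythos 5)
Your overall strategy is the one the paper follows: reduce to producing a bipartite quadrilateral whose shear coordinates against $\Lambda_0$ agree with those of the quadrilateral enclosing $\gamma$, use the uniqueness of the bipartite triangulation extending such a quadrilateral, and push the type $D_n$ bookkeeping into the appendix. However, the mechanism you give for why the replacement quadrilateral exists does not work as stated. You claim the sides of $Q_\gamma(\Gamma)$ can be deformed onto boundary segments (or radii) ``through homotopies that do not sweep across any lamination of $\Lambda_0$,'' so that the entire intersection pattern is preserved. Such a homotopy does not exist in general: the sides of a quadrilateral are arcs with endpoints at marked points, and a bipartite quadrilateral requires two opposite sides to be boundary segments, so you must move endpoints from one marked point to another. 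Any isotopy doing this sweeps across the intermediate marked points and across the elementary laminations anchored at the nearby points $p'$; already in type $A_n$ a quadrilateral such as $(1,3,5,7)$ in a heptagon has three sides that are genuine arcs, none of which can be pushed to the boundary without crossing laminations of a typical $\Lambda_0$.

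What is actually true, and what the paper proves, is weaker and requires a different justification: the replacement quadrilateral generally has \emph{different} vertices and a \emph{different} total intersection pattern with $\Lambda_0$; only the shear-coordinate-contributing crossings (the ``Z-shaped'' ones of Figure \ref{fig:shear-coordinates}) are preserved. To see that such a quadrilateral exists one cannot argue by a generic deformation; one uses the structure of the support of $c_{\gamma,\Gamma}$. Concretely, in type $A_n$ the paper takes a leaf $i$ of the support, observes that the extremal lamination $\lambda_i$ sits in a triangle of $\Lambda_0$ whose other two laminations do not contribute positively, and uses the free vertex $p'$ of that triangle to dictate where the new quadrilateral's vertices go; repeating at the other leaf yields a quadrilateral with two opposite sides on the boundary and unchanged shear coordinates. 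In type $D_n$ the same leaf-based reduction is performed first and then the result is matched against Figure \ref{fig:bipartite_quadrilaterals}, with the $87$ configurations checked in Appendix \ref{app:type-Dn}. So your proposal identifies the right reduction and the right place where the case analysis lives, but the step ``the homotopy preserves intersections by construction'' is the core of the proof and, as formulated, it is false; you need the leaf/triangle argument (or an equivalent explicit construction) in its place.
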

\begin{proof}
	Let $c_{\gamma,\Gamma}$ be any element of $\mathcal{C}_+(B)$; we need to show
	that there exists a bipartite triangulation $\Gamma'$ and an arc $\gamma'\in
	\Gamma'$ such that	$c_{\gamma,\Gamma}=c_{\gamma',\Gamma'}$. 

	Let $\Lambda_0$ be the multilamination associated to $B$.
	In view of the observation we just	made we only need to construct a
	quadrilateral like those in Figure \ref{fig:bipartite_quadrilaterals} having
	the same intersections with $\Lambda_0$ that $\Gamma$ does: this will
	automatically determine the bipartite triangulation we are after.

	We concentrate first on type $A_n$.
	The idea is simple: pick a leaf in the support of
	$c_{\gamma,\Gamma}$ and let $\lambda$ be the corresponding elementary
	lamination in $\Lambda_0$. Since $\lambda$ is the ``last'' lamination
	intersecting the quadrilateral enclosing $\gamma$ positively it must belong to
	a triangle in $\Lambda_0$ such that the other two lamination composing it do not
  give rise to positive shear coordinates. Let $p'$ be the only vertex of the
	triangle that is not incident to $\lambda$. We can replace the
	original quadrilateral with one having the two marked points closest to $p'$
	as vertices: all the shear coordinates will be
	unchanged. We can than conclude by applying the same procedure to the other leaf 
	(cf. Figure \ref{fig:example_bipartite_An}). 
\begin{figure}[htpb]
	\begin{center}
		\includegraphics[scale=.6]{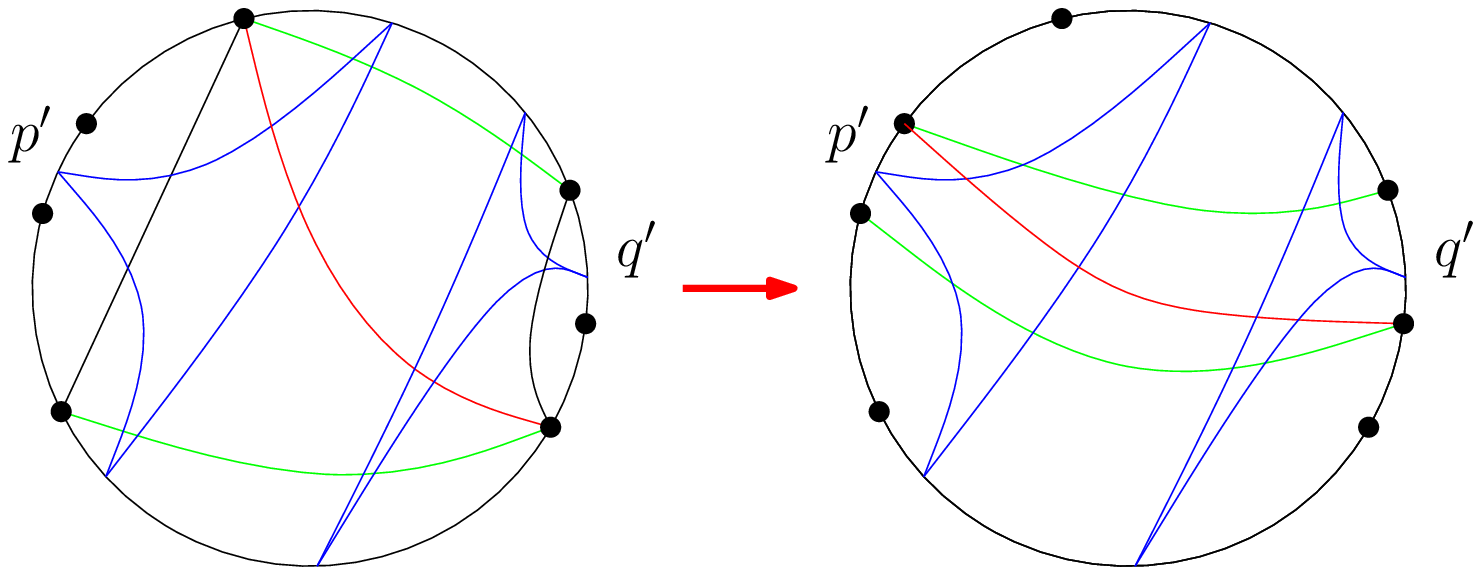}
	\end{center}
	\caption{The reduction of a quadrilateral to a bipartite quadrilateral in type
		$A_n$. The quadrilateral on the right give raise to the same shear
		coordinates produced by the quadrilateral on the left and determines
		uniquely a bipartite triangulation.}
	\label{fig:example_bipartite_An}
\end{figure}

	This is sufficient in type $A_n$ but not in general in type $D_n$: we need to
	deal with folded quadrilaterals as well. The replacement to be performed
	depends both on $\Lambda_0$ and $\gamma$ but it is straightforward from the
	pictures. The general procedure is shown in Figure
	\ref{fig:example_bipartite}. The reduction is in two steps: first we apply the
	same strategy of type $A_n$ to have the correct amount of edges of the
	quadrilateral on the boundary of the surface. Then, if needed, we replace the quadrilateral we
	obtain with one from  Figure \ref{fig:bipartite_quadrilaterals}.
	The precise case analysis is again in 
	Appendix \ref{app:type-Dn}; there we provide, for each possible quadrilateral
	and for each multilamination an explicit replacement.
\end{proof}
\begin{figure}[htpb]
	\begin{center}
		\includegraphics[scale=.6]{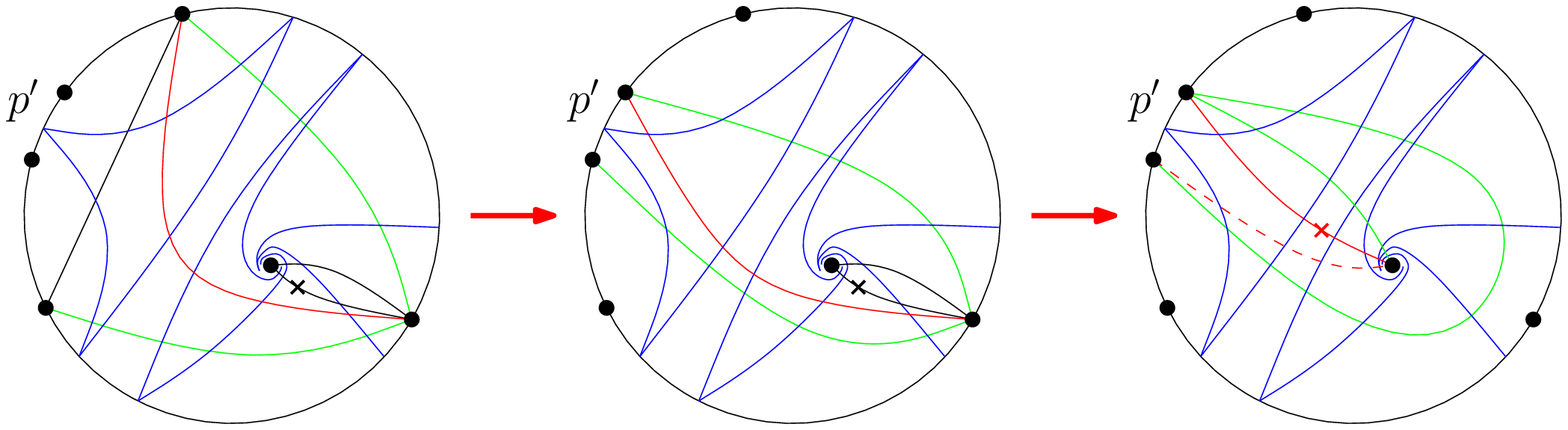}
	\end{center}
	\caption{An example of the reduction of a quadrilateral to a bipartite
		quadrilateral in type $D_n$. }
	\label{fig:example_bipartite}
\end{figure}

In analogy with the definition above let $\mathcal{D}^b(B)$ be the subset of all
the non-initial $d$-vectors corresponding to cluster variables appearing in
bipartite seeds of $\mathcal{A}_\bullet(B)$. Since any arc on $S$ appears in a
bipartite triangulation, in types $A_n$ and $D_n$ we have 
\begin{align}
	\mathcal{D}^b(B)=\mathcal{D}(B).
	\label{eqn:bipartite-D}
\end{align}
\begin{rk}
	The above equality, together with Proposition \ref{prop:bipartite}, prove
	Theorem \ref{thm:bipartite} for cluster algebras of types $A_n$ and $D_n$.
\end{rk}

\begin{prop}
	\label{prop:c=d_simply_laced}
	In types $A_n$ and $D_n$
	\[
		\mathcal{C}_+(B)=\mathcal{D}(B).
	\]
\end{prop}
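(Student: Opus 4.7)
The plan is to leverage Proposition \ref{prop:bipartite} and equation (\ref{eqn:bipartite-D}), and then identify positive $c$-vectors with non-initial $d$-vectors via the geometry of the bipartite quadrilaterals of Figure \ref{fig:bipartite_quadrilaterals}.

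The central claim I will prove is the identity $c_{\gamma,\Gamma}=d_{\gamma^\flat}$, valid for every bipartite triangulation $\Gamma$ of $S$ and every $\gamma\in\Gamma$, where $\gamma^\flat$ denotes the flip of $\gamma$ in $\Gamma$ (equivalently, the co-diagonal of the quadrilateral enclosing $\gamma$ inside $\Gamma$). Componentwise this amounts to showing $b^{\Gamma}_{\lambda_i,\gamma}=(\gamma_i|\gamma^\flat)$ for every $i\in I$, which I would verify by inspection on each of the bipartite quadrilaterals in Figure \ref{fig:bipartite_quadrilaterals} together with the various admissible positions of $\lambda_i$. The decisive combinatorial observation is that the specific shapes of bipartite quadrilaterals listed in the figure force every intersection of $\lambda_i$ with $\gamma$ to occur in one of the positively oriented configurations of Figure \ref{fig:shear-coordinates}, and moreover each such intersection corresponds, under the small boundary perturbation that defines the elementary lamination $\lambda_i$ from the arc $\gamma_i$, to a crossing of $\gamma_i$ with the co-diagonal $\gamma^\flat$.

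Granted this identity, both inclusions of Proposition \ref{prop:c=d_simply_laced} follow. For $\mathcal{C}_+(B)\subset\mathcal{D}(B)$: any positive $c$-vector has the form $c_{\gamma,\Gamma}$ with $\Gamma$ bipartite by Proposition \ref{prop:bipartite}, hence equals $d_{\gamma^\flat}$; positivity forces $\gamma^\flat\notin\Gamma_0$ (initial $d$-vectors are negative, so an initial target would violate sign-coherence), so $d_{\gamma^\flat}\in\mathcal{D}(B)$. For the reverse inclusion, given any non-initial arc $\alpha$, one uses (\ref{eqn:bipartite-D}) to place $\alpha$ inside a bipartite triangulation and then adapts the local reduction procedure of Proposition \ref{prop:bipartite} to produce a bipartite pair $(\gamma,\Gamma)$ realizing $\gamma^\flat=\alpha$, which by the identity gives $d_\alpha=c_{\gamma,\Gamma}\in\mathcal{C}_+(B)$.

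The main obstacle will be the case analysis in type $D_n$ when $\gamma$, $\gamma^\flat$, or some $\gamma_i$ is a radius: the enclosing quadrilateral can degenerate to a digon, the initial laminations may spiral around the puncture, and tag conventions must be tracked carefully. This bookkeeping is routine but voluminous, mirroring the case analyses in Propositions \ref{prop:An_Dn-support} and \ref{prop:bipartite}; equation (\ref{eqn:change_tagging}) roughly halves the number of distinct configurations requiring explicit verification, and the complete subcase list fits into the framework of Appendix \ref{app:type-Dn}.
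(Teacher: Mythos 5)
Your proposal follows essentially the same route as the paper: reduce to bipartite data via Proposition \ref{prop:bipartite} and equation (\ref{eqn:bipartite-D}), and then identify a positive $c$-vector $c_{\gamma,\Gamma}$ with the $d$-vector of the co-diagonal of the bipartite quadrilateral enclosing $\gamma$, checking componentwise that the shear coordinates of the perturbed laminations $\lambda_i$ count the crossings of $\gamma_i$ with that co-diagonal; the paper organizes the same verification from the other end, constructing for each non-initial arc (three cases according to its endpoints) the bipartite quadrilateral whose other diagonal realizes its $d$-vector as a $c$-vector, with the degenerate digon/tagging cases relegated to Appendix \ref{app:type-Dn}. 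One correction: your central identity $c_{\gamma,\Gamma}=d_{\gamma^\flat}$ cannot hold \emph{for every} $\gamma$ in \emph{every} bipartite triangulation $\Gamma$. Already in type $A_2$ all five triangulations of the pentagon are bipartite, and by (\ref{eqn:sign-coherence}) the vector $-(1,1)$ occurs as some $c_{\gamma,\Gamma}$ in one of them; but every $d$-vector is either componentwise nonnegative (non-initial) or equal to $-e_i$ (initial), so no arc $\gamma^\flat$ can realize it. Equivalently, $(\gamma_i|\gamma^\flat)$ is never negative unless $\gamma_i=\gamma^\flat$, while $b^\Gamma_{\lambda_i,\gamma}$ certainly can be. The statement you actually need, and the only one you use, is the identity for those $\gamma$ whose $c$-vector is positive (equivalently for the quadrilaterals produced by the reduction of Proposition \ref{prop:bipartite}); with that restriction your argument is correct and coincides in substance with the paper's.
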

\begin{proof}
	In view of the above reductions it suffices to show that 
  \[
	   \mathcal{C}^b_+(B)=\mathcal{D}^b(B).
	\]
	As before let $\Gamma_0=\left\{ \gamma_i \right\}_{i\in I}$ be the triangulation
	corresponding to $B$ and $\Lambda_0=\left\{ \lambda_i \right\}_{i\in I}$ the
	associated multilamination.
	In view of Theorem \ref{thm:description_of_d-vectors} and Definition
	\ref{defn:intersection_pairing} all the vectors in $\mathcal{D}^b(B)$ have
	non-negative components. 
	
	Let $\gamma$ be any arc not in $\Gamma_0$ and consider the $d$-vector
	$d_\gamma$; we need to distinguish three cases (cf. Figure
	\ref{fig:d_vectors-to-c_vectors})
	depending on the endpoints of $\gamma$ (call them $p$ and $q$).
	\begin{itemize}
		\item 
			If both $p$ and $q$ are on the boundary of $S$ and they are not adjacent
			then there are two other marked points $r$ and $s$ such that $p'$ is
			contained on the boundary segment $pr$ and $q'$ is contained in the
			boundary segment $qs$. Let $\gamma'$ be the diagonal $rs$ of the
			quadrilateral $prqs$ and complete the quadrilateral to a bipartite triangulation
			$\Gamma'$. We have $d_\gamma=c_{\gamma',\Gamma'}$. Note that if $S$ is of
			type $A_n$ this is the only possible case.
			
		\item
			It both $p$ and $q$ are on the boundary of $S$ and they are adjacent then
			we can assume (up to relabeling) that $q'$ lies on the boundary segment
			$qp$. Let $r$ be such that $p'$ lies on the boundary segment $pr$. Let
			$\gamma'$ be the diagonal $pr$ of the folded quadrilateral having
			vertices $q$, $p$, $r$, and the puncture and having two homotopic radii
			starting at $p$; Let $\Gamma'$ be the bipartite
			triangulation containing this quadrilateral. We have again $d_\gamma=
			c_{\gamma',\Gamma'}$.

		\item
			If one of the endpoints of $\gamma$ (say $q$ to fix ideas) is the puncture
			then let $r$ be the marked point such that $p'$ lies between $p$ and $r$.
			Let $\Gamma'$ be the bipartite triangulation containing the digon with vertices $p$
			and $r$, enclosing the puncture, and such that its radii both start from
			$r$. If $\gamma'$ is the radius with tagging opposite to the tagging of
			$\gamma$ then $d_\gamma=c_{\gamma',\Gamma'}$.

	\end{itemize}

	\begin{figure}[htbp]
		\begin{center}
			\includegraphics[scale=.6]{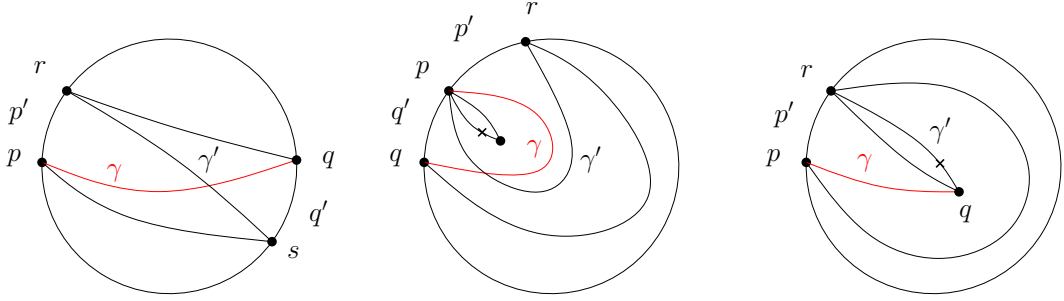}
		\end{center}
		\caption{The three possible cases of Proposition
			\ref{prop:c=d_simply_laced}.}
		\label{fig:d_vectors-to-c_vectors}
	\end{figure}

	Conversely let $c_{\gamma',\Gamma'}$ in $\mathcal{C}^b_+(B)$. The
	quadrilateral of $\Gamma'$ enclosing $\gamma'$ will be exactly one of those
	constructed above (they are all bipartite). Choosing $\gamma$ to be the
	corresponding arc we get $d_\gamma=c_{\gamma',\Gamma'}$.
\end{proof}
We thank Andrei Zelevinsky for providing the idea of using the ``bipartite
belt'' in the above proof.

The following Proposition concludes the proof of Theorem \ref{thm:main} for types $A_n$
and $D_n$.
\begin{prop}
	\label{prop:inverse-AD}
	In types $A_n$ and $D_n$ we have
	\[
		\mathcal{V}(B)\subset\mathcal{D}(B).
	\]
\end{prop}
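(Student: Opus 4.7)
The plan is to construct, for each embedding $W\subset X(B)$, a non-initial arc $\gamma$ on the surface $S$ whose $d$-vector, read off via Theorem \ref{thm:description_of_d-vectors}, coincides with the weight vector $v$ of $W$. Combining this with Proposition \ref{prop:An_Dn-support} and Proposition \ref{prop:c=d_simply_laced} already yields the chain $\mathcal{D}(B)=\mathcal{C}_+(B)\subset\mathcal{V}(B)$, so the inclusion $\mathcal{V}(B)\subset\mathcal{D}(B)$ is precisely what is needed to close that chain into an equality.

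For type $A_n$ the construction is essentially immediate. A template in $\mathcal{W}(A_n)$ is a string of vertices $i_1,\dots,i_k$ with all weights equal to one, and because the embedding into $X(B)$ is as a full sub-diagram, consecutive $\gamma_{i_j}$ and $\gamma_{i_{j+1}}$ share a triangle of $\Gamma_0$. These $k$ arcs, together with consecutive arcs of $\partial S$, bound a polygonal sub-region of $S$ whose two extreme boundary vertices I denote $p$ and $q$; the chord $\gamma=pq$ is non-initial, crosses each $\gamma_{i_j}$ exactly once, and is disjoint from every other arc of $\Gamma_0$, so $d_\gamma=v$ by Theorem \ref{thm:description_of_d-vectors}.

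For type $D_n$ the same strategy works but requires a case analysis over the templates in $\mathcal{W}(D_n)$ and their position relative to the puncture. Templates whose support is a string avoiding the $D$-branch give chords constructed exactly as above. Templates whose support meets the branch, or which carry a vertex of weight $2$, translate under the surface dictionary into radii with a specified tagging at the puncture, or into chords that wind once around the puncture and hence intersect certain initial radii twice, as prescribed by Definition \ref{defn:intersection_pairing}. In each case the arc $\gamma$ can be read off from the local picture of $\Gamma_0$ around the embedded $W$, and its intersection pairing with each $\gamma_i$ is checked against the weight of $W$ at $i$ by direct inspection.

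The main obstacle is keeping the $D_n$ enumeration under control: one has to run through every shape appearing in $\mathcal{W}(D_n)$ and, for each, every local configuration of $\Gamma_0$ compatible with the given full sub-diagram embedding (plain versus notched radii, digons with homotopic or non-homotopic radii, and the gluing patterns of Figure \ref{fig:wheel-decomposition}). As in Proposition \ref{prop:An_Dn-support} and Proposition \ref{prop:bipartite}, the bookkeeping is routine but substantial, and I would defer it to Appendix \ref{app:type-Dn}, where the same stock of local pictures already used in those proofs can be recycled to exhibit the required arc $\gamma$ in each case.
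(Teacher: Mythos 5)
Your proposal follows essentially the same route as the paper: for each embedded template one exhibits an explicit non-initial arc whose intersection numbers with $\Gamma_0$ reproduce the weights, via Theorem \ref{thm:description_of_d-vectors}; your type $A_n$ construction (the chord joining the two extreme vertices of the polygon swept out by the string) is exactly the paper's, and your type $D_n$ outline (chords for string templates, suitably tagged radii or arcs crossing both radii of a digon for the weight-$2$ and branch templates) matches the paper's case-by-case construction over the diagrams of Figure \ref{fig:allowed_diagrams_Dn}. The only difference is that the paper actually carries out the $D_n$ enumeration (distinguishing whether $\Gamma_0$ contains a digon with two homotopic radii) within the proof itself rather than deferring it to the appendix, but the substance is the same.
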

\begin{proof}
	Let $\Gamma_0=\left\{ \gamma_i \right\}_{i\in I}$ be a triangulation realizing
	$B$ and let $v=\left( v_i \right)_{i\in I}$ be any element in $\mathcal{V}(B)$.

	In type $A_n$ it is clear how to construct an arc $\gamma$ crossing exactly
	one time all the arcs $\gamma_j$ such that $v_j\neq0$: suppose $i$ is a leaf
	in the weighted diagram; the arc $\gamma_i$ corresponding to it belongs to two
	triangles. One of them is such that the nodes corresponding to the other two
	arcs forming it do not belong to the support of the weighted diagram. The arc
	$\gamma$ we are looking for starts from the vertex of this triangle opposed to
	$\gamma_i$.  It crosses then in sequence all the arcs $\gamma_j$ such that
	$v_j\neq0$ and terminates in the vertex opposite to the arc corresponding to
	the other leaf.

	In type $D_n$ the procedure is slightly more involved and depends on the
	initial triangulation $\Gamma_0$ but follows the same basic idea. 
	Suppose at first that $\Gamma_0$ does not contain a digon with two homotopic radii. 
	The same procedure described for type $A_n$ works verbatim for diagrams 
	I, IV, and VIII in Figure \ref{fig:allowed_diagrams_Dn}: 
	the only thing to note is that instead of a leaf we might
	have to take one of the nodes in the left triangle (for IV we cannot use the
	two rightmost leaves). 
	For diagrams II and VI we need a small fix: $\gamma$ starts from the vertex
	opposite to the arc corresponding to the leftmost leaf and ends at the
	puncture; its tagging is the opposite of the tagging of the radii in $\Gamma_0$. 
	For diagrams III and VII we repeat the same argument using the
	leftmost leaf and the leftmost node with multiplicity $2$.
	Diagrams like V cannot be embedded in a $X(B)$ if $\Gamma_0$ does not have a digon
	with two homotopic radii in it.

	If $\Gamma_0$ contains a digon with two homotopic radii then the only diagrams that
	can arise are I, II, III, IV, and V. For V the procedure is the same as
	the one for type $A_n$, we just need to cross both the radii of the digon.
	For diagrams III and IV the procedure is identical to the above. For diagrams
	like II $\gamma$ starts from the vertex opposite to the arc corresponding to
	the leftmost leaf and ends in the vertex of the digon not adjacent to the
	radii. For diagrams like I we need to distinguish two cases: if one of the
	leaves corresponds to a radius then the corresponding endpoint of $\gamma$ is
	the puncture and its tagging is the opposite of the one of that radius.
	Otherwise we proceed as in type $A_n$.
\end{proof}

%%%%%%%%%%%%%%%%%%%%%%%%%%%%%%%%%%%%%%%%%%%%%%%%%%%%%%%
\section{Types \texorpdfstring{$B_n$}{Bn} and \texorpdfstring{$C_n$}{Cn}: the folding method}
\label{sect:folding-bc}
Building on the results of last section we will now prove Theorem \ref{thm:main}
for  types $B_n$ and $C_n$.  In order to do so we will realize any principal
coefficients cluster algebra of type $B_n$ (respectively $C_n$) as a
subquotient of an appropriate cluster algebra of type $D_{n+1}$ (respectively
$A_{2n-1}$) with principal coefficients.

\subsection{Folding of cluster algebras with trivial coefficients}
The construction, for the coefficient-free case,  was explained in
\cite{Dupont08}. Since we need to generalize it to work with principal
coefficients later on let us begin by recalling in some details its main
features.

Let $B=\left( b_{ij} \right)_{i,j\in I}$ be a skew-symmetrizable integer matrix
and $\sigma$ a permutation of $I$.  

\begin{defn}
	A permutation $\sigma$ is an \emph{automorphism} of $B$ if, for any $i$ and
	$j$ in $I$,
	\begin{equation}
		b_{\sigma(i)\sigma(j)}=b_{ij}.
		\label{eqn:automorphism}
	\end{equation}
	An automorphism of $B$ is said to be \emph{admissible} if, for any $i_1$ and
	$i_2$ in the same $\sigma$-orbit $\overline \imath$ and for any $j$ in $I$,
	\begin{eqnarray}
		b_{i_1,j}b_{i_2j}\ge 0
		\label{eqn:no_2_paths}\\
		b_{i_1,i_2}= 0.
		\label{eqn:no_1_paths}
	\end{eqnarray}
\end{defn}

An easy computation shows that, if $\sigma$ is an admissible automorphism of
$B$ and $k_1$ and $k_2$ are two points in the same $\sigma$-orbit
$\overline{k}$, the mutations $\mu_{k_1}$ and $\mu_{k_2}$ commute; that is 
\[
\mu_{k_1}\circ \mu_{k_2} (B)
=
\mu_{k_2}\circ \mu_{k_1} (B).
\]
Indeed $\mu_{k_2}\left( \mu_{k_1}(b_{ij}) \right)$ is either $-b_{ij}$, if at
least one among $i$ and $j$ is in $\left\{ k_1,k_2 \right\}$, or 
\[
b_{ij} 
+b_{ik_1}[b_{k_1j}]_+ +[-b_{ik_1}]_+  b_{k_1j}
+b_{ik_2}[b_{k_2j}]_+ +[-b_{ik_2}]_+  b_{k_2j}
\]
otherwise. Those expressions are clearly independent on the order in which
$\mu_{k_1}$ and $\mu_{k_2}$ are applied.
It makes therefore sense to define \emph{orbit-mutations} as the compositions
\[
\mu_{\overline{k}}^\sigma:=\prod_{t\in\overline{k}}\mu_t.
\]
Repeating the same reasoning we get
\begin{equation}
	\mu_{\overline{k}}^\sigma(b_{ij})=
	\left\{
  \begin{array}[]{ll}
    -b_{ij} & 
		\mbox{if } i \mbox{ or } j \in \overline{k}\\
		b_{ij} +\sum_{t\in \overline{k}}\left (b_{it}[b_{tj}]_+ +[-b_{it}]_+b_{tj}\right ) &
    \mbox{otherwise}.\\
  \end{array}
  \right.
	\label{eqn:orbit_mutation}
\end{equation}

Note that, given a $\sigma$-orbit $\overline{k}$, the permutation $\sigma$ is
always an automorphism of
$\mu_{\overline{k}}^\sigma(B)$ but it need not be admissible; in particular
condition (\ref{eqn:no_2_paths}) may be violated.

\begin{defn}
	An admissible automorphism $\sigma$ of $B$ is said to be \emph{stable} if, for any finite
	sequence of $\sigma$-orbits $\overline{k_1},\dots\overline{k_\ell}$, it is an
	admissible automorphism of
  \[	
	\mu_{\overline{k_\ell}}^\sigma\circ\dots\circ\mu_{\overline{k_1}}^\sigma(B).
	\]
\end{defn}

\begin{prop}[{\cite[Proposition 2.22]{Dupont08}}]
	If the Cartan counterpart of $B$ is a simply-laced finite type then any
	admissible automorphism of $B$ is stable.
	\label{prop:dynkin_stable}
\end{prop}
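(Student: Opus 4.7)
The plan is to argue by induction on the length $\ell$ of the mutation sequence; the base case $\ell=0$ is the admissibility hypothesis, so it suffices to prove the inductive step: if $\sigma$ is admissible for $B$ with $A(B)$ simply-laced of finite type, then $\sigma$ is also admissible for $B':=\mu^\sigma_{\overline k}(B)$ for any $\sigma$-orbit $\overline k$.

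I would first verify, by direct substitution into the orbit-mutation formula \eqref{eqn:orbit_mutation}, that conditions \eqref{eqn:automorphism} and \eqref{eqn:no_1_paths} pass from $B$ to $B'$ without invoking the simply-laced or finite-type hypothesis. For \eqref{eqn:automorphism}, the $\sigma$-invariance of $\overline k$ lets us re-index $t\mapsto\sigma(t)$ in the orbit-sum, yielding $b'_{\sigma(i)\sigma(j)}=b'_{ij}$. For \eqref{eqn:no_1_paths} with $i_1,i_2\in\overline i\neq\overline k$ (the case $\overline i=\overline k$ being immediate), admissibility condition \eqref{eqn:no_2_paths} for $B$ applied at column $t\in\overline k$ forces $b_{i_1t}$ and $b_{ti_2}=-b_{i_2t}$ to carry opposite signs (or one to vanish), so each summand in \eqref{eqn:orbit_mutation} is identically zero and $b'_{i_1i_2}=0$.

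The real content, and the main obstacle, is verifying \eqref{eqn:no_2_paths} for $B'$ in the nontrivial case $j\notin\overline i\cup\overline k$ (the other subcases reduce straightforwardly to \eqref{eqn:no_1_paths} for $B'$ just proved, or to \eqref{eqn:no_2_paths} for $B$ via $b'_{i_\ell j}=-b_{i_\ell j}$). My approach is through folding: admissibility of $\sigma$ for $B$ lets us define the folded matrix $B^\sigma$ with entries $b^\sigma_{\overline i,\overline j}:=\sum_{i\in\overline i}b_{ij_0}$ for any $j_0\in\overline j$ (independence of $j_0$ follows from \eqref{eqn:automorphism}), and the simply-laced finite-type hypothesis forces $A(B^\sigma)$ to be of (non-simply-laced) finite type via the standard folds $A_{2n-1}$ to $C_n$, $D_{n+1}$ to $B_n$, $D_4$ to $G_2$, and $E_6$ to $F_4$. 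A careful sign-case analysis using \eqref{eqn:no_2_paths} for $B$ at columns $t_0\in\overline k$ and $j_0\in\overline j$ to simplify the double sums yields the key identity $(B')^\sigma=\mu_{\overline k}(B^\sigma)$. Combined with the $\sigma$-equivariance of $B'$ verified above, the bound $|b'_{ij}|\le 1$ (from $|b'_{ij}b'_{ji}|\le 3$ in finite type), and the limited orbit sizes $|\overline i|\in\{1,2,3\}$ available in simply-laced finite type, this identity reduces the final verification to a short case check that rules out opposite signs within a $\sigma$-orbit, completing the inductive step.
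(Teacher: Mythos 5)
First, note that the paper does not prove this statement at all: it is quoted verbatim from \cite[Proposition 2.22]{Dupont08}, so there is no in-paper argument to compare against, and your proposal must stand on its own. The routine parts of your plan are sound: $\sigma$-equivariance \eqref{eqn:automorphism} and the vanishing \eqref{eqn:no_1_paths} do propagate through one orbit mutation using only admissibility of $\sigma$ for the current matrix (your term-by-term cancellation in \eqref{eqn:orbit_mutation} is correct, with the caveat that for skew-symmetrizable $B$ one has only $\mathrm{sgn}(b_{ti_2})=-\mathrm{sgn}(b_{i_2t})$ rather than equality, which is all you use), the one-step identity $(B')^\sigma=\mu_{\overline{k}}(B^\sigma)$ does hold under the induction hypothesis, and you correctly isolate the only case where finite type must enter, namely \eqref{eqn:no_2_paths} for $B'$ with $j\notin\overline{\imath}\cup\overline{k}$.

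The gap is that the ingredients you list do not close that last case. When $j$ is fixed by $\sigma$, equivariance alone gives $b'_{i_1j}=b'_{\sigma(i_1)\sigma(j)}=b'_{i_2j}$, so the genuinely hard situation is when both $\overline{\imath}$ and $\overline{\jmath}$ are non-singleton orbits. Suppose there $b'_{i_1j}=1$ and $b'_{i_2j}=-1$ with $i_2=\sigma(i_1)$ and $|\overline{\imath}|=|\overline{\jmath}|=2$. Then the folded entries between $\overline{\imath}$ and $\overline{\jmath}$ vanish, and equivariance together with \eqref{eqn:no_1_paths} forces the full subquiver on $\{i_1,j,i_2,\sigma(j)\}$ to be a chordless oriented $4$-cycle. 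Nothing in your list --- the bound $|b'_{ij}|\le 1$, the orbit sizes, the folding identity, or finiteness of the folded type --- excludes this configuration: chordless oriented $4$-cycles with unit weights genuinely occur in simply-laced finite mutation classes (the central cycles of type $D_n$), and a zero folded entry is perfectly compatible with $\mu_{\overline{k}}(B^\sigma)$ being a legitimate matrix in a $C_n$- or $F_4$-class. Excluding it requires type-specific input that your outline never identifies: for $A_{2n-1}\to C_n$ one needs the fact that every chordless cycle in the type-$A$ mutation class is a triangle (Proposition \ref{prop:dynkin-An}); for $D_{n+1}\to B_n$ and $D_4\to G_2$ one observes that there is only one non-singleton orbit, so two distinct $2$-element orbits never occur; and for $E_6\to F_4$ neither remark applies and a further argument (or a finite check over the orbit-mutation class) is needed. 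A similar unaddressed loophole appears already in the cancellation analysis: weak sign-coherence of $b''_{i_1j},b''_{i_2j}$ and of the two correction terms still permits, say, $b''_{i_1j}=1$ with zero correction and $b''_{i_2j}=0$ with correction $-1$. The ``short case check'' you defer to is therefore exactly where the content of the proposition lives, and the proposal does not supply it.
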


\begin{rk}
	We will need the following incarnations of Proposition \ref{prop:dynkin_stable}:
	\begin{enumerate}
		\item 
			$B$ has Cartan counterpart of type $A_{2n-1}$ and, using the standard
			labeling of the nodes of the associated Dynkin diagram,
			\[
			\sigma:=\prod_{i=1}^n(i,2n-i)
			\]
			is an admissible automorphism of $B$.
			
		\item
			$B$ has Cartan counterpart of type $D_{n+1}$ and, again in the standard
			labeling,
			\[
			\sigma=(n,n+1)
			\]
			is an admissible automorphism of $B$.
	\end{enumerate}
	\label{rk:our_cases}
\end{rk}

Given a skew-symmetrizable integer matrix $B$ and a (stable) admissible
automorphism $\sigma$ we can define a \emph{folded} matrix
$\pi(B):=\overline{B}=\left( b_{\overline{\imath}\overline{\jmath}} \right)$, as
$\overline{\imath}$ and $\overline{\jmath}$ vary over all the $\sigma$-orbits, by
setting
\begin{equation}
	b_{\overline{\imath}\overline{\jmath}}:=
	\sum_{s\in\overline{\imath}}b_{sj}.
	\label{eqn:folded_b_matrix}
\end{equation}
In view of condition (\ref{eqn:automorphism}) the value of
$b_{\overline{\imath}\overline{\jmath}}$ does not depend on the choice of a
representative of $\overline{\jmath}$. The folded matrix $\pi(B)$ is itself
skew-symmetrizable (see \cite[Lemma 2.5]{Dupont08}).

The key point here is this: if $\sigma$ is a stable admissible automorphism of $B$
then for any $\sigma$-orbit $\overline{k}$
\[
\pi\left( \mu_{\overline{k}}^\sigma(B) \right)=\mu_{\overline{k}}\left( \pi(B) \right)
\]
thanks to condition (\ref{eqn:no_2_paths}) (see \cite[Theorem 2.24]{Dupont08}).

We will use the following obvious converse stating the existence of
``unfolding'' for the matrices we are interested into.
\begin{prop}
	Let $\overline{B}'$ be any matrix in the same mutation class of a matrix
	$\overline{B}$ obtained by folding from a skew-symmetrizable matrix $B$ with a
	stable admissible automorphism $\sigma$. There exist a matrix $B'$ and a
	sequence of $\sigma$-orbits $\overline{k_1},\dots,\overline{k_\ell}$ such that
	\begin{enumerate}
		\item 
			$\mu_{\overline{k_\ell}}^\sigma\circ\dots\circ\mu_{\overline{k_1}}^\sigma(B)=B'$
		\item
			$\overline{B}'=\overline{B'}$.
	\end{enumerate}
	\label{prop:unfolding}
\end{prop}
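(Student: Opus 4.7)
The plan is to reduce the proposition to a straightforward induction based on the commutation identity
\[
 \pi\bigl(\mu_{\overline{k}}^{\sigma}(B)\bigr) \;=\; \mu_{\overline{k}}\bigl(\pi(B)\bigr),
\]
which was established immediately before the statement under the hypothesis that $\sigma$ is a stable admissible automorphism. Since $\overline{B}'$ is mutation equivalent to $\overline{B}=\pi(B)$, fix a sequence of orbit indices $\overline{k_1},\dots,\overline{k_\ell}$ with
\[
 \overline{B}' \;=\; \mu_{\overline{k_\ell}}\circ\dots\circ\mu_{\overline{k_1}}(\overline{B}),
\]
and define the candidate lift
\[
 B' \;:=\; \mu_{\overline{k_\ell}}^{\sigma}\circ\dots\circ\mu_{\overline{k_1}}^{\sigma}(B).
\]
The goal is to prove that $\pi(B')=\overline{B}'$.

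First I would check that the expression defining $B'$ is meaningful: at each intermediate step the matrix obtained must still admit $\sigma$ as an admissible automorphism, otherwise the next orbit-mutation $\mu_{\overline{k}}^{\sigma}$ cannot be performed. This is exactly the content of the stability hypothesis on $\sigma$, so each $\mu_{\overline{k_t}}^{\sigma}$ is legitimately applicable.

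Then I would proceed by induction on $\ell$. The case $\ell=0$ is trivial. Assuming the statement holds after $\ell-1$ orbit-mutations, set
\[
 B^{(\ell-1)} \;:=\; \mu_{\overline{k_{\ell-1}}}^{\sigma}\circ\dots\circ\mu_{\overline{k_1}}^{\sigma}(B),
\]
so that, by the inductive hypothesis, $\pi\bigl(B^{(\ell-1)}\bigr)=\mu_{\overline{k_{\ell-1}}}\circ\dots\circ\mu_{\overline{k_1}}(\overline{B})$. By stability, $\sigma$ is admissible on $B^{(\ell-1)}$, so the commutation identity applies and yields
\[
 \pi\bigl(\mu_{\overline{k_\ell}}^{\sigma}(B^{(\ell-1)})\bigr)
 \;=\;
 \mu_{\overline{k_\ell}}\bigl(\pi(B^{(\ell-1)})\bigr)
 \;=\;
 \mu_{\overline{k_\ell}}\circ\dots\circ\mu_{\overline{k_1}}(\overline{B})
 \;=\; \overline{B}'.
\]
The left-hand side is $\pi(B')=\overline{B'}$, finishing the induction.

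There is essentially no conceptual obstacle: stability was defined precisely so that this inductive argument goes through, and the commutation identity does all the work. The only point to handle with some care is verifying that every intermediate matrix to which we apply $\mu_{\overline{k_t}}^{\sigma}$ still has $\sigma$ as an admissible (not merely automorphism) symmetry, but this is guaranteed by the stability assumption applied to the prefix $\overline{k_1},\dots,\overline{k_{t-1}}$.
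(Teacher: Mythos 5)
Your argument is correct and is exactly the one the paper intends: the paper states this proposition as an ``obvious converse'' of the commutation identity $\pi(\mu_{\overline{k}}^{\sigma}(B))=\mu_{\overline{k}}(\pi(B))$ and omits the proof entirely, and your induction, with the stability hypothesis invoked at each step to keep the orbit-mutations admissible, is precisely the intended justification.
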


The folding map can be extended to a morphism of algebras as
follows. 
Fix an initial $B$-matrix $B$ and a stable admissible automorphism $\sigma$. Let
\[
\left( B,\left\{ x_i \right\}_{i\in I} \right)
\]
be the initial cluster of the coefficient-free cluster algebra $\mathcal{A}_0(B)$.
Write $\mathcal{A}_0^\sigma(B)$ for the subalgebra of
$\mathcal{A}_0(B)$ generated by all the clusters reachable from the
initial one by a sequence of orbit mutations.

Let $\mathcal{A}_0(\overline{B})$ be the coefficient-free cluster algebra with
initial $B$-matrix $\pi(B)=\overline{B}$ and initial cluster variables 
$\left\{ x_{\overline{\imath}}\right\}_{\overline{\imath}\in I/\sigma}$.
The assignment 
\[
\pi(x_i):=x_{\overline{\imath}}
\]
extends to a surjective map
\[
\pi:\mathcal{A}_0^\sigma(B)\longrightarrow\mathcal{A}_0(\overline{B}).
\]
The algebra $\mathcal{A}_0(\overline{B})$ is the quotient of
$\mathcal{A}_0^\sigma(B)$  by the ideal generated by the relations
\[
x_i=x_{\sigma(i)}.
\]
Moreover, and this is the key point in the construction, the map $\pi$ preserves
the cluster structure: seeds of $\mathcal{A}_0^\sigma(B)$ are mapped to seeds of
$\mathcal{A}_0(\overline{B})$.

Combining the above observation with Remark \ref{rk:our_cases} we get the
following statement.
\begin{prop}
	\label{prop:b_and_c_are_folded}
	Any matrix of cluster type $B_n$ (respectively $C_n$) is the image $\pi(B)$ of
	a matrix $B$ of cluster type $D_{n+1}$ (respectively $A_{2n-1}$) with
	automorphism $\sigma$ from Remark \ref{rk:our_cases}. The
	coefficient-free cluster algebra $\mathcal{A}_0(\overline{B})$ is the quotient
	of a subalgebra of $\mathcal{A}_0(B)$ by an ideal preserving the cluster
	structure. In particular any exchange matrix of $\mathcal{A}_0(\overline{B})$
	is the folding of some exchange matrix of $\mathcal{A}_0(B)$.
\end{prop}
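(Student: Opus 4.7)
The plan is to reduce everything to one specific folding check and then propagate it along the mutation class. First I would fix a bipartite representative: take $B_0$ to be the standard Dynkin $B$-matrix of type $D_{n+1}$ (respectively $A_{2n-1}$) and let $\sigma$ be the involution described in Remark \ref{rk:our_cases}. The first thing to verify is that $\sigma$ is an admissible automorphism of $B_0$: condition \eqref{eqn:no_1_paths} holds because the nodes in each nontrivial $\sigma$-orbit are not adjacent in the standard Dynkin diagram (nodes $n$ and $n+1$ in $D_{n+1}$, and each symmetric pair $\{i,2n-i\}$ in $A_{2n-1}$), while condition \eqref{eqn:no_2_paths} holds for a bipartite orientation, since the arrows between a fixed node and the two nodes of a given $\sigma$-orbit point consistently. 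Stability is then automatic from Proposition \ref{prop:dynkin_stable}. A direct computation using \eqref{eqn:folded_b_matrix} shows that $\pi(B_0)$ has Cartan counterpart equal to the Cartan matrix of type $B_n$ (respectively $C_n$), so $\pi(B_0)$ is a matrix of cluster type $B_n$ (respectively $C_n$).

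Next I would propagate this to the full mutation class. Given any matrix $\overline{B}'$ of cluster type $B_n$ (respectively $C_n$), by definition $\overline{B}'$ lies in the mutation class of any matrix whose Cartan counterpart is the standard $B_n$ (respectively $C_n$) Cartan matrix, so in particular it lies in the mutation class of $\pi(B_0)$. Proposition \ref{prop:unfolding} then supplies a sequence of $\sigma$-orbit mutations producing a matrix $B'$ with $\pi(B')=\overline{B}'$; stability ensures the sequence is well defined, and $B'$ is in the mutation class of $B_0$, hence of cluster type $D_{n+1}$ (respectively $A_{2n-1}$). This settles the first assertion.

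For the second and third assertions, I would extend $\pi$ to the algebra map $\mathcal{A}_0^\sigma(B)\to \mathcal{A}_0(\overline{B})$ via $x_i\mapsto x_{\overline{\imath}}$ following the recipe already sketched in the excerpt. Surjectivity is clear since every cluster variable of $\mathcal{A}_0(\overline{B})$ is reached from the initial one by ordinary mutations, and each such mutation lifts to a $\sigma$-orbit mutation in $\mathcal{A}_0^\sigma(B)$ by Proposition \ref{prop:unfolding}; the kernel is the ideal generated by $x_i-x_{\sigma(i)}$, essentially by construction. The crucial compatibility --- that orbit mutations of $\sigma$-invariant seeds project to honest mutations of the folded seed, with $B$-matrices related by \eqref{eqn:folded_b_matrix} --- is exactly \eqref{eqn:orbit_mutation} combined with \eqref{eqn:folded_b_matrix}, and has already been recorded in the discussion preceding the proposition (following \cite{Dupont08}). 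The final claim about exchange matrices is then immediate: any exchange matrix of $\mathcal{A}_0(\overline{B})$ is obtained from $\pi(B)$ by a finite sequence of mutations at $\sigma$-orbits, which lifts to a sequence of orbit mutations starting at $B$; the resulting matrix $B''$ satisfies $\pi(B'')=\overline{B}''$ by the same seed-preservation statement.

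The only nontrivial obstacle is the explicit folding verification for the seed case: checking that $\sigma$ is admissible for a bipartite $B_0$ and that $\pi(B_0)$ genuinely has Cartan counterpart of type $B_n$ or $C_n$. This is a direct but sign-sensitive calculation; once it is done, every other step is pure bookkeeping that reuses the machinery of Section \ref{sect:folding-bc} and of \cite{Dupont08}.
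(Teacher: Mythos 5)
Your proposal is correct and follows essentially the same route as the paper, which presents this proposition as a direct combination of the preceding folding machinery (admissibility and stability via Proposition \ref{prop:dynkin_stable}, the unfolding statement of Proposition \ref{prop:unfolding}, and the algebra map $\pi$) with Remark \ref{rk:our_cases}; you have simply written out the bookkeeping that the paper leaves implicit. The one step you flag as the "only nontrivial obstacle" --- admissibility of $\sigma$ for the standard seed and the identification of the Cartan counterpart of $\pi(B_0)$ --- checks out, since the automorphism condition $b_{\sigma(i)\sigma(j)}=b_{ij}$ already forces the sign condition \eqref{eqn:no_2_paths} at the unique common neighbour of each nontrivial orbit.
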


\subsection{Proof of Propositions \ref{prop:dynkin-Bn} and \ref{prop:dynkin-Cn}.}
\label{sect:folding-bc-dynkin}
The results just summarized are enough to describe the sets $\mathcal{X}(B_n)$
and $\mathcal{X}(C_n)$.

\begin{proof}[Proof of Proposition \ref{prop:dynkin-Bn}]
	In view of Proposition \ref{prop:b_and_c_are_folded} any element of
	$\mathcal{X}(B_n)$ can be obtained by folding an element of
	$\mathcal{X}(D_{n+1})$. On the other hand not every diagram from Figure
	\ref{fig:dynkin-Dn} can be folded: we know that any chordless cycle in such a
	diagram corresponds to a cyclically oriented chordless cycle in the quiver
	$Q(B)$ associated to it (see \cite{Fomin03a,Barot06}). By definition of
	admissible automorphism all the vertices in the only non-trivial orbit of
	$\sigma$ must be not adjacent and must be connected to all the other adjacent
	vertices in the same way.  This forces us to conclude that diagrams (c) and
	(d) cannot be folded. 

	The diagrams of Figure \ref{fig:dynkin-Bn} are thus the folding of diagrams
	(a) and (b) from Figure \ref{fig:dynkin-Dn}.
\end{proof}

\begin{proof}[Proof of Proposition \ref{prop:dynkin-Cn}]
	In view of Proposition \ref{prop:b_and_c_are_folded} diagrams in
	$\mathcal{X}(C_n)$ are obtained by folding elements of $\mathcal{X}(A_{2n-1})$.
	The only requirement a diagram must satisfy to be folded is to be symmetric
	with respect to the only fixed point of $\sigma$ from Remark
	\ref{rk:our_cases}.
\end{proof}

\subsection{Folding of \texorpdfstring{$c$}{c}-vectors}
In order to consider $c$-vectors we need to extend the above construction to
cluster algebras with principal coefficients. We take inspiration from the
following example.

\begin{ex}
	Let $\mathcal{A}_\bullet(B)$ be the cluster algebra of type $D_4$ with principal
	coefficients at the initial cluster given by
	\[	
	B=
	\left( 
	\begin{array}[]{cccc}
		0 & -1 & 0 & 0  \\
		1 & 0 & -1 & -1 \\
		0 & 1 & 0 & 0 \\
		0 & 1 & 0 & 0 \\
	\end{array}
	\right).
	\]
	$B$ is invariant under permutation $\sigma=(34)$ and has $b_{34}=0$.
	Moreover the mutations in directions $3$ and $4$ commute; that is 
	\[
	\mu_3\circ\mu_4(B)=\mu_4\circ\mu_3(B).
	\]
	Let $\mathcal{A}_\bullet^{\sigma }(B)$ be the subalgebra of all the clusters
	reachable from the initial one by any sequence of the mutations $\mu_1$,
	$\mu_2$, and $\mu_3\circ\mu_4$. All the $B$-matrices in it have
	$b_{34}=0$.

	The permutation $\sigma $ acts on the set of clusters of $\mathcal{A}_\bullet(B)$
	by relabeling:
	\[
	\sigma(x_i):=x_{\sigma(i)}
	\qquad
	\mbox{and}
	\qquad
	\sigma(y_i):=y_{\sigma(i)}
	\]
	Let $\mathcal{I}$ be the ring ideal of
	$\mathcal{A}_\bullet^{\sigma }(B)$ generated by the relations
	\[
	x_3=x_4
	\qquad
	\mbox{and}
	\qquad
	y_3=y_4	
	\]
	
	The quotient $\mathcal{A}_\bullet^{\sigma }(B)/\mathcal{I}$ is a cluster algebra
	of type $B_3$ with principal coefficients at the initial cluster given by
	$\pi(B)$. Under the projection map clusters of $\mathcal{A}_\bullet^{\sigma }(B)$
	are mapped to clusters of $\mathcal{A}_\bullet^{\sigma }(B)/\mathcal{I}$. 
	Moreover exchange relations in the quotient come from
	exchange relations of $\mathcal{A}_\bullet(B)$.
\end{ex}

For any skew-symmetrizable integer matrix $B$ endowed with a stable admissible
automorphism $\sigma$ let $\mathcal{A}_\bullet(B)$ and
$\mathcal{A}_\bullet(\overline{B})$ be the cluster algebras with principal
coefficients respectively at $B$ and $\overline{B}=\pi(B)$.  Let
$\mathcal{A}_\bullet^\sigma(B)$ be the subalgebra of $\mathcal{A}_\bullet(B)$
generated by all clusters reachable from the initial one using orbit-mutations.

In view of the above example it is natural to define folding for a $c$-vector
$c=\left( c_i \right)_{i\in I}$ of $\mathcal{A}_\bullet^\sigma(B)$ componentwise as
follows:
\begin{equation}
	\label{eqn:folding-c}
	c_{\overline{\imath}}:=\sum_{s\in \overline{\imath}}c_s.
\end{equation}
However the correctness of this definition is not so obvious because the
tropicalization map (\ref{eqn:tropicalization-y}) and folding are not compatible
in general. Let us clarify the condition required to guarantee that 
(\ref{eqn:folding-c}) is well-posed.

Note that if $C$ and $C'$ are two coefficient matrices of
$\mathcal{A}_\bullet^\sigma(B)$ connected by a single orbit-mutation
$\mu_{\overline{k}}$ then it follows directly from having assumed $\sigma$ to
be a stable admissible automorphism of $B$ that:
\begin{equation}
	c'_{ij}=
	\left\{
 	\begin{array}[]{ll}
 	 -c_{ij} & 
		\mbox{if } j \in \overline{k}\\
		c_{ij} +\sum_{t\in \overline{k}}\left (c_{it}[b_{tj}]_+ +[-c_{it}]_+b_{tj}\right ) &
 	 \mbox{otherwise}.\\
 	\end{array}
 	\right.
	\label{eqn:orbit_mutation-c}
\end{equation}

From (\ref{eqn:orbit_mutation-c}) we get an important observation: all the
$C$-matrices in $\mathcal{A}_\bullet^\sigma(B)$ are such that 
\begin{equation}
	c_{\sigma(i)\sigma(j)}=c_{ij}.
	\label{eqn:c-vector-invariant}
\end{equation}
Indeed the property holds for the initial $C$-matrix and we can use the
admissibility of $\sigma$ to propagate it.

We introduce the folded $C$-matrix $\overline{C}$ for a $C$-matrix $C=\left(
c_{ij} \right)_{ij \in I}$ of $\mathcal{A}_\bullet^\sigma(B)$ as
\begin{align}
	c_{\overline{\imath}\overline{\jmath}}:=\sum_{s\in
	\overline{\imath}}c_{sj}.
	\label{eqn:folding-c-matrix}
\end{align}
Note that (\ref{eqn:folding-c-matrix}) is independent of the choice of a
representative of $\overline{\jmath}$ due to the symmetry
(\ref{eqn:c-vector-invariant}).

\begin{prop}
	\label{prop:folding-c}
	Let $B$ be any skew-symmetrizable integer matrix and let $\sigma$ be a stable
	admissible automorphism of $B$. 
	The matrix $\overline{C}$ satisfies the recursion relation
\begin{equation}
	c'_{\overline{\imath}\overline{\jmath}}=
	\left\{
 	\begin{array}[]{ll}
		-c_{\overline{\imath}\overline{\jmath}} & 
		\mbox{if } \overline{\jmath} = \overline{k}\\
		c_{\overline{\imath}\overline{\jmath}}
		+c_{\overline{\imath}\overline{k}}[b_{\overline{k}\overline{\jmath}}]_+ 
		+[-c_{\overline{\imath}\overline{k}}]_+b_{\overline{k}\overline{\jmath}} &
 	 \mbox{otherwise}\\
 	\end{array}
 	\right.
	\label{eqn:orbit_mutation-c-folded}
\end{equation}
	if and only if the following condition holds: for any $i$ and $j$ the sign of
	$c_{sj}$ is independent of the choice of representative
	$s\in\overline{\imath}$.
\end{prop}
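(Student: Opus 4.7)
The plan is to derive the folded recursion by summing the unfolded recursion (\ref{eqn:orbit_mutation-c}) over representatives of $\overline{\imath}$, and then isolate the precise point at which the sign hypothesis enters.

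First, for the case $\overline{\jmath}=\overline{k}$, summing $c'_{sj}=-c_{sj}$ over $s\in\overline{\imath}$ immediately gives $c'_{\overline{\imath}\overline{\jmath}}=-c_{\overline{\imath}\overline{\jmath}}$, so nothing needs to be checked. For $\overline{\jmath}\neq\overline{k}$, summing (\ref{eqn:orbit_mutation-c}) over $s\in\overline{\imath}$ gives
\[
c'_{\overline{\imath}\overline{\jmath}}
= c_{\overline{\imath}\overline{\jmath}}
+\sum_{t\in\overline{k}}[b_{tj}]_+\Bigl(\sum_{s\in\overline{\imath}}c_{st}\Bigr)
+\sum_{t\in\overline{k}}b_{tj}\Bigl(\sum_{s\in\overline{\imath}}[-c_{st}]_+\Bigr).
\]
Using the $\sigma$-invariance (\ref{eqn:c-vector-invariant}), the inner sums $\sum_s c_{st}$ and $\sum_s [-c_{st}]_+$ depend only on $\overline{\imath}$ and $\overline{t}=\overline{k}$, not on the chosen representative $t\in\overline{k}$. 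Using the admissibility condition (\ref{eqn:no_2_paths}), the signs of the $b_{tj}$ for $t\in\overline{k}$ are coherent, so $\sum_{t\in\overline{k}}[b_{tj}]_+=[b_{\overline{k}\overline{\jmath}}]_+$ and $\sum_{t\in\overline{k}}b_{tj}=b_{\overline{k}\overline{\jmath}}$. The inner sum $\sum_s c_{st}$ is, by the definition (\ref{eqn:folding-c-matrix}), equal to $c_{\overline{\imath}\overline{k}}$. This handles the first correction term cleanly, producing $c_{\overline{\imath}\overline{k}}[b_{\overline{k}\overline{\jmath}}]_+$.

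The crux is the second correction term: we need $\sum_{s\in\overline{\imath}}[-c_{st}]_+=[-c_{\overline{\imath}\overline{k}}]_+$. In general one only has the inequality $\sum_s[-c_{st}]_+\geq\bigl[-\sum_s c_{st}\bigr]_+=[-c_{\overline{\imath}\overline{k}}]_+$, with equality if and only if the numbers $\{c_{st}\}_{s\in\overline{\imath}}$ all share the same sign. This is exactly the hypothesis of the proposition specialized to $j=t\in\overline{k}$, so the forward implication follows: under the sign condition the second correction term collapses to $[-c_{\overline{\imath}\overline{k}}]_+\,b_{\overline{k}\overline{\jmath}}$ and the claimed recursion (\ref{eqn:orbit_mutation-c-folded}) holds.

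For the converse, I would argue contrapositively: suppose the sign condition fails, so there are $\overline{\imath}$ and some index $j_0$ with representatives $s_1,s_2\in\overline{\imath}$ such that $c_{s_1 j_0}$ and $c_{s_2 j_0}$ have strictly opposite signs. Choose a mutation direction $\overline{k}$ containing $j_0$; since the recursion for $\overline{\jmath}=\overline{k}$ is automatic, one has to look at a nearby orbit $\overline{\jmath}\neq\overline{k}$ with $b_{j_0\overline{\jmath}}\neq 0$ to detect the discrepancy in the second correction term (any such $\overline{\jmath}$ will do by admissibility and the assumption that $B$ is skew-symmetrizable with a nontrivial row through $j_0$; if no such $\overline{\jmath}$ exists the row is zero and there is nothing to check). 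For this $\overline{k},\overline{\jmath}$ the strict inequality $\sum_s[-c_{sj_0}]_+>[-c_{\overline{\imath}\overline{k}}]_+$ combined with the coherent signs of the $b_{tj}$ prevents the folded recursion from holding. The main obstacle is this last step: one must verify that the discrepancies coming from different $t\in\overline{k}$ cannot cancel. This is handled by using the $\sigma$-invariance of the $C$-entries to see that the contribution $\sum_s[-c_{st}]_+$ is actually the \emph{same} for every $t\in\overline{k}$, so the strict inequality is multiplied by the single nonzero factor $b_{\overline{k}\overline{\jmath}}$ and no cancellation is possible, completing the proof.
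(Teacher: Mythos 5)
Your proposal is correct and follows essentially the same route as the paper: sum the unfolded recursion \eqref{eqn:c-recursion} over the orbit $\overline{\imath}$, use the invariance \eqref{eqn:c-vector-invariant} and the admissibility of $\sigma$ to collapse the term involving $c_{\overline{\imath}\overline{k}}[b_{\overline{k}\overline{\jmath}}]_+$, and observe that the remaining term matches $[-c_{\overline{\imath}\overline{k}}]_+b_{\overline{k}\overline{\jmath}}$ exactly when $[-\sum_s c_{sk}]_+=\sum_s[-c_{sk}]_+$, i.e.\ when the $c_{sk}$ are sign-coherent over $s\in\overline{\imath}$. Your explicit treatment of the converse (noting that the per-$t$ contributions are equal, so no cancellation can mask a sign incoherence, modulo the degenerate case $b_{\overline{k}\overline{\jmath}}=0$) is slightly more careful than the paper's, but it is the same argument.
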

\begin{proof}
	It suffices to establish the proposition for a single mutation; if
	$\overline{\jmath}=\overline{k}$ our claim is trivial so we can assume
	$j\not\in \overline{k}$. On the one hand we can rewrite
	(\ref{eqn:orbit_mutation-c-folded}) as
	\[
		c'_{\overline{\imath}\overline{\jmath}}
		=
		c_{\overline{\imath}\overline{\jmath}} +
		c_{\overline{\imath}\overline{k}}[b_{\overline{k}\overline{\jmath}}]_+ +
		[-c_{\overline{\imath}\overline{k}}]_+b_{\overline{k}\overline{\jmath}}
		=
		\sum_{s\in \overline{\imath}} c_{sj}+
		\sum_{s\in \overline{\imath}} c_{sk}\left[ \sum_{t\in\overline{k}}b_{tj}\right]_+ +
		\left [ -\sum_{s\in \overline{\imath}} c_{sk} \right ]_+\sum_{t\in\overline{k}}b_{tj}
		.
	\]

	On the other hand 
	\[
		c'_{\overline{\imath}\overline{\jmath}}
		=
		\sum_{s\in \overline{\imath}} c'_{sj}
		=
		\sum_{s\in \overline{\imath}} \left( c_{sj} +\sum_{t\in \overline{k}}\left
		(c_{st}[b_{tj}]_+ +[-c_{st}]_+b_{tj}\right ) \right).
	\]
	Therefore the recursion \ref{eqn:orbit_mutation-c-folded} is satisfied if and
	only if
	\[
		\sum_{s\in \overline{\imath}} c_{sk}\left[\sum_{t\in\overline{k}}b_{tj}\right]_+
		=
		\sum_{s\in \overline{\imath}} \sum_{t\in \overline{k}} c_{st}[b_{tj}]_+
	\]
	and
	\[
		\left [ -\sum_{s\in \overline{\imath}} c_{sk} \right]_+\sum_{t\in\overline{k}}b_{tj}
		=
		\sum_{s\in \overline{\imath}} \sum_{t\in \overline{k}} [-c_{st}]_+b_{tj}.
	\]
	The first condition is guaranteed by the admissibility of $\sigma$; indeed we
	get 	
	\[
		\sum_{s\in \overline{\imath}} \sum_{t\in\overline{k}}c_{sk}\left[b_{tj}\right]_+
		=
		\sum_{s\in \overline{\imath}} \sum_{t\in \overline{k}} c_{st}[b_{tj}]_+
	\]
	which is true by a simple change of summation index using
	(\ref{eqn:c-vector-invariant}).

	Similarly the second condition is equivalent to 
	\[
		\sum_{s\in \overline{\imath}}\sum_{t\in\overline{k}} \left [ -c_{sk} \right]_+b_{tj}
		=
		\sum_{s\in \overline{\imath}} \sum_{t\in \overline{k}} [-c_{st}]_+b_{tj}.
	\]
	if and only if the sign of $c_{sk}$ is independent on the choice of
	representative $s\in\overline{\imath}$.
\end{proof}

In our situation the condition of Proposition \ref{prop:folding-c} is satisfied
by the sign-coherence property of $c$-vectors established in Lemma
\ref{lemma:c-vectors_are_sign_coerent} or, more generally for skew-symmetric
$B$-matrices, explained in Section \ref{sect:back}. Thus definition
(\ref{eqn:folding-c}) is well-posed in our case.

It is worth noticing at this point that the folding map (\ref{eqn:folding-c})
sends the identity matrix to the identity matrix: as one might expect the image
of the initial cluster of $\mathcal{A}_\bullet(B)$ is the initial cluster of
$\mathcal{A}_\bullet\left( \overline{B} \right)$.

\begin{cor}
	\label{cor:folding}
	Let $\overline{B}$ be any skew-symmetrizable  matrix of cluster type
	$B_n$ (respectively $C_n$). There exists a matrix $B$ of
	cluster type $D_{n+1}$ (respectively $A_{2n-1}$) such that
	the cluster algebra with principal coefficients $\mathcal{A}_\bullet\left(
	\overline{B} \right)$ is a subquotient of the cluster algebra
	with principal coefficients $\mathcal{A}_\bullet(B)$. In particular any
	$c$-vector of $\mathcal{A}_\bullet\left( \overline{B} \right)$ is the 
	folding of some	$c$-vector of $\mathcal{A}_\bullet(B)$.
\end{cor}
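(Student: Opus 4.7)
The plan is to assemble this corollary from pieces already in hand: the coefficient-free folding picture recalled in Proposition \ref{prop:b_and_c_are_folded}, the recursion compatibility established in Proposition \ref{prop:folding-c}, the sign-coherence from Lemma \ref{lemma:c-vectors_are_sign_coerent}, and the unfolding mechanism of Proposition \ref{prop:unfolding}.

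First, given $\overline{B}$ of cluster type $B_n$ (resp.\ $C_n$), invoke Proposition \ref{prop:b_and_c_are_folded} to produce a matrix $B$ of cluster type $D_{n+1}$ (resp.\ $A_{2n-1}$), equipped with the stable admissible automorphism $\sigma$ of Remark \ref{rk:our_cases}, satisfying $\pi(B)=\overline{B}$. Form the subalgebra $\mathcal{A}_\bullet^\sigma(B)\subset\mathcal{A}_\bullet(B)$ generated by the clusters reachable from the initial seed by $\sigma$-orbit mutations, and let $\mathcal{I}$ be the ideal generated by the relations $x_i=x_{\sigma(i)}$ and $y_i=y_{\sigma(i)}$ for all $i\in I$. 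Mimicking the example preceding Proposition \ref{prop:folding-c}, the coefficient-free half of the statement gives that seeds of $\mathcal{A}_\bullet^\sigma(B)$ modulo $\mathcal{I}$ project to seeds whose $B$-matrix is the folded matrix; this identifies $\mathcal{A}_\bullet^\sigma(B)/\mathcal{I}$ with a cluster algebra whose exchange matrices are the foldings of those appearing in $\mathcal{A}_\bullet^\sigma(B)$.

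Next I would verify that the principal-coefficient structure descends correctly. The initial extended exchange matrix of $\mathcal{A}_\bullet(B)$ has $C$-part equal to the identity, and the componentwise folding rule \eqref{eqn:folding-c-matrix} sends the identity on $I$ to the identity on $I/\sigma$, which is precisely the initial $C$-matrix of $\mathcal{A}_\bullet(\overline{B})$. To propagate this along every orbit mutation I want to apply Proposition \ref{prop:folding-c}, whose hypothesis is that within each $\sigma$-orbit $\overline{\imath}$ the entries $c_{sj}$ have a common sign. This is exactly the sign-coherence of $c$-vectors of $\mathcal{A}_\bullet(B)$, which is available because $B$ is of cluster type $D_{n+1}$ or $A_{2n-1}$: either Lemma \ref{lemma:c-vectors_are_sign_coerent} (for the $D$ case via the surface model and for the $A$ case as a special case of it) or, more generally, the skew-symmetric sign-coherence result cited in Section \ref{sect:back}, applies. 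Iterating, every $C$-matrix of $\mathcal{A}_\bullet^\sigma(B)$ folds to a genuine $C$-matrix of $\mathcal{A}_\bullet(\overline{B})$, so the subquotient $\mathcal{A}_\bullet^\sigma(B)/\mathcal{I}$ is isomorphic to $\mathcal{A}_\bullet(\overline{B})$ as a cluster algebra with principal coefficients.

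Finally, to upgrade from ``some $c$-vectors of $\mathcal{A}_\bullet(\overline{B})$ arise by folding'' to ``every $c$-vector arises by folding,'' I would apply Proposition \ref{prop:unfolding}: any $\overline{B}'$ mutation-equivalent to $\overline{B}$ is of the form $\overline{B'}$ for some $B'$ reached from $B$ by a sequence of orbit-mutations; the corresponding $C$-matrix of $\mathcal{A}_\bullet^\sigma(B)$ then folds to the $C$-matrix associated with $\overline{B}'$, so every column is the folding of some $c$-vector of $\mathcal{A}_\bullet(B)$. The main obstacle is the central compatibility step: confirming that the hypothesis of Proposition \ref{prop:folding-c} is preserved under every orbit-mutation in $\mathcal{A}_\bullet^\sigma(B)$, which is where one really needs sign-coherence, together with the admissibility and stability of $\sigma$, to conclude by induction on the length of the mutation sequence.
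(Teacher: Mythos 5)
Your proposal is correct and follows essentially the same route as the paper: it assembles the corollary from Proposition \ref{prop:b_and_c_are_folded}, the principal-coefficients extension modelled on the $D_4\to B_3$ example, Proposition \ref{prop:folding-c} whose sign hypothesis is supplied by Lemma \ref{lemma:c-vectors_are_sign_coerent} (or the general skew-symmetric sign-coherence), the observation that the identity $C$-matrix folds to the identity, and the unfolding statement to cover every seed of $\mathcal{A}_\bullet(\overline{B})$. The paper leaves the corollary without a displayed proof precisely because this chain of reductions, which you have reconstructed faithfully, is the intended argument.
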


\subsection{Folding of \texorpdfstring{$d$}{d}-vectors}
Our next goal is to produce a folding rule for $d$-vectors. From the above
example it is natural to fold the vector $d=\left(
d_i \right)_{i\in I}$ componentwise in this way:
\begin{equation}
	\label{eqn:fold-d}
	d_{\overline{\imath}}:=\sum_{s\in\overline{\imath}}d_s.
\end{equation}
Once again the correctness of the above definition is not obvious because, in general, folding is
not compatible with the tropicalization map (\ref{eqn:tropicalization-x}).

Recall the definition of $D$-matrix given in Section \ref{sect:back.vectors}.

\begin{lem}
	\label{lemma:d-matrix-invariance}
	If $\sigma$ is a stable admissible automorphism of $B$ then the entries in any
	$D$-matrix of $\mathcal{A}^\sigma(B)$ satisfy
	\begin{align}
		d_{\sigma(i)\sigma(j)}=d_{ij}.
		\label{eqn:d-matrix-invariance}
	\end{align}
\end{lem}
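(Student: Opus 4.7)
The plan is to proceed by induction on the number of orbit-mutations needed to reach the cluster whose $D$-matrix is under consideration.

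For the base case, note that the initial cluster $(B,x)$ has $D$-matrix equal to $-I$ (since $\pi_{\mathrm{trop}}(x_j)=x_j$ gives $d_{ii}=-1$ and $d_{ij}=0$ for $i\neq j$). The symmetry $d_{\sigma(i)\sigma(j)}=d_{ij}$ is then trivial: both sides are $-1$ if $i=j$ (equivalently $\sigma(i)=\sigma(j)$) and $0$ otherwise.

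For the inductive step, suppose the $D$-matrix of some cluster in $\mathcal{A}^\sigma(B)$ satisfies $d_{\sigma(i)\sigma(j)}=d_{ij}$ and that the corresponding $B$-matrix, which is $\sigma$-invariant by admissibility, continues to admit $\sigma$ as a stable admissible automorphism. I apply an orbit-mutation $\mu_{\overline{k}}^\sigma=\prod_{t\in\overline{k}}\mu_t$ to pass to a new $D$-matrix $D'$. The critical observation, analogous to the derivation of \eqref{eqn:orbit_mutation}, is that condition \eqref{eqn:no_1_paths} forces $b_{k_tk_{t'}}=0$ for $k_t,k_{t'}\in\overline{k}$, so by \eqref{eqn:b-mutation} the entries $b_{\ell k_{t'}}$ for $\ell\notin\overline{k}$ are not altered by $\mu_{k_t}$ for any $k_t\in\overline{k}$, and $b_{k_tk_{t'}}$ remains zero. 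Consequently, the $d$-vector recursion \eqref{eqn:d-recursion} gives
\begin{align*}
d'_{ij}=\begin{cases}d_{ij}&\text{if }j\notin\overline{k},\\[2pt] -d_{ij}+\max\!\left(\sum_{\ell}d_{i\ell}[b_{\ell j}]_+,\ \sum_{\ell}d_{i\ell}[-b_{\ell j}]_+\right)&\text{if }j\in\overline{k},\end{cases}
\end{align*}
where on the right the entries of $B$ and $D$ refer to the cluster before the orbit-mutation.

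It remains to verify $d'_{\sigma(i)\sigma(j)}=d'_{ij}$. If $j\notin\overline{k}$ then $\sigma(j)\notin\overline{k}$, and the claim is immediate from the inductive hypothesis. If $j\in\overline{k}$, then $\sigma(j)\in\overline{k}$, and substituting the recursion gives
\begin{align*}
d'_{\sigma(i)\sigma(j)}=-d_{\sigma(i)\sigma(j)}+\max\!\left(\sum_{\ell}d_{\sigma(i)\ell}[b_{\ell\sigma(j)}]_+,\ \sum_{\ell}d_{\sigma(i)\ell}[-b_{\ell\sigma(j)}]_+\right).
\end{align*}
Reindexing the sum by $\ell=\sigma(\ell')$ and applying the inductive hypothesis on $D$ together with the $\sigma$-invariance $b_{\sigma(\ell')\sigma(j)}=b_{\ell'j}$ transforms this into the formula for $d'_{ij}$, completing the induction.

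The only potential obstacle is the subtlety in the orbit-mutation formula for $D$-matrices: a priori mutating at $k_t$ could alter the $B$-matrix entries needed to compute the update of column $k_{t'}$ of $D$ in a later step. The admissibility conditions \eqref{eqn:no_1_paths}--\eqref{eqn:no_2_paths}, and in particular the vanishing of $b_{k_tk_{t'}}$ within an orbit, are precisely what is needed to ensure that the orbit-mutation acts on $D$ independently on each column, so that the clean formula above holds and the reindexing argument goes through.
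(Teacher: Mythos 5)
Your proof is correct and follows essentially the same route as the paper's: induction on the number of orbit-mutations, with the base case $D=-I$ and the inductive step handled by applying the recursion \eqref{eqn:d-recursion} to the column $j\in\overline{k}$ and reindexing the sum via the $\sigma$-invariance of $B$ and the inductive hypothesis on $D$. Your extra remark that \eqref{eqn:no_1_paths} keeps the relevant $B$-entries fixed throughout the orbit, so that the columns of $D$ are updated independently, is a point the paper leaves implicit, but it does not change the argument.
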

\begin{proof}
	The property holds for the $D$-matrix of the initial cluster. Suppose that $D$
	and $D'$ correspond to clusters obtained from one another by a single orbit
	mutation $\mu_{\overline{k}}$ and that the property holds for $D$. 
	The only non trivial case we need to consider is when $j$ is in
	$\overline{k}$. By (\ref{eqn:d-recursion}) we have 
	\[
	d'_{ij}
	=
	-d_{ij}+\max\left( \sum_{t\in I}d_{it}[b_{tk}]_+,\sum_{t\in I}d_{it}[-b_{tk}]_+ \right).
	\]
	Using both induction hypotheses and the fact that $\sigma$ is stable admissible
	we get
	\[
		d'_{ij}
		=
		-d_{\sigma(i)\sigma(j)}
		+\max\left(
		\sum_{t\in I}-d_{\sigma(i)\sigma(t)}\left[b_{\sigma(t)\sigma(k)} \right]_+
		, 
		\sum_{t\in I}-d_{\sigma(i)\sigma(t)}\left[-b_{\sigma(t)\sigma(k)} \right]_+
		\right)
	\]
	and we can conclude changing the summation index.
\end{proof}

We define the folding $\overline{D}$ of the $D$-matrix $D=\left( d_{ij}
\right)_{ij\in I}$ as we did for $C$-matrices:
\[
	d_{\overline{\imath}\overline{\jmath}}:=
	\sum_{s\in\overline{\imath}}d_{sj}.
\]
Thank to the above lemma this definition is independent of the representative
$j$.
\begin{prop}
	\label{prop:condition-to-fold-d}
	The matrix $\overline{D}$ satisfies the recursion 
	\begin{align}
		d'_{\overline{\imath}\overline{\jmath}} &=
 		\begin{cases}
 		\displaystyle
		-d_{\overline{\imath}\overline{k}}
  	+
		\max  \left( 
		\sum_{\overline{\ell}\in I/\sigma}d_{\overline{\imath}\overline{\ell}}
		{[b_{\overline{\ell}\overline{k}}]_+},
 		\sum_{\overline{\ell}\in I/\sigma}d_{\overline{\imath}\overline{\ell}}  
		{[-b_{\overline{\ell}\overline{k}}]_+},
		\right)
 		& \overline{\jmath}=\overline{k}\\
 		d_{\overline{\imath}\overline{\jmath}}
 		& \overline{\jmath}\neq \overline{k}.
 		\end{cases}
	 \end{align}
	if and only if for any $\sigma$-orbit
	$\overline{\imath}$
	the sign of 
	\[
		\sum_{t\in I}d_{st}b_{tk}
	\]
	is independent of the representative $s\in\overline{\imath}$.
\end{prop}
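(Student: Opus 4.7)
The plan is to follow the strategy of the proof of Proposition \ref{prop:folding-c}: analyze the effect of a single orbit mutation $\mu^\sigma_{\overline{k}}=\prod_{t\in\overline{k}}\mu_t$ and reduce the question to an identity about commuting a maximum with a sum. The case $\overline{\jmath}\neq\overline{k}$ is essentially trivial, since each individual mutation $\mu_t$ with $t\in\overline{k}$ only changes the $t$-th column of $D$; summing the unchanged columns over $\overline{\imath}$ immediately gives $d'_{\overline{\imath}\overline{\jmath}}=d_{\overline{\imath}\overline{\jmath}}$. So I would focus the remaining work on the case $\overline{\jmath}=\overline{k}$.

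Fix a representative $k\in\overline{k}$ and arrange for $\mu_k$ to be performed last in the product $\prod_{t\in\overline{k}}\mu_t$. The key preliminary observation I would establish is that along the sequence of intermediate mutations the $k$-th column of the $B$- and $D$-matrices remains essentially frozen: admissibility conditions (\ref{eqn:no_2_paths}) and (\ref{eqn:no_1_paths}), together with a short induction on the number of mutations already performed, force $b^{(u)}_{\ell,k}=b_{\ell,k}$ for $\ell\notin\overline{k}$, $b^{(u)}_{\ell,k}=0$ for $\ell\in\overline{k}$, and $d^{(u)}_{s,k}=d_{s,k}$ throughout. Applying the standard recursion (\ref{eqn:d-recursion}) at the last step then yields, for each $s\in\overline{\imath}$,
\[
d'_{s,k}=-d_{s,k}+\max\bigl(A_s,B_s\bigr),\qquad A_s:=\sum_{\ell}d_{s\ell}[b_{\ell k}]_+,\quad B_s:=\sum_{\ell}d_{s\ell}[-b_{\ell k}]_+,
\]
and summing over $s\in\overline{\imath}$ via (\ref{eqn:fold-d}) gives $d'_{\overline{\imath}\overline{k}}=-d_{\overline{\imath}\overline{k}}+\sum_{s\in\overline{\imath}}\max(A_s,B_s)$.

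Next I would unpack the right-hand side of the target recursion. Condition (\ref{eqn:no_2_paths}) guarantees that the $b_{t,k}$ for $t\in\overline{\ell}$ share a common sign, so $[b_{\overline{\ell}\overline{k}}]_+=\sum_{t\in\overline{\ell}}[b_{t,k}]_+$ (and similarly for the negative part); meanwhile Lemma \ref{lemma:d-matrix-invariance} ensures that $\sum_{s\in\overline{\imath}}d_{s,t}$ is independent of $t\in\overline{\ell}$. A straightforward exchange of sums then produces
\[
\sum_{\overline{\ell}}d_{\overline{\imath}\overline{\ell}}[b_{\overline{\ell}\overline{k}}]_+=\sum_{s\in\overline{\imath}}A_s,\qquad \sum_{\overline{\ell}}d_{\overline{\imath}\overline{\ell}}[-b_{\overline{\ell}\overline{k}}]_+=\sum_{s\in\overline{\imath}}B_s.
\]
The proposition therefore reduces to the identity $\sum_{s\in\overline{\imath}}\max(A_s,B_s)=\max\bigl(\sum_s A_s,\sum_s B_s\bigr)$. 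Since $A_s-B_s=\sum_{\ell}d_{s\ell}b_{\ell k}$ by the elementary relation $[x]_+-[-x]_+=x$, this identity holds precisely when the differences $A_s-B_s$ share a common sign as $s$ ranges over $\overline{\imath}$, which is exactly the condition stated.

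The hard part will be the preliminary stability step: one must verify by induction that the $k$-th column of the $B$-matrix survives all the intermediate mutations intact, which requires invoking both parts of admissibility at each inductive step. Once that technical point is in place, the rest of the argument is a careful rewriting of sums together with the elementary observation that a sum of maxima equals the maximum of sums exactly when the comparisons inside are all resolved in the same direction.
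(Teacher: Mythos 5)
Your proposal is correct and follows essentially the same route as the paper's proof: reduce to a single orbit-mutation, expand $d'_{\overline{\imath}\overline{k}}$ as a sum over representatives of the unfolded recursion, match it against the folded recursion, and observe that the two agree exactly when a sum of maxima collapses to a maximum of sums, i.e.\ when the differences $\sum_{t}d_{st}b_{tk}$ are sign-coherent across the orbit. The only difference is that you spell out the ``frozen column'' verification for the intermediate mutations within the orbit, a step the paper leaves implicit behind the phrase ``it suffices to show that the property holds for a single mutation.''
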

\begin{proof}
	We proceed again by induction. It suffices to show that the property holds
	for a single mutation. Fix a $\sigma$-orbit $\overline{\imath}$.
	The only non-trivial case is when
	$\overline{\jmath}=\overline{k}$. On the one hand we have
	\[
		d'_{\overline{\imath}\overline{\jmath}}
		=
		\sum_{s\in\overline{\imath}}
		\left( 
		-d_{sj}+ 
		\max\left( 
		\sum_{t\in I}d_{st}[b_{tk}]_+,
		\sum_{t\in I}d_{st}[-b_{tk}]_+
		\right)
		\right).
	\]
	On the other hand, for the recursion to be satisfied, we must have 
	\[
		d'_{\overline{\imath}\overline{\jmath}}
		=
		-\sum_{s\in\overline{\imath}}d_{sj} +
		\max\left(
		\sum_{t\in I}\sum_{s\in\overline{\imath}}d_{st}[b_{tk}]_+, 
		\sum_{t\in I}\sum_{s\in\overline{\imath}}d_{st}[-b_{tk}]_+ 
		\right).
	\]
	We need therefore to have
	\begin{align}
		\sum_{s\in\overline{\imath}}
		\left( 
		\max\left( 
		\sum_{t\in I}d_{st}[b_{tk}]_+,
		\sum_{t\in I}d_{st}[-b_{tk}]_+
		\right)
		\right)
		&=\\
		\max\left(
		\sum_{s\in\overline{\imath}} \sum_{t\in I}d_{st}[b_{tk}]_+,
		\sum_{s\in\overline{\imath}} \sum_{t\in I}d_{st}[-b_{tk}]_+
		\right)
	\end{align}
	which holds if and only if
	the sign of 
	\[
		\sum_{t\in I}d_{st}[b_{tk}]_+-\sum_{t\in I}d_{st}[-b_{tk}]_+=\sum_{t\in
		I}d_{st}b_{tk}
	\]
	is independent of the choice of the representative $s\in\overline{\imath}$.
\end{proof}

\begin{rk}
	 Anna Felikson and Pavel Tumarkin found a case of cluster affine type $D$
	 where the condition of previous proposition does not hold \cite{Felikson12};
	 we thank them for showing us their example.
\end{rk}

For our purposes it is enough to show that the condition of Proposition
\ref{prop:condition-to-fold-d} holds in the cases of Remark \ref{rk:our_cases}.
Using Lemma \ref{lemma:d-matrix-invariance} and the fact that $\sigma$ is stable
admissible, it is equivalent to ask the sign of
\[
  \sum_{t\in I}d_{it}b_{tr}
\]
to be independent of the representative $r\in\overline{k}$. We get therefore
that the condition is satisfied whenever $k$ is fixed by $\sigma$.

We prefer to work with this third equivalent formulation: the sign of 
\[
  \sum_{t\in I}d_{i\sigma^m(t)}b_{tk}
\]
is independent of $m\in\mathbb{Z}$.
\begin{lem}
	\label{lem:folding-d-tupe_D}
	The condition of Proposition \ref{prop:condition-to-fold-d} holds for $B$ of
	cluster type $D_{n+1}$ endowed with the automorphism $\sigma$ of Remark
	\ref{rk:our_cases}.
\end{lem}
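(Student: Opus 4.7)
The plan is to apply the third equivalent formulation of the condition in Proposition~\ref{prop:condition-to-fold-d}, exploiting the fact that in type $D_{n+1}$ the automorphism $\sigma=(n,n+1)$ has only a single non-trivial orbit, namely $\overline{k}=\{n,n+1\}$.

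First, I collect the structural facts about $\mathcal{A}^{\sigma}_\bullet(B)$. By Proposition~\ref{prop:dynkin_stable}, the automorphism $\sigma$ remains admissible at every stage of $\mathcal{A}^{\sigma}_\bullet(B)$, so throughout this subalgebra the $B$-matrix satisfies $b_{\sigma(i)\sigma(j)}=b_{ij}$ together with the admissibility identity $b_{n,n+1}=b_{n+1,n}=0$. Lemma~\ref{lemma:d-matrix-invariance} gives the parallel symmetry $d_{\sigma(i)\sigma(j)}=d_{ij}$ for every $D$-matrix.

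Since $\sigma^{2}=\mathrm{id}$ and the condition is automatic whenever $k$ is fixed by $\sigma$, it suffices to show, for every row index $i\in I$ and $k=n$ (the case $k=n+1$ being handled identically, or by applying $\sigma$), that the two sums
\[
S_0 := \sum_{t\in I} d_{it}\,b_{tn}, \qquad S_1 := \sum_{t\in I} d_{i\sigma(t)}\,b_{tn}
\]
have the same sign. I will do this by establishing the stronger equality $S_0=S_1$. Split the summation into $t\notin\{n,n+1\}$ and $t\in\{n,n+1\}$. For the first part, $\sigma$ acts trivially on $t$, so the two sums match term by term. For the second part, each of $b_{n,n}$, $b_{n+1,n+1}$, $b_{n,n+1}$, $b_{n+1,n}$ vanishes, so neither sum receives any contribution from $t\in\{n,n+1\}$. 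Consequently $S_0=S_1$, and in particular they agree in sign.

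I do not anticipate any real obstacle: the argument is essentially bookkeeping. The simplicity is due entirely to $\{n,n+1\}$ being the \emph{only} non-trivial $\sigma$-orbit in type $D_{n+1}$, which prevents any cross-contributions coming from $t$'s sitting in a different non-trivial orbit. This is precisely the feature that will make the corresponding $A_{2n-1}$ statement more delicate, since there the presence of many non-trivial $\sigma$-orbits forces one to work genuinely with signs rather than with exact equality of the two sums.
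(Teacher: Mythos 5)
Your proof is correct and follows essentially the same route as the paper's: reduce to the case where $k$ lies in the unique non-trivial orbit $\{n,n+1\}$, note that every $t$ outside this orbit is fixed by $\sigma$ while the terms with $t\in\{n,n+1\}$ vanish because $b_{tk}=0$ there, and conclude that the sum is literally unchanged under $\sigma^m$. The paper's proof is the same computation, phrased as $\sum_{t\in I}d_{i\sigma^m(t)}b_{tk}=\sum_{t\in I\setminus\overline{k}}d_{it}b_{tk}$.
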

\begin{proof}
	There is only one non-trivial $\sigma$-orbit; in view of previous observations we
	can assume it is the orbit of $k$. This forces $t\not\in\overline{k}$ to be
	fixed by $\sigma$. Moreover, since $\sigma$ is a stable admissible automorphism
	$b_{tk}=0$ if $t\in\overline{k}$. Therefore 
	\[
		\sum_{t\in I}d_{i\sigma^m(t)}b_{tk}
		=
		\sum_{t\in I\setminus\overline{k}}d_{it}b_{tk}
	\]
	which is manifestly independent of $m$.
\end{proof}

\begin{lem}
	\label{lem:folding-d-tupe_A}
	The condition of Proposition \ref{prop:condition-to-fold-d} holds for $B$ of
	cluster type $A_{2n-1}$ endowed with the automorphism $\sigma$ of Remark
	\ref{rk:our_cases}.
\end{lem}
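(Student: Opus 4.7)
The plan is to verify the condition of Proposition \ref{prop:condition-to-fold-d}, which by the reformulation given just before Lemma \ref{lem:folding-d-tupe_D} together with $\sigma^2=\mathrm{id}$ in type $A_{2n-1}$ amounts to the equality of signs
\[
\mathrm{sign}\!\left(\sum_{t\in I} d_{it}\,b_{tk}\right)
\;=\;
\mathrm{sign}\!\left(\sum_{t\in I} d_{it}\,b_{t,\sigma(k)}\right)
\]
for every $D$-matrix in $\mathcal{A}^\sigma_\bullet(B)$ and every pair $(i,k)$. When $k=n$ is the unique $\sigma$-fixed index the two sums are literally equal, since $b_{tk}=b_{\sigma(t)\sigma(k)}=b_{\sigma(t)k}$ and reindexing turns one sum into the other. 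The new feature compared with the type $D_{n+1}$ case handled in Lemma \ref{lem:folding-d-tupe_D} is the presence of $n-1$ non-trivial orbits $\{k,2n-k\}$ for which the equality has to be proved.

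The plan is to exploit the surface realization of Section \ref{sect:surfaces-ad}, which identifies $B$-matrices of cluster type $A_{2n-1}$ with triangulations of a $(2n+2)$-gon $P$. A direct construction produces a $\sigma$-invariant initial triangulation $\Gamma_0$, namely the ``double fan'' obtained from the long diagonal joining two antipodal marked points together with the two fans from its endpoints, that is preserved by the $180^\circ$ rotation $\tau$ of $P$ in such a way that $\tau$ induces $\sigma$ on arc labels. Since orbit-mutations commute with $\tau$, every triangulation $\Gamma$ reachable by orbit-mutations remains $\tau$-invariant, and each quadrilateral $Q_{\sigma(k)}$ surrounding $\beta_{\sigma(k)}$ is exactly $\tau(Q_k)$. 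By Theorem \ref{thm:description_of_d-vectors} we have $d_{it}=(\gamma_i|\beta_t)$, so $\sum_{t} d_{it}\,b_{tk}$ acquires the geometric interpretation of a signed count of crossings of the fixed arc $\gamma_i\in\Gamma_0$ with the diagonal sides of $Q_k$, the signs being read off from $B(\Gamma)$.

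The main obstacle is that $\tau$ does not fix $\gamma_i$ (it sends it to $\gamma_{\sigma(i)}$), so the desired sign equality cannot be deduced from $\tau$-equivariance alone: we are comparing the interaction of one and the same arc with two different (although $\tau$-related) quadrilaterals. What rescues the argument is the combination of two facts. First, the $\sigma$-invariance of $B(\Gamma)$ forces the pair of sides of $Q_k$ carrying positive $B$-entries to be $\tau$-mapped to the positive pair of $Q_{\sigma(k)}$, so the ``$+$/$-$ partition'' of the sides behaves equivariantly. Second, the non-crossing of the arcs of $\Gamma_0$, together with the simply-connected nature of $P$ (unlike in type $D_n$ there is no puncture to cope with), severely restricts how the endpoints of a single arc $\gamma_i$ can be placed relative to the two $\tau$-related quadrilaterals. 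A direct case analysis on these placements, in the same spirit as, but noticeably shorter than, the one carried out for type $D_n$ in Appendix \ref{app:type-Dn}, produces the required sign matching in all cases and completes the proof.
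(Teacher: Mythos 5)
Your setup is fine as far as it goes: you state the right condition (equality of signs of $\sum_t d_{it}b_{tk}$ and $\sum_t d_{it}b_{t\sigma(k)}$ for a fixed row $i$), you dispose of the $\sigma$-fixed index correctly, you give the correct geometric reading of these sums as signed crossing counts of $\gamma_i$ with the quadrilaterals of $\beta_k$ and $\beta_{\sigma(k)}=\tau(\beta_k)$, and you correctly identify the obstruction that $\tau$ moves $\gamma_i$ to $\gamma_{\sigma(i)}$, so equivariance alone proves nothing. But the proof stops exactly where the real work begins: the sentence asserting that ``a direct case analysis on these placements \dots\ produces the required sign matching in all cases'' is the entire content of the lemma, and you neither list the cases nor explain why they close. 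There is no a priori reason the enumeration is ``noticeably shorter'' than the $87$-case analysis of Appendix \ref{app:type-Dn}, and nothing in what you wrote rules out a configuration where $\gamma_i$ meets the positive sides of $Q_k$ and the negative sides of $\tau(Q_k)$. A secondary issue: you fix $\Gamma_0$ to be one specific ``double fan'' triangulation, whereas the lemma must hold for \emph{every} $B$ of cluster type $A_{2n-1}$ admitting $\sigma$, i.e.\ for every centrally symmetric triangulation of the $(2n+2)$-gon serving as $\Gamma_0$; at minimum the case analysis would have to be carried out for an arbitrary $\tau$-invariant $\Gamma_0$.

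For contrast, the paper's proof sidesteps the geometric enumeration with one structural input that you do not use: the rows of any $D$-matrix in $\mathcal{A}^\sigma_0(B)$ are themselves $d$-vectors (of the cluster algebra based at the other seed; this is read off from Theorem \ref{thm:description_of_d-vectors}), so in type $A$ each row is sign-coherent with entries in $\{0,1\}$ and support a connected string, or else is a negative simple vector. Since $b_{tk}\in\{0,\pm 1\}$ and a string contains at most two neighbours of $k$, one checks that $\sum_t d_{it}b_{tk}$ is either $0$ or has the sign of $b_{t_1k}$, where $t_1$ is the neighbour of $k$ on the shortest path from $k$ to $\sigma(k)$ (a string meeting neighbours of both $k$ and $\sigma(k)$ must contain $t_1$ and $\sigma(t_1)$); the admissibility identity $b_{t_1k}=b_{\sigma(t_1)\sigma(k)}$ then yields the sign equality at once. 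If you want to keep your surface-based route, this string-support fact is exactly what would make your case analysis collapse to a few lines; as written, the argument has a genuine gap.
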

\begin{proof}
	Note at first that rows of a $D$-matrix associated to a $B$-matrix $B'$ in
	$\mathcal{A}^\sigma_0(B)$ are again $d$-vectors: they are the $d$-vectors of
	$\mathcal{A}^\sigma_0(B')$ in the $D$-matrix associated to $B$. This follows
	directly from the surface realization (see Theorem
	\ref{thm:description_of_d-vectors}). In particular, in this case, they are
	sign-coherent and their support is either a string (if they are positive) or a
	single vertex (if they are negative).

	As before we can assume that $k$ is not fixed by $\sigma$. If the support of
	the row $i$ does not contain neighbours of both $k$ and $\sigma(k)$ then the
	statement is clear. We can therefore assume that there is at least one
	neighbour of each of them in the support of the $i$-th row of $D$.

	Let $t_1$ and $t_2$ be the two neighbours of $k$ and $\sigma(k)$ respectively
	lying on the shortest path from $k$ to $\sigma(k)$. By the symmetry required
	for folding $t_2=\sigma(t_1)$. Moreover if a row of $D$ contains at least one
	neighbour of both $k$ and $\sigma(k)$ then it contains both $t_1$ and $t_2$.
	We claim that, in this situation,  
	\[
		\sum_{t\in I}d_{i\sigma^m(t)}b_{tk}
	\]
	is either $0$ or has the same sign of $b_{t_1k}$. Indeed each row of
	$D$ has at most $2$ neighbours of $k$ in its support and the entries of $B$
	are either $0$ or $\pm1$.

	We can therefore conclude our proof: since $\sigma$ is a stable admissible
	automorphism of $B$ we have:
	\[
		b_{t_1k}=b_{\sigma(t_1)\sigma(k)}.
	\]
\end{proof}

\subsection{Proof of Theorem \ref{thm:main} for types
	\texorpdfstring{$B_n$}{Bn} and \texorpdfstring{$C_n$}{Cn}}
To fix the notation observe that any $B$-matrix of cluster type $B_n$ or $C_n$ 
uniquely determines a $\sigma$-invariant matrix of cluster type respectively
$D_{n+1}$ or $A_{2n-1}$ of which it is the folding. We will therefore denote by
$\overline{B}$ a matrix of cluster type $B_n$ or $C_n$ and by $B$ its unfolding.

Let $\pi\left( \mathcal{V}(B) \right)$ be the image of the set $\mathcal{V}(B)$
under the folding map
\[
\begin{array}{cccc}
	\pi:&\mathcal{V}(B)&\longrightarrow&\mathbb{Z}^{\overline{I}}\\
	&\left( v_i \right)_{i\in I}&\longmapsto&\left( \sum_{s\in \overline{\imath}}v_s
	\right)_{\overline{\imath}\in\overline{I}}
\end{array}
\]
and recall the definition of the sets $\mathcal{W}(B_n)$ and $\mathcal{W}(C_n)$
from Section \ref{sect:sets}.

\begin{prop}
	\label{prop:folding-v}
	For any matrix $\overline{B}$ of cluster type $B_n$ or $C_n$ we have
	\[
		\mathcal{V}(\overline{B})=\pi\left( \mathcal{V}(B) \right).
	\]
\end{prop}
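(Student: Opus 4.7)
The plan is to establish both inclusions $\pi(\mathcal{V}(B))\subseteq\mathcal{V}(\overline{B})$ and $\mathcal{V}(\overline{B})\subseteq\pi(\mathcal{V}(B))$ by an explicit case analysis. The key structural fact, implicit in the proofs of Propositions \ref{prop:dynkin-Bn} and \ref{prop:dynkin-Cn}, is that the unfolded diagram $X(B)$ is precisely the $\sigma$-symmetric element of $\mathcal{X}(D_{n+1})$ or $\mathcal{X}(A_{2n-1})$ whose graph-theoretic quotient under $\sigma$ is $X(\overline{B})$. In particular, an embedding of a full sub-diagram $W\subseteq X(B)$ descends, componentwise via $\pi$, to a weighted sub-diagram on $X(\overline{B})$, and the two set-inclusions are simply statements about the images of this descent.

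I would begin with type $C_n$, where $\sigma$ fixes the middle vertex of the axis of symmetry of $X(B)$. Since $\mathcal{W}(A_{2n-1})$ consists only of strings of multiplicity one, each embedding $W\subseteq X(B)$ falls into three essentially distinct cases: (i) $W$ and $\sigma(W)$ are disjoint, so $\pi(W)$ is a string with every weight equal to $1$; (ii) $W$ and $\sigma(W)$ share some vertices without coinciding, in which case the fold produces a shorter string some of whose vertices carry multiplicity $2$; (iii) $W=\sigma(W)$, which forces $W$ to be symmetric around the fixed vertex and to contain it, producing a half-string with the fixed vertex as an endpoint. A direct pictorial comparison with Figure~\ref{fig:allowed_diagrams_Cn} shows that the resulting fold is always a template in $\mathcal{W}(C_n)$, and conversely every template of $\mathcal{W}(C_n)$ embedded into $X(\overline{B})$ can be lifted to an embedding of a string of $\mathcal{W}(A_{2n-1})$ into $X(B)$ by the obvious symmetric completion.

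For type $B_n$, the same scheme applies with $\sigma=(n,n+1)$ swapping two vertices of $X(B)\in\mathcal{X}(D_{n+1})$ in case (a) or (b) of Figure~\ref{fig:dynkin-Dn}. The templates of $\mathcal{W}(D_{n+1})$ in Figure~\ref{fig:allowed_diagrams_Dn} now contain non-trivial multiplicities, and I would split the analysis according to whether the embedded template contains neither, exactly one, or both of the two $\sigma$-swapped vertices of $X(B)$. In the first two cases the fold reproduces the template verbatim in $X(\overline{B})$; in the third case the weights at the swapped vertices add up at the folded vertex, producing precisely the multiplicity-$2$ endpoint that distinguishes certain templates of $\mathcal{W}(B_n)$ (Figure~\ref{fig:allowed_diagrams_Bn}). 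The reverse inclusion again follows by exhibiting an explicit $\sigma$-symmetric lift for each template in $\mathcal{W}(B_n)$.

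The main obstacle is purely combinatorial: one must check systematically that the folds of the eight families of $\mathcal{W}(D_{n+1})$-templates, taken over all embeddings into all $\sigma$-symmetric diagrams in $\mathcal{X}(D_{n+1})$, exhaust exactly the families of $\mathcal{W}(B_n)$, with no surplus, and analogously for the string family of $\mathcal{W}(A_{2n-1})$ and the families of $\mathcal{W}(C_n)$. No deeper structural argument seems needed, but the verification is tedious enough that it amounts to reproducing the list of folded templates one by one; this is essentially the same kind of graphical enumeration used to define $\mathcal{W}(B_n)$ and $\mathcal{W}(C_n)$ in the first place, which is why the statement is expected to hold on the nose.
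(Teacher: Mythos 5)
Your proposal is correct and follows essentially the same route as the paper: the paper's proof of Proposition \ref{prop:folding-v} is the one-line observation that the templates in $\mathcal{W}(B_n)$ and $\mathcal{W}(C_n)$ are \emph{defined} as exactly the folds of templates from $\mathcal{W}(D_{n+1})$ and $\mathcal{W}(A_{2n-1})$ embedded in $\sigma$-symmetric diagrams, which is precisely the case-by-case verification you spell out. Your explicit breakdown (according to how an embedded template meets the non-trivial $\sigma$-orbit, resp.\ the fixed vertex) is just the unpacked form of that observation.
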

\begin{proof}
	The claim is clear once we observe that the diagrams in $\mathcal{W}(B_n)$ and
	$\mathcal{W}(C_n)$ are obtained precisely by folding diagrams from
	$\mathcal{W}(D_{n+1})$ and $\mathcal{W}(A_{2n-1})$ embedded in $X(B)$.
\end{proof}

We have now the tools we need to deduce Theorem 
\ref{thm:main} for types $B_n$ and $C_n$ from the same result for types $A_n$
and $D_n$.

\begin{prop}
	For any matrix $\overline{B}$ of cluster type $B_n$ or $C_n$ we have
	\[
		\mathcal{C}_+(\overline{B})\subset\mathcal{V}(\overline{B})
	\]
\end{prop}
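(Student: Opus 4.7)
The plan is to reduce to the already-established inclusion $\mathcal{C}_+(B)\subset\mathcal{V}(B)$ for simply-laced types $A_{2n-1}$ or $D_{n+1}$ (Proposition \ref{prop:An_Dn-support}) via the folding machinery built up in this section. Concretely, given $\overline{c}\in\mathcal{C}_+(\overline{B})$, Corollary \ref{cor:folding} produces a $C$-matrix $C'$ of $\mathcal{A}_\bullet^\sigma(B)$ and a $\sigma$-orbit $\overline{\jmath}$ such that, for any representative $j\in\overline{\jmath}$, the column $c:=(c'_{ij})_{i\in I}$ is a $c$-vector of $\mathcal{A}_\bullet(B)$ whose image under the folding map is $\overline{c}$.

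The key intermediate step is to show that $c\in\mathcal{C}_+(B)$, i.e. that $c$ itself is positive, not merely sign-coherent. Since $B$ is of simply-laced cluster-finite type, Lemma \ref{lemma:c-vectors_are_sign_coerent} (or equivalently the more general sign-coherence results recalled in Section \ref{sect:back}) guarantees that $c$ is sign-coherent: all its nonzero components share a common sign. The folded component $\overline{c}_{\overline{\imath}}=\sum_{s\in\overline{\imath}}c_{sj}$ is thus a sum of terms of that common sign, so the sign of each $\overline{c}_{\overline{\imath}}$ equals the common sign of $c$ whenever it is nonzero. As $\overline{c}$ is positive and nonzero, at least one $\overline{c}_{\overline{\imath}}$ is strictly positive, which forces the common sign of $c$ to be nonnegative, and $c\neq 0$ since $\overline{c}\neq 0$; hence $c\in\mathcal{C}_+(B)$.

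Once $c\in\mathcal{C}_+(B)$ is in hand, Proposition \ref{prop:An_Dn-support} gives $c\in\mathcal{V}(B)$, and Proposition \ref{prop:folding-v} then yields
\[
\overline{c}=\pi(c)\in\pi\bigl(\mathcal{V}(B)\bigr)=\mathcal{V}(\overline{B}),
\]
completing the desired inclusion. The only part that requires any real care is the sign-promotion step from sign-coherence of $c$ to positivity of $c$; everything else is a direct application of the folding dictionary (Corollary \ref{cor:folding} and Proposition \ref{prop:folding-v}) together with the already-proved simply-laced case.
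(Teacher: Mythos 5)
Your proof is correct and follows essentially the same route as the paper, which simply chains Corollary \ref{cor:folding}, the simply-laced inclusion $\mathcal{C}_+(B)\subset\mathcal{V}(B)$ from Proposition \ref{prop:An_Dn-support}, and Proposition \ref{prop:folding-v}. The sign-promotion step you spell out (using sign-coherence of the unfolded $c$-vector to upgrade ``some $c$-vector'' to ``some \emph{positive} $c$-vector'') is left implicit in the paper's one-line argument, so your write-up is if anything slightly more complete.
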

\begin{proof}
	Combining Corollary \ref{cor:folding}, Proposition \ref{prop:An_Dn-support}
	and Proposition \ref{prop:folding-v} we have 
	\[
		\mathcal{C}_+(\overline{B})\subset
		\pi\left(\mathcal{C}_+(B) \right)=
		\pi\left( \mathcal{V}(B) \right)=
		\mathcal{V}(\overline{B}).
	\]
\end{proof}

\begin{prop}
	For any matrix $\overline{B}$ of cluster type $B_n$ or $C_n$ we have
	\[
		\mathcal{D}(\overline{B})\subset\mathcal{V}(\overline{B})
	\]
\end{prop}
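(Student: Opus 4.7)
The plan is to mirror the proof of the previous proposition, with $d$-vectors replacing $c$-vectors. Specifically, I will establish the following $d$-vector analogue of Corollary \ref{cor:folding}: every non-initial $d$-vector of $\mathcal{A}_\bullet(\overline{B})$ is the folding $\pi(d)$ of some non-initial $d$-vector $d$ of $\mathcal{A}_\bullet(B)$, in the sense of (\ref{eqn:fold-d}). Once this inclusion $\mathcal{D}(\overline{B})\subset\pi(\mathcal{D}(B))$ is in hand, the conclusion drops out: by the simply-laced case of Theorem \ref{thm:main} already proved in Section \ref{sect:surfaces-ad} (combining Propositions \ref{prop:An_Dn-support} and \ref{prop:c=d_simply_laced}) we have $\mathcal{D}(B)\subset\mathcal{V}(B)$, so Proposition \ref{prop:folding-v} gives
\[
\mathcal{D}(\overline{B})\subset\pi(\mathcal{D}(B))\subset\pi(\mathcal{V}(B))=\mathcal{V}(\overline{B}).
\]

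To establish the key inclusion, I would proceed as follows. Fix any non-initial cluster variable $\overline{x}_{\overline{\jmath}}'$ of $\mathcal{A}_\bullet(\overline{B})$ living in a seed with exchange matrix $\overline{B}'$. By Proposition \ref{prop:unfolding} there exist a sequence of $\sigma$-orbits $\overline{k_1},\dots,\overline{k_\ell}$ and a matrix $B'=\mu_{\overline{k_\ell}}^\sigma\circ\cdots\circ\mu_{\overline{k_1}}^\sigma(B)$ such that $\overline{B}'=\pi(B')$, and the corresponding seed of $\mathcal{A}_\bullet^\sigma(B)$ projects onto the seed of $\overline{x}_{\overline{\jmath}}'$ under the quotient. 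I would then show, by induction along the orbit-mutation path, that the $D$-matrix at each intermediate step folds consistently under $\pi$: this is exactly the content of Proposition \ref{prop:condition-to-fold-d}, whose hypothesis has been verified in the two cases of interest by Lemmas \ref{lem:folding-d-tupe_D} and \ref{lem:folding-d-tupe_A}. The base case is trivial since the initial $D$-matrix is the identity, which folds to the identity. Inductively, Lemma \ref{lemma:d-matrix-invariance} guarantees that the entries $d_{sj}$ do not depend on the representative of $\overline{\jmath}$, so the folded matrix $\overline{D}$ is well defined and, by Proposition \ref{prop:condition-to-fold-d}, obeys the correct recursion. In particular the $d$-vector $\overline{d}_{\overline{\jmath}}$ of $\overline{x}_{\overline{\jmath}}'$ in $\mathcal{A}_\bullet(\overline{B})$ is the folding of the $d$-vector $d_j$ of $x_j'$ in $\mathcal{A}_\bullet(B)$, hence lies in $\pi(\mathcal{D}(B))$.

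The main obstacle is exactly the delicate point flagged by Proposition \ref{prop:condition-to-fold-d}: the recursion for $d$-vectors involves a $\max$, so folding and tropicalization are not automatically compatible (indeed, they can fail in cluster affine type $D$, as noted in the remark following that proposition). However the required sign condition has already been verified, case by case, in Lemmas \ref{lem:folding-d-tupe_D} and \ref{lem:folding-d-tupe_A} precisely for the unfoldings of cluster types $B_n$ and $C_n$ described in Remark \ref{rk:our_cases}. With those lemmas available, the remaining work is bookkeeping along a mutation sequence, directly analogous to the corresponding argument for $c$-vectors leading to Corollary \ref{cor:folding}.
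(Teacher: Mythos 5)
Your proposal is correct and follows essentially the same route as the paper: the paper's proof is exactly the chain $\mathcal{D}(\overline{B})\subset\pi(\mathcal{D}(B))=\pi(\mathcal{V}(B))=\mathcal{V}(\overline{B})$, obtained by combining Lemmas \ref{lem:folding-d-tupe_D} and \ref{lem:folding-d-tupe_A} (which justify the folding of $d$-vectors via Proposition \ref{prop:condition-to-fold-d}) with Propositions \ref{prop:c=d_simply_laced}, \ref{prop:An_Dn-support}, and \ref{prop:folding-v}. You have merely spelled out the induction along the orbit-mutation sequence that the paper leaves implicit.
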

\begin{proof}
	Combining Lemmata \ref{lem:folding-d-tupe_D} and \ref{lem:folding-d-tupe_A}
	with Proposition \ref{prop:c=d_simply_laced}, Proposition \ref{prop:An_Dn-support}
	and Proposition \ref{prop:folding-v} we have 
	\[
		\mathcal{D}(\overline{B})\subset
		\pi\left(\mathcal{D}(B) \right)=
		%\pi\left(\mathcal{C}_+(B) \right)=
		\pi\left( \mathcal{V}(B) \right)=
		\mathcal{V}(\overline{B}).
	\]
\end{proof}

To conclude we need one last lemma.
\begin{lem}
	\label{lem:folding-bipartite}
	For any matrix $\overline{B}$ of cluster type $B_n$ or $C_n$ we have
	\[
		\mathcal{C}^b_+(\overline{B})=
		\pi\left(\mathcal{C}^b_+(B) \right)
	\]
	and
	\[
		\mathcal{D}^b(\overline{B})=
		\pi\left(\mathcal{D}^b(B) \right).
	\]
\end{lem}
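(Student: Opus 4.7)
The plan is to reduce both equalities to the established folding correspondences for $c$-vectors (Corollary \ref{cor:folding}) and $d$-vectors (Proposition \ref{prop:condition-to-fold-d} combined with Lemmata \ref{lem:folding-d-tupe_D} and \ref{lem:folding-d-tupe_A}) by first establishing the following key compatibility: a matrix $B'$ reachable from $B$ by orbit mutations is bipartite if and only if its folding $\overline{B'}=\pi(B')$ is bipartite.

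First I would prove the forward direction of this claim. If each vertex of $B'$ is a sink or source, then using the $\sigma$-invariance of $B'$ preserved along any sequence of orbit-mutations, each $\sigma$-orbit consists of vertices that are either all sinks or all sources (as $\sigma$ permutes them, preserving the sink/source type). Summing the entries $b'_{sj}$ over a fixed orbit $\overline{\imath}$ then gives $b'_{\overline{\imath}\overline{\jmath}}$ with uniform sign across $\overline{\jmath}$, so $\overline{B'}$ is bipartite. For the converse, if $\overline{B'}$ is bipartite then the admissibility condition \eqref{eqn:no_2_paths}, preserved along the orbit-mutation sequence because $\sigma$ is stable, forces the summands $b'_{sj}$ for $s\in\overline{\imath}$ to agree in sign; combined with the sign of their sum being fixed across $\overline{\jmath}$, this makes every representative a sink or source in $B'$.

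Next I would use Proposition \ref{prop:unfolding} to show that every bipartite seed in the mutation class of $\overline{B}$ arises as the folding of some bipartite seed in $\mathcal{A}_\bullet^\sigma(B)$: any unfolding provided by that proposition is bipartite by the reverse direction of the key claim. Combined with the forward direction, this yields a surjection from bipartite seeds of $\mathcal{A}_\bullet^\sigma(B)$ onto bipartite seeds of $\mathcal{A}_\bullet(\overline{B})$ which is compatible, via \eqref{eqn:folding-c-matrix}, with the folding of $C$-matrices. The first equality $\mathcal{C}^b_+(\overline{B})=\pi(\mathcal{C}^b_+(B))$ then follows directly from Corollary \ref{cor:folding} applied seed-by-seed: the $c$-vectors of a bipartite $\overline{B}'$ are the foldings of the $c$-vectors of a bipartite lift $B'$, and every such folding is obtained this way.

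For the second equality the same bijection on bipartite seeds supplies a correspondence between non-initial cluster variables appearing in those seeds: the folding quotient sends the initial cluster of $\mathcal{A}_\bullet^\sigma(B)$ onto the initial cluster of $\mathcal{A}_\bullet(\overline{B})$ by construction, so non-initial variables correspond to non-initial ones (and their folded $d$-vectors remain non-zero by sign-coherence). Applying the folding rule \eqref{eqn:fold-d}, which is valid here by Lemmata \ref{lem:folding-d-tupe_D} and \ref{lem:folding-d-tupe_A}, then yields $\mathcal{D}^b(\overline{B})=\pi(\mathcal{D}^b(B))$. The main obstacle I anticipate is carefully tracking sign-coherence of the entries $b'_{sj}$ across an orbit throughout the orbit-mutation process — this is the point where the bipartite equivalence, and hence both equalities, crucially rests on the admissibility of $\sigma$ being preserved along orbit mutations.
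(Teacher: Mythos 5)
Your key observation --- that a seed of $\mathcal{A}_\bullet^\sigma(B)$ is bipartite if and only if its folding is bipartite --- is exactly the observation the paper's proof rests on, and your verification of it (via $\sigma$-invariance for the forward direction and admissibility \eqref{eqn:no_2_paths}, \eqref{eqn:no_1_paths} for the converse) is correct and more detailed than what the paper records. Your use of Proposition \ref{prop:unfolding} also correctly delivers the inclusion $\mathcal{C}^b_+(\overline{B})\subseteq\pi\bigl(\mathcal{C}^b_+(B)\bigr)$: every bipartite seed of $\mathcal{A}_\bullet(\overline{B})$ unfolds to a bipartite seed inside $\mathcal{A}_\bullet^\sigma(B)$, whose $c$-vectors fold correctly.

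The gap is in the opposite inclusion $\pi\bigl(\mathcal{C}^b_+(B)\bigr)\subseteq\mathcal{C}^b_+(\overline{B})$ (and likewise for $d$-vectors). The set $\mathcal{C}^b_+(B)$ is defined using \emph{all} bipartite seeds of $\mathcal{A}_\bullet(B)$, whereas your surjection only accounts for bipartite seeds lying in the orbit-mutation subalgebra $\mathcal{A}_\bullet^\sigma(B)$; the phrase ``every such folding is obtained this way'' quantifies over the wrong set. This is not a vacuous worry: the mutation class of $B$ contains bipartite seeds whose exchange matrix is not $\sigma$-invariant (already in type $A_3$ with $\sigma=(1,3)$ one reaches a bipartite orientation of a path centered at vertex $1$), and such seeds cannot belong to $\mathcal{A}_\bullet^\sigma(B)$, yet their positive $c$-vectors contribute to $\mathcal{C}^b_+(B)$ and hence to the left-hand side $\pi\bigl(\mathcal{C}^b_+(B)\bigr)$. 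The paper closes precisely this gap with the additional assertion that any two bipartite matrices of cluster type $D_{n+1}$ or $A_{2n-1}$ are connected by orbit mutations, which is what forces every bipartite seed's contribution to be captured by the folded subalgebra. Since the inclusion you leave unjustified is exactly the one used downstream (in the chain proving $\mathcal{V}(\overline{B})\subseteq\mathcal{D}(\overline{B})$ via $\pi\bigl(\mathcal{D}^b(B)\bigr)=\mathcal{D}^b(\overline{B})$), you need either this connectivity statement or some other argument showing that the $c$- and $d$-vectors arising from bipartite seeds outside $\mathcal{A}_\bullet^\sigma(B)$ fold into $\mathcal{C}^b_+(\overline{B})$ and $\mathcal{D}^b(\overline{B})$.
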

\begin{proof}
	The claim follows directly from the following observation: a matrix
	$\overline{B}$ is bipartite if and only if its unfolding $B$ is bipartite. We
	get equalities (as opposed to inclusions) because any two bipartite
	matrices of cluster type $D_{n+1}$ or $A_{2n-1}$ are connected by orbit
	mutations.
\end{proof}

\begin{prop}
	For any matrix $\overline{B}$ of cluster type $B_n$ or $C_n$ we have
	\[
		\mathcal{V}(\overline{B})\subset\mathcal{C}_+(\overline{B})
	\]
	and
	\[
		\mathcal{V}(\overline{B})\subset\mathcal{D}(\overline{B}).
	\]
\end{prop}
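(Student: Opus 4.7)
The plan is to combine everything already established for types $A_n$ and $D_n$ with the folding machinery just developed. The strategy is a direct chain-of-inclusions argument: unfold, apply the simply-laced result, then fold back.

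First, by Proposition~\ref{prop:folding-v}, any $\bar v\in\mathcal{V}(\overline{B})$ is the image $\pi(v)$ of some $v\in\mathcal{V}(B)$, where $B$ is the unfolding of $\overline{B}$ (so $B$ is of cluster type $D_{n+1}$ or $A_{2n-1}$). Proposition~\ref{prop:inverse-AD} gives $v\in\mathcal{D}(B)$, and Proposition~\ref{prop:c=d_simply_laced} upgrades this to $v\in\mathcal{C}_+(B)=\mathcal{D}(B)$. Moreover, Proposition~\ref{prop:bipartite} together with equation~\eqref{eqn:bipartite-D} shows that both of these sets coincide with their bipartite versions: $v\in\mathcal{C}^b_+(B)=\mathcal{C}_+(B)$ and $v\in\mathcal{D}^b(B)=\mathcal{D}(B)$. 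Thus $v$ can be realized simultaneously as a positive $c$-vector and as a non-initial $d$-vector coming from bipartite seeds of $\mathcal{A}_\bullet(B)$.

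Next I would push these facts through the folding map using Lemma~\ref{lem:folding-bipartite}, which says that $\pi(\mathcal{C}^b_+(B))=\mathcal{C}^b_+(\overline{B})$ and $\pi(\mathcal{D}^b(B))=\mathcal{D}^b(\overline{B})$. Applying this to $v$, we get $\pi(v)\in\mathcal{C}^b_+(\overline{B})\subset\mathcal{C}_+(\overline{B})$ and $\pi(v)\in\mathcal{D}^b(\overline{B})\subset\mathcal{D}(\overline{B})$. Since $\bar v=\pi(v)$, this proves both desired inclusions. Schematically:
\begin{align*}
\mathcal{V}(\overline{B})&=\pi\bigl(\mathcal{V}(B)\bigr)=\pi\bigl(\mathcal{C}^b_+(B)\bigr)=\mathcal{C}^b_+(\overline{B})\subset\mathcal{C}_+(\overline{B}),\\
\mathcal{V}(\overline{B})&=\pi\bigl(\mathcal{V}(B)\bigr)=\pi\bigl(\mathcal{D}^b(B)\bigr)=\mathcal{D}^b(\overline{B})\subset\mathcal{D}(\overline{B}).
\end{align*}

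There is no genuine obstacle at this stage: all the delicate work (the sign-coherence needed to define the folding of $c$-vectors, the sign condition of Proposition~\ref{prop:condition-to-fold-d} for $d$-vectors in the two relevant types, and the bipartite reductions on the simply-laced side) has already been done. The only thing to double-check is that Lemma~\ref{lem:folding-bipartite} really delivers \emph{equalities} $\pi(\mathcal{C}^b_+(B))=\mathcal{C}^b_+(\overline{B})$ rather than mere inclusions; this is exactly the content of the lemma, and it relies on the fact that all bipartite $B$-matrices of cluster type $D_{n+1}$ or $A_{2n-1}$ are connected by orbit-mutations, so every bipartite seed of $\mathcal{A}_\bullet(\overline{B})$ lifts to one of $\mathcal{A}_\bullet(B)$.
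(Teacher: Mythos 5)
Your proposal is correct and follows essentially the same route as the paper: express $\mathcal{V}(\overline{B})$ as $\pi(\mathcal{V}(B))$ via Proposition~\ref{prop:folding-v}, pass to the bipartite sets $\mathcal{C}^b_+(B)$ and $\mathcal{D}^b(B)$ using the simply-laced results, and fold down via Lemma~\ref{lem:folding-bipartite}. You correctly identify the one genuinely delicate point, namely that the bipartite detour (and the \emph{equalities} in Lemma~\ref{lem:folding-bipartite}) is what lets arbitrary elements of $\pi(\mathcal{D}(B))$ be realized inside $\mathcal{A}_\bullet(\overline{B})$, since only seeds reachable by orbit-mutations fold.
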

\begin{proof}
	We show only the second condition; the first one is obtained in the same way.
	Using Proposition \ref{prop:folding-v}, Proposition \ref{prop:inverse-AD},
	equation (\ref{eqn:bipartite-D}), and
	Lemma \ref{lem:folding-bipartite} we get
	\[
		\mathcal{V}(\overline{B})=
		\pi\left( \mathcal{V}(B) \right)\subset
		\pi\left( \mathcal{D}(B) \right)=
		\pi\left( \mathcal{D}^b(B) \right)=
		\mathcal{D}^b(\overline{B})\subset
		\mathcal{D}(\overline{B})
		.
	\]
\end{proof}

For completeness we record also the following equalities (of which Theorem
\ref{thm:bipartite} is a direct consequence).
\begin{cor} 
  For any matrix $\overline{B}$ of cluster type $B_n$ or $C_n$ we have
	\[
		\mathcal{C}^b_+(\overline{B})=\mathcal{C}_+(\overline{B}),
	\]
	\[
		\mathcal{D}^b(\overline{B})=\mathcal{D}(\overline{B}),
	\]
	\[
		\mathcal{C}^b_+(\overline{B})=\mathcal{D}^b(\overline{B}).
	\]
\end{cor}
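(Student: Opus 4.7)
The plan is essentially pure bookkeeping: at this point of Section 5 every ingredient has already been manufactured, and the corollary is obtained by chaining together equalities. Specifically, the three immediately preceding propositions establish Theorem \ref{thm:main} in types $B_n$ and $C_n$, so that $\mathcal{C}_+(\overline{B})=\mathcal{V}(\overline{B})=\mathcal{D}(\overline{B})$; Lemma \ref{lem:folding-bipartite} gives the folding identities $\mathcal{C}^b_+(\overline{B})=\pi(\mathcal{C}^b_+(B))$ and $\mathcal{D}^b(\overline{B})=\pi(\mathcal{D}^b(B))$; and the simply-laced analogues of all three equalities were proved in Section \ref{sect:surfaces-ad} (Proposition \ref{prop:bipartite}, the identity (\ref{eqn:bipartite-D}), and the bipartite-belt argument inside Proposition \ref{prop:c=d_simply_laced}).

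For the first equality I would observe that $\mathcal{C}^b_+(\overline{B})\subset\mathcal{C}_+(\overline{B})$ is tautological, and that the reverse inclusion is delivered by the chain
\[
\mathcal{C}_+(\overline{B})=\mathcal{V}(\overline{B})=\pi(\mathcal{V}(B))=\pi(\mathcal{C}_+(B))=\pi(\mathcal{C}^b_+(B))=\mathcal{C}^b_+(\overline{B}),
\]
where the second equality uses Proposition \ref{prop:folding-v}, the third uses Theorem \ref{thm:main} in types $A_n$ and $D_n$, the fourth uses Proposition \ref{prop:bipartite}, and the last is Lemma \ref{lem:folding-bipartite}. The second equality is then obtained via the identical template, with $\mathcal{D}$ replacing $\mathcal{C}_+$ and (\ref{eqn:bipartite-D}) playing the role of Proposition \ref{prop:bipartite}.

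For the third equality, $\mathcal{C}^b_+(\overline{B})=\mathcal{D}^b(\overline{B})$, I would simply splice together the two equalities just proved with $\mathcal{C}_+(\overline{B})=\mathcal{D}(\overline{B})$, which is part of Theorem \ref{thm:main} for types $B_n$ and $C_n$. I do not anticipate any obstacle: the entire proof consists of invoking results already at hand, and the only conceptual point — that folding is compatible with both the bipartite restriction and the tropicalization on the relevant classes of $c$- and $d$-vectors — has already been isolated in Lemma \ref{lem:folding-bipartite} and Propositions \ref{prop:folding-c}, \ref{prop:condition-to-fold-d} together with Lemmata \ref{lem:folding-d-tupe_D} and \ref{lem:folding-d-tupe_A}.
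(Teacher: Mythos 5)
Your proposal is correct and matches the paper's (implicit) argument: the paper records this corollary without proof, presenting it as a direct consequence of the preceding propositions, and the chains of equalities you assemble — Theorem \ref{thm:main} in types $B_n$/$C_n$ and $A_n$/$D_n$, Proposition \ref{prop:folding-v}, Proposition \ref{prop:bipartite}, equation (\ref{eqn:bipartite-D}), and Lemma \ref{lem:folding-bipartite} — are exactly the intended justification. No gaps.
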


%%%%%%%%%%%%%%%%%%%%%%%%%%%%%%%%%%%%%%%%%%%%%%%%%%%%%%%
\section{Proof of Theorem \ref{thm:corollaries}}
\label{sect:corollaries}
Here we derive Theorem \ref{thm:corollaries}. The claim
(\ref{thm:corollaries-4}) is a direct consequence of our description of $c$- and
$d$-vectors in Theorem \ref{thm:main}. For simply-laced types 
claims (\ref{thm:corollaries-1}) and (\ref{thm:corollaries-3}) 
follow from Corollaries \ref{cor:schur} and \ref{cor:independent1}.
However, for types $A_n$ and $D_n$, we provide a direct proof using Theorem
\ref{thm:main} without referring to the representation-theoretic results of
Section \ref{sect:back}.

As we did before we deal with types $A_n$ and $D_n$ first; we will use again a
folding argument to deduce the results for types $B_n$ and $C_n$. 

\subsection{Types \texorpdfstring{$A_n$}{An} and \texorpdfstring{$D_n$}{Dn}}
Let $B$ be any skew-symmetric integer matrix of cluster type either $A_n$ or
$D_n$.
Having built an explicit list of all the positive $c$-vectors and non-initial
$d$-vectors for the cluster algebra $\mathcal{A}_\bullet(B)$ with principal
coefficients we can give a combinatorial proof of Theorem \ref{thm:corollaries}.

\begin{prop}
	All $c$-vectors and $d$-vectors of	$\mathcal{A}_\bullet(B)$ are roots in the
	root system associated to the Cartan counterpart of $B$. Each of them is real
	if and only if its support in $X(B)$ is a tree.
	\label{prop:roots_simply_laced}
\end{prop}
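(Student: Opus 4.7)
By Theorem \ref{thm:main} together with (\ref{eqn:sign-coherence}), every $c$-vector or $d$-vector equals $\pm v$ for some $v \in \mathcal{V}(B)$, and since $(v,v) = (-v,-v)$ it is enough to prove both claims for $v \in \mathcal{V}(B)$. For simply-laced $B$ the matrix $A(B)$ is symmetric, so the realness criterion recalled from \cite{Kac90} reduces to $(v,v) := v^\top A(B) v > 0$. Moreover, by Lemma \ref{lemma:support_is_connected} the support of any $v \in \mathcal{V}(B)$ is connected; if $X_v$ denotes the full subdiagram of $X(B)$ on $\supp(v)$, then $v$ is a root of $\Delta(A(B))$ if and only if it is a root of $\Delta(A(X_v))$, which reduces the problem to analyzing finitely many ``shape types'' for $X_v$.

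The plan is then a case analysis by cluster type. For type $A_n$, every $W \in \mathcal{W}(A_n)$ is a unit-weight string, and by Proposition \ref{prop:dynkin-An} any full-subdiagram embedding cannot pick up extra edges (an extra edge would complete a triangle inside the subdiagram). Thus $X_v$ is a Dynkin diagram of finite type $A_k$ and $v|_{X_v} = (1,\dots,1)$ is the highest root of $A_k$, which is real with $(v,v) = 2$ and whose support is a tree.

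For type $D_n$, I would go through each family listed in Figure \ref{fig:allowed_diagrams_Dn} and compute $(v,v)$ directly from the edge data of $X_v$. When the embedding has tree support, $X_v$ is of finite type $A_k$ or $D_k$ and one matches $v|_{X_v}$ to a known positive root of the corresponding root system: unit-weight strings correspond to $A_k$-roots of the form $e_i - e_j$, while diagrams with a central row of $2$'s correspond to $D_k$-roots of the form $e_i + e_j$. A direct computation gives $(v,v) = 2$ in every such case. The only way $X_v$ can contain a cycle is for the embedding to wrap the central cycle of case (d) in Figure \ref{fig:dynkin-Dn}, which forces $X_v$ to be an affine diagram of type $\tilde{A}$ or $\tilde{D}$; then $v|_{X_v}$ should be recognized as a positive multiple of the null root $\delta$ of that affine algebra, giving $(v,v) = 0$ and an imaginary root, whose support is by construction not a tree.

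The main obstacle I anticipate is the explicit identification of $v|_{X_v}$ with a root in the non-tree case: one must verify that the integer weights prescribed by $\mathcal{W}(D_n)$ along the cycle and on the attached $A$-type branches match exactly the null-root multiplicities of the associated affine Kac--Moody diagram. Once this is checked family by family against the finite explicit list in Figure \ref{fig:allowed_diagrams_Dn}, both the root property and the tree/non-tree realness dichotomy follow uniformly, without any reliance on the representation-theoretic results of Section \ref{sect:back}.
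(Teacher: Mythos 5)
Your reduction to positive vectors in $\mathcal{V}(B)$, the locality argument via connected support, and the treatment of type $A_n$ and of the tree-supported cases in type $D_n$ all match the paper's proof. The gap is in the cycle-supported cases. You assert that when $X_v$ contains a cycle it must be an affine diagram of type $\tilde{A}$ or $\tilde{D}$ and that $v$ is a positive multiple of its null root $\delta$. Neither claim survives inspection: $\tilde{D}_k$ is a tree, so it cannot occur as a cycle-containing $X_v$ at all; and, more seriously, the weighted diagrams VI, VII and VIII of Figure \ref{fig:allowed_diagrams_Dn}, when embedded in a diagram of shape (d), have support equal to the whole central cycle \emph{together with} segments of the attached type-$A$ components (this is visible in the counting argument of Proposition \ref{prop:counting-AD}, where such embeddings are parametrized by vertices lying outside the cycle). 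For these, $X_v$ is a cycle with trees attached, hence of indefinite type rather than affine, and $v$ is not a multiple of a null root; you cannot conclude $(v,v)=0$, and, more importantly, you are left with no argument that $v$ is a root at all.

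The paper closes exactly this gap by a different mechanism: it acts by simple reflections at the leaves of the weighted diagram (``trimming the branches'', preceded by reflections at the outermost weight-$2$ node to reduce case VII to case VI). These reflections preserve both the root property and the bilinear form, and they reduce every cycle-supported $v$ to one of four explicit core vectors $c$. For each of these one checks that every component of $Ac$ is non-positive, so $c$ has connected support and lies in the fundamental domain, hence is an imaginary root by Lemma 5.3 of \cite{Kac90} --- a criterion that covers $(c,c)<0$ as well as $(c,c)=0$, which is why no identification with $\delta$ is needed. If you wish to keep your direct computation of $(v,v)$ for the realness dichotomy, you would still need this reflection step (or an equivalent device) to establish that the cycle-supported vectors are roots in the first place; the quadratic form alone does not deliver that.
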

\begin{proof}
	It suffices to establish the claim for positive $c$-vectors. 
	We are dealing with a local property: since the support of any $c$-vector
	$c$ of $\mathcal{A}_\bullet(B)$ is a connected sub-diagram of
	$X(B)$ it suffices to show that $c$ is a root in the root system
	associated to its support.

	The claim is clear for type $A_n$ and for cases I, II and III of type
	$D_n$: they are all roots in the corresponding finite type root system.
	
	Applying in sequence the simple reflections corresponding to the outermost
	node with multiplicity $2$ we can reduce case VII to case VI. We can then ``trim
	the branches'' reflecting each time with respect to a leaf of the diagram.
	After these reductions we are left with the four cases in Figure
	\ref{fig:prop-roots}.
	\begin{figure}[htbp]
		\begin{center}
			\includegraphics[scale=.5]{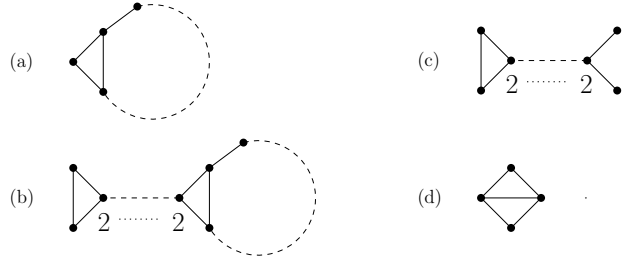}
		\end{center}
		\caption{Reduced $c$-vectors.}
		\label{fig:prop-roots}
	\end{figure}
	They all correspond to imaginary roots. Indeed let
	$c$ be any of these reduced $c$-vectors and let	$A$ be the generalized Cartan
	matrix associated to its support, then all the components of the vector $Ac$
	are non-positive which is exactly the condition of \cite[Lemma 5.3]{Kac90}.
\end{proof}

Let $\langle\cdot,\cdot\rangle$ be the Euler form of the quiver $Q=Q(B)$ associated
to $B$; it is defined on roots as follows:
\[
\langle \sum_{i\in I} c_i \alpha_i,\sum_{i\in I} d_i \alpha_i\rangle:=
\sum_{i\in I}c_i d_i-
\sum_{b_{ij}>0}b_{ij}c_j d_i.
\]

To show that elements of $\mathcal{C}_+(B)=\mathcal{D}(B)$ are Schur roots we will use
the following result of A. Schofield (\cite[Theorem 6.2]{schofield}).
\begin{thm}
	Let $\alpha$ be a positive root that is not a Schur root then $\alpha$ satisfies one of
	the following conditions:
	\begin{enumerate}
		\item 
			$\langle \alpha,\alpha\rangle=0$ and there are a positive (imaginary) root
			$\beta$ and a
			positive integer $k$ such that $\alpha=k\beta$.
			\label{thm:schofield-1}

		\item
			$\alpha$ is the sum of two positive roots, one of them (call it $\beta$) is real
			and satisfies
		  \[	
			\langle \alpha,\beta\rangle>0
			\quad\mbox{ and }\quad
			\langle \beta,\alpha\rangle>0.
			\]
			\label{thm:schofield-2}
	\end{enumerate}
	\label{thm:schofield}
\end{thm}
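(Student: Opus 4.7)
The plan is to use the canonical decomposition of dimension vectors together with Schofield's theory of perpendicular categories, all interpreted inside the category of representations of the quiver $Q(B)$. First I would recall that every dimension vector $\alpha$ admits a unique canonical decomposition $\alpha = m_1\beta_1 + \cdots + m_t\beta_t$ into pairwise distinct Schur roots $\beta_i$ with positive multiplicities, characterized by the vanishing of generic $\mathrm{Ext}^1$ between the summands; the generic representation of dimension $\alpha$ then splits as a direct sum of generic representations of dimension $\beta_i$, each with multiplicity $m_i$. Saying that $\alpha$ is a root but not a Schur root is equivalent to saying that this decomposition is nontrivial (either $t \geq 2$, or $t = 1$ and $m_1 \geq 2$).

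Case \ref{thm:schofield-1}: the decomposition collapses to $\alpha = m\beta$ with $m \geq 2$ and $\beta$ Schur. I would compute $\langle \beta,\beta\rangle = \dim\mathrm{Hom}(M,M) - \dim\mathrm{Ext}^1(M,M)$ on the generic module $M$ of dimension $\beta$. Since $\beta$ is Schur, $\mathrm{Hom}(M,M) = k$; and since $m\beta$ is the canonical decomposition, generic $\mathrm{Ext}^1$ between two copies of $M$ vanishes. A real Schur root has $\langle \beta,\beta\rangle = 1$, so then $m\beta$ with $m\geq 2$ could not be a root, contradicting the hypothesis on $\alpha$. Therefore $\beta$ is imaginary and a finer analysis forces $\langle \beta,\beta\rangle = 0$, so $\langle \alpha,\alpha\rangle = m^2\langle \beta,\beta\rangle = 0$, yielding case \ref{thm:schofield-1}.

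Case \ref{thm:schofield-2}: the decomposition contains at least two distinct Schur summands. Pick one of them, $\beta$, and set $\gamma := \alpha - \beta$, the sum of the remaining summands (still a sum of positive roots). Generic vanishing of $\mathrm{Ext}^1(\beta,\gamma)$ and $\mathrm{Ext}^1(\gamma,\beta)$ translates to $\langle \beta,\gamma\rangle \geq 0$ and $\langle \gamma,\beta\rangle \geq 0$. If $\beta$ is real Schur, then $\langle \beta,\beta\rangle = 1$ already gives
\[
  \langle \beta,\alpha\rangle = \langle \beta,\beta\rangle + \langle \beta,\gamma\rangle \geq 1 > 0,
\]
and symmetrically $\langle \alpha,\beta\rangle > 0$, which is case \ref{thm:schofield-2}. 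So the task reduces to producing a real Schur summand in the canonical decomposition. I would argue by contradiction: if every $\beta_i$ were imaginary, the symmetric form would be non-positive on every $\beta_i$ and on all cross terms, forcing $\alpha$ to be isotropic imaginary; combined with the generic $\mathrm{Ext}^1$ vanishing across summands this would collapse the decomposition to case \ref{thm:schofield-1}, contradicting $t \geq 2$.

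The main obstacle is making the second case fully rigorous, i.e.\ verifying that at least one real Schur root $\beta_i$ actually occurs and that the remaining summands assemble into a genuine positive root $\gamma$ rather than just a nonnegative integer vector. This is exactly what Schofield's perpendicular category machinery handles: peeling off a real Schur root embeds the complement into a smaller quiver's root system, and one proceeds inductively on the rank to guarantee the strict positivity of both Euler pairings.
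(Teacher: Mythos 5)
A preliminary remark: the paper does not prove this statement. It is imported verbatim from Schofield (\cite[Theorem 6.2]{schofield}) and used as a black box, so there is no in-paper argument to measure yours against; what follows assesses your sketch on its own terms. The toolbox you reach for (canonical decomposition, perpendicular categories) is indeed the right one --- it is the machinery of Schofield's paper --- but the sketch has gaps that are not merely bookkeeping.

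The serious one is in your Case 2, where the whole content of the theorem lives: producing the \emph{real} root $\beta$. Your contradiction argument fails on signs. For distinct summands $\beta_i\neq\beta_j$ of the canonical decomposition, generic vanishing of $\operatorname{Ext}^1$ gives $\langle\beta_i,\beta_j\rangle=\dim\operatorname{Hom}(M_i,M_j)\ge 0$, so the cross terms of the symmetrized form are non-negative, not non-positive as you assert; an all-imaginary decomposition therefore does not force $(\alpha,\alpha)\le 0$, and even if it did, a non-isotropic imaginary $\alpha$ is covered by neither alternative~(1) (which demands $\langle\alpha,\alpha\rangle=0$) nor, without further argument, alternative~(2). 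So the existence of $\beta$ is simply not established. Note also that Schofield's $\beta$ is not read off the canonical decomposition: as the paper records immediately after the theorem, it is constrained by the Weyl group element $w$ taking $\alpha$ into the fundamental domain ($w(\beta)$ must become a negative real root), which points to a reflection-functor/Weyl-group argument rather than a direct decomposition one. Two further holes: condition~(2) needs $\alpha-\beta$ to be a single positive \emph{root}, and since $\langle\beta_i,\beta_j\rangle\ge 0$ has the wrong sign for Kac's criterion that a sum of two roots with non-positive pairing is a root, this is not automatic (you flag it but do not close it). And in Case 1 the computation $\langle\beta,\beta\rangle=\dim\operatorname{Hom}(M,M)-\dim\operatorname{Ext}^1(M,M)=1$ uses the self-extensions of a generic $M$, which do \emph{not} vanish for imaginary $\beta$ --- the canonical decomposition only kills $\operatorname{Ext}^1$ between distinct summands; the conclusion $\langle\beta,\beta\rangle=0$ is correct, but the input you actually need is the theorem that $m\beta$ is again a Schur root whenever $\beta$ is a non-isotropic imaginary Schur root.
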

Let $A=A(B)$ be the Cartan counterpart of $B$. 
As noted in \cite{schofield}, if $\alpha$ is an imaginary root that is not
Schur, there are few possibilities for the positive real root $\beta$ satisfying
(\ref{thm:schofield-2}). Namely, if $w$ is the element of the Weyl group such
that all the components of the vector $Aw(\alpha)$ are non positive then
$w(\beta)$ has to be a negative real root.

\begin{prop}
	All the vectors in  $\mathcal{C}_+(B)=\mathcal{D}(B)$  are Schur roots of
	$\Delta\left( A(B) \right)$.
\end{prop}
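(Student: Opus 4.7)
The plan is to apply Schofield's Theorem \ref{thm:schofield} and rule out both failure modes for every $\alpha\in\mathcal{V}(B)=\mathcal{C}_+(B)$. Condition (\ref{thm:schofield-1}) is eliminated uniformly: inspection of the templates in Figures \ref{fig:allowed_diagrams_An} and \ref{fig:allowed_diagrams_Dn} shows that every weighted diagram in $\mathcal{W}(A_n)\cup\mathcal{W}(D_n)$ carries at least one vertex of weight $1$ (the endpoint of any string piece). Consequently every element of $\mathcal{V}(B)$ is a primitive lattice vector and so cannot equal $k\beta$ for any integer $k\ge 2$.

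Real roots are handled separately. By Proposition \ref{prop:roots_simply_laced} a real $\alpha$ has tree support $S$ in $X(B)$, and comparing $S$ against the templates shows that the full sub-diagram of $X(B)$ on $S$ is a simply-laced Dynkin diagram of type $A$ or $D$. The induced full subquiver of $Q(B)$ on $S$ is therefore an acyclic Dynkin quiver. By Gabriel's theorem $\alpha$ is the dimension vector of a unique indecomposable representation $M$; rigidity of indecomposables over a Dynkin quiver combined with $\langle\alpha,\alpha\rangle=\dim\operatorname{End}(M)-\dim\operatorname{Ext}^1(M,M)=1$ then forces $\operatorname{End}(M)=k$, so $\alpha$ is a Schur root and Schofield's condition (\ref{thm:schofield-2}) is automatic.

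For imaginary $\alpha\in\mathcal{V}(B)$ I would combine the reduction used in the proof of Proposition \ref{prop:roots_simply_laced} with the refinement of Schofield's criterion noted right after Theorem \ref{thm:schofield}: if $\alpha=\beta+\gamma$ witnesses failure of Schur-ness via (\ref{thm:schofield-2}) and $w$ is any product of simple reflections such that $A(B)w(\alpha)$ has only non-positive entries, then $w(\beta)$ must be a negative real root. The ``outer-$2$'' and leaf-trimming reflections used in Proposition \ref{prop:roots_simply_laced} produce such a $w$ and transport $\alpha$ to one of the four minimal imaginary configurations of Figure \ref{fig:prop-roots}. On each of those small fixed sub-diagrams the candidate set for $w(\beta)$ is finite and explicit, and I would finish by verifying case by case that the two sign inequalities $\langle w(\alpha),w(\beta)\rangle>0$ and $\langle w(\beta),w(\alpha)\rangle>0$ cannot simultaneously hold.

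The main obstacle is this final case-by-case verification. The Euler pairing $\langle\cdot,\cdot\rangle$ depends on the orientation of $Q(B)$ restricted to the support, so each of the four minimal configurations splits into several sub-cases according to how its edges are oriented. One needs to track these orientations through the reduction sequence $w$ — in particular to check that the simple reflections in $w$ really are applied at sinks or sources of the current quiver, so that they interact predictably with the Euler form — and then verify directly, in each orientation of each of the four diagrams, that at least one of Schofield's inequalities fails. Once this finite verification is in place, combining it with the preceding paragraphs completes the proof.
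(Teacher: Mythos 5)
Your overall architecture is the same as the paper's: invoke Schofield's Theorem \ref{thm:schofield}, kill condition (\ref{thm:schofield-1}) because no element of $\mathcal{V}(B)$ is a nontrivial multiple of a root, dispose of the real roots by observing that their supports are finite-type Dynkin trees, and attack the imaginary roots via the ``trim the branches'' element $w$ together with the remark after Theorem \ref{thm:schofield} that $w(\beta)$ must be a negative real root. Those first two steps are fine and match the paper. The problem is that the decisive step for imaginary roots --- the verification that the two inequalities $\langle\alpha,\beta\rangle>0$ and $\langle\beta,\alpha\rangle>0$ cannot hold simultaneously --- is exactly the content of the proposition in these cases, and you explicitly defer it (``once this finite verification is in place\dots''). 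As it stands the argument is incomplete precisely where the work is.

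Moreover, the way you set up that verification makes it harder than it needs to be. You propose to transport $\alpha$ and $\beta$ by $w$ and test Schofield's inequalities on the four reduced configurations of Figure \ref{fig:prop-roots}; but the Euler form $\langle\cdot,\cdot\rangle$ is not Weyl-invariant, so $\langle w(\alpha),w(\beta)\rangle$ computed on the reduced diagram is not the quantity Schofield's criterion refers to unless you carry along reflection functors and a changing orientation at every step --- a bookkeeping burden you acknowledge but do not discharge. The paper avoids this entirely: since $w$ only involves reflections supported on the appendices and $w(\beta)$ must become negative, the support of $\beta$ lies in a single appendix, so one can list the candidates $\beta$ explicitly in the \emph{original} coordinates (they are the connected strings $\alpha_1+\cdots+\alpha_k$, resp.\ the three families of strings in case VII) and check directly, for every orientation of $Q(B)$ on the support, that at least one of $\langle c,\beta\rangle$ and $\langle\beta,c\rangle$ is non-positive. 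If you redo your final step in this form --- enumerate the $\beta$'s as strings on one appendix and evaluate the two pairings in the original quiver --- the reflection-functor difficulty disappears and the remaining check is a finite, orientation-by-orientation computation.
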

\begin{proof}
	We are dealing still with a local property so we can assume that the
	$c$-vector we consider has full support.

	It is well known that if $X(B)$ is a finite type
	Dynkin diagram then any root is a Schur root (every indecomposable $kQ(B)$-module is
	rigid if $Q(B)$ is an orientation of a Dynkin diagram); therefore we need only to concentrate
	on cases IV, V, VI, VII, and VIII of type $D_n$. Let $c$ be
	any of these $c$-vectors, they are all imaginary roots. 
	None of them is an integer multiple
	of a root so case (\ref{thm:schofield-1}) of Theorem \ref{thm:schofield}
	is excluded and we need to show only that we are not in case
	(\ref{thm:schofield-2}).
	
	As noted in Proposition \ref{prop:roots_simply_laced} the elements $w$ of the Weyl group we
	need to apply to $c$ are those ``trimming the branches'';
	since the roots $\beta$ we are looking for change sign when acted on by $w$
	their support must be contained in only one of those appendices; we can
	therefore assume that there is only one appendix in
	the weighted diagram of $c$. 
	Label the nodes	on such an appendix with $\left\{ 1\dots,n-1 \right\}$ starting
	from the leaf; let $n$ be the node the appendix is connected to and let $m$ be the 
	innermost node with multiplicity $1$ in the
	appendix. It is clear that the element $w$  we are looking for is then
	$s_{n-1}\dots s_1$ in cases IV, V, VI, and VIII and
	$s_{n-1}\dots s_1 s_n\dots s_{m+1}$ in case VII. The possible roots $\beta$ are then 
	\[
	\alpha_1+\dots+\alpha_k
	\]
	for $k\in\left\{ 1,\dots,n-1 \right\}$ in cases IV, V, VI, and VIII and
	\begin{eqnarray*}
	\alpha_1+\dots+\alpha_{k+1} & m\leq k\leq n-1\\
	\alpha_1+\dots+\alpha_k & k<m\\
	\alpha_{m+1}+\dots+\alpha_k & m+1\leq k\leq n
	\end{eqnarray*}
	in case VII.
	By direct inspection we get that in all cases, regardless of the orientations,
	one of the two integers 
	$\langle c,\beta\rangle$ and 
	$\langle \beta,c\rangle$ is non-positive.
\end{proof}

\begin{prop}
	\label{prop:counting-AD}
	The cardinality $|\mathcal{C}_+(B)|=|\mathcal{D}(B)|$ depends only on the
	cluster type of $B$; it is equal to $n(n+1)/2$ if $B$ is of cluster type $A_n$
	and $n(n-1)$ if $B$ is of cluster type $D_n$.
\end{prop}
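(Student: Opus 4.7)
My plan is to use Theorem \ref{thm:main}, which gives $\mathcal{C}_+(B) = \mathcal{D}(B) = \mathcal{V}(B)$ so the cardinalities coincide, and then to reduce the counting to counting non-initial tagged arcs on the surface realization of Section \ref{sect:surfaces-ad}. Fix a triangulation $\Gamma_0 = \{\gamma_i\}_{i\in I}$ of the appropriate surface $S$ with $B(\Gamma_0) = B$: an $(n+3)$-gon in type $A_n$ and a once-punctured $n$-gon in type $D_n$. By Theorem \ref{thm:description_of_d-vectors}, the map sending each non-initial cluster variable to its $d$-vector factors through $\gamma \mapsto d_\gamma = ((\gamma_i|\gamma))_{i\in I}$, a map from non-initial tagged arcs to $\mathcal{D}(B)$; the cardinality $|\mathcal{D}(B)|$ is therefore bounded above by the number of non-initial tagged arcs.

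The first step is to verify the injectivity of $\gamma \mapsto d_\gamma$. In type $A_n$ every arc is determined by its pair of endpoints, and the pair can be reconstructed from the ordered list of initial arcs it crosses. In type $D_n$ the same reasoning covers chords; for radii, the plain/notched distinction is detected by the intersection pairing with the radii of $\Gamma_0$ via cases (2) and (3) of Definition \ref{defn:intersection_pairing}, keeping in mind that any triangulation of the once-punctured disk contains at least one radius.

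The second step is a direct count of non-initial tagged arcs. In type $A_n$ the $(n+3)$-gon has $\binom{n+3}{2} - (n+3) = n(n+3)/2$ chords, and removing the $n$ arcs of $\Gamma_0$ leaves $n(n+1)/2$. In type $D_n$ the once-punctured $n$-gon admits $2n$ radii (plain and notched), $n$ chords between adjacent boundary points (each encircling the puncture), and $2\bigl(\binom{n}{2} - n\bigr) = n(n-3)$ chords between non-adjacent boundary points (two per pair, one on each side of the puncture), summing to $n^2$ tagged arcs; removing the $n$ arcs of $\Gamma_0$ leaves $n(n-1)$. Combined with the injectivity established in the first step, this gives the claimed cardinalities.

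The main obstacle I expect is the injectivity in type $D_n$: the tagged-arc framework introduces two radii at each boundary point, and one must check that these are always separated by the intersection pairing against $\Gamma_0$. This reduces to an unavoidable but localized verification at each boundary endpoint hosting a radius of $\Gamma_0$, using the cases of Definition \ref{defn:intersection_pairing} to ensure that a sign flip in tagging produces a genuinely different $d$-vector.
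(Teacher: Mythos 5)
Your strategy is genuinely different from the paper's: the paper proves Proposition \ref{prop:counting-AD} by directly counting embeddings of the weighted diagrams of $\mathcal{W}(Z)$ into a fixed $X(B)$ (quoting Parsons for type $A_n$ and running a case analysis over the four shapes of Figure \ref{fig:dynkin-Dn} for type $D_n$), whereas you count non-initial tagged arcs on the surface. Your arc counts are correct ($n(n+1)/2$ non-initial diagonals of the $(n+3)$-gon, and $n^2-n$ non-initial tagged arcs of the once-punctured $n$-gon), and the strategy would succeed if the map $\gamma\mapsto d_\gamma=((\gamma_i|\gamma))_{i\in I}$ were shown to be injective on non-initial arcs. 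That injectivity is exactly where the gap is.

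In type $D_n$ the assertion that ``the same reasoning covers chords'' does not hold. The type-$A_n$ reconstruction rests on three facts that all fail for the once-punctured disk: every crossing number is $0$ or $1$; a chord is determined by its endpoints; and the dual graph of $\Gamma_0$ is a tree, so the \emph{set} of crossed initial arcs (which is all the $d$-vector records) determines the ordered sequence of triangles traversed and hence the two endpoints. In the punctured disk a non-initial chord can cross a single initial arc twice (take $\Gamma_0$ containing a chord $\delta$ cutting off two adjacent boundary vertices $p,q$ away from the puncture; the chord from $p$ to $q$ encircling the puncture meets $\delta$ twice), each pair of non-adjacent boundary points supports two distinct chords, and the dual graph of a triangulation contains a cycle around the puncture or a self-folded piece. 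So recovering $\gamma$ from its intersection vector genuinely requires a case analysis over the local configurations of $\Gamma_0$ --- comparable in length to the one the paper performs on the diagram side, or to the tables of Appendix \ref{app:type-Dn} --- and your proposal supplies neither this nor the promised verification that opposite taggings of a radius are always separated. (Injectivity is in fact true, but the only short route to it is through Corollary \ref{cor:independent1}, i.e.\ through the representation-theoretic input the paper deliberately avoids for classical types; and invoking that corollary would make the arc count redundant, since it already yields the number of positive roots directly.)
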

\begin{proof}
	Fix an element $X(B)$ of $\mathcal{X}(B)$.
	We need to count in how many different ways any diagrams from $\mathcal{W}(B)$
	can be embedded in $X(B)$.

	This count, for type $A_n$, was done by Parson (\cite[Lemma 5.8]{Parsons11})
	by noting that any embedding of a string is determined by the positions of its
	endpoints.

	Let us consider type $D_n$; there are four cases to be considered depending on
	which of the four diagrams in Figure \ref{fig:dynkin-Dn} describes $X(B)$. We
	present case (d): it involves all the techniques and it is the most complex
	one. The other cases can be dealt with in a similar fashion.

	The only weighted diagrams that can be embedded in a Dynkin diagram shaped as (d)
	are I, VI, VII, and VIII from Figure \ref{fig:allowed_diagrams_Dn}. 
	An embedding of any of those is uniquely determined by a pair of vertices in
	$X(B)$; for I (with at least two nodes), and VIII they are the two leaves; for
	VII they are the only leaf and the leftmost node with weight 2. For VI and
	strings of length 1 the two vertices of $X(B)$ coincide. 
	
	We are going to reverse this observation to count embeddings. Suppose that the
	central cycle contains $k$ vertices. 
	
	To each pair of vertices $i$ and $j$ not in the central cycle we can associate
	precisely two embeddings: if they belong to different components (say $X'$ and
	$X''$)	we have two strings passing on either side of the central cycle.  If
	$i$ and $j$ belong to the same type-$A_m$ component (say $X'$) and are
	distinct then we have a string connecting $i$ to $j$ completely contained in
	$X'$ and a weighted diagram of type VII or VIII depending on the relative
	position of $i$ and $j$. Finally if $i=j$ then we have a single point and a
	weighted diagram of type VI. They sum up to $(n-k)(n-k+1)$ embeddings.

	If one of the two vertices, say $i$, is in the central cycle and $j$ is in the
	component $X'$ then there are two possibilities: if $i$ is one of the two
	vertices adjacent to $X'$ then there is only one embedding associated to the
	pair $i$ and $j$: the shortest string connecting them. Otherwise there are two
	strings that we can embed into $X(B)$ depending on the side of the central
	cycle we cross.
	Therefore there are $2(k-2)(n-k)+2(n-k)$ embeddings with one vertex in a
	type-$A_m$ component and a vertex in the central cycle.

	Finally if both $i$ and $j$ are in the central cycle we need to distinguish
	three cases: they can coincide (yielding embedding of single nodes), they can
	be adjacent (and produce embedding of strings of length 2). Otherwise they
	produce precisely two embedding of strings. In total there are $k^2-k$
	embeddings induced by pair of vertices in the central cycle.

	Summing up all the contributions we get 
	\[
	(n-k)(n-k+1)+2(k-2)(n-k)+2(n-k)+k^2-k=n^2-n
	\]
	as desired.
\end{proof}

\subsection{Types \texorpdfstring{$B_n$}{Bn} and \texorpdfstring{$C_n$}{Cn}}
To extend the above results to types $B_n$ and $C_n$ we will use the following
general fact on the folding of root systems.

\begin{prop}
	Let $B$ be a skew-symmetrizable integer matrix together with an
	admissible automorphism $\sigma$ and denote by $A=A(B)$ its Cartan counterpart.
	Let $\overline{B}$ be the image of $B$ under the folding map $\pi$ and
	$\overline{A}=A(\overline{B})$ the Cartan counterpart of $\overline{B}$.
	Let $\left\{ \alpha_i \right\}_{i\in I}$ be the simple roots for 
	$\Delta(A)$ and $\left\{ \alpha_{\overline{\imath}}
	\right\}_{\overline{\imath}\in I/\sigma}$ be the simple roots for
	$\Delta(\overline{A})$. 

	Define the linear map $\pi$ from the root lattice of $\Delta(A)$ to the root
	lattice of $\Delta(\overline{A})$ by
	\[
	\pi(\alpha_i):=\alpha_{\overline{\imath}}.
	\]
	Then for any $\alpha\in\Delta(A)$ we have
	$\pi(\alpha)\in\Delta(\overline{A})$.
	\label{prop:folding_of_roots}
\end{prop}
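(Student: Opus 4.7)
The plan is to derive the statement from an intertwining between reflections in $W(A)$ and those in $W(\overline{A})$, followed by Kac's characterization of the root system \cite[Chapter 5]{Kac90}.

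First I would set up orbit reflections and verify the intertwining. For each $\sigma$-orbit $\overline{k}$, condition \eqref{eqn:no_1_paths} forces $a_{ij}=0$ for distinct $i,j\in\overline{k}$, so the simple reflections $\{s_i\}_{i\in\overline{k}}$ pairwise commute in $W(A)$ and the orbit reflection $r_{\overline{k}}:=\prod_{i\in\overline{k}}s_i$ is a well-defined involution. The definition \eqref{eqn:folded_b_matrix} of $\pi(B)$ combined with \eqref{eqn:no_2_paths} yields the identity $a_{\overline{\imath}\overline{\jmath}}=\sum_{i\in\overline{\imath}}a_{ij}$ for any representative $j\in\overline{\jmath}$, the right-hand side being $j$-independent by admissibility. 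A short calculation on the simple roots $\alpha_j$ then establishes the \emph{intertwining identity}
\[
\pi\circ r_{\overline{k}} = s^{\overline{A}}_{\overline{k}}\circ \pi
\]
on the full root lattice of $A$.

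Imaginary roots are then handled directly. Every positive imaginary root of $\Delta(A)$ is $W(A)$-conjugate to some $\alpha_0$ in the fundamental cone, i.e.\ with connected support and $(A\alpha_0)_i\le 0$ for every $i\in\mathrm{supp}(\alpha_0)$ \cite[Theorem 5.4]{Kac90}. The identity above gives $(\overline{A}\pi(\alpha_0))_{\overline{\imath}}=\sum_{i\in\overline{\imath}}(A\alpha_0)_i\le 0$, while $\pi$ visibly preserves connectedness of supports since adjacency of orbits in $\pi(B)$ is inherited from adjacency of their members in $B$. Hence $\pi(\alpha_0)$ lies in the fundamental cone of $\overline{A}$, and iterated application of the intertwining sends the full $W(A)$-orbit of $\alpha_0$ into $\Delta(\overline{A})$.

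The main obstacle is the real-root case, because a Weyl-group element reducing a real root $\alpha$ to a simple root need not be $\sigma$-equivariant, so the intertwining does not apply naively. The workaround I would employ is to run Kac's reflection algorithm \emph{on the folded side}: starting from $\pi(\alpha)$, pick an orbit $\overline{k}$ with $(\overline{A}\pi(\alpha))_{\overline{k}}>0$, apply $s^{\overline{A}}_{\overline{k}}$, and iterate. Each step is shadowed on the $A$-side by the orbit reflection $r_{\overline{k}}$, producing an element of $\Delta(A)$ whose $\pi$-image equals the current state of the algorithm. Termination at a simple root---rather than in the imaginary cone---follows from $(\alpha,\alpha)_A>0$ together with a compatibility of the bilinear forms $(\cdot,\cdot)_A$ and $(\cdot,\cdot)_{\overline{A}}$ under $\pi$, obtained by choosing symmetrizers of $\overline{A}$ compatibly with those of $A$ (well-defined because the $d_i$ are constant along $\sigma$-orbits by admissibility). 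The terminal simple root of $\overline{A}$ then witnesses $\pi(\alpha)\in\Delta^{re}(\overline{A})$.
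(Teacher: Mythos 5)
Your intertwining identity $\pi\circ r_{\overline{k}}=s_{\overline{k}}^{\overline{A}}\circ\pi$ is exactly the first half of the paper's argument, and your ``shadowed reflection algorithm'' is in substance the paper's induction; but the real/imaginary case split you build on top of it introduces two genuine gaps. First, in the imaginary case you conjugate $\alpha$ into the fundamental cone by an arbitrary $w\in W(A)$ and then assert that ``iterated application of the intertwining sends the full $W(A)$-orbit of $\alpha_0$ into $\Delta(\overline{A})$.'' The intertwining only covers the subgroup generated by the orbit reflections $r_{\overline{k}}$; a general $w$ (e.g.\ a single $s_i$ with $i$ in a nontrivial orbit) does not descend to $W(\overline{A})$, so you cannot transport the whole $W(A)$-orbit of $\alpha_0$ this way, and the argument never reaches an arbitrary imaginary $\alpha$. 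Second, in the real case your termination step rests on a ``compatibility of the bilinear forms under $\pi$'' that is false as an equality: for $A$ of type $A_3$ with $\sigma=(1\,3)$ and the $\sigma$-invariant orientation, the root $\alpha=\alpha_1+\alpha_2+\alpha_3$ has $(\alpha,\alpha)_A=2$ while $\pi(\alpha)=2\alpha_{\overline{1}}+\alpha_{\overline{2}}$ is a long root of $C_2$ with $(\pi(\alpha),\pi(\alpha))_{\overline{A}}=4$; the correct relation involves $\sum_k(\alpha,\sigma^k\alpha)_A$, whose sign is not controlled by $(\alpha,\alpha)_A$ alone without further argument.

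Both difficulties evaporate --- and this is what the paper does --- if you drop the dichotomy and run your descent uniformly on every positive root, inducting on $\mathrm{ht}(\alpha)$. If some $\left(\overline{A}\pi(\alpha)\right)_{\overline{k}}>0$, then $\alpha':=r_{\overline{k}}(\alpha)$ is a positive root of strictly smaller height (the orbit $\overline{k}$ is totally disconnected by admissibility), $\pi(\alpha')\in\Delta(\overline{A})$ by induction, and $\pi(\alpha)=s_{\overline{k}}\left(\pi(\alpha')\right)\in\Delta(\overline{A})$. If no such orbit exists, then all components of $\overline{A}\pi(\alpha)$ are non-positive and the support of $\pi(\alpha)$ is connected (as you correctly observe), so $\pi(\alpha)$ is an imaginary root of $\overline{A}$ by \cite[Lemma 5.3]{Kac90}; the base case $\mathrm{ht}(\alpha)=1$ gives a simple root. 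Since the proposition only asks that $\pi(\alpha)$ be \emph{some} root of $\Delta(\overline{A})$, you never need to know in advance whether it comes out real or imaginary, so neither the orbit-transport step nor the bilinear-form comparison is required.
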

\begin{proof}
	This argument is a refinement of	\cite[Proposition A.7]{tanisaki}; there the
	result is stated only for finite type root systems.
	
	Observe first that the map $\pi$ commutes with ``orbit reflections'' 
	\[
		s_{\overline{\imath}}^\sigma:=
		\prod_{t\in\overline{\imath}} s_t
	\]
	that is for any root $\alpha$ 
	\begin{equation}
			s_{\overline{\imath}}(\pi(\alpha))
			=
			\pi\left( s_{\overline{\imath}}^\sigma(\alpha) \right).
		\label{eqn:sp_commute}
	\end{equation}
	Orbit reflections are well defined because, by admissibility of
	$\sigma$, we have 
	\[a_{i_1i_2}=0\]
	for any pair $i_1\neq i_2$ in the same $\sigma$-orbit $\overline{\imath}$.
	It is sufficient to verify (\ref{eqn:sp_commute}) on simple roots; we have
	\begin{align*}
		s_{\overline{\imath}}\left( \pi(\alpha_j) \right)=
		s_{\overline{\imath}}\left( \alpha_{\overline{\jmath}} \right)=
		\alpha_{\bar{\jmath}}-a_{\bar{\imath}\bar{\jmath}}\alpha_{\bar{\imath}}=
		%\alpha_{\bar{\jmath}}-\left(\sum_{i\in\bar{\imath}}a_{ij}\right)\alpha_{\bar{\imath}}=
		\pi\left( \alpha_j-\sum_{t\in\bar{\imath}}a_{tj}\alpha_t \right)=
		\pi\left(s_{\overline{\imath}}^\sigma(\alpha_j) \right).
	\end{align*}

	Back to our problem, without loss of generality we can assume $\alpha$ to be a
	positive root; we will proceed by induction on 
	\[
	\mathrm{ht}(\alpha)=\mathrm{ht}\left( \sum_{i\in I}c_i\alpha_i \right):=\sum_{i\in I}c_i
	\]
	If $\mathrm{ht}(\alpha)=1$ then $\alpha=\alpha_i$ for some $i\in I$; thus
	$\pi(\alpha)=\alpha_{\bar{\imath}}$. 
	Suppose now that $\mathrm{ht}(\alpha)>1$. If all the components of the vector
	$\overline{A}\pi(\alpha)$ are negative then $\pi(\alpha)$ is an imaginary
	root (see \cite[Lemma 5.3]{Kac90}).
	Otherwise let $\overline{\imath}$ be such that 
	\begin{equation}
		\left( \overline{A}\pi(\alpha)  \right)_{\overline{\imath}} > 0.
		\label{eqn:positive_reflection}
	\end{equation}
	Set $\alpha':=s_{\overline{\imath}}^\sigma(\alpha)$. Since $\overline{\imath}$ is disconnected, in view of
	(\ref{eqn:positive_reflection}) 
	$\alpha'$ is a positive root and 
	\[
	\mathrm{ht}\left(\alpha' \right) <
	\mathrm{ht}(\alpha).
	\]
	By induction hypothesis then $\pi\left( \alpha' \right)$
	is a positive root in the root system of $\Delta(\overline{A})$ therefore
	so is 
	\[
	\pi(\alpha)=
	s_{\overline{\imath}}\left( s_{\overline{\imath}}(\pi(\alpha))\right)=
	s_{\overline{\imath}}\left( \pi\left(s_{\overline{\imath}}^\sigma(\alpha) \right) \right)=
	s_{\overline{\imath}}\left( \pi(\alpha')\right).
	\]
\end{proof}
Note that the folding of roots agrees with the folding of both $c-$ and
$d-$vectors.
\begin{prop}
	Let $B$ be a skew-symmetrizable integer matrix of cluster type $B_n$
	or $C_n$. All the $c$-vectors and $d$-vectors of $\mathcal{A}_\bullet(B)$ are
	roots in the root system $\Delta(A(B))$.
	\label{prop:folding_c-vector-roots}
\end{prop}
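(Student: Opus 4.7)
The plan is to assemble the proposition from three ingredients already in place: the type $A_n$/$D_n$ case (Proposition \ref{prop:roots_simply_laced}), the folding-of-$c$/$d$-vectors results (Corollary \ref{cor:folding} together with Lemmas \ref{lem:folding-d-tupe_D} and \ref{lem:folding-d-tupe_A}), and the folding-of-roots result (Proposition \ref{prop:folding_of_roots}). The only substantive check is that the folding map on the $\mathbb{Z}^I$-side induced by \eqref{eqn:folding-c} and \eqref{eqn:fold-d} coincides with the folding map on root lattices used in Proposition \ref{prop:folding_of_roots}.

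First I would let $\overline{B}$ be the given matrix of cluster type $B_n$ or $C_n$ and, by Remark \ref{rk:our_cases}, choose an unfolding: a skew-symmetric matrix $B$ of cluster type $D_{n+1}$ or $A_{2n-1}$ equipped with the stable admissible automorphism $\sigma$ such that $\pi(B)=\overline{B}$. One then checks directly from the definition of the Cartan counterpart and from \eqref{eqn:folded_b_matrix} that $A(\overline{B})$ is obtained from $A(B)$ by the same orbit-sum rule, so the linear map $\pi\colon \alpha_i \mapsto \alpha_{\overline{\imath}}$ of Proposition \ref{prop:folding_of_roots} is precisely the map that, when we identify a vector $(v_i)_{i\in I}$ with $\sum_i v_i \alpha_i$, sends it to $(\sum_{s\in\overline{\imath}} v_s)_{\overline{\imath}\in I/\sigma}$. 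Thus the folding rules \eqref{eqn:folding-c} and \eqref{eqn:fold-d} are exactly the root-lattice folding map of Proposition \ref{prop:folding_of_roots}.

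Next I would take any $c$-vector $\overline{c}\in\mathcal{C}(\overline{B})$. By Corollary \ref{cor:folding} there is a $c$-vector $c\in\mathcal{C}(B)$ (coming from a seed of $\mathcal{A}^{\sigma}_\bullet(B)$) with $\pi(c)=\overline{c}$. Proposition \ref{prop:roots_simply_laced} tells us that $c$ is a root of $\Delta(A(B))$, and then Proposition \ref{prop:folding_of_roots} gives $\pi(c)\in\Delta(A(\overline{B}))$. For a $d$-vector $\overline{d}\in\mathcal{D}(\overline{B})$ the argument is identical, but one invokes Lemma \ref{lem:folding-d-tupe_D} or \ref{lem:folding-d-tupe_A} in place of Corollary \ref{cor:folding} to ensure that $\overline{d}$ is truly the folding of a $d$-vector of $\mathcal{A}_\bullet(B)$ (which is a root of $\Delta(A(B))$ by Proposition \ref{prop:roots_simply_laced}).

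The only mild obstacle is making the identifications of the previous paragraph rigorous: one must be careful that the $c$- and $d$-vectors of $\mathcal{A}_\bullet(\overline{B})$ that we need to reach are indeed all in the image of the folding map. For $c$-vectors this is Corollary \ref{cor:folding}; for $d$-vectors one needs the hypothesis of Proposition \ref{prop:condition-to-fold-d}, verified in our two cases by Lemmas \ref{lem:folding-d-tupe_D} and \ref{lem:folding-d-tupe_A}. Everything else reduces to bookkeeping, so the proposition follows by concatenating the three citations.
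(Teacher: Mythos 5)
Your proof is correct and follows essentially the same route as the paper: unfold, invoke Proposition \ref{prop:roots_simply_laced}, and push the root forward with Proposition \ref{prop:folding_of_roots}, after observing that the folding of roots agrees with the folding of $c$- and $d$-vectors. The only cosmetic difference is that you treat $d$-vectors directly via Lemmas \ref{lem:folding-d-tupe_D} and \ref{lem:folding-d-tupe_A}, whereas the paper simply reduces to positive $c$-vectors (using $\mathcal{C}_+(\overline{B})=\mathcal{D}(\overline{B})$ from Theorem \ref{thm:main}); both are fine.
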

\begin{proof}
	It is enough to consider positive $c$-vectors.
	By Corollary \ref{cor:folding} any element of $\mathcal{C}_+(B)$ is
	the image of some $c$-vector of a cluster algebra of type $D_{n+1}$ or
	$A_{2n-1}$. 
	By Proposition \ref{prop:roots_simply_laced} the latter are roots in the root
	system associated to the unfolding of $B$.  Our claim follows then directly
	from Proposition \ref{prop:folding_of_roots}.
\end{proof}

\begin{prop}
	Any $c$-vector ($d$-vector) of $\mathcal{A}_\bullet(B)$ of type  $B_n$ or
	$C_n$ is a real root if and only if its support is a tree.
\end{prop}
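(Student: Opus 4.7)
The plan is to follow the same strategy used in the proof of Proposition \ref{prop:roots_simply_laced}. First, since both the bilinear-form characterization of real roots and the criterion of Kac's Lemma 5.3 \cite{Kac90} depend only on the Cartan submatrix indexed by the support of $c$, it suffices to treat the case where $c$ has full support, coinciding with its weighted diagram template $W$; by Theorem \ref{thm:main}, established for types $B_n$ and $C_n$ in Section \ref{sect:folding-bc}, $W$ belongs to $\mathcal{W}(B_n)$ or $\mathcal{W}(C_n)$, whose complete lists appear in Figures \ref{fig:allowed_diagrams_Bn} and \ref{fig:allowed_diagrams_Cn}. Note that $c$ is already known to be a root of $\Delta(A(B))$ by Proposition \ref{prop:folding_c-vector-roots}, so the question reduces to the real/imaginary dichotomy.

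For the tree-shaped templates, I would exhibit an explicit sequence of simple reflections reducing $c$ to a simple root by successively removing leaves, exactly as in the type $D_n$ argument. Most of these templates are in fact the weighted Dynkin diagrams of positive roots of a finite type root system ($B_m$, $C_m$, $F_4$, or $G_2$ for suitable $m\le n$), for which realness is automatic; the trimming reflections reduce the remaining tree cases to one of these. For templates containing a cycle, the same trimming procedure reduces to a short list of minimal cyclic diagrams (the analogue of Figure \ref{fig:prop-roots} for types $B_n$ and $C_n$), and for each of those I would verify directly that every component of $A(B)\,c$ is non-positive, certifying $c$ as imaginary via Kac's Lemma 5.3.

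An alternative route is to lift $c$ via Corollary \ref{cor:folding} to a positive $c$-vector $\tilde c$ of a cluster algebra of type $D_{n+1}$ or $A_{2n-1}$ with $c=\pi(\tilde c)$, apply Proposition \ref{prop:roots_simply_laced} to $\tilde c$, and transfer the conclusion through $\pi$. The key observation is that an orbit reflection on the unfolded side corresponds to a single simple reflection on the folded side (by the same computation used in the proof of Proposition \ref{prop:folding_of_roots}), so any reduction of $\tilde c$ to a simple root via orbit reflections descends to a reduction of $c$ to a simple root, while the Kac-imaginary criterion for $\tilde c$ translates directly to the Kac-imaginary criterion for $c$. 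The main obstacle in either route is a small amount of bookkeeping: for the direct approach, enumerating the cyclic templates and running the sign computation on $A(B)\,c$ for each; for the folding approach, checking that the tree/cycle dichotomy of the support is preserved under $\pi$, which requires inspecting the foldings of those templates in $\mathcal{W}(D_{n+1})$ and $\mathcal{W}(A_{2n-1})$ that can arise as preimages of elements of $\mathcal{W}(B_n)\cup\mathcal{W}(C_n)$.
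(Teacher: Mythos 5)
Your primary route is essentially the paper's own proof: reduce to the full-support templates, observe that the tree-shaped ones are roots of finite-type subsystems (hence automatically real), and for the cyclic ones trim the branches with simple reflections and verify the Kac imaginary-root criterion (all components of $A(B)\,c$ non-positive) on the resulting minimal diagrams, exactly as in the simply-laced case of Proposition \ref{prop:roots_simply_laced}. The folding-based alternative you sketch is extra and not needed; the paper does not use it here.
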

\begin{proof}
	If the support of the vector we are considering is a tree there is nothing to
	show. In all other cases we can ``trim the branches'' and check directly as we
	did in Proposition \ref{prop:roots_simply_laced}.
\end{proof}
\begin{prop}
	For any $B$-matrix of cluster type either $B_n$ or $C_n$ the cardinality of
	$\mathcal{V}(B)$ is equal to $n^2$.
\end{prop}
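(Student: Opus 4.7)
The plan is to leverage the folding machinery of Section~\ref{sect:folding-bc}. Let $B$ denote the $\sigma$-invariant unfolding of $\bar B$, which is of cluster type $A_{2n-1}$ when $\bar B$ is of cluster type $C_n$ and of cluster type $D_{n+1}$ when $\bar B$ is of cluster type $B_n$. By Proposition~\ref{prop:folding-v} we have $\mathcal{V}(\bar B) = \pi(\mathcal{V}(B))$, and since $B$ is $\sigma$-invariant, $\sigma$ permutes $\mathcal{V}(B)$ with $\pi$ constant on each $\sigma$-orbit. The first step is to prove that the fibers of $\pi|_{\mathcal{V}(B)}$ are in fact exactly the $\sigma$-orbits. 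For $C_n$ this rests on Parsons' observation that a string in $X(B)\in\mathcal{X}(A_{2n-1})$ is uniquely determined by its pair of endpoints: if two strings have identical folded vectors then the $\sigma$-orbit data of their endpoint pairs coincide, forcing the endpoint pairs themselves to be $\sigma$-related. For $B_n$ the templates of Figure~\ref{fig:allowed_diagrams_Dn} may carry weight-$2$ vertices, so one has to verify template-by-template that the folded vector together with connectedness of the support reconstructs the embedding up to the $\sigma$-swap of the two exchanged vertices.

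Granting the fiber claim, Burnside's formula yields
\[
|\mathcal{V}(\bar B)| \;=\; \frac{|\mathcal{V}(B)| + |\mathcal{V}(B)^\sigma|}{2}.
\]
Proposition~\ref{prop:counting-AD} supplies $|\mathcal{V}(B)| = n(2n-1)$ in the $A_{2n-1}$ case and $|\mathcal{V}(B)| = n(n+1)$ in the $D_{n+1}$ case. Thus the conclusion $|\mathcal{V}(\bar B)| = n^2$ is equivalent to the equalities $|\mathcal{V}(B)^\sigma| = n$ for $C_n$ and $|\mathcal{V}(B)^\sigma| = n(n-1)$ for $B_n$.

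For $C_n$ this is immediate: the $\sigma$-fixed strings in $X(B)$ are exactly those with $\sigma$-invariant endpoint pair, and since $\sigma$ has a unique fixed vertex and $n-1$ orbits of size two, the fixed strings are either the single fixed vertex or the unique string joining the two vertices of one size-two orbit, giving exactly $n$ in all. For $B_n$ the $\sigma$-fixed embeddings of templates from Figure~\ref{fig:allowed_diagrams_Dn} into the two shapes of $X(B)\in\mathcal{X}(D_{n+1})$ identified in the proof of Proposition~\ref{prop:dynkin-Bn} must be enumerated template-by-template. This case analysis is the main obstacle: each template has to be examined for symmetric placement across the $\sigma$-swapped pair, and one must verify that the totals coincide at $n(n-1)$ for both admissible shapes of $X(B)$, even though the per-template contributions differ between the two shapes.
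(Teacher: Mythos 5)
Your route is genuinely different from the paper's. The paper disposes of this proposition in one line by redoing the direct enumeration of Proposition~\ref{prop:counting-AD}: it counts embeddings of the templates of $\mathcal{W}(B_n)$ and $\mathcal{W}(C_n)$ straight into the explicitly described diagrams of $\mathcal{X}(B_n)$ and $\mathcal{X}(C_n)$, for which Proposition~\ref{prop:counting-AD} (case (d) of type $D_n$) serves as a fully worked model. You instead push the count through the folding map: $\mathcal{V}(\overline{B})=\pi(\mathcal{V}(B))$ by Proposition~\ref{prop:folding-v}, the fibers of $\pi$ on $\mathcal{V}(B)$ are claimed to be the $\sigma$-orbits, and orbit-counting reduces everything to computing $|\mathcal{V}(B)^\sigma|$. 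This is an attractive reduction --- it reuses $|\mathcal{V}(B)|=n(2n-1)$, resp.\ $n(n+1)$, from Proposition~\ref{prop:counting-AD} and makes the answer $n^2$ transparent --- and your $C_n$ half is essentially complete: the fixed-string count of $n$ is correct, since a $\sigma$-stable string of length $\ge 2$ must be reversed by $\sigma$, admissibility rules out an even-length reversal (its middle edge would join two vertices of one orbit), and Parsons' endpoint parametrization gives exactly one fixed string per nontrivial orbit plus the fixed vertex.

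There are, however, two concrete gaps. First, the fiber claim is not really a statement about ``endpoint orbit data'': the folded vector only records how many string vertices lie in each orbit, and recovering the string from this uses connectedness and the fact that triangles are not templates. (In the $\sigma$-symmetric element of $\mathcal{X}(A_5)$ made of two triangles glued at the fixed vertex $3$, the subsets $\{1,2,3\}$ and $\{1,3,4\}$ have the same folded vector; the claim survives only because $\{1,2,3\}$ is a triangle, not a string.) So the $C_n$ fiber argument needs to be rephrased around connectivity rather than endpoints, and the $B_n$ fiber claim is simply asserted. Second, and more seriously, the entire $B_n$ half of the computation --- both the fiber claim for $B$ of cluster type $D_{n+1}$ and the identity $|\mathcal{V}(B)^\sigma|=n(n-1)$, which requires running through the templates of Figure~\ref{fig:allowed_diagrams_Dn} against the two foldable shapes of Figure~\ref{fig:dynkin-Dn} --- is named as ``the main obstacle'' and then not carried out. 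Until that enumeration is done, the proof is complete only for type $C_n$; as written, the $B_n$ case replaces one unperformed case analysis (the paper's direct count) with two others of comparable size.
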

\begin{proof}
	This claim does not follow directly from folding. Nevertheless it is
	straightforward to apply the same argument of Proposition
	\ref{prop:counting-AD} to perform the counting. 
\end{proof}

%%%%%%%%%%%%%%%%%%%%%%%%%%%%%%%%%%%%%%%%%%%%%%%%%%%%%%%
\subsection*{Acknowledgments}
We thank Anna Felikson, Bernhard Keller, Robert Marsh, Gregg Musiker, Satoshi
Naito, Idun Reiten,  Pavel Tumarkin, Toshiyuki Tanisaki, Gordana Todorov, and
Jerzy Weyman for useful discussion.  We thank Alfredo N\'ajera Ch\'avez for
letting us use his result \cite{Najera12b} prior to the publication, Hugh
Thomas for his guidance on representation theory, and the anonymous referee for
the useful suggestions.  We also thank MSRI,
Berkeley for financial support, for the computer facilities used when dealing
with exceptional types, and for providing the ideal environment where the
last stage of this work was done.

We would like to dedicate this paper to the memory of Andrei Zelevinsky.
We are grateful to him for sharing his insights and ideas on the subject, and
suggesting the coincidence of the $c$- and $d$-vectors to us at the early stage 
of this work before the appearance of the paper \cite{Najera12}.
The second author is especially grateful to him for his guidance, support and
constant spur during the course of his Ph.D. studies.

%%%%%%%%%%%%%%%%%%%%%%%%%%%%%%%%%%%%%%%%%%%%%%%%%%%%%%%
%\bibliographystyle{plain} 
%\bibliography{bibliography}

%%%%%%%%%%%%%%%%%%%%%%%%%%%%%%%%%%%%%%%%%%%%%%%%%%%%%%%
\appendix
\section{Type \texorpdfstring{$D_n$}{Dn} analysis}
\label{app:type-Dn}
We provide here the detailed case analysis required to prove both Propositions
\ref{prop:bipartite} and \ref{prop:An_Dn-support} in type $D_n$ at the same time.

As explained above, any multilamination
corresponding to an initial triangulation decomposes the surface $S$ into pieces
(see Figure \ref{fig:example_glueing} for an example). Any quadrilateral can
intersect positively at most laminations contained in three different pieces. We
need therefore to consider, for any quadrilateral in Figure
\ref{fig:shear-coordinates}, all the possible ways of inscribing it in a surface
with at most three pieces. These configurations are listed in the leftmost column of
the following tables.

For each of them we distinguish five sub-cases (the other five columns)
depending on how the multilamination looks around the puncture (cf. Figure
\ref{fig:wheel-decomposition}). 

For any given configuration and choice of lamination around the puncture 
we provide a bipartite quadrilateral giving rise
to the same $c$-vector as the original triangulation and we record which
template $c$-vector from Figure \ref{fig:allowed_diagrams_Dn} we get. 
%\newpage
	\begin{figure}[H]
		\begin{center}
  		\includegraphics[scale=1.05,angle=90]{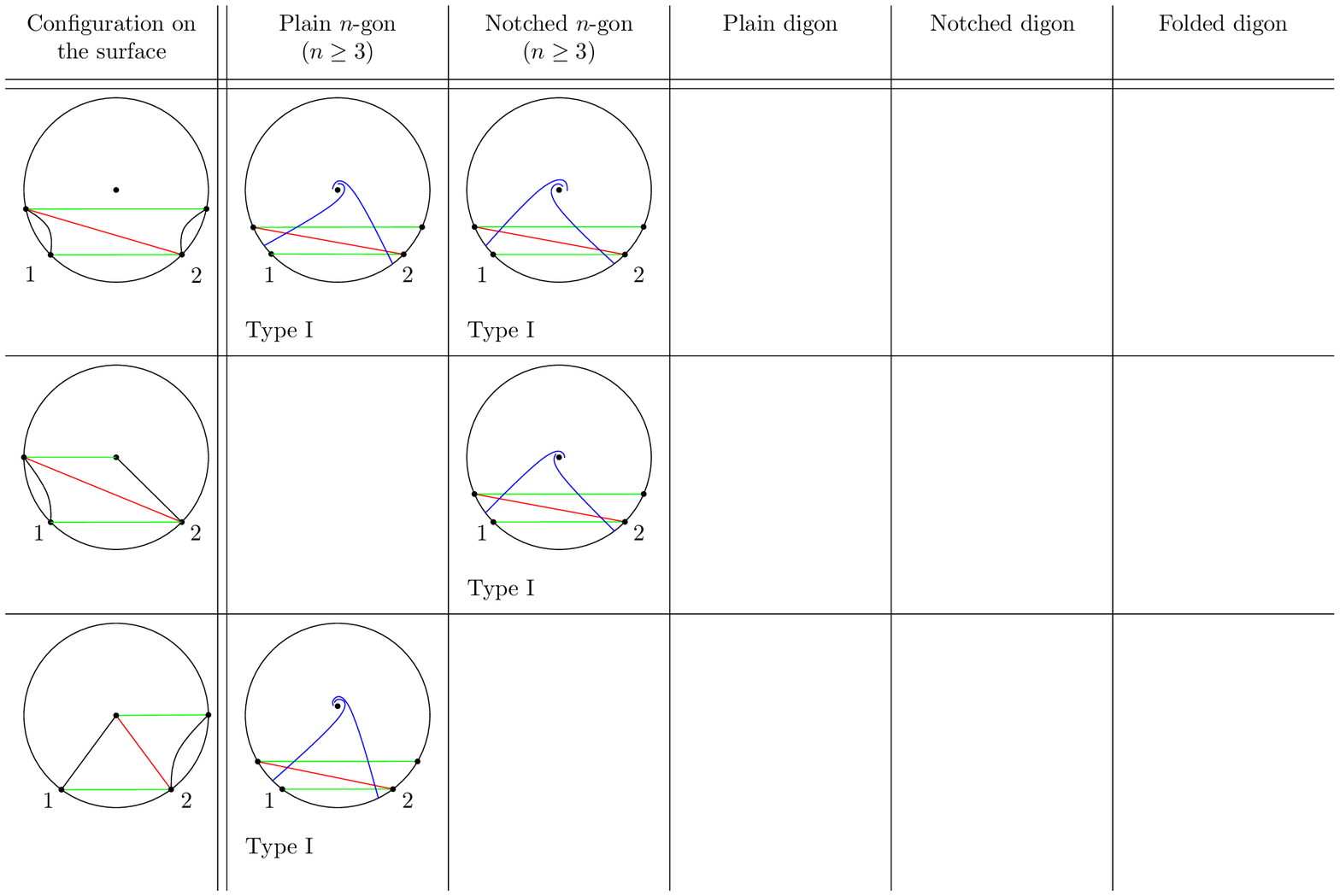}
		\end{center}
    	%\caption{}
    	\label{fig:type-D_n-1}
	\end{figure}
 
	\begin{figure}[H]
		\begin{center}
  		\includegraphics[scale=1.05,angle=90]{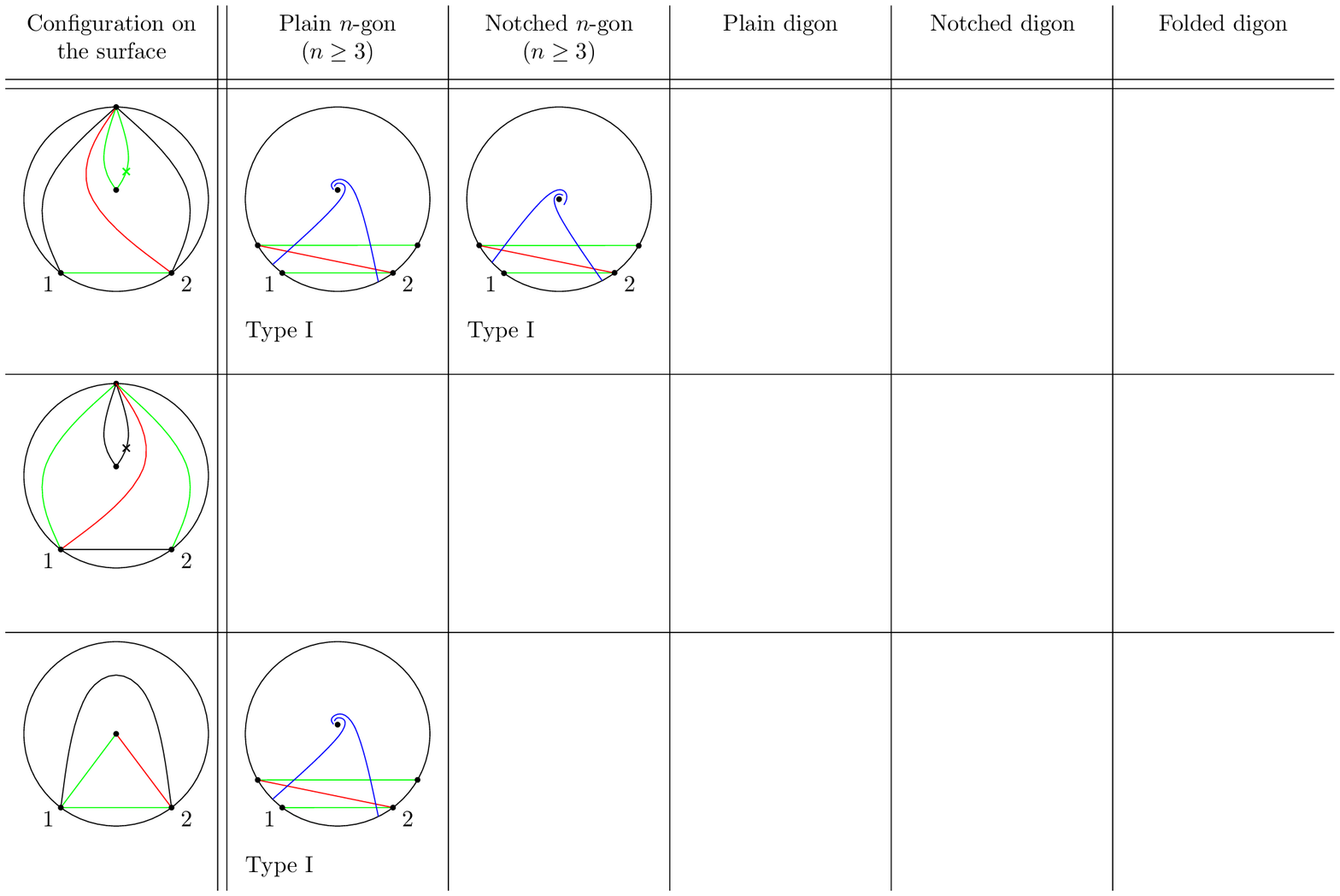}
		\end{center}
    	%\caption{}
    	\label{fig:type-D_n-2}
	\end{figure}
 
	\begin{figure}[H]
		\begin{center}
  		\includegraphics[scale=1.05,angle=90]{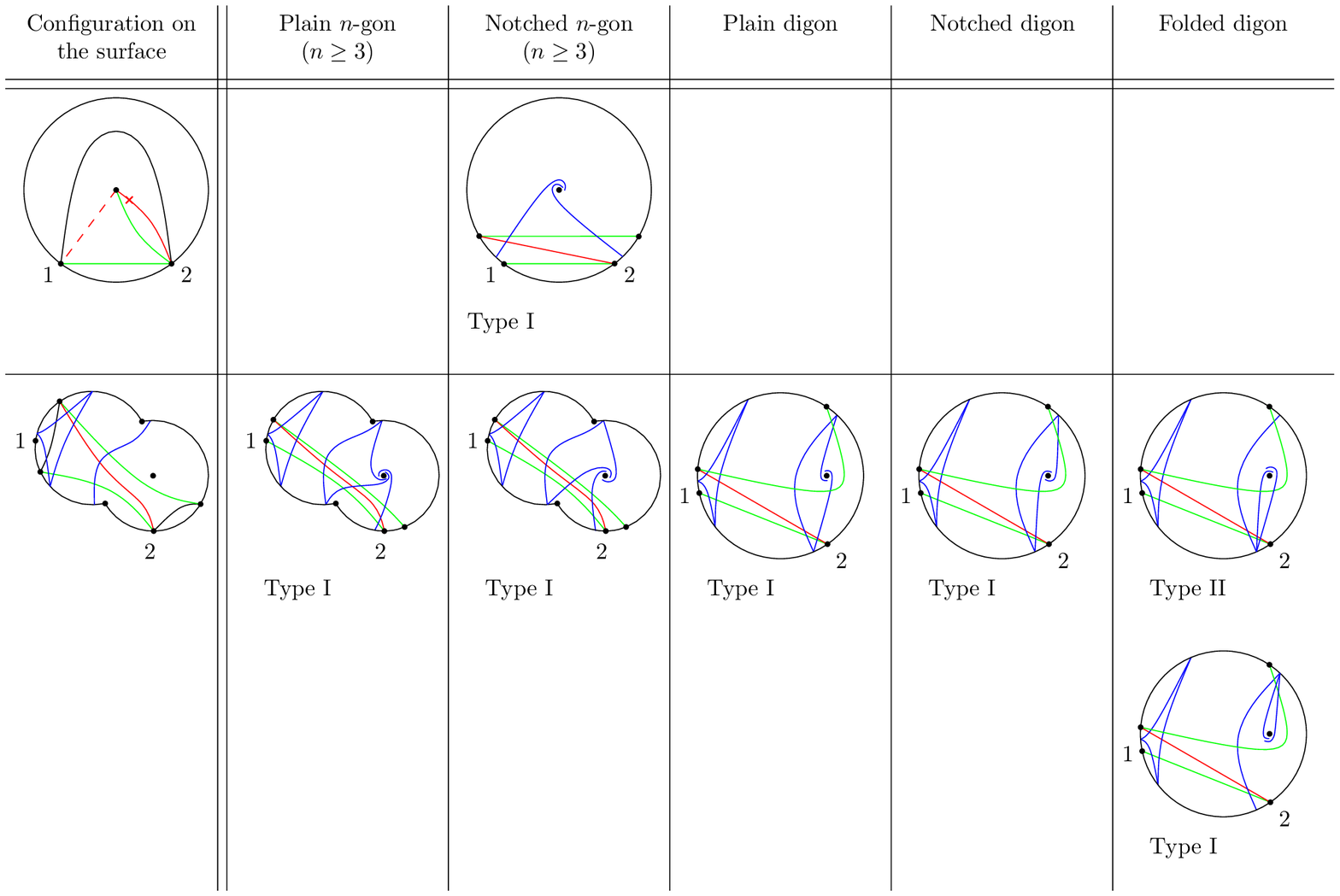}
		\end{center}
    	%\caption{}
    	\label{fig:type-D_n-3}
	\end{figure}
 
	\begin{figure}[H]
		\begin{center}
  		\includegraphics[scale=1.05,angle=90]{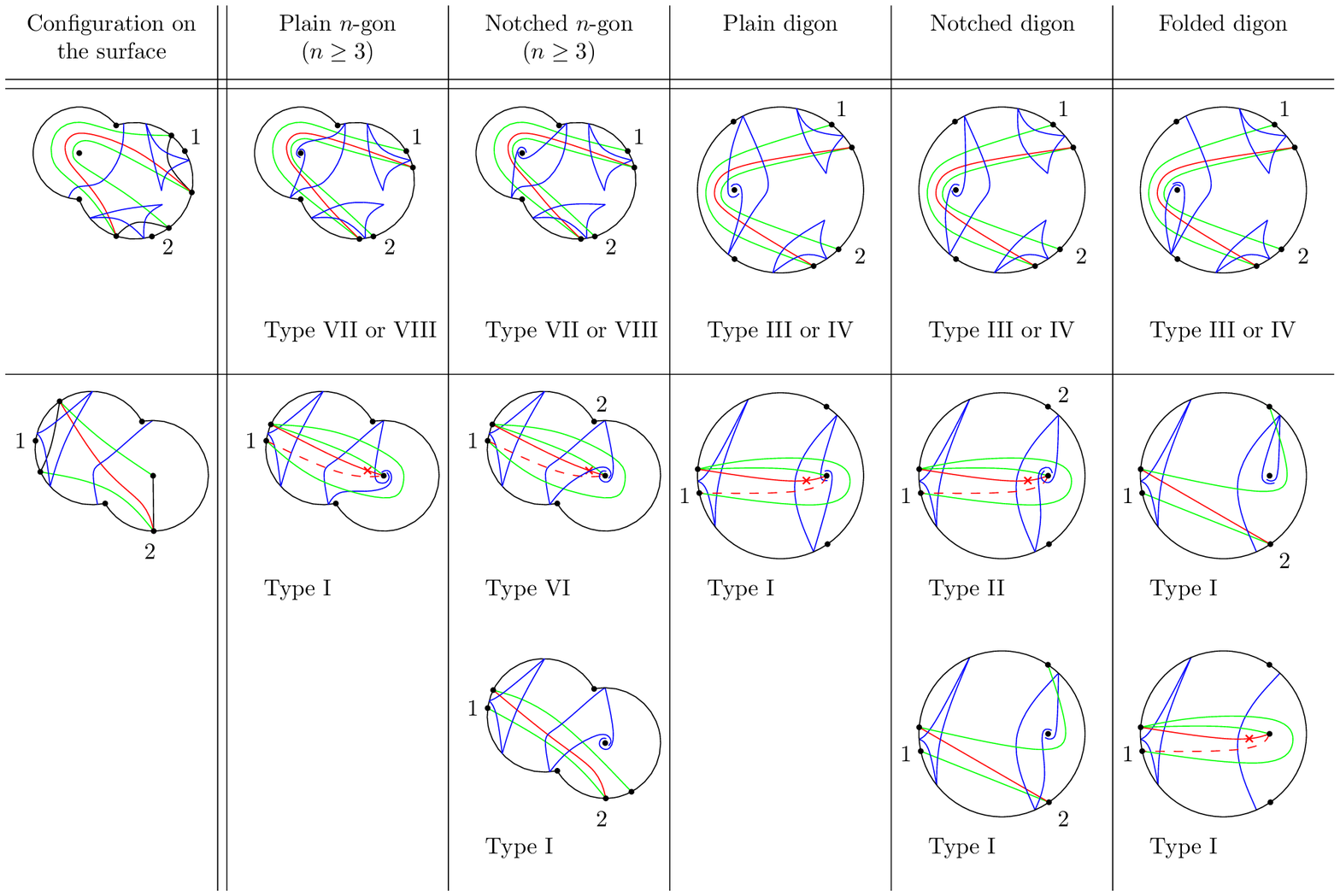}
		\end{center}
    	%\caption{}
    	\label{fig:type-D_n-4}
	\end{figure}
 
	\begin{figure}[H]
		\begin{center}
  		\includegraphics[scale=1.05,angle=90]{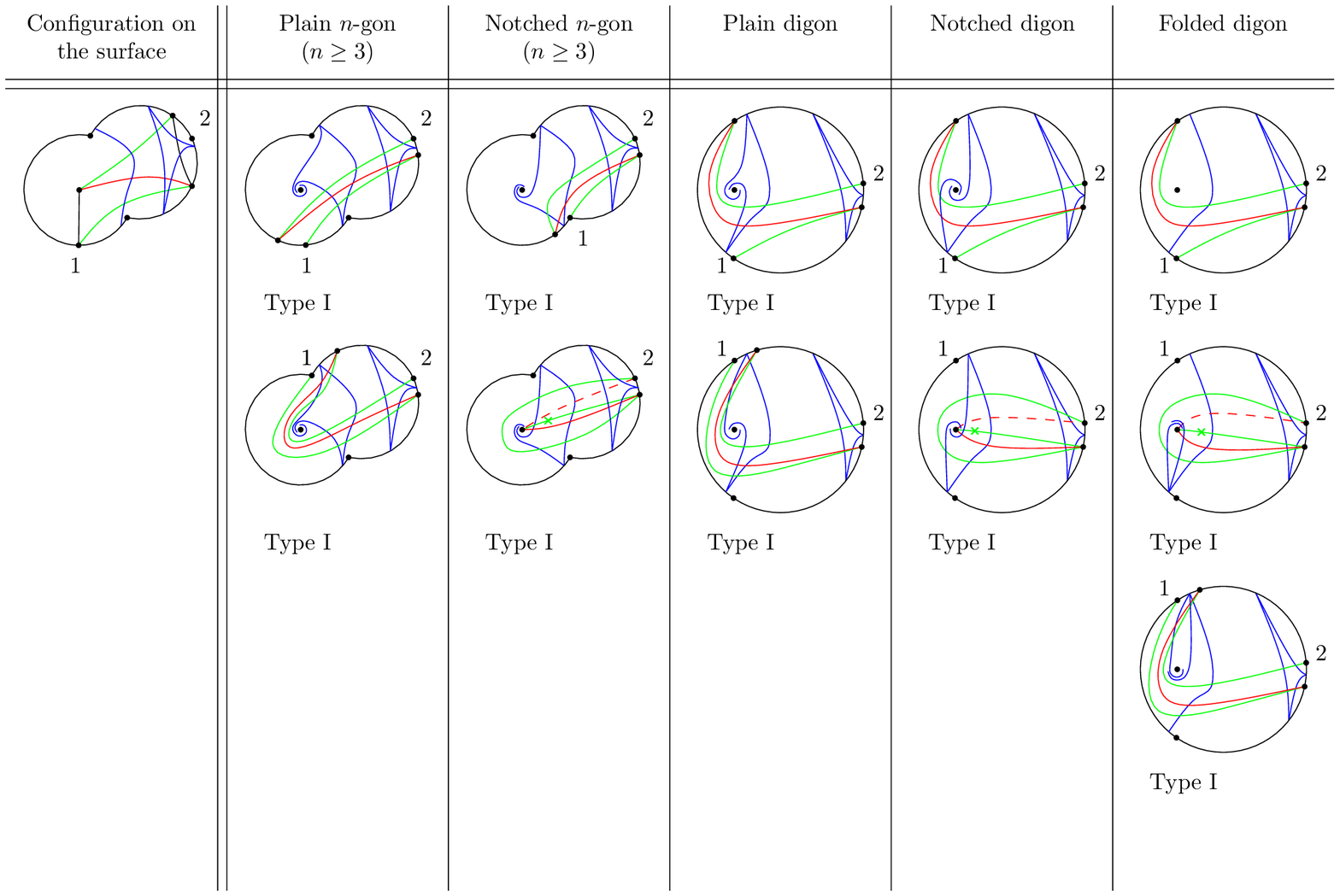}
		\end{center}
    	%\caption{}
    	\label{fig:type-D_n-5}
	\end{figure}
 
	\begin{figure}[H]
		\begin{center}
  		\includegraphics[scale=1.05,angle=90]{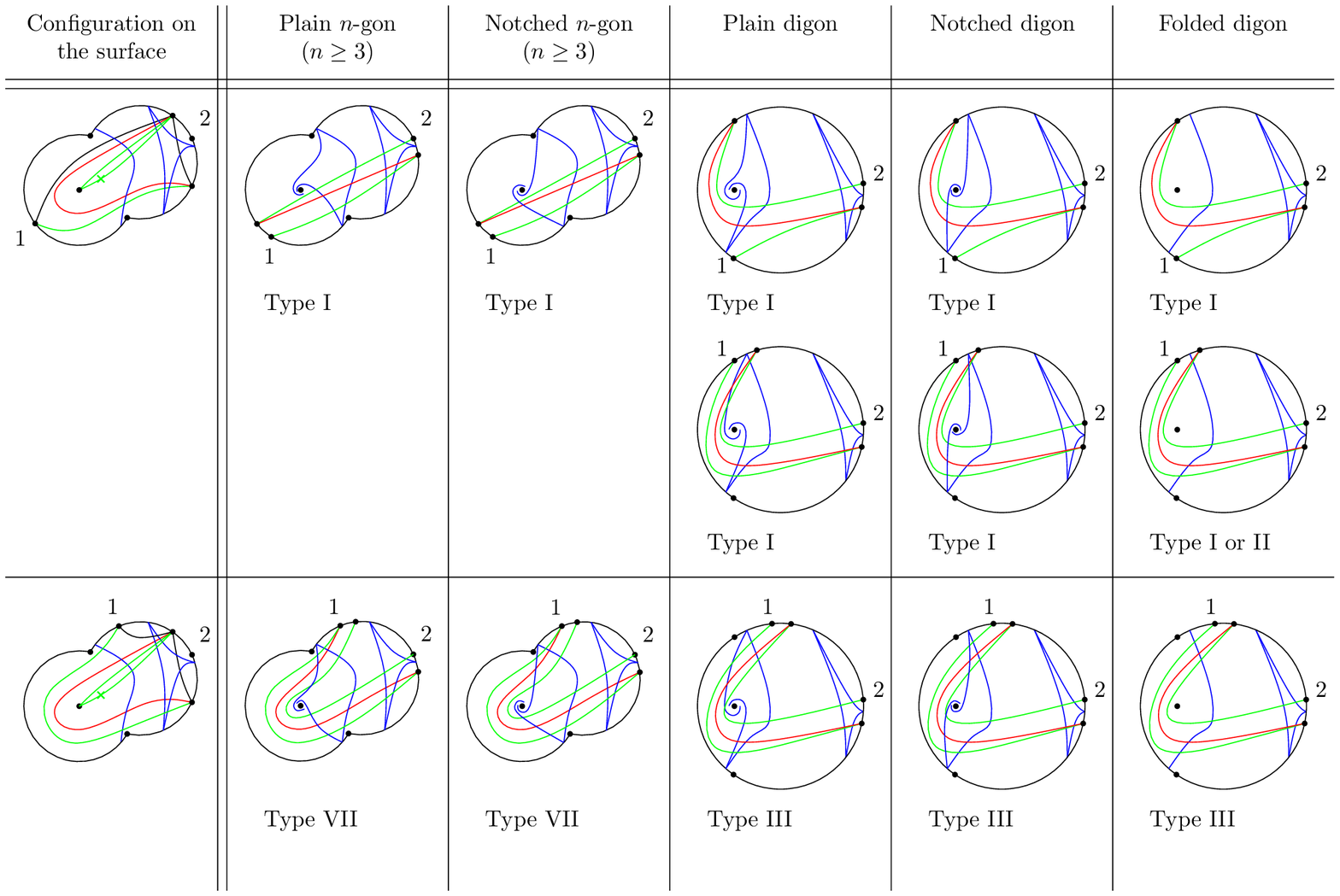}
		\end{center}
    	%\caption{}
    	\label{fig:type-D_n-6}
	\end{figure}
 
	\begin{figure}[H]
		\begin{center}
  		\includegraphics[scale=1.05,angle=90]{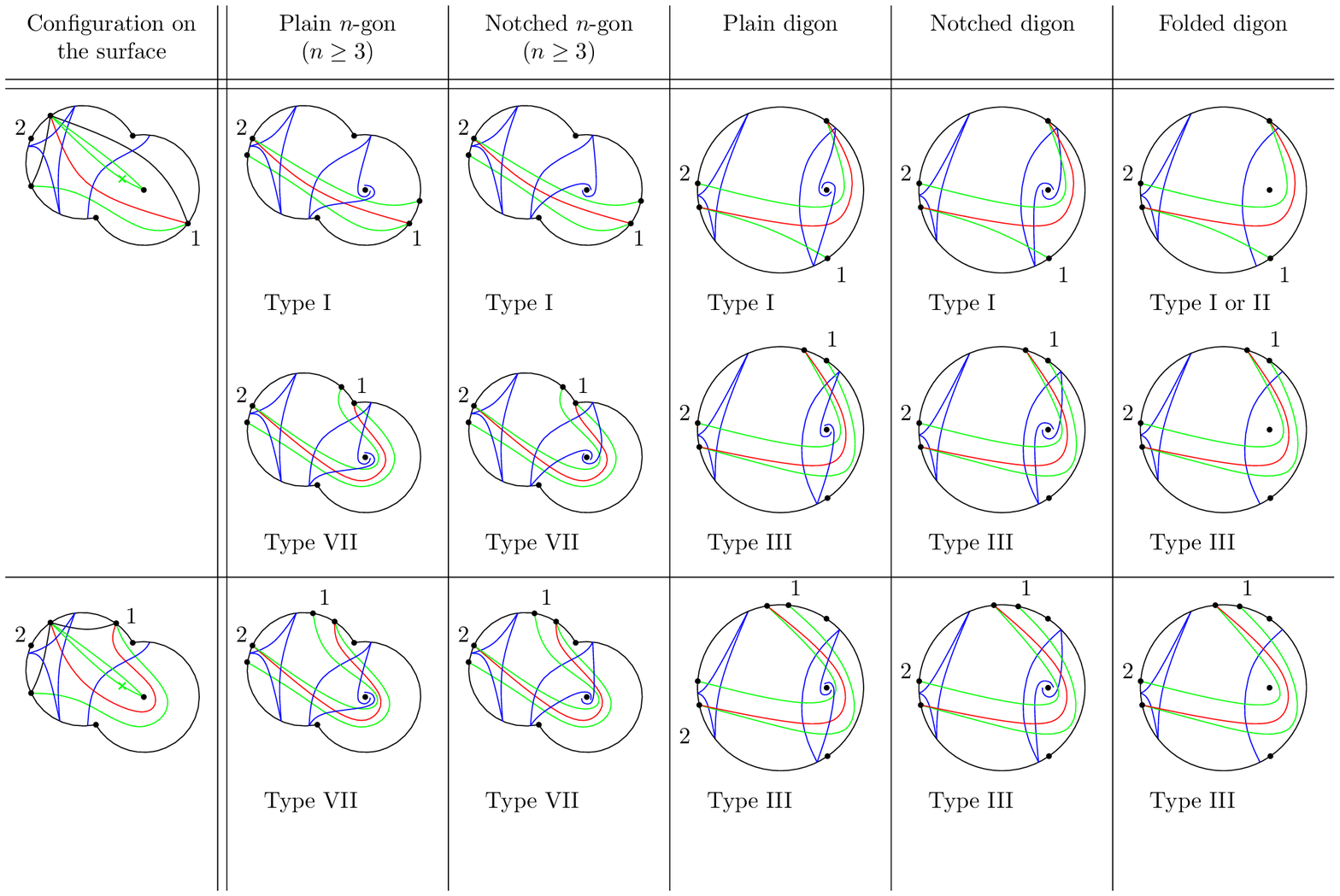}
		\end{center}
    	%\caption{}
    	\label{fig:type-D_n-7}
	\end{figure}
 
	\begin{figure}[H]
		\begin{center}
  		\includegraphics[scale=1.05,angle=90]{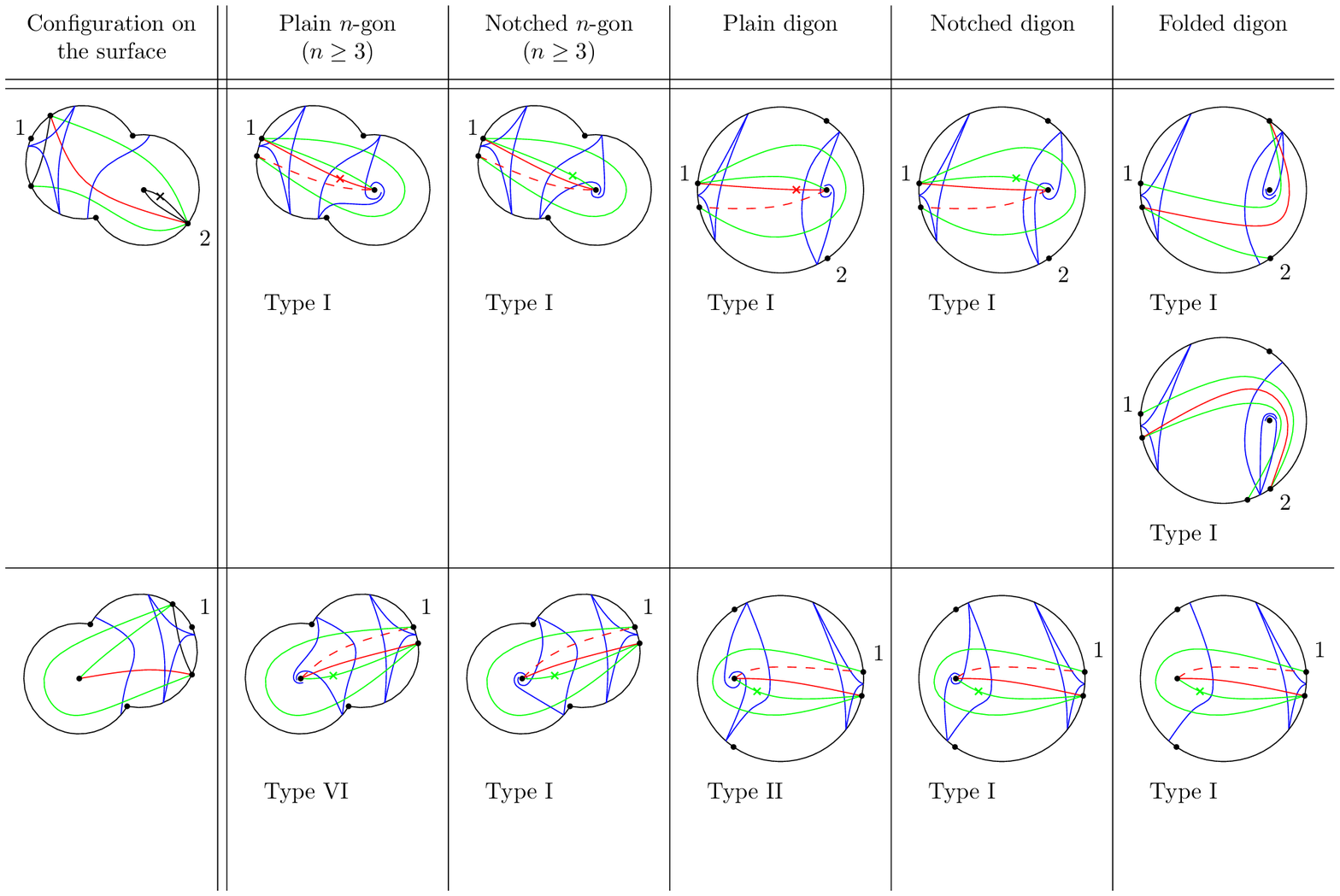}
		\end{center}
    	%\caption{}
    	\label{fig:type-D_n-8}
	\end{figure}
 
	\begin{figure}[H]
		\begin{center}
  		\includegraphics[scale=1.05,angle=90]{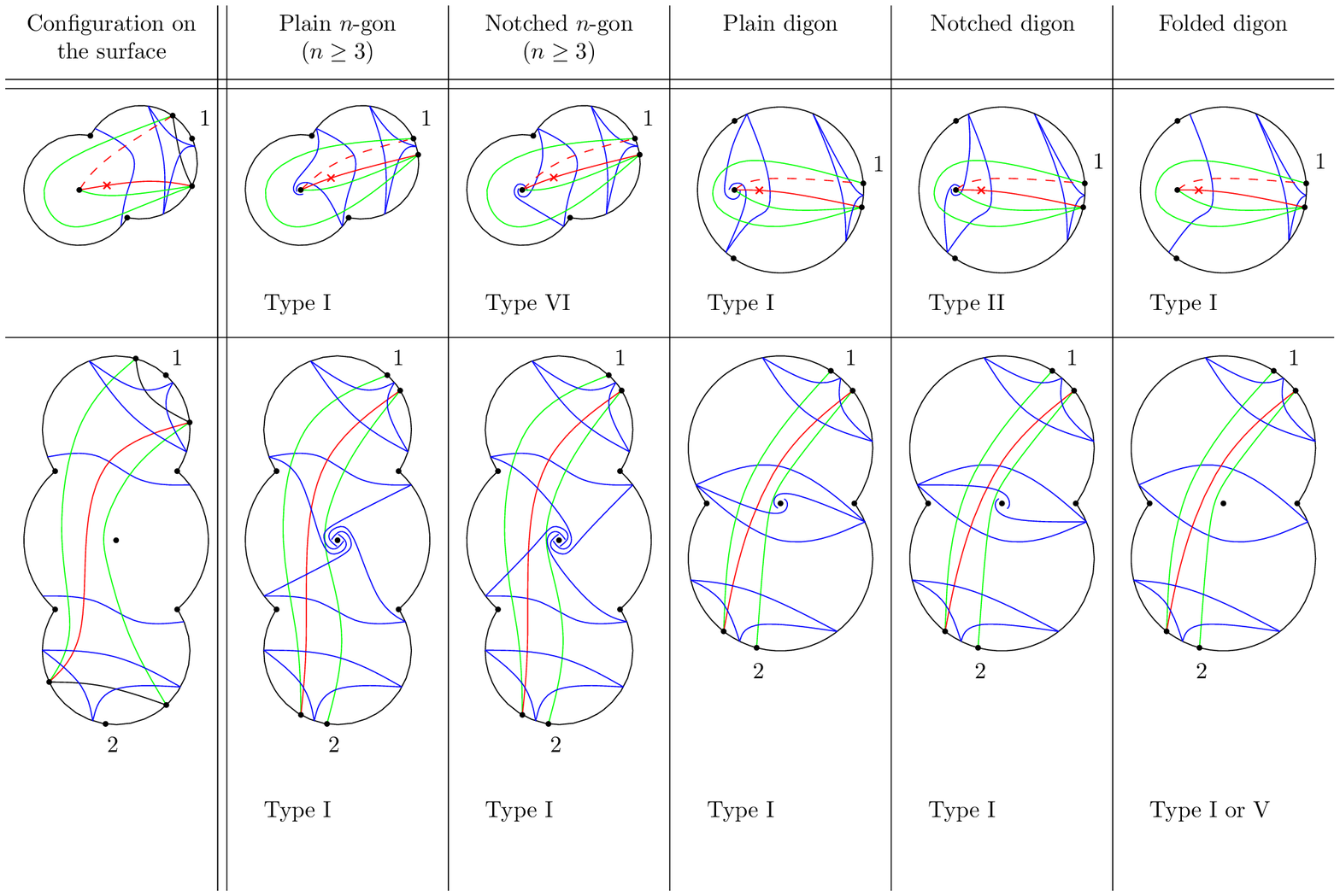}
		\end{center}
    	%\caption{}
    	\label{fig:type-D_n-9}
	\end{figure}
\newpage

%%%%%%%%%%%%%%%%%%%%%%%%%%%%%%%%%%%%%%%%%%%%%%%%%%%%%%%
\section{The sets \texorpdfstring{$\mathcal{X}(Z)$}{X(Z)} and
	\texorpdfstring{$\mathcal{W}(Z)$}{W(Z)} for the exceptional types.}
\label{app:exceptional}
For exceptional types we obtain a description of
$\mathcal{X}(Z)$ by direct inspection using 
%both 
\cite{Keller08c,Musiker10}. Similarly we obtain 
$\mathcal{W}(Z)$ and
%by direct inspection we 
check Theorems \ref{thm:main}, \ref{thm:corollaries}, and \ref{thm:bipartite}. 

%\begin{prop}
%	Let $Z$ be any type among $E_6$, $E_7$, $E_8$, $F_4$, and $G_2$. The set
%	$\mathcal{X}(Z)$ is the set of diagrams in Figures \ref{fig:dynkin-E6},
%	\ref{fig:dynkin-E7-first}-\ref{fig:dynkin-E7-last},
%	\ref{fig:dynkin-E8-first}-\ref{fig:dynkin-E8-last}, \ref{fig:dynkin-F4}, and
%	\ref{fig:dynkin-G2} respectively.
%\end{prop}
%The 
%%corresponding 
%sets $\mathcal{W}(Z)$ are given by Figures
%\ref{fig:allowed_diagrams_E6},
%\ref{fig:allowed_diagrams_E7-first}-\ref{fig:allowed_diagrams_E7-last},
%\ref{fig:allowed_diagrams_E8-first}-\ref{fig:allowed_diagrams_E8-last},
%\ref{fig:allowed_diagrams_F4}, and \ref{fig:allowed_diagrams_G2}.

\subsection{Type \texorpdfstring{$G_2$}{G2}}
%$ $
\begin{figure}[htpb]
	\begin{center}
		\includegraphics[scale=\scalingconstant]{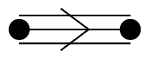}
%	\end{center}
	\caption{The only diagram in $\mathcal{X}(G_2)$.}
	\label{fig:dynkin-G2}
	\vspace{\baselineskip}
%	\begin{center}
		\includegraphics[scale=\scalingconstant]{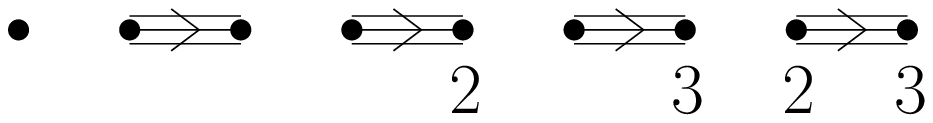}
	\end{center}
	\caption{The set $\mathcal{W}(G_2)$.}
	\label{fig:allowed_diagrams_G2}
\end{figure}

\subsection{Type \texorpdfstring{$F_4$}{F4}}
%$ $
\begin{figure}[H] 
	\begin{center}
		\includegraphics[scale=\scalingconstant]{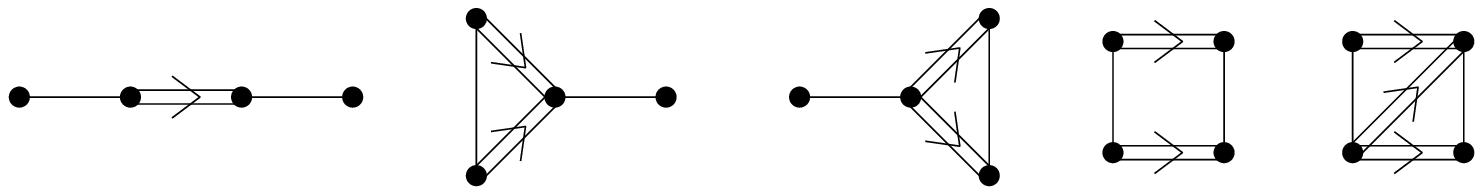}
%	\end{center}
	\caption{Diagrams in $\mathcal{X}(F_4)$.}
	\label{fig:dynkin-F4}
	\vspace{\baselineskip}
	% extraspace here to make it look nicer
	%\vspace{\baselineskip}
%	\begin{center}
		\includegraphics[scale=\scalingconstant]{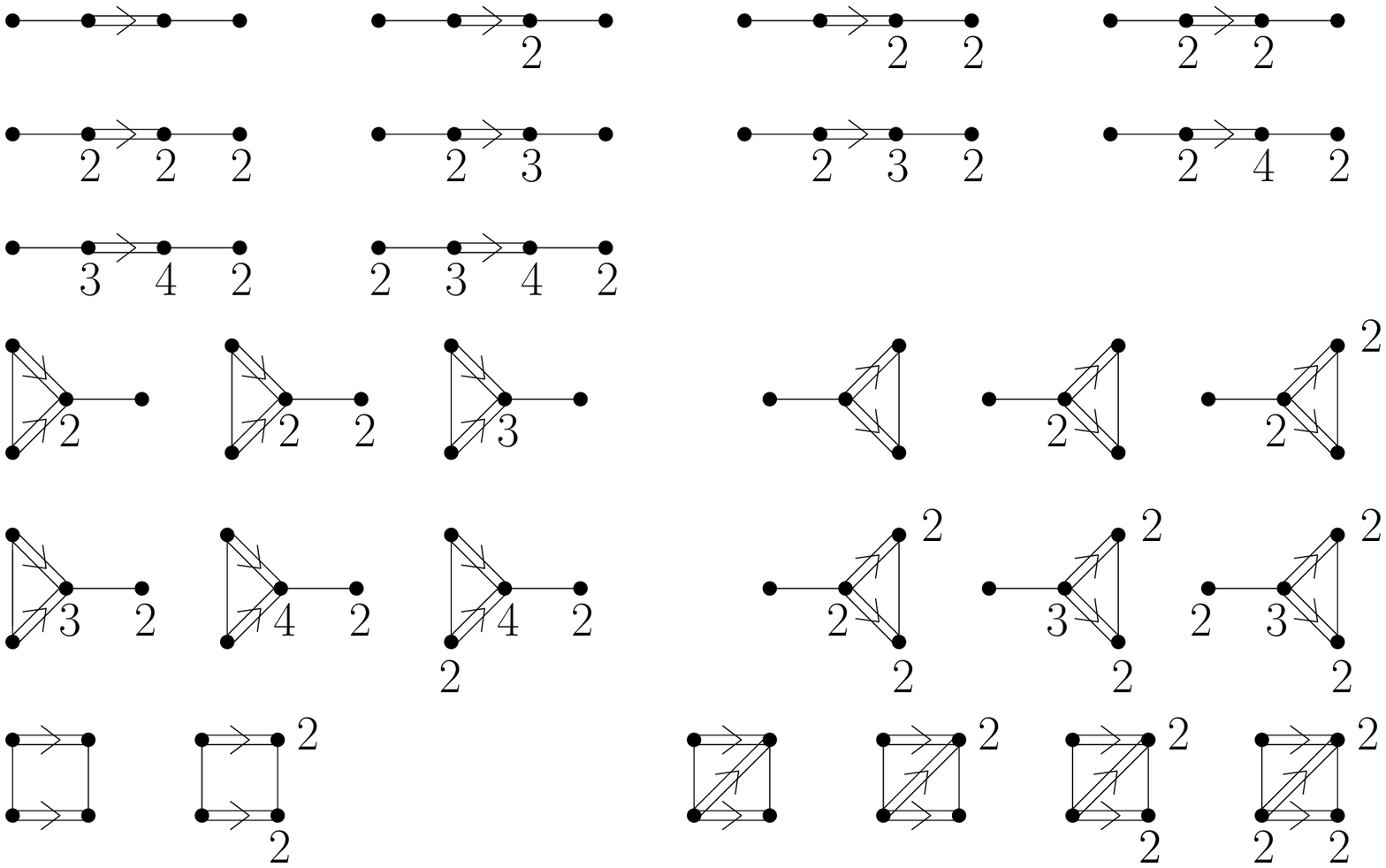}
	\end{center}
	\caption{The set $\mathcal{W}(F_4)$ consists of the above weighed diagrams
		together with all the elements of $\mathcal{W}(B_3)$ and $\mathcal{W}(C_3)$.}
	\label{fig:allowed_diagrams_F4}
\end{figure}
\subsection{Type \texorpdfstring{$E_6$}{E6}}
\begin{figure}[H]
	\begin{center}
		\includegraphics[scale=\scalingconstant]{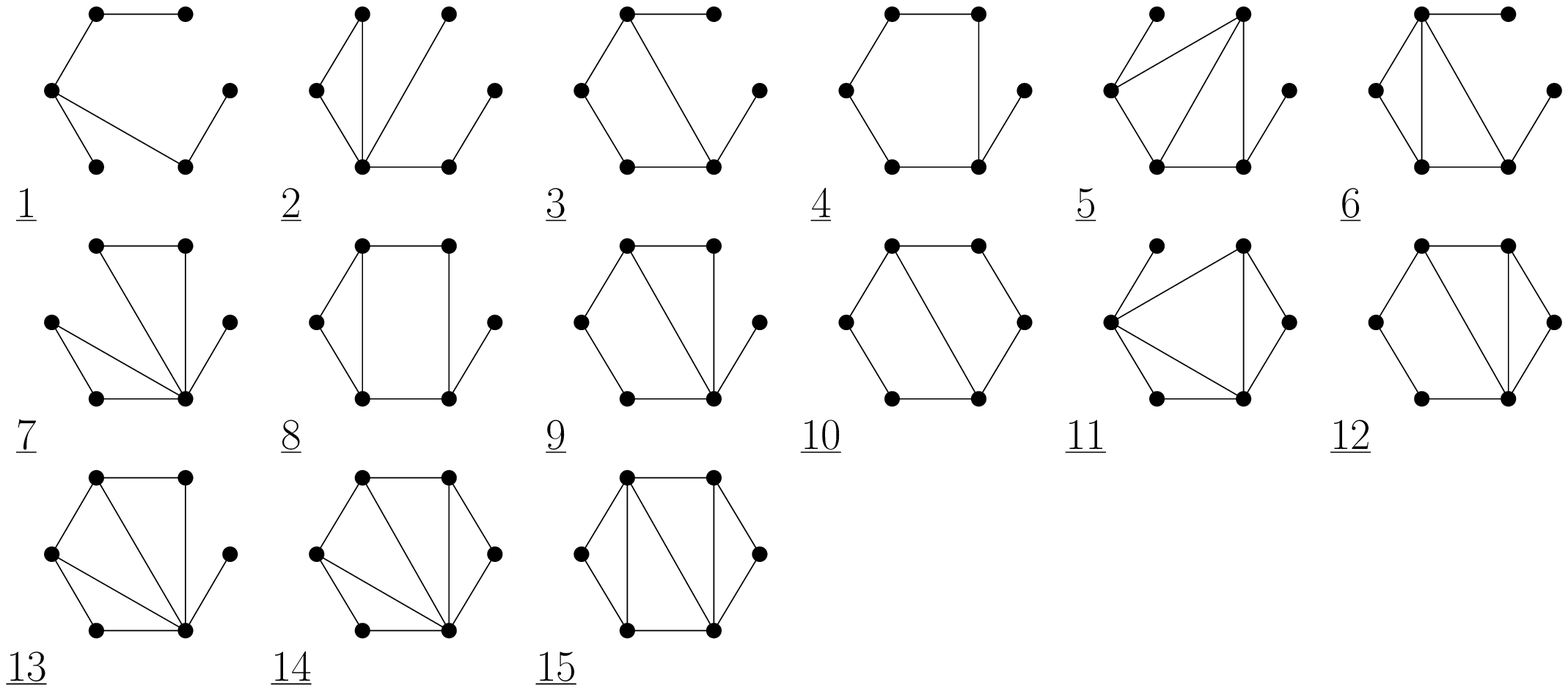}
	\caption{Diagrams in $\mathcal{X}(E_6)$.}
	\label{fig:dynkin-E6}
	\vspace{\baselineskip}
		\includegraphics[scale=\scalingconstant]{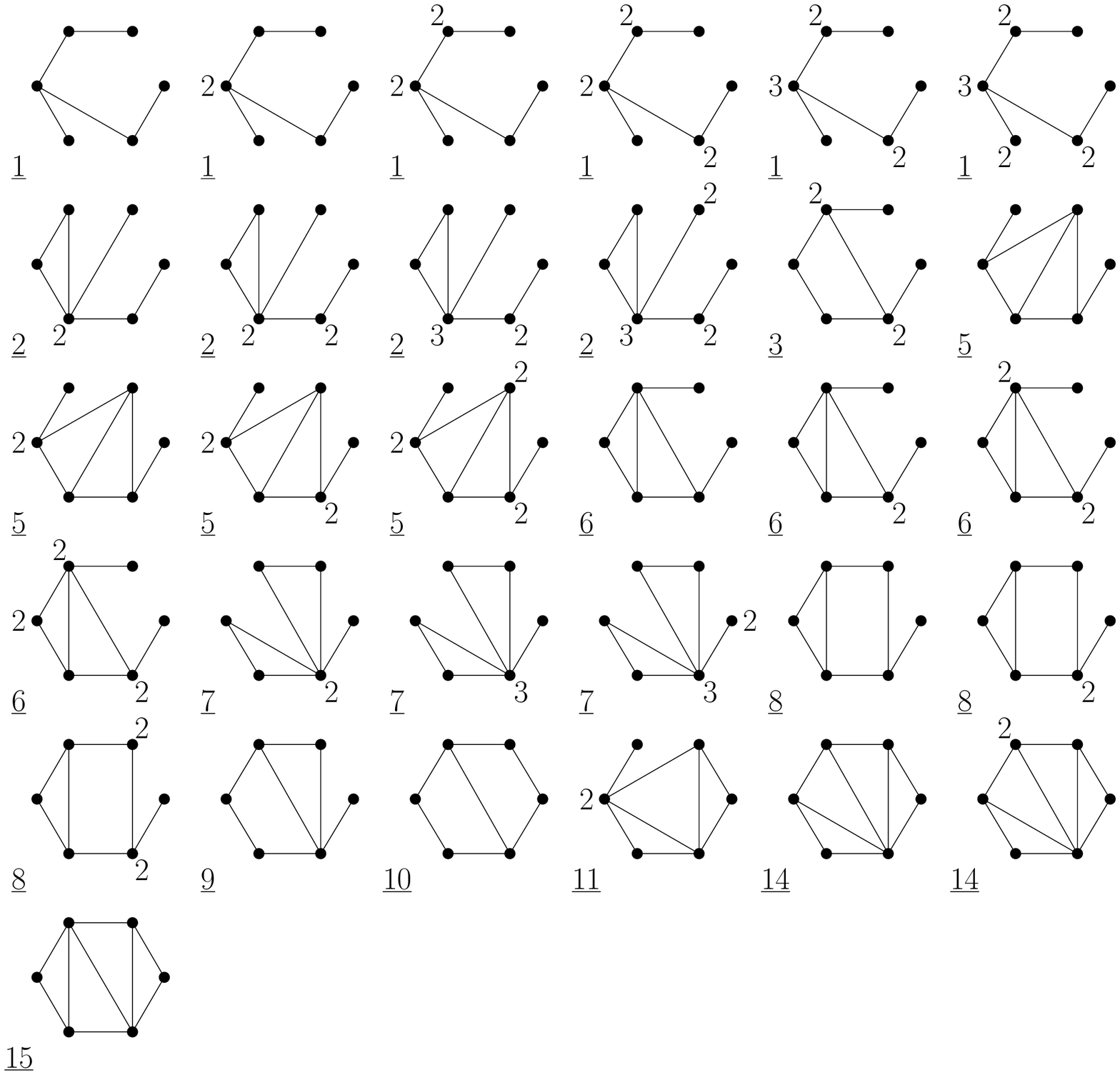}
	\caption{The set $\mathcal{W}(E_6)$ consists of the above weighted diagrams together
	with all the elements of $\mathcal{W}(A_5)$ and $\mathcal{W}(D_5)$.}
		\label{fig:allowed_diagrams_E6}
	\end{center}
\end{figure}

\subsection{Type \texorpdfstring{$E_7$}{E7}}
\begin{figure}[H]
	\begin{center}
		\includegraphics[scale=\scalingconstant]{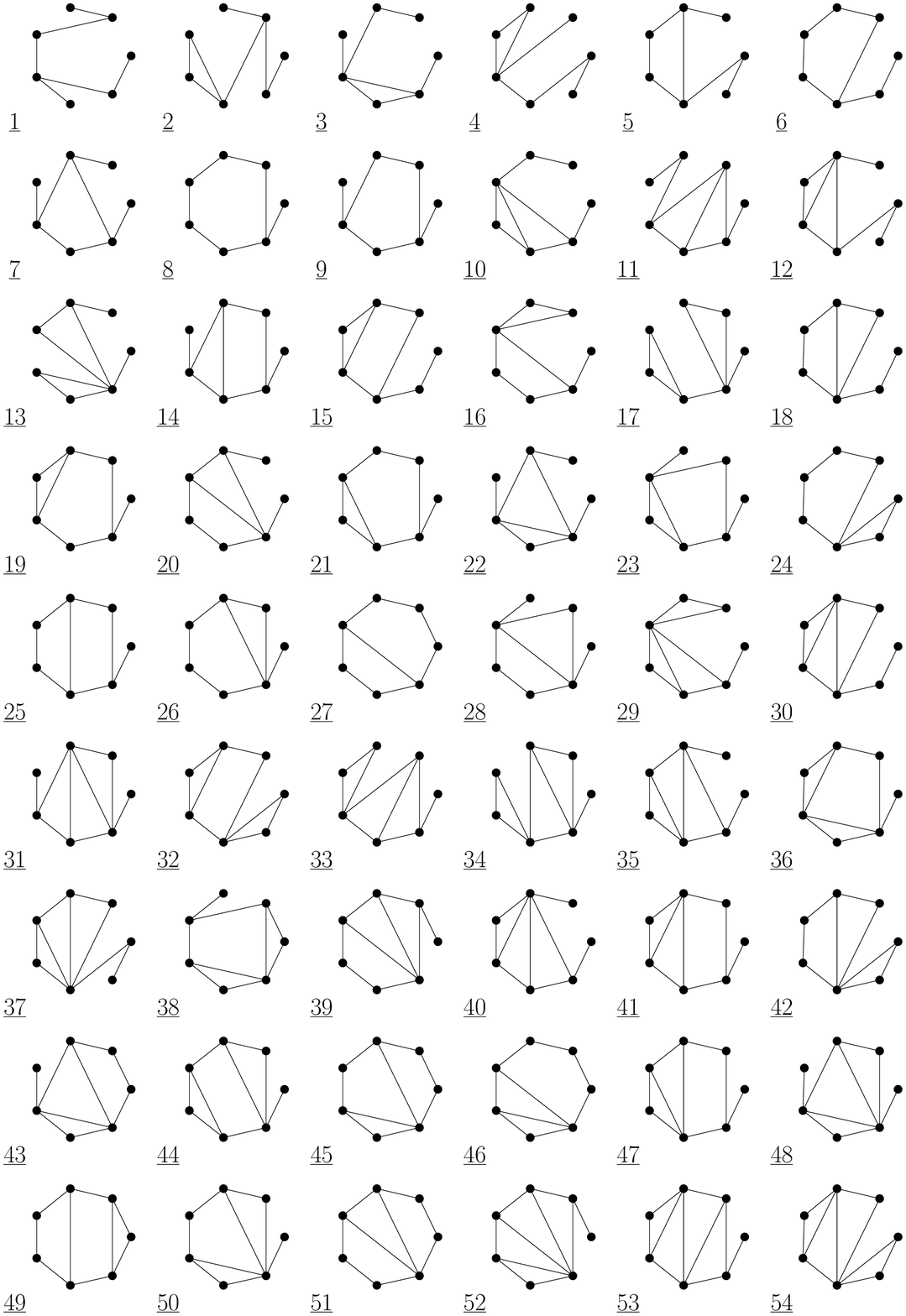}
	\end{center}
	\caption{Diagrams in $\mathcal{X}(E_7)$.}
	\label{fig:dynkin-E7-first}
\end{figure}
\begin{figure}[H]
	\begin{center}
		\includegraphics[scale=\scalingconstant]{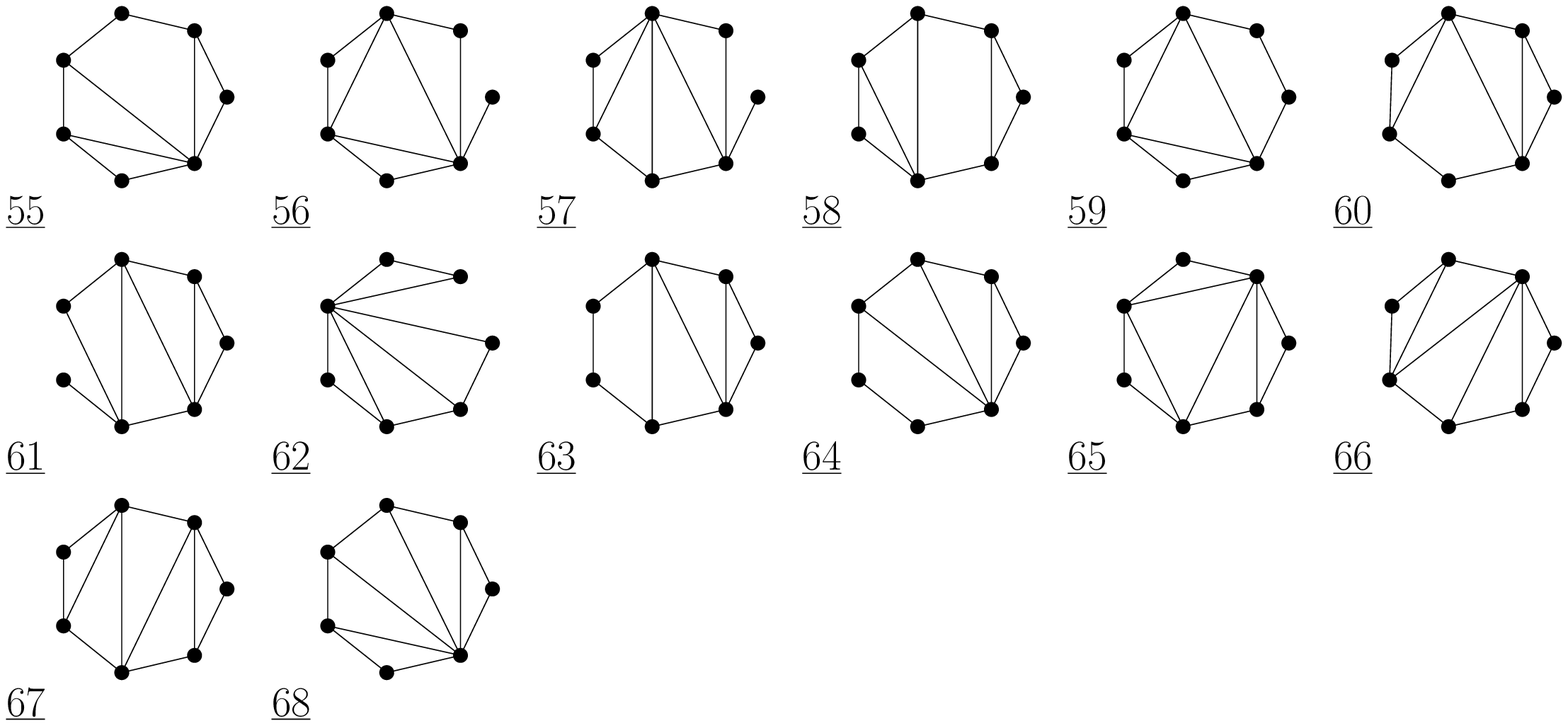}
	\caption{Diagrams in $\mathcal{X}(E_7)$ (continued).}
	\label{fig:dynkin-E7-last}
	\vspace{\baselineskip}
 	\includegraphics[scale=\scalingconstant]{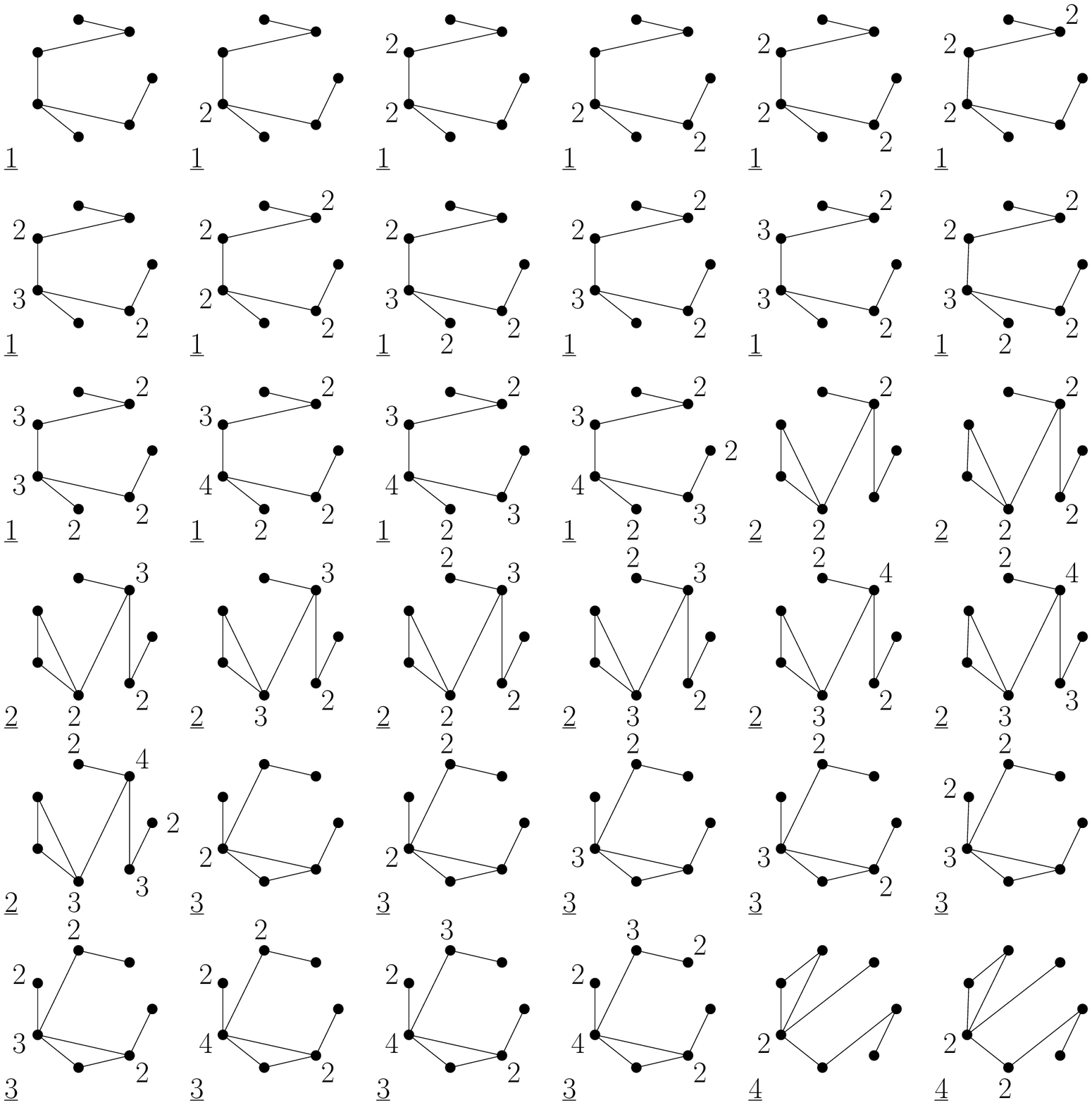}
	\caption{The set $\mathcal{W}(E_7)$ consists of the above weighted diagrams
	and all the elements of $\mathcal{W}(A_6)$, $\mathcal{W}(D_6)$,
	and $\mathcal{W}(E_6)$.}
		\label{fig:allowed_diagrams_E7-first}
	\end{center}
\end{figure}
\begin{figure}[H]
	\begin{center}
		\includegraphics[scale=\scalingconstant]{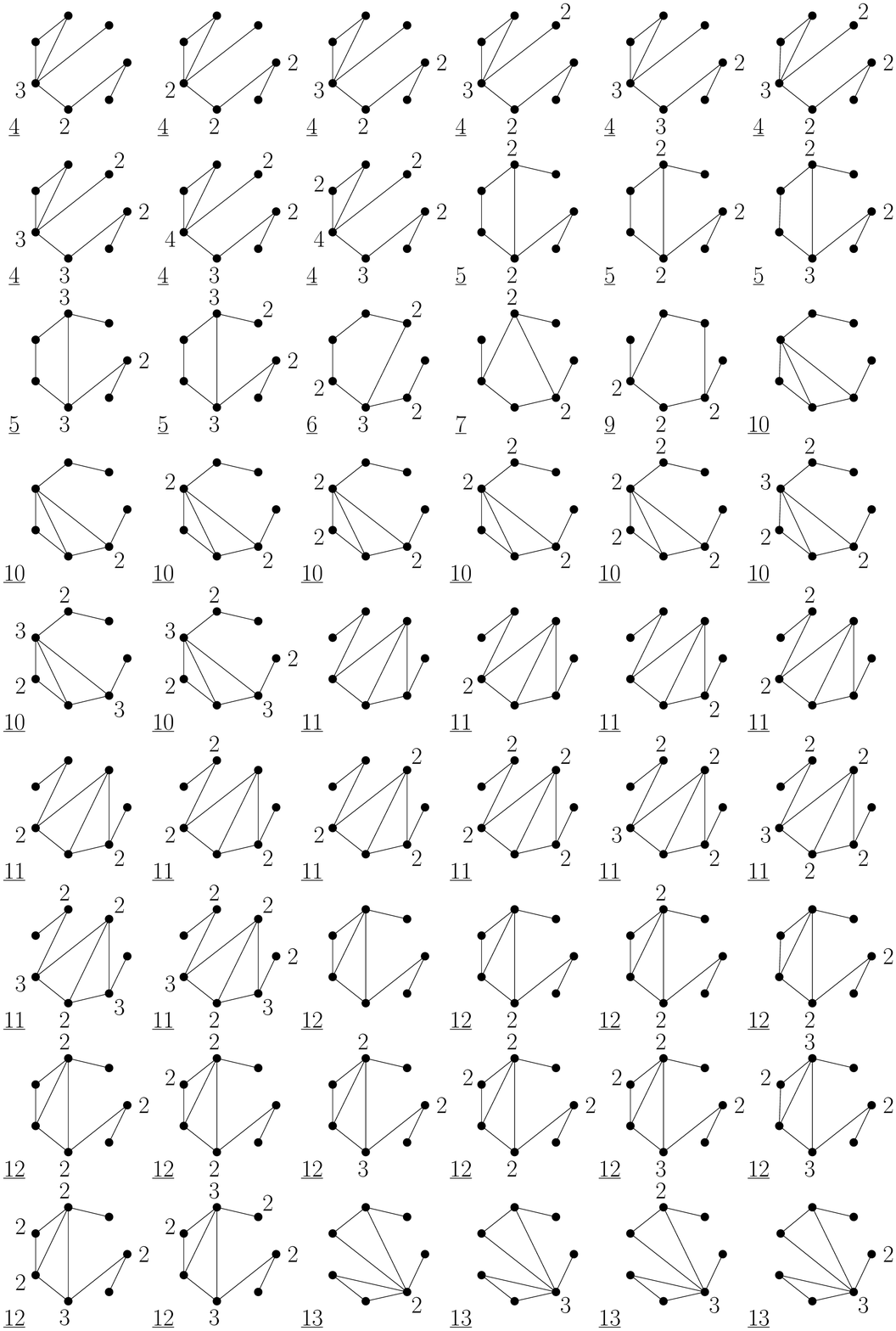}
	\caption{The set $\mathcal{W}(E_7)$ (continued).}
	\end{center}
\end{figure}
\begin{figure}[H]
	\begin{center}
	 	\includegraphics[scale=\scalingconstant]{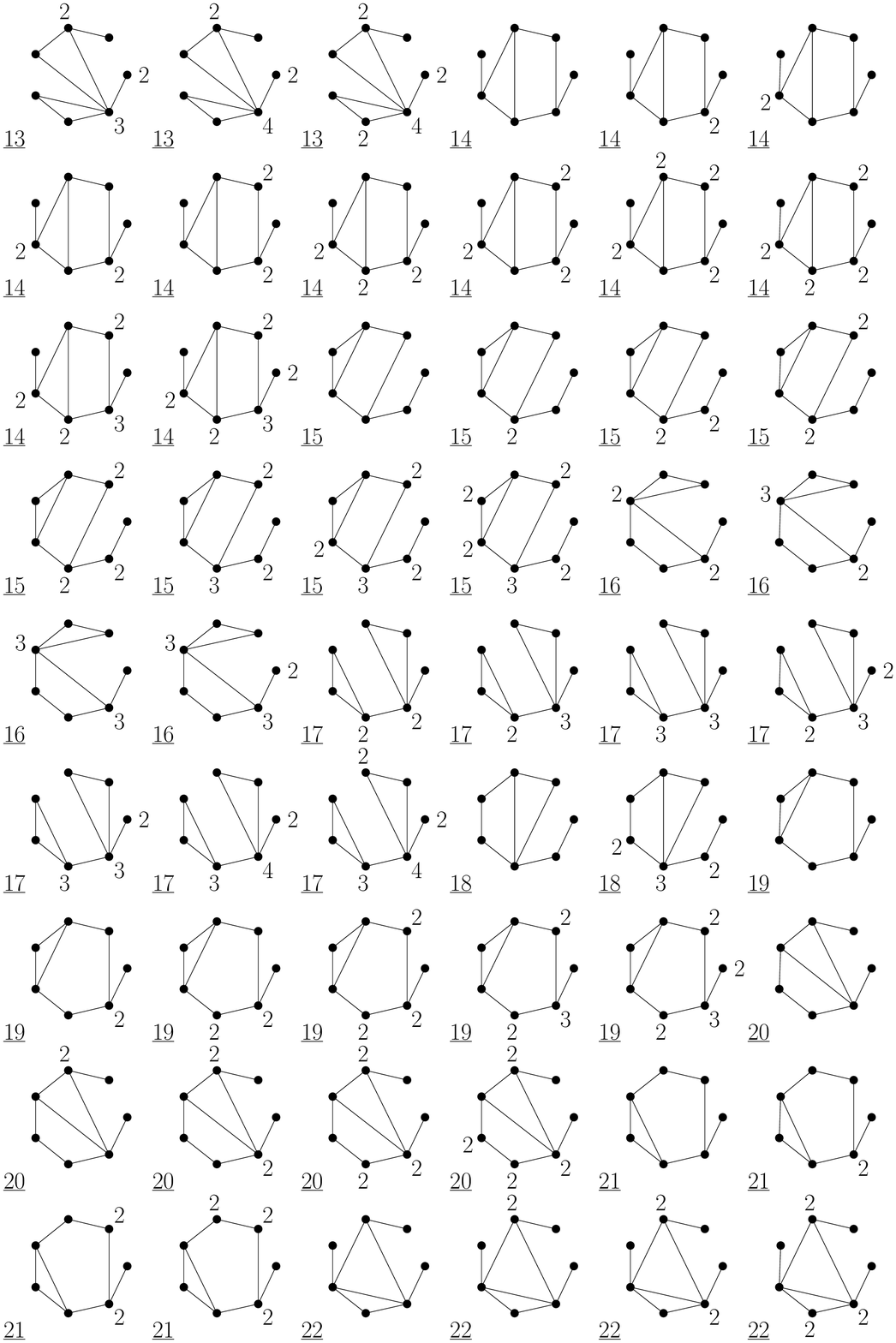}
	\caption{The set $\mathcal{W}(E_7)$ (continued).}
	\end{center}
\end{figure}
\begin{figure}[H]
	\begin{center}
	 	\includegraphics[scale=\scalingconstant]{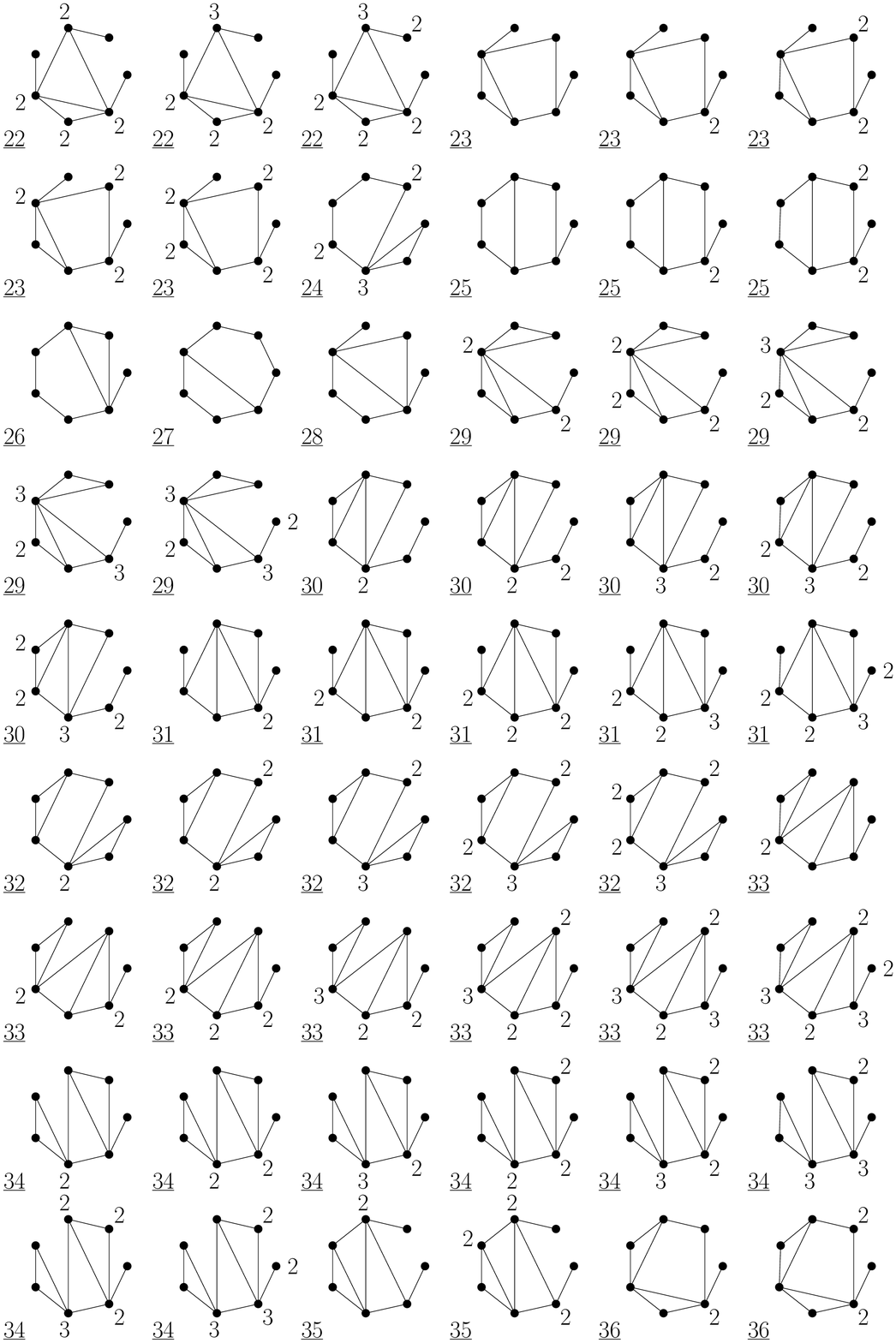}
	\caption{The set $\mathcal{W}(E_7)$ (continued).}
	\end{center}
\end{figure}
\begin{figure}[H]
	\begin{center}
	 	\includegraphics[scale=\scalingconstant]{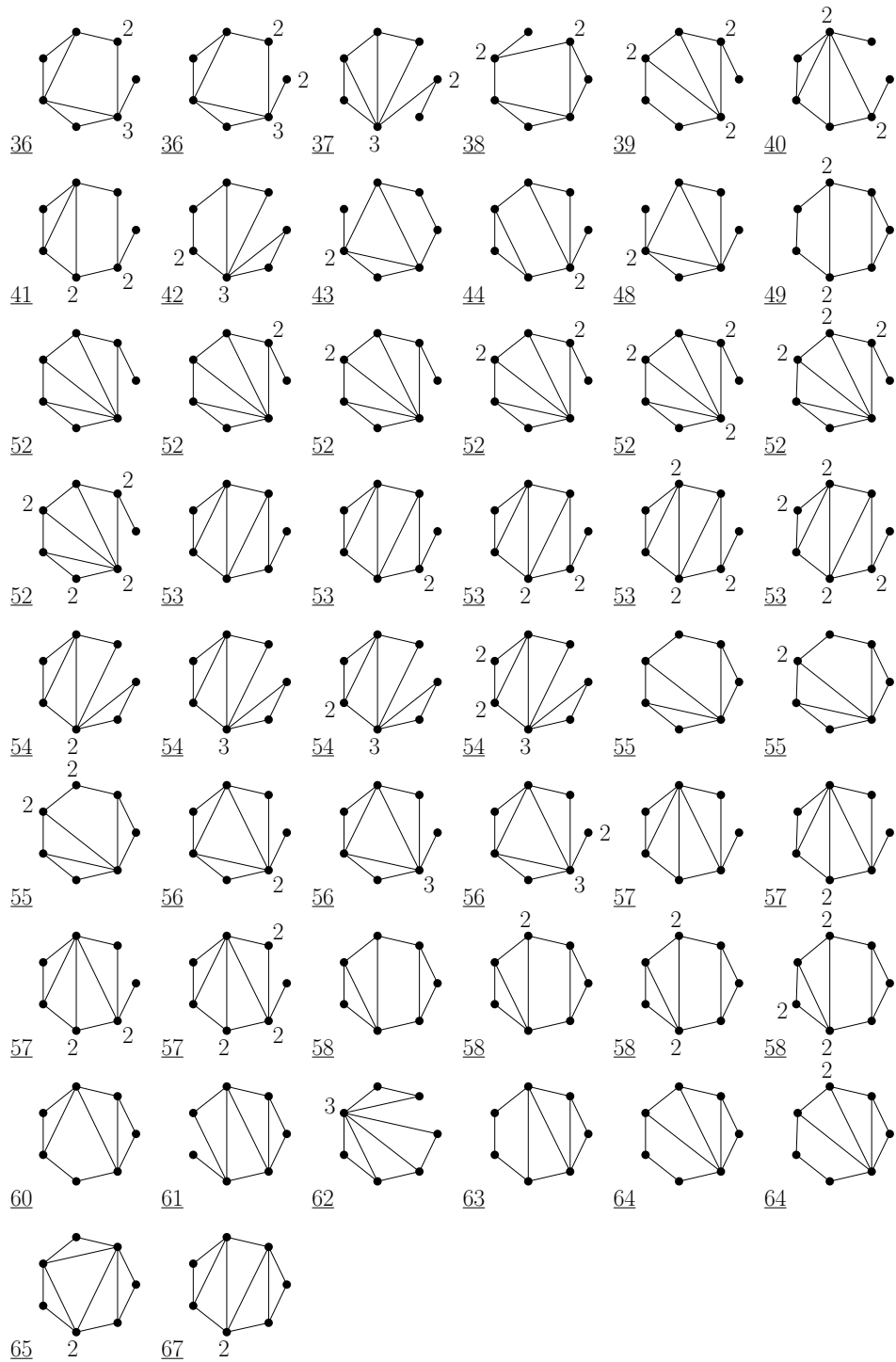}
	\caption{The set $\mathcal{W}(E_7)$ (continued).}
   	\label{fig:allowed_diagrams_E7-last}
	\end{center}
\end{figure}
\newpage

\subsection{Type \texorpdfstring{$E_8$}{E8}}
\begin{figure}[H]
	\begin{center}
		\includegraphics[scale=\scalingconstant]{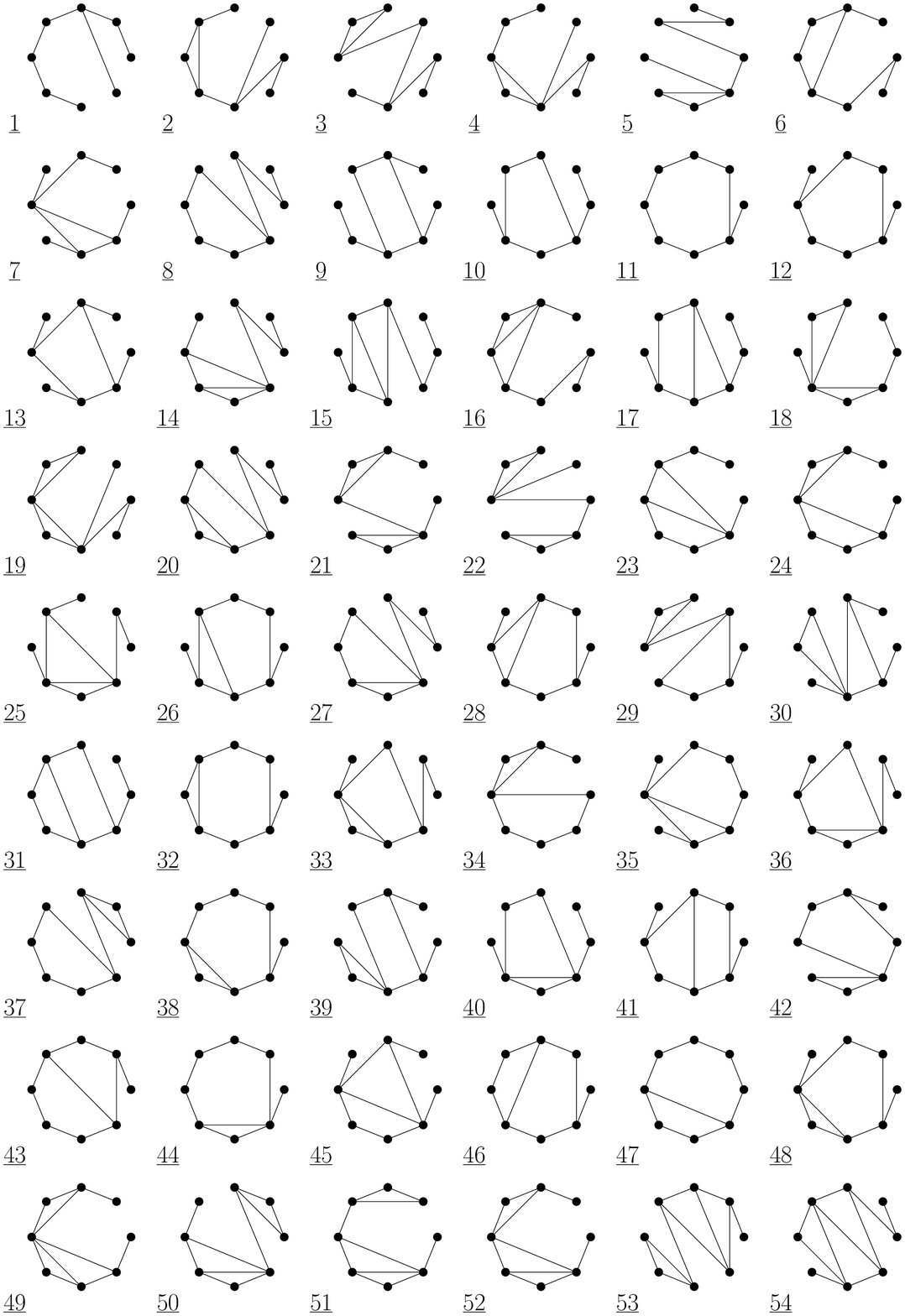}
	\end{center}
	\caption{Diagrams in $\mathcal{X}(E_8)$.}
	\label{fig:dynkin-E8-first}
\end{figure}
\begin{figure}[H]
	\begin{center}
		\includegraphics[scale=\scalingconstant]{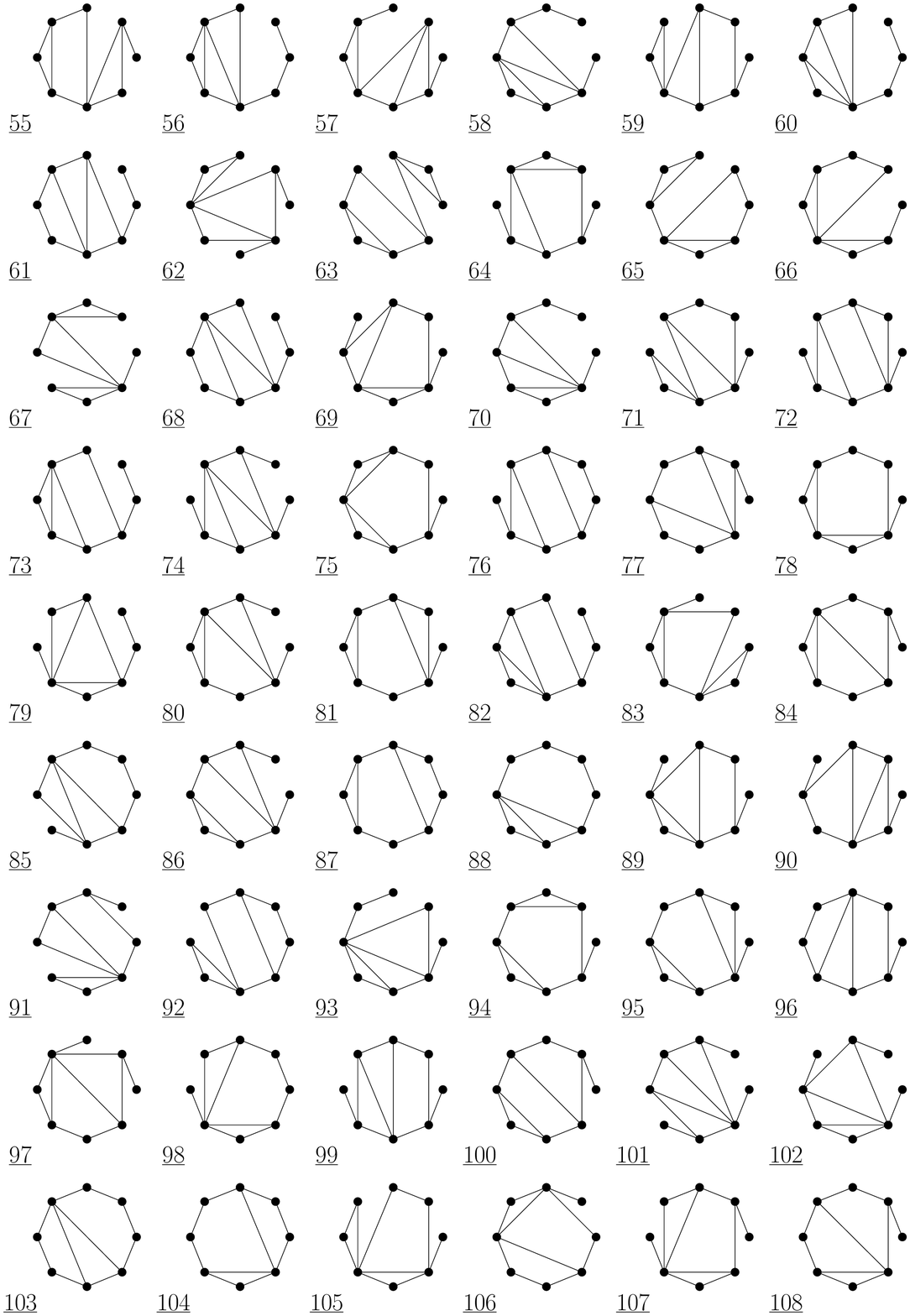}
	\end{center}
	\caption{Diagrams in $\mathcal{X}(E_8)$ (continued).}
\end{figure}
\begin{figure}[H]
	\begin{center}
		\includegraphics[scale=\scalingconstant]{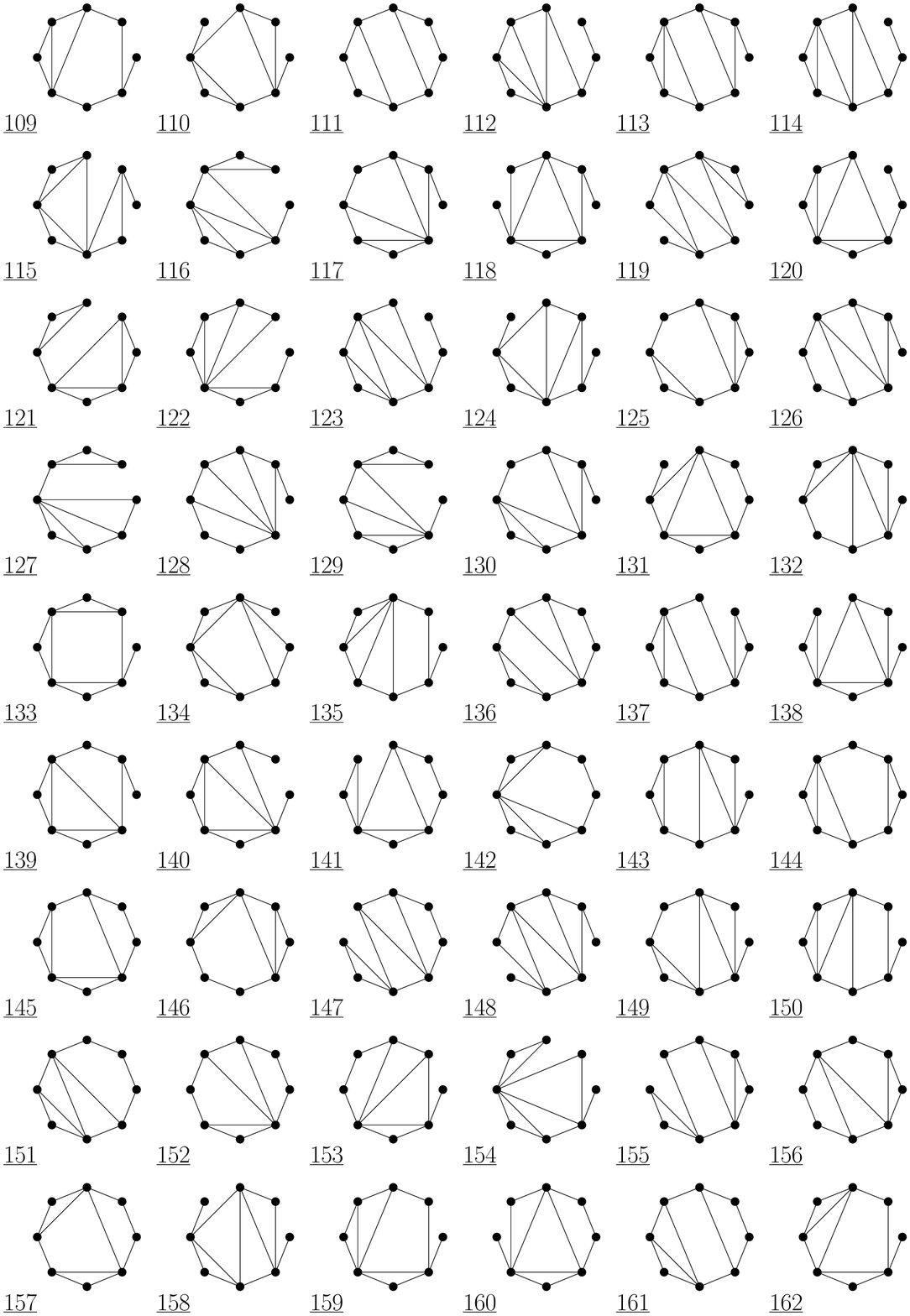}
	\end{center}
	\caption{Diagrams in $\mathcal{X}(E_8)$ (continued).}
\end{figure}
\begin{figure}[H]
	\begin{center}
		\includegraphics[scale=\scalingconstant]{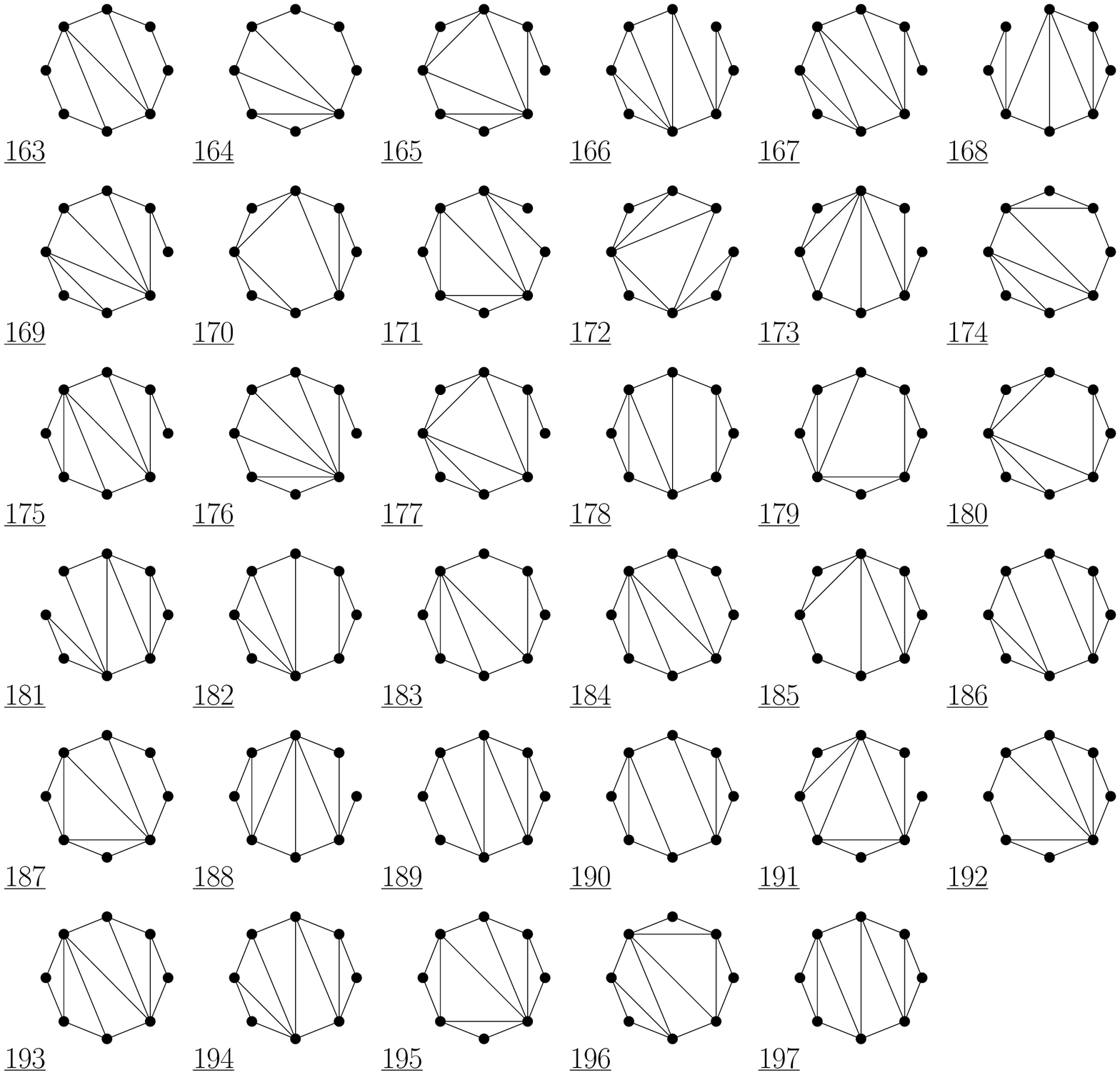}
		\caption{Diagrams in $\mathcal{X}(E_8)$ (continued).}
		\label{fig:dynkin-E8-last}
	 	\includegraphics[scale=\scalingconstant]{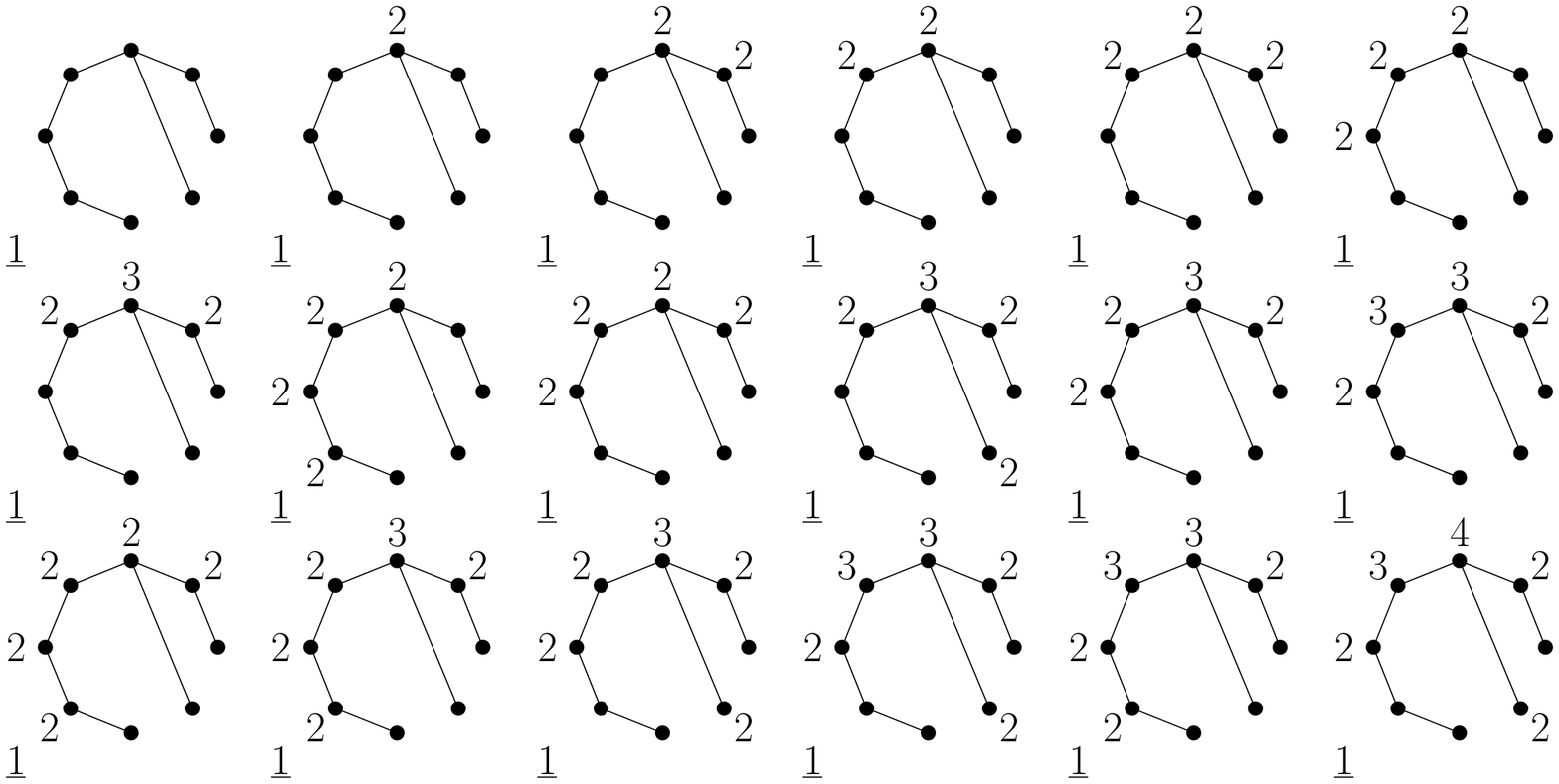}
		\caption{The set $\mathcal{W}(E_8)$ consists of the above weighted diagrams
			and all the elements of $\mathcal{W}(A_7)$, $\mathcal{W}(D_7)$,
			and $\mathcal{W}(E_7)$.}
		\label{fig:allowed_diagrams_E8-first}
	\end{center}
\end{figure}
\begin{figure}[H]
	\begin{center}
		\includegraphics[scale=\scalingconstant]{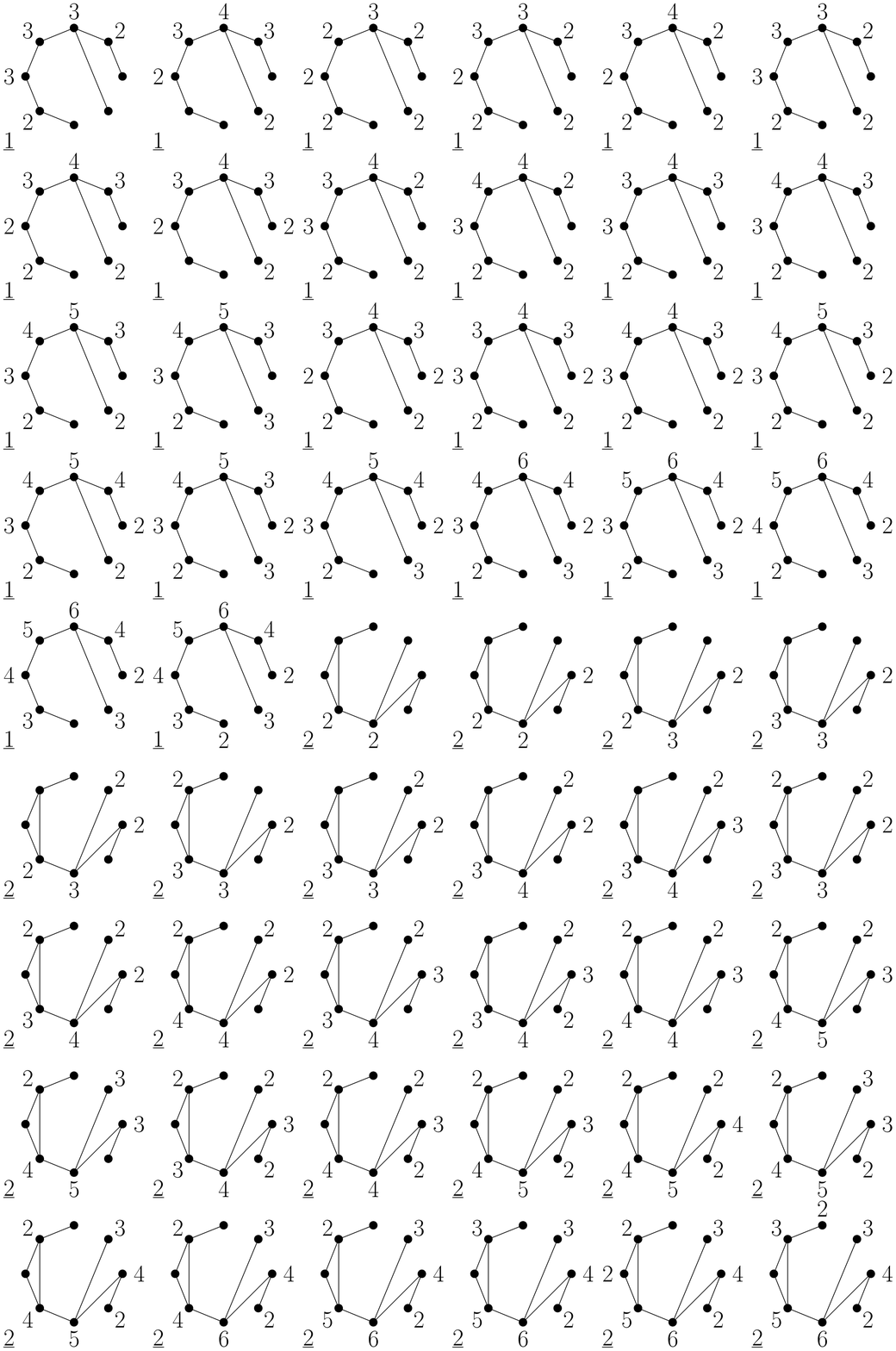}
	\end{center}
	\caption{The set $\mathcal{W}(E_8)$ (continued).}
\end{figure}
\begin{figure}[H]
	\begin{center}
 		\includegraphics[scale=\scalingconstant]{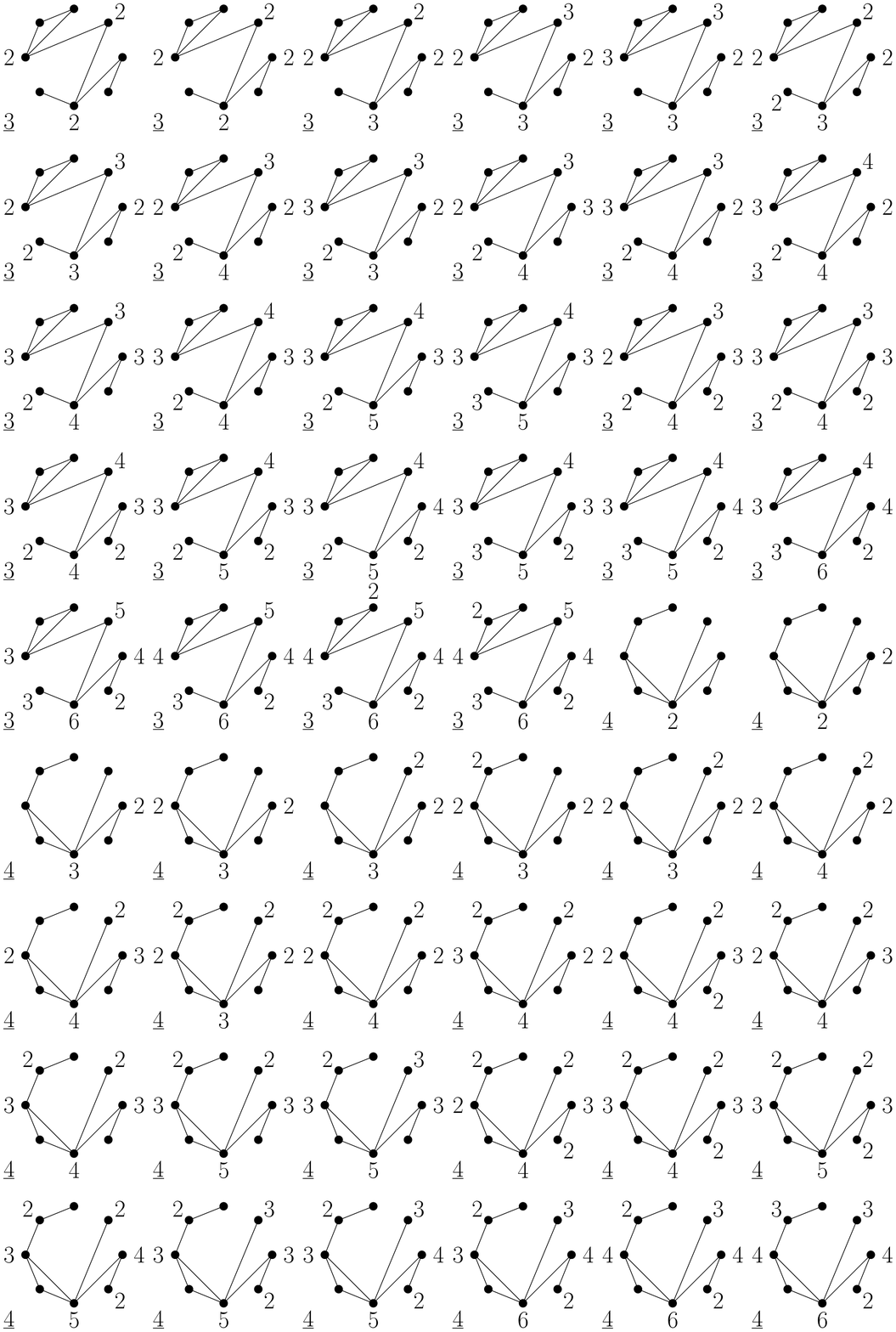}
	\end{center}
	\caption{The set $\mathcal{W}(E_8)$ (continued).}
\end{figure}
\begin{figure}[H]
	\begin{center}
 		\includegraphics[scale=\scalingconstant]{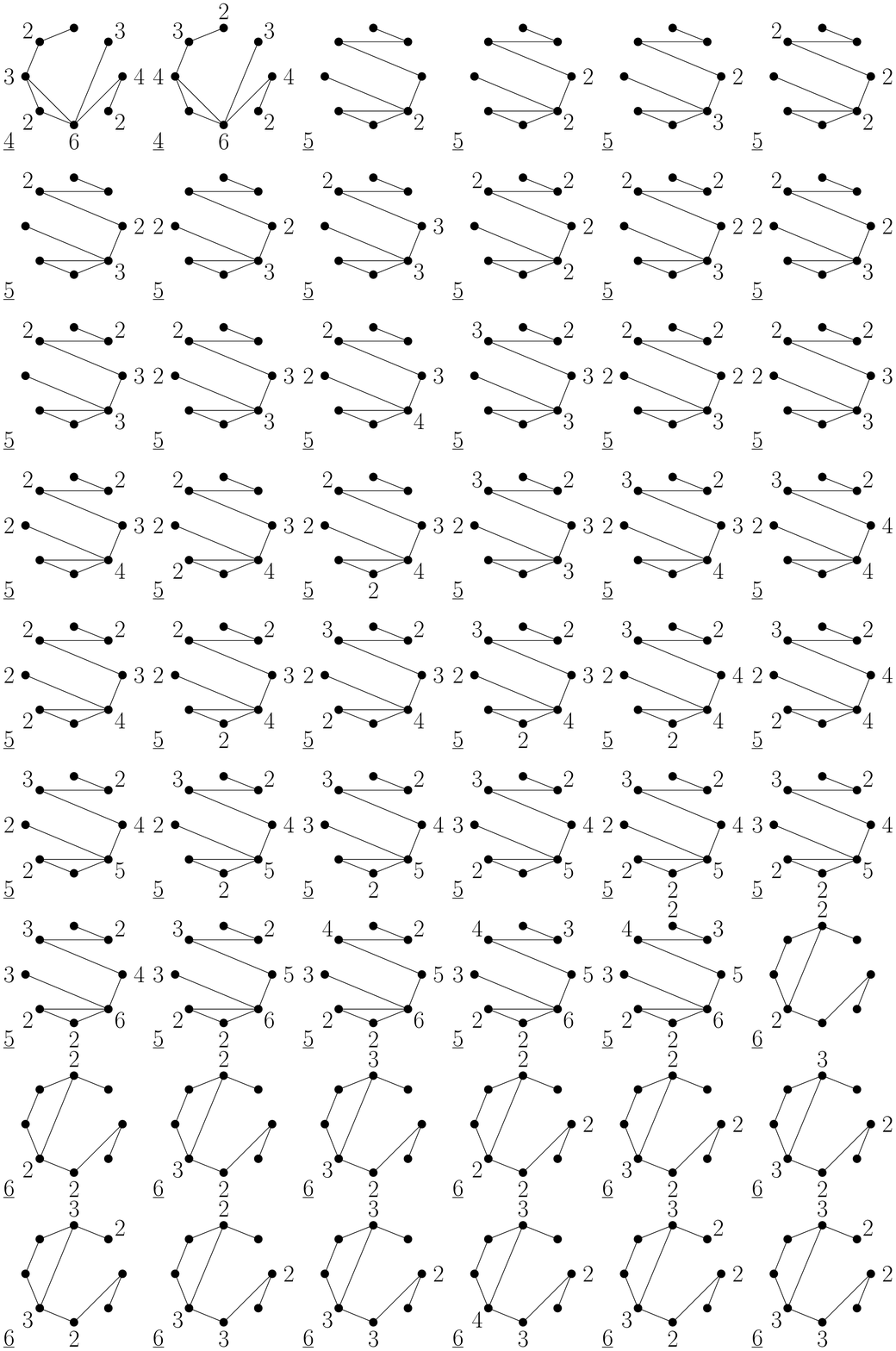}
	\end{center}
	\caption{The set $\mathcal{W}(E_8)$ (continued).}
\end{figure}
\begin{figure}[H]
	\begin{center}
 		\includegraphics[scale=\scalingconstant]{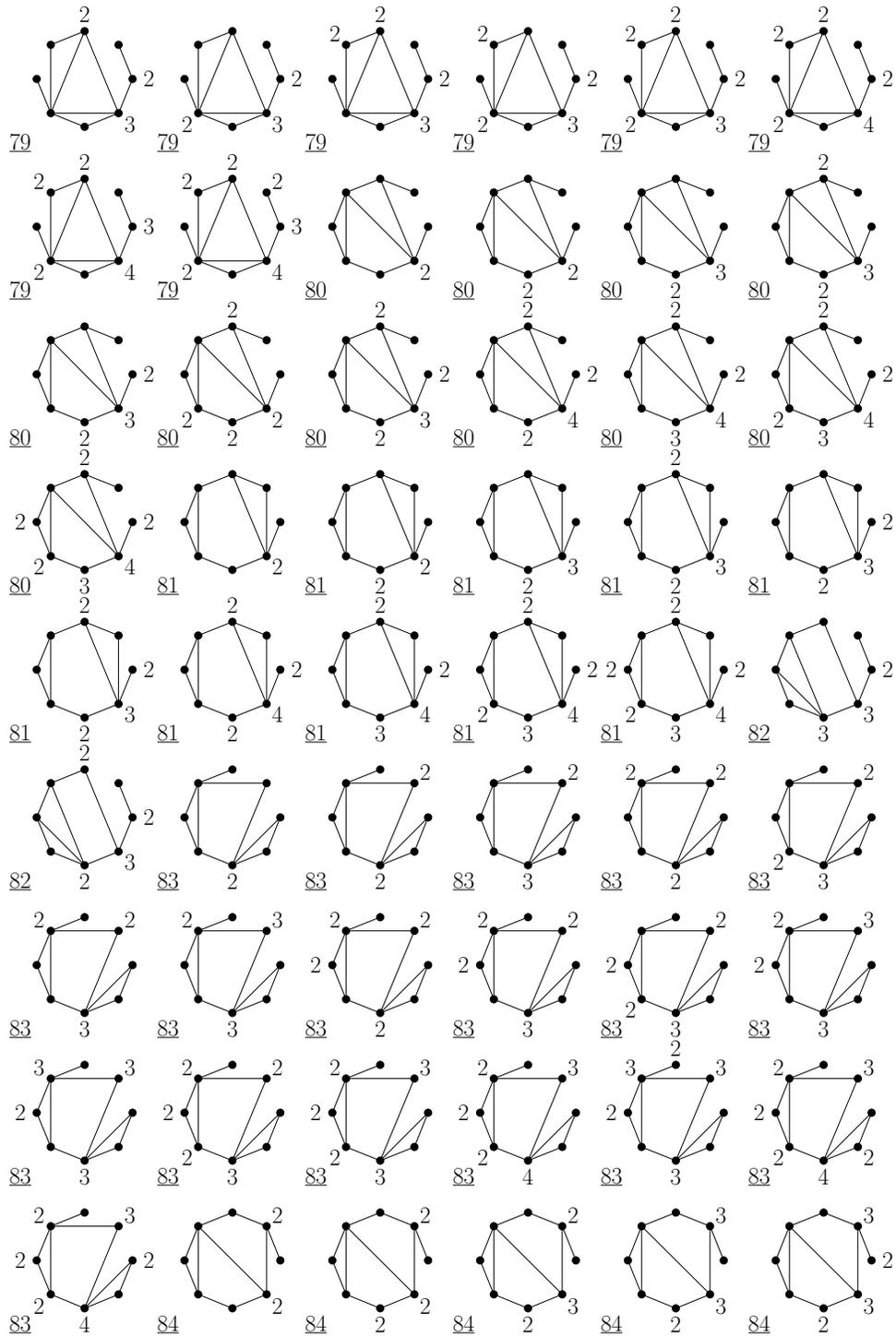}
	\end{center}
	\caption{The set $\mathcal{W}(E_8)$ (continued).}
\end{figure}
\begin{figure}[H]
	\begin{center}
		\includegraphics[scale=\scalingconstant]{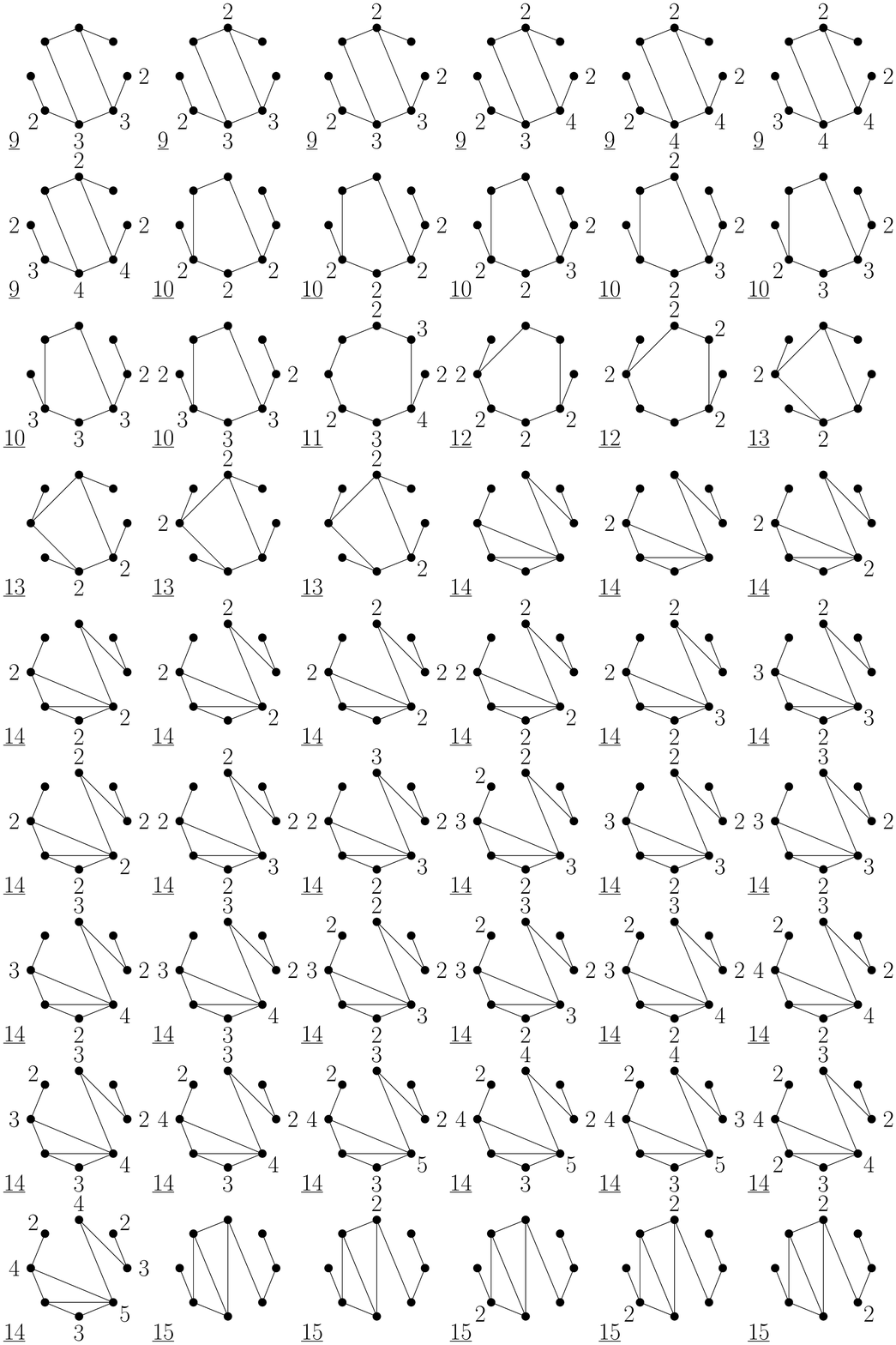}
	\end{center}
	\caption{The set $\mathcal{W}(E_8)$ (continued).}
\end{figure}
\begin{figure}[H]
	\begin{center}
		\includegraphics[scale=\scalingconstant]{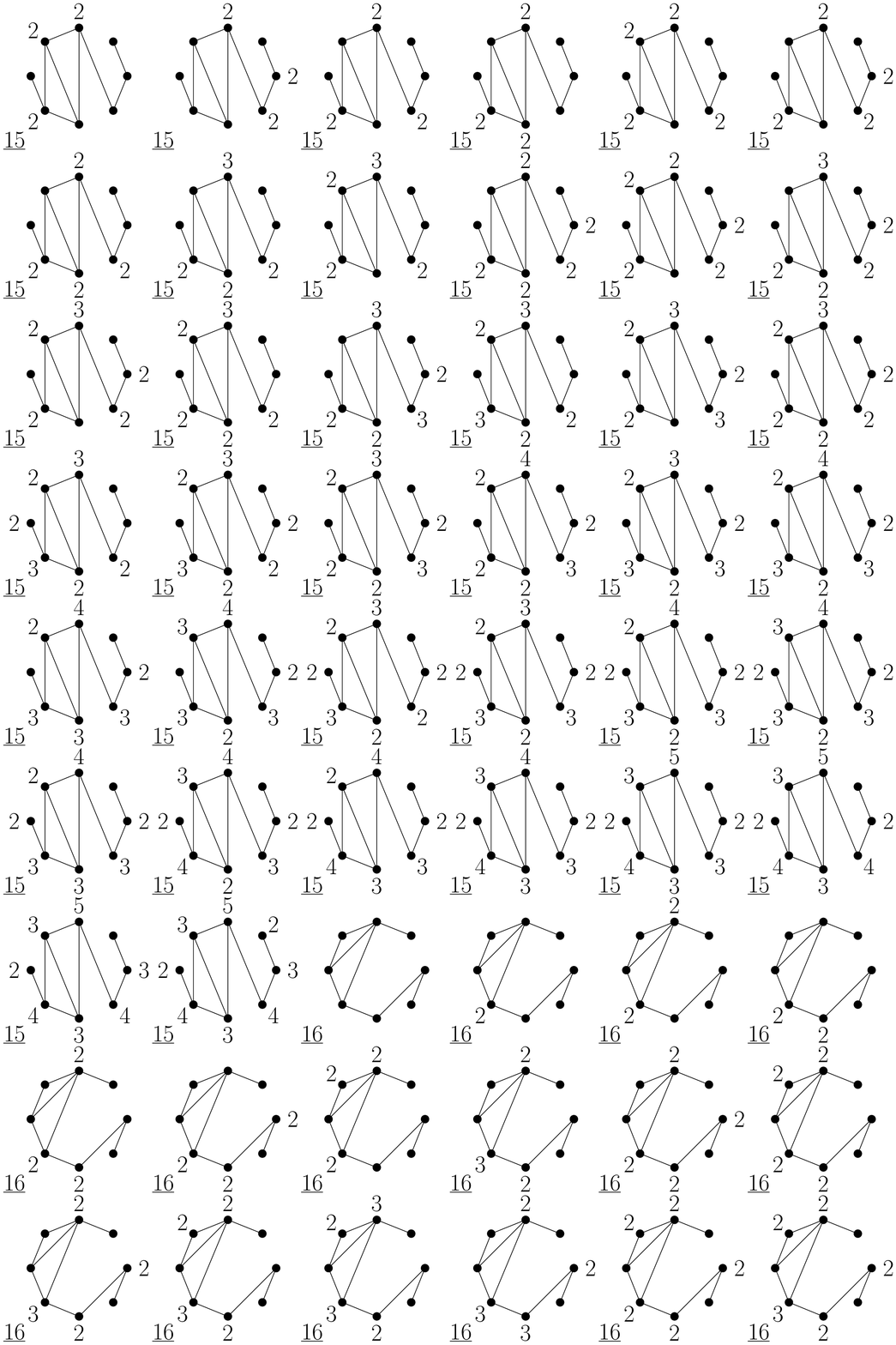}
	\end{center}
	\caption{The set $\mathcal{W}(E_8)$ (continued).}
\end{figure}
\begin{figure}[H]
	\begin{center}
		\includegraphics[scale=\scalingconstant]{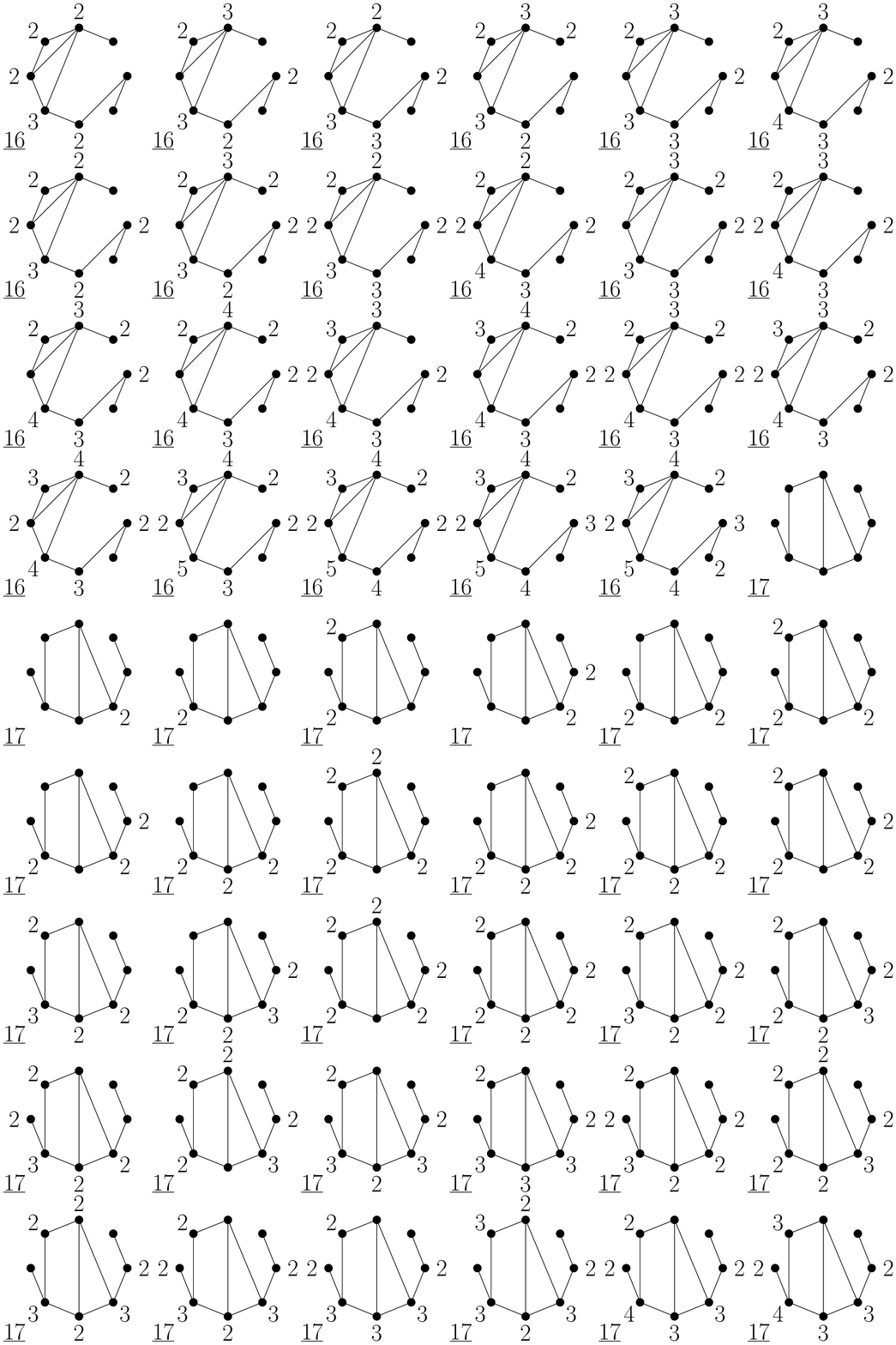}
	\end{center}
	\caption{The set $\mathcal{W}(E_8)$ (continued).}
\end{figure}
\begin{figure}[H]
	\begin{center}
		\includegraphics[scale=\scalingconstant]{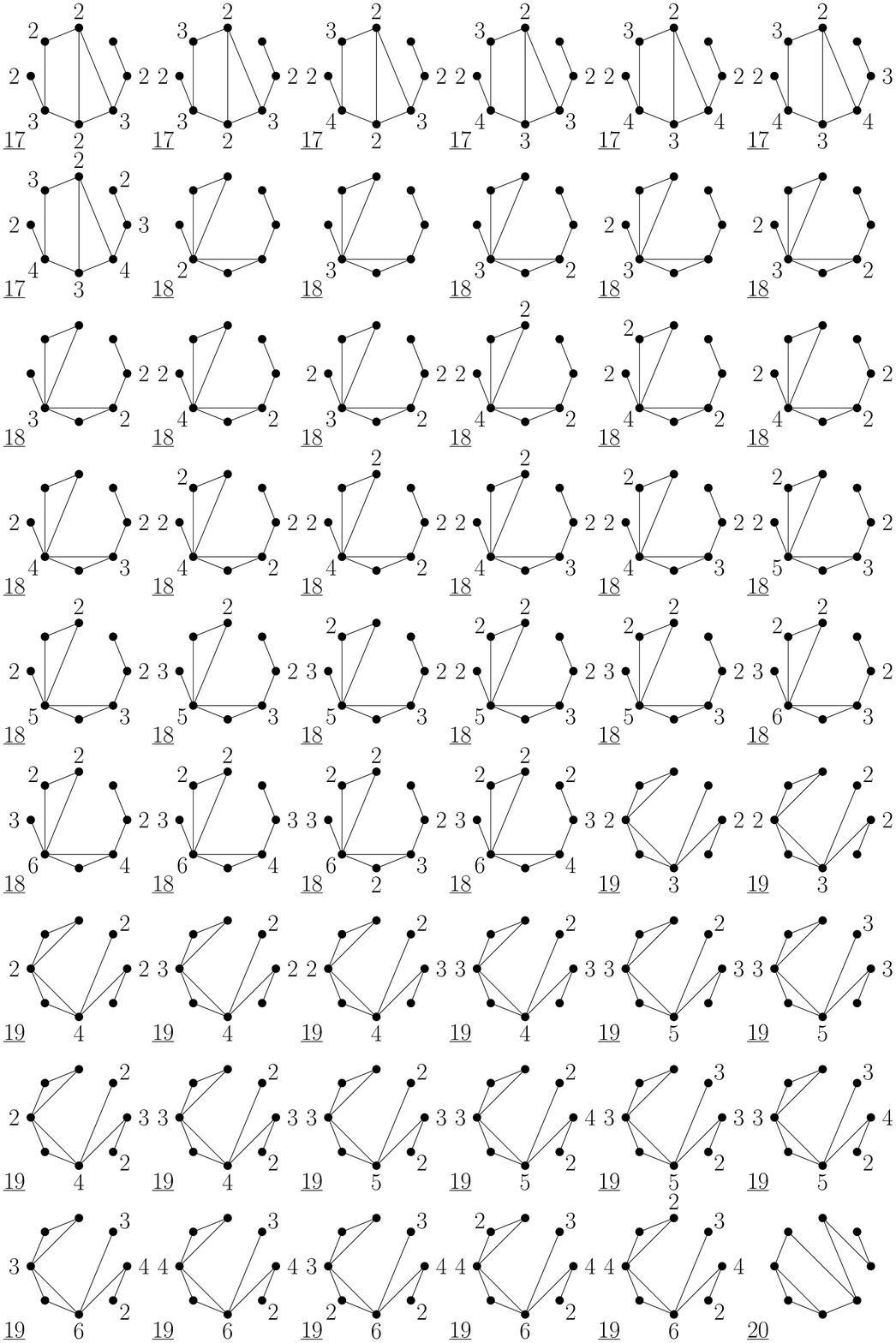}
	\end{center}
	\caption{The set $\mathcal{W}(E_8)$ (continued).}
\end{figure}
\begin{figure}[H]
	\begin{center}
		\includegraphics[scale=\scalingconstant]{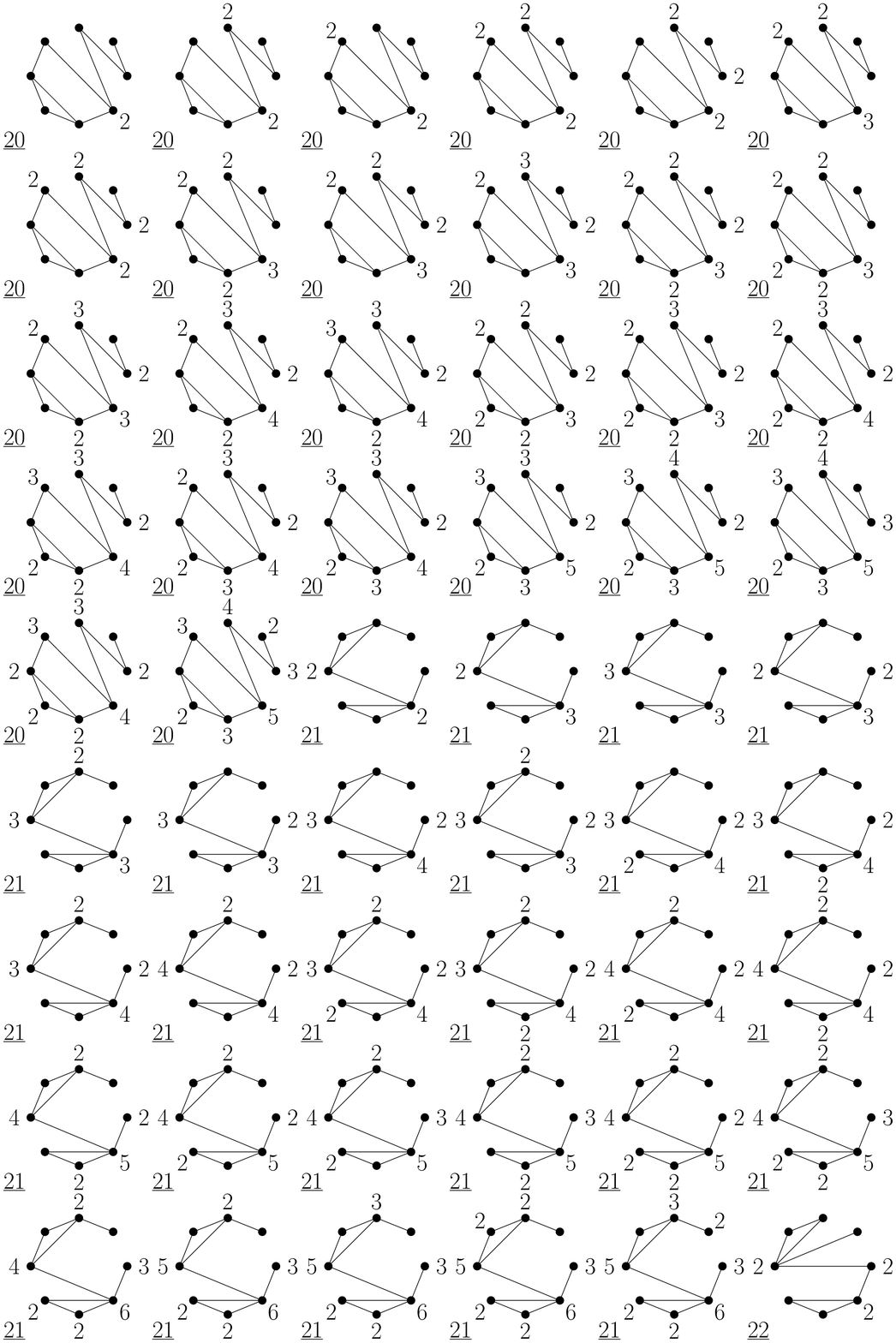}
	\end{center}
	\caption{The set $\mathcal{W}(E_8)$ (continued).}
\end{figure}
\begin{figure}[H]
	\begin{center}
		\includegraphics[scale=\scalingconstant]{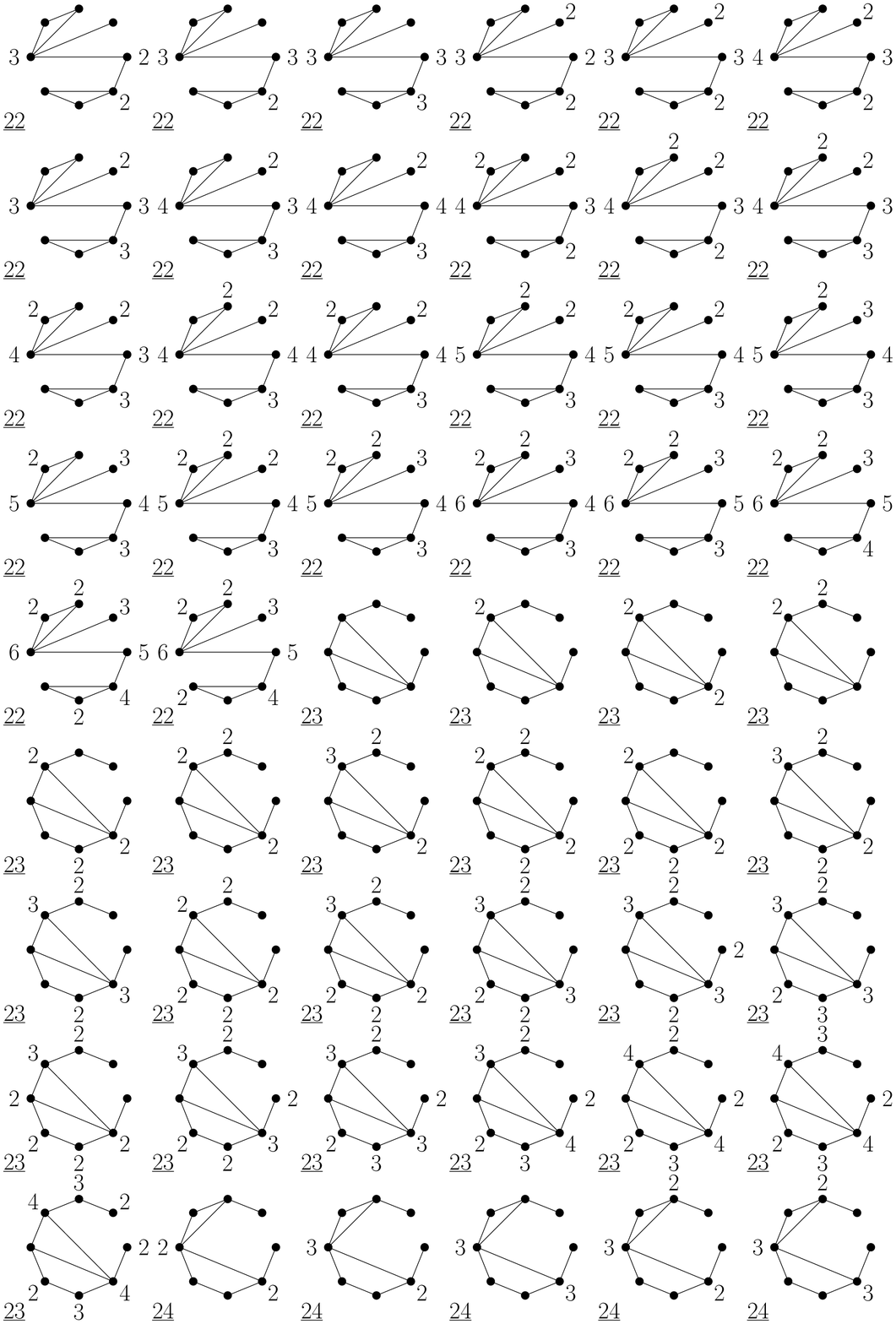}
	\end{center}
	\caption{The set $\mathcal{W}(E_8)$ (continued).}
\end{figure}
\begin{figure}[H]
	\begin{center}
		\includegraphics[scale=\scalingconstant]{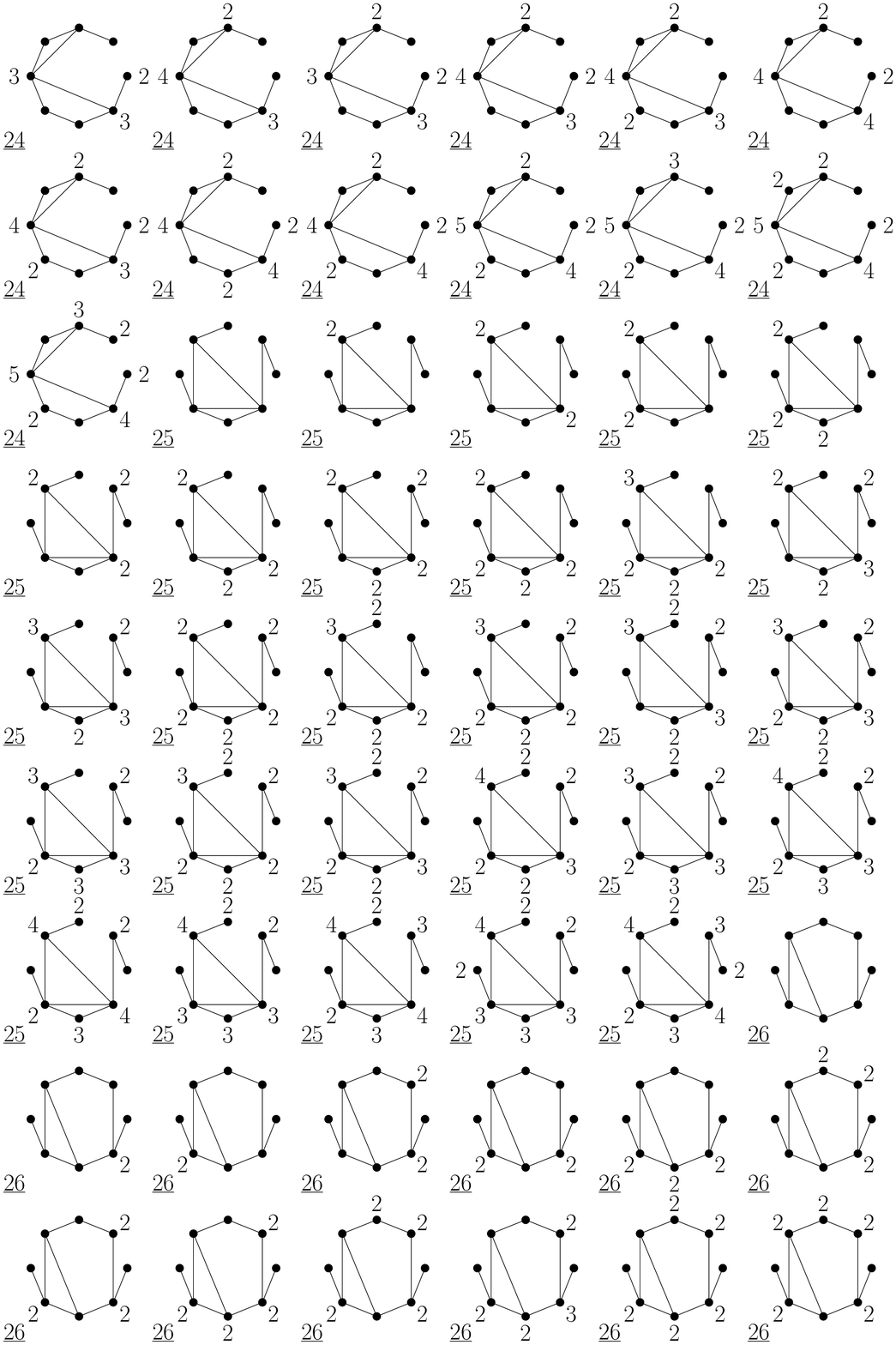}
	\end{center}
	\caption{The set $\mathcal{W}(E_8)$ (continued).}
\end{figure}
\begin{figure}[H]
	\begin{center}
		\includegraphics[scale=\scalingconstant]{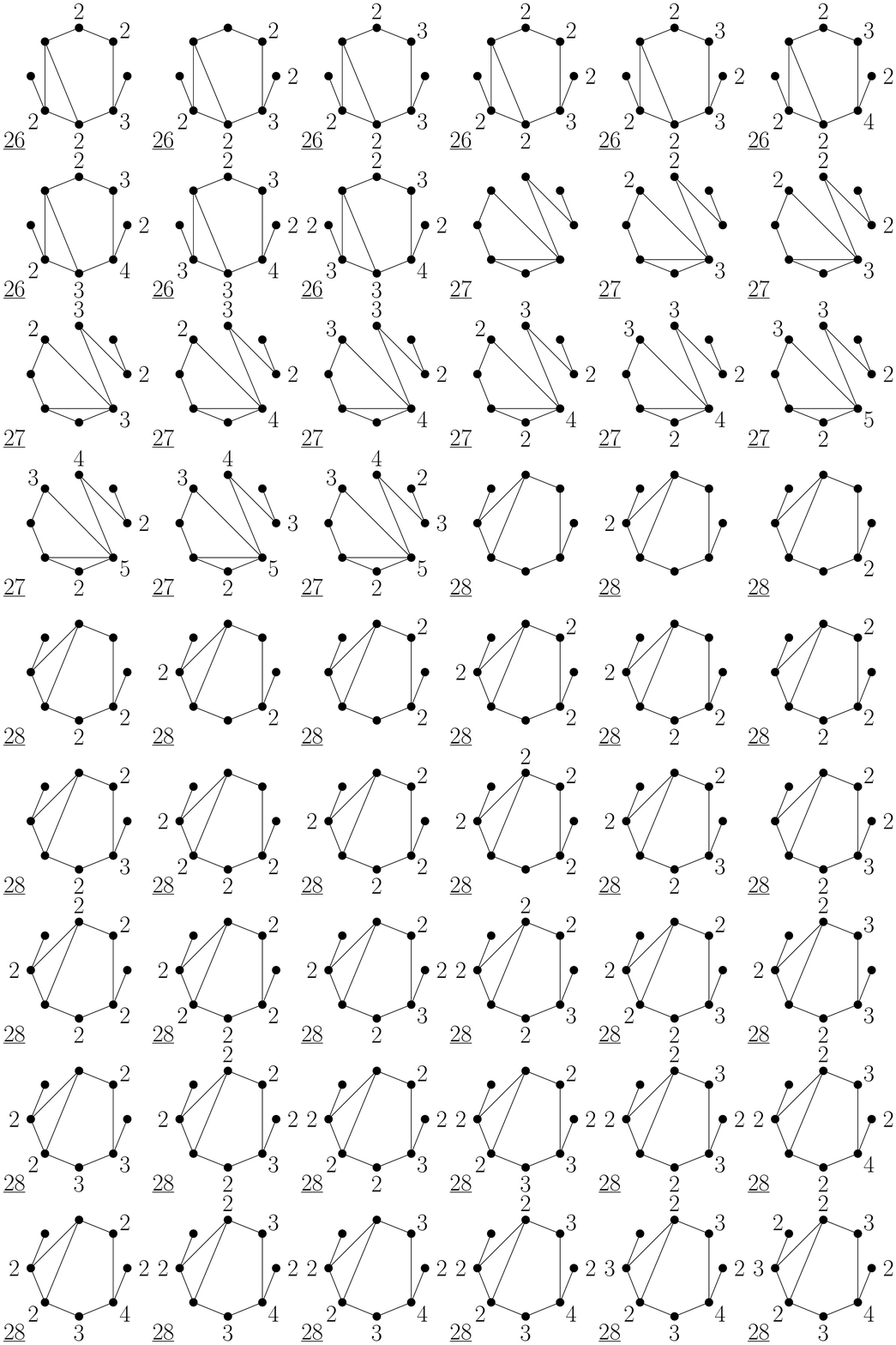}
	\end{center}
	\caption{The set $\mathcal{W}(E_8)$ (continued).}
\end{figure}
\begin{figure}[H]
	\begin{center}
		\includegraphics[scale=\scalingconstant]{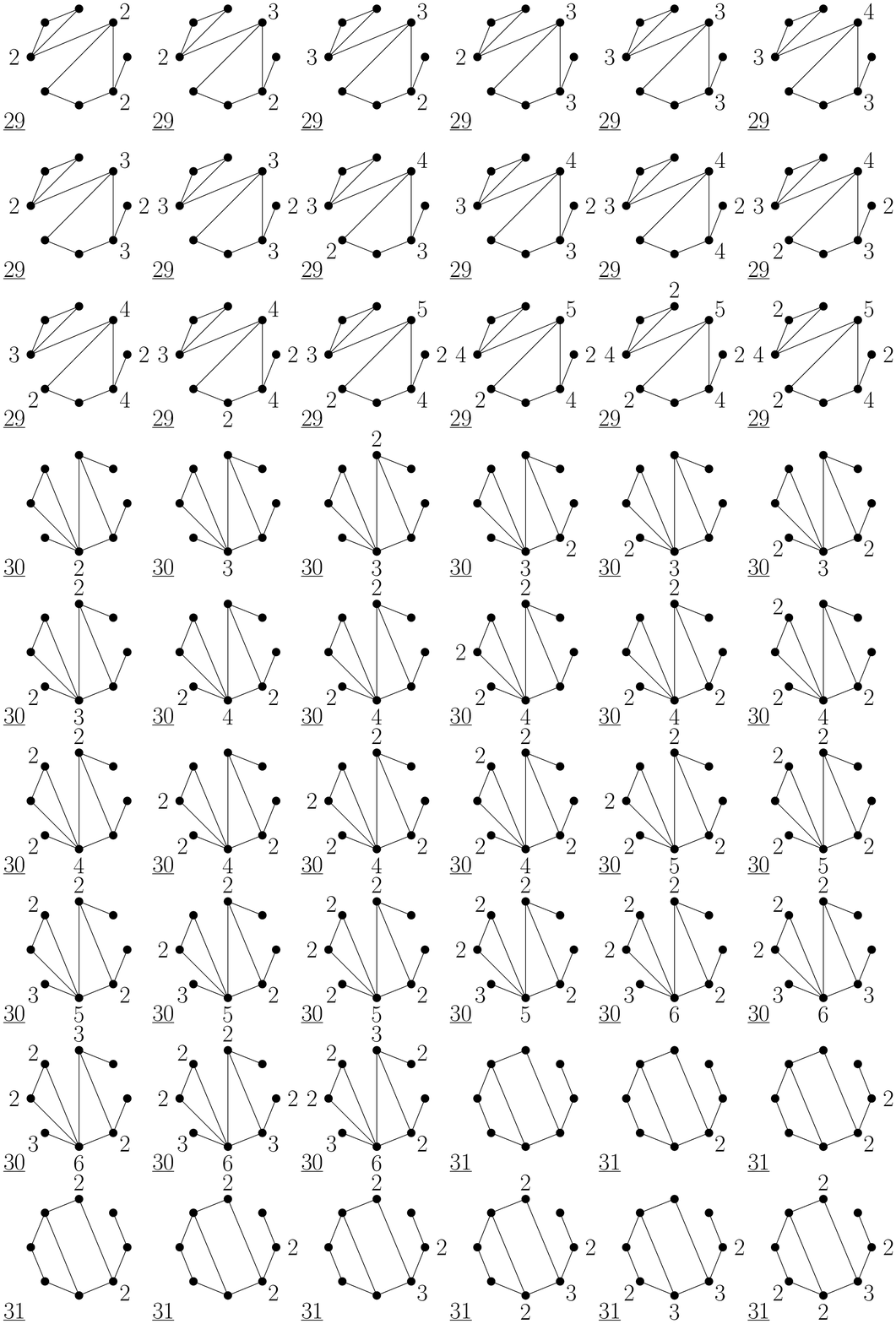}
	\end{center}
	\caption{The set $\mathcal{W}(E_8)$ (continued).}
\end{figure}
\begin{figure}[H]
	\begin{center}
		\includegraphics[scale=\scalingconstant]{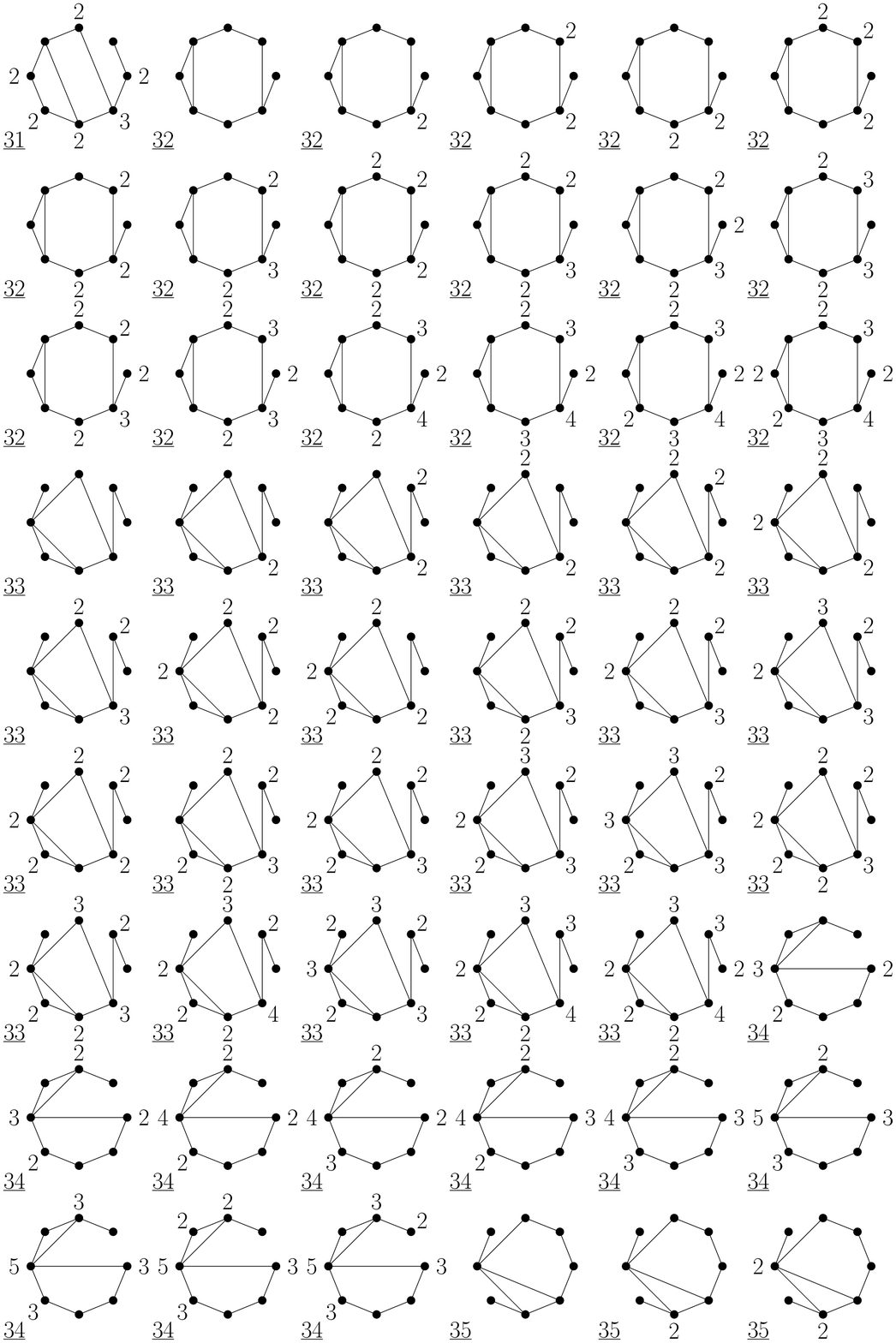}
	\end{center}
	\caption{The set $\mathcal{W}(E_8)$ (continued).}
\end{figure}
\begin{figure}[H]
	\begin{center}
		\includegraphics[scale=\scalingconstant]{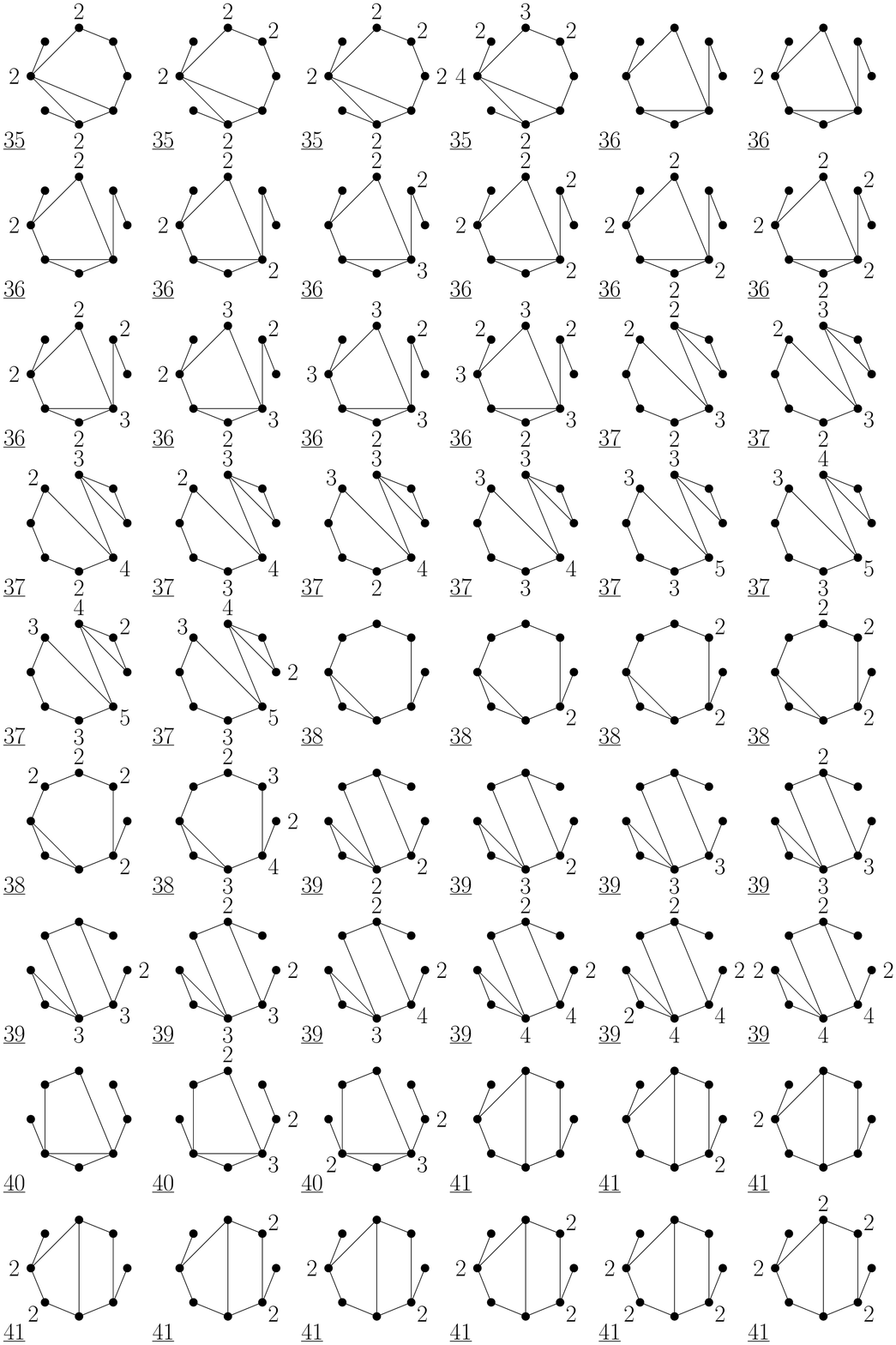}
	\end{center}
	\caption{The set $\mathcal{W}(E_8)$ (continued).}
\end{figure}
\begin{figure}[H]
	\begin{center}
		\includegraphics[scale=\scalingconstant]{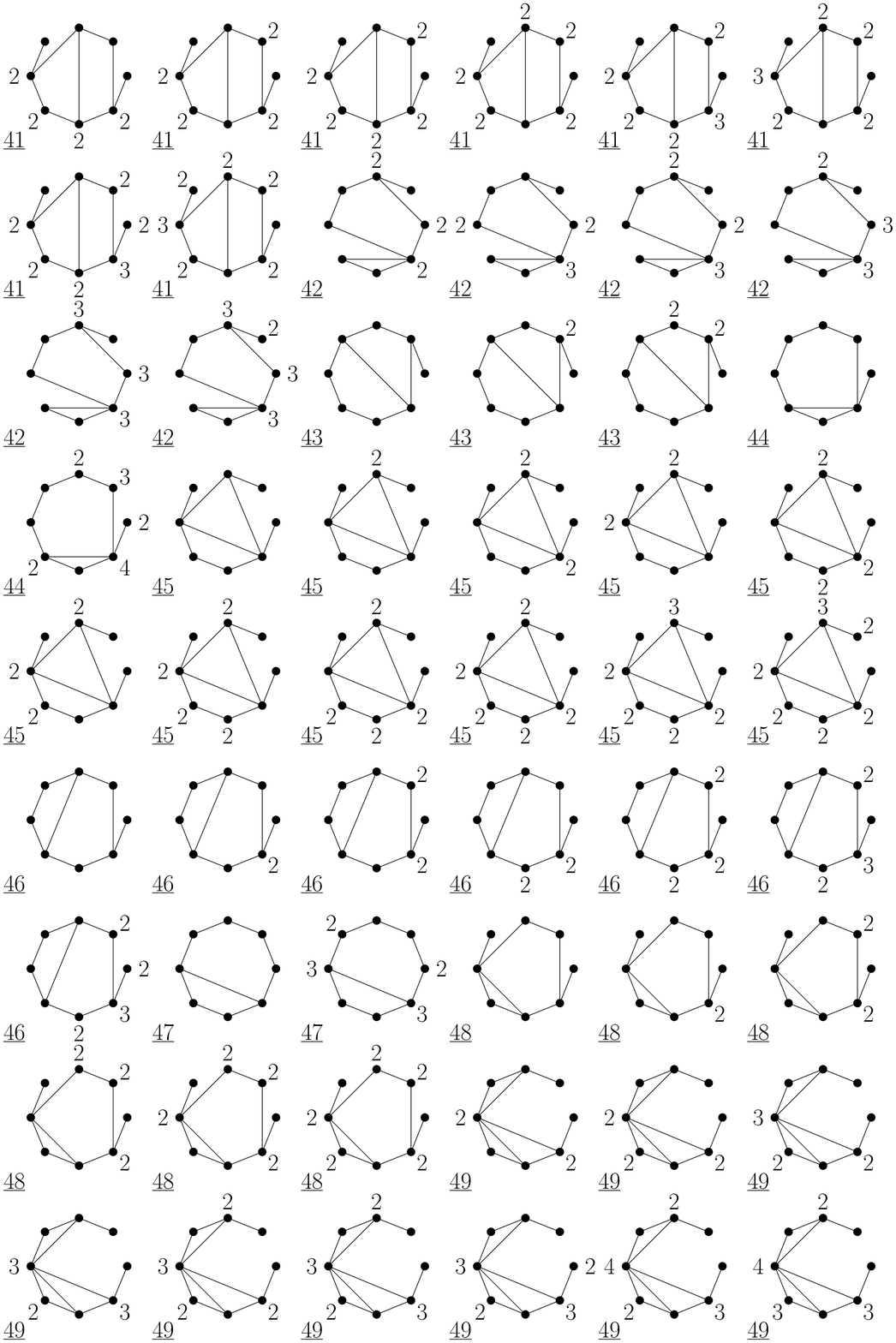}
	\end{center}
	\caption{The set $\mathcal{W}(E_8)$ (continued).}
\end{figure}
\begin{figure}[H]
	\begin{center}
		\includegraphics[scale=\scalingconstant]{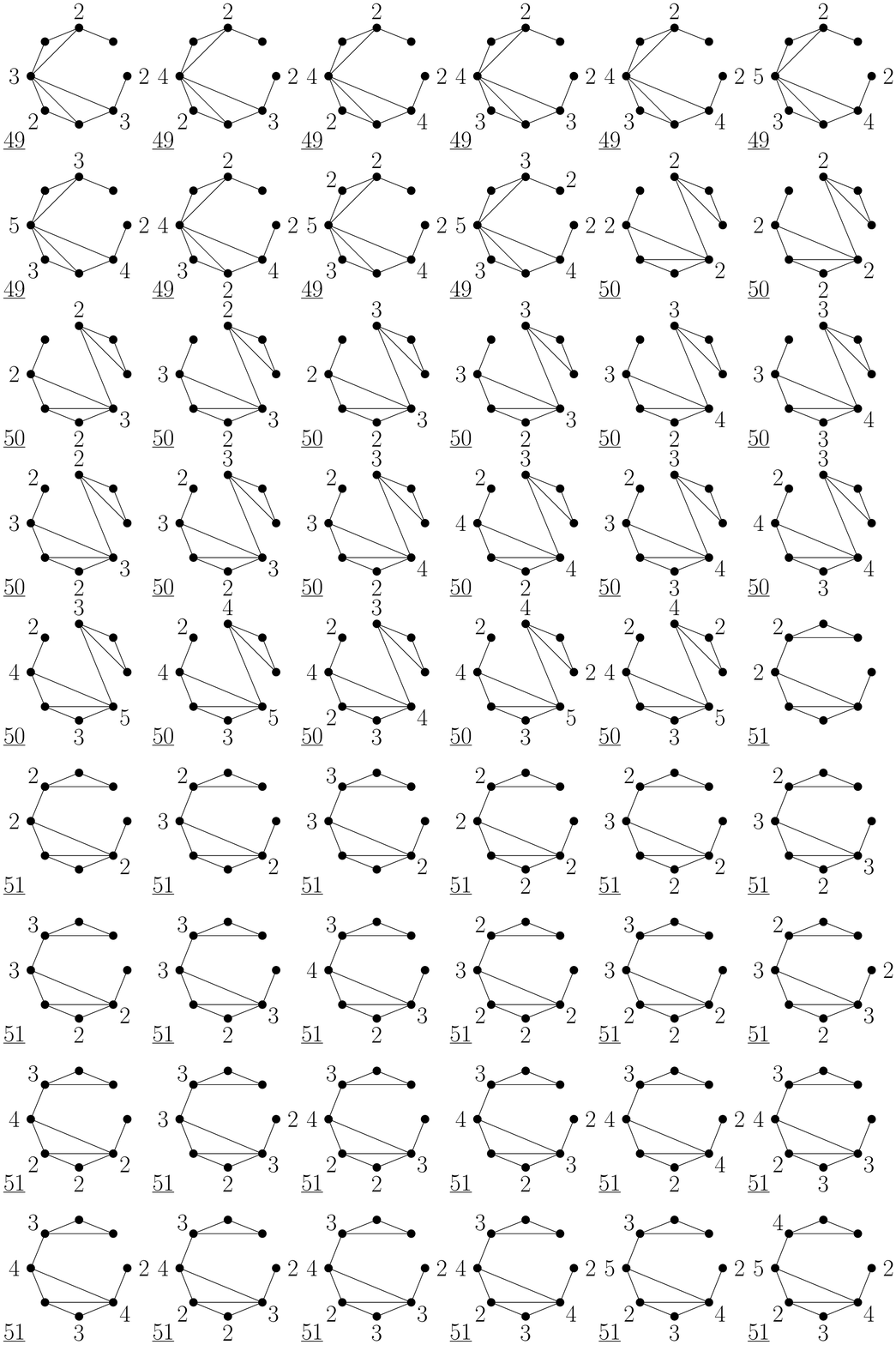}
	\end{center}
	\caption{The set $\mathcal{W}(E_8)$ (continued).}
\end{figure}
\begin{figure}[H]
	\begin{center}
		\includegraphics[scale=\scalingconstant]{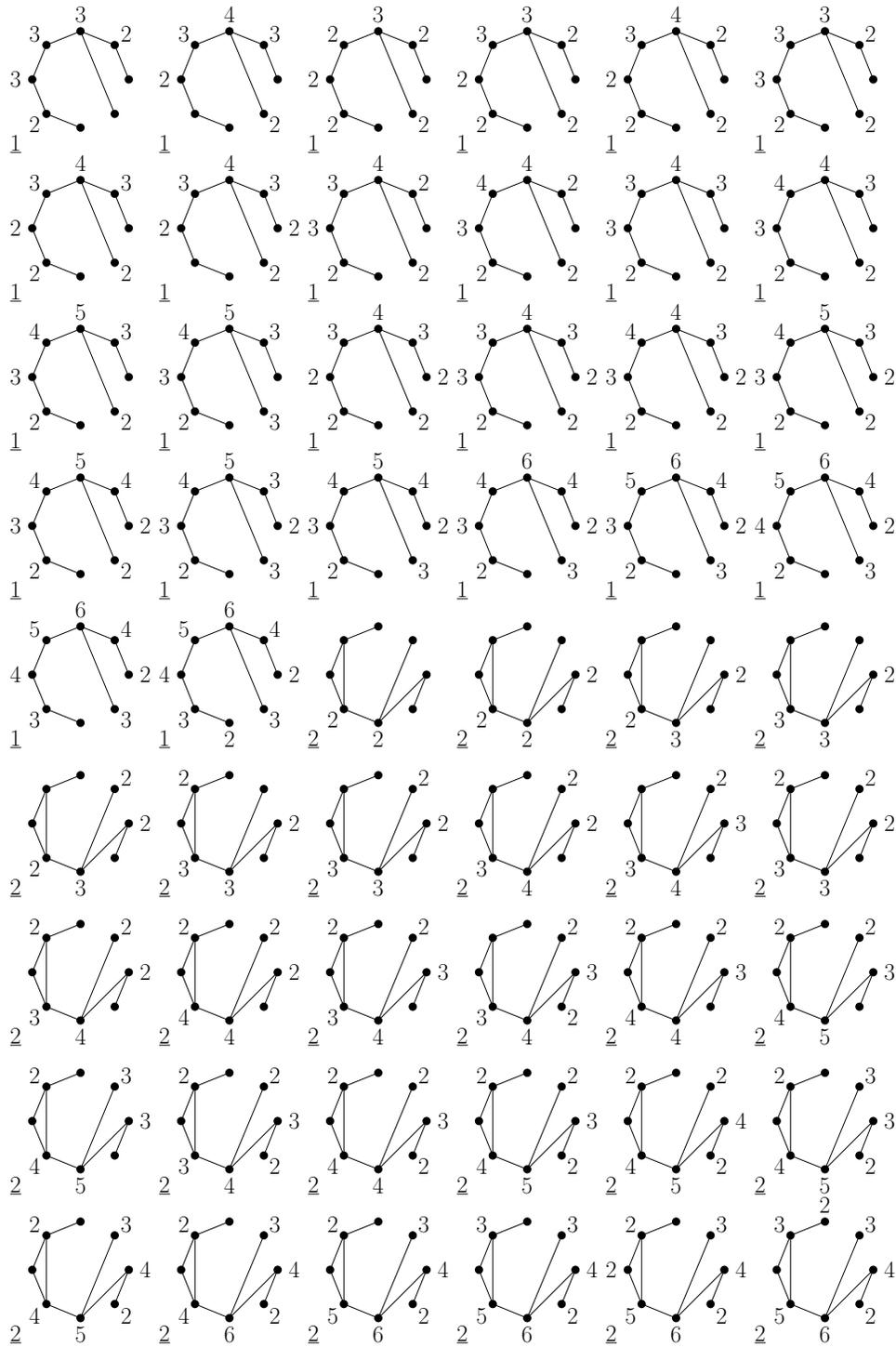}
	\end{center}
	\caption{The set $\mathcal{W}(E_8)$ (continued).}
\end{figure}
\begin{figure}[H]
	\begin{center}
		\includegraphics[scale=\scalingconstant]{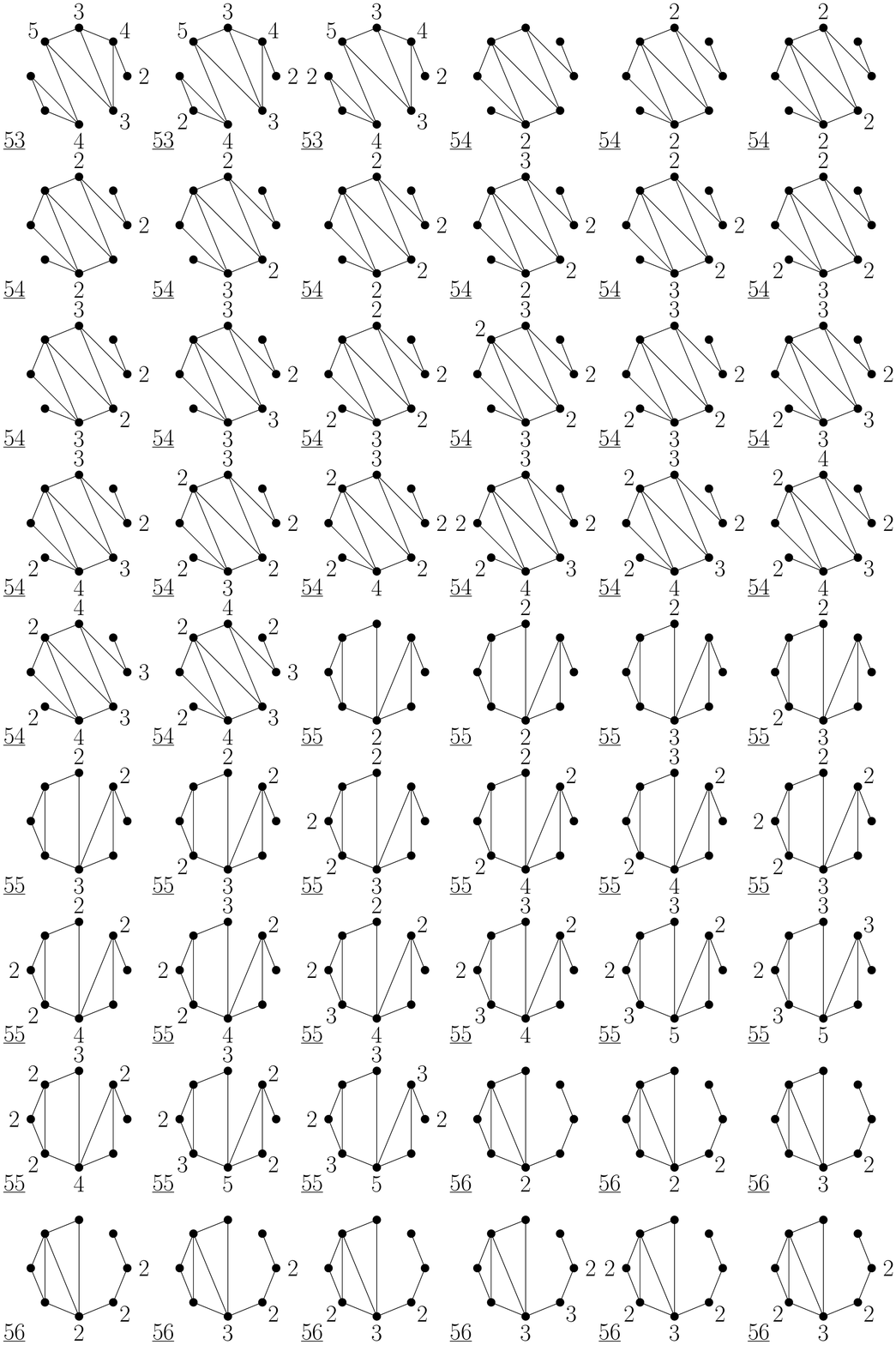}
	\end{center}
	\caption{The set $\mathcal{W}(E_8)$ (continued).}
\end{figure}
\begin{figure}[H]
	\begin{center}
		\includegraphics[scale=\scalingconstant]{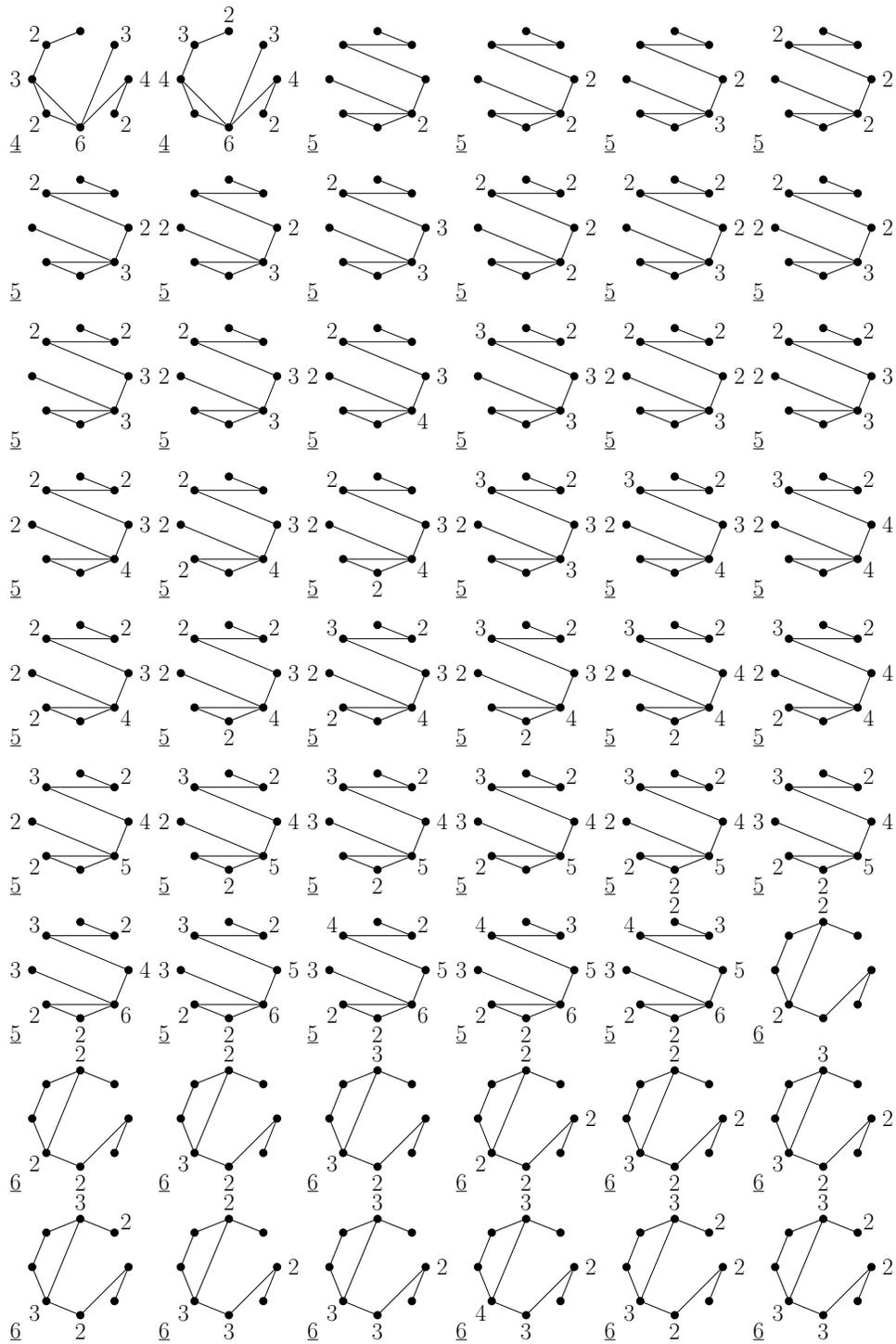}
	\end{center}
	\caption{The set $\mathcal{W}(E_8)$ (continued).}
\end{figure}
\begin{figure}[H]
	\begin{center}
		\includegraphics[scale=\scalingconstant]{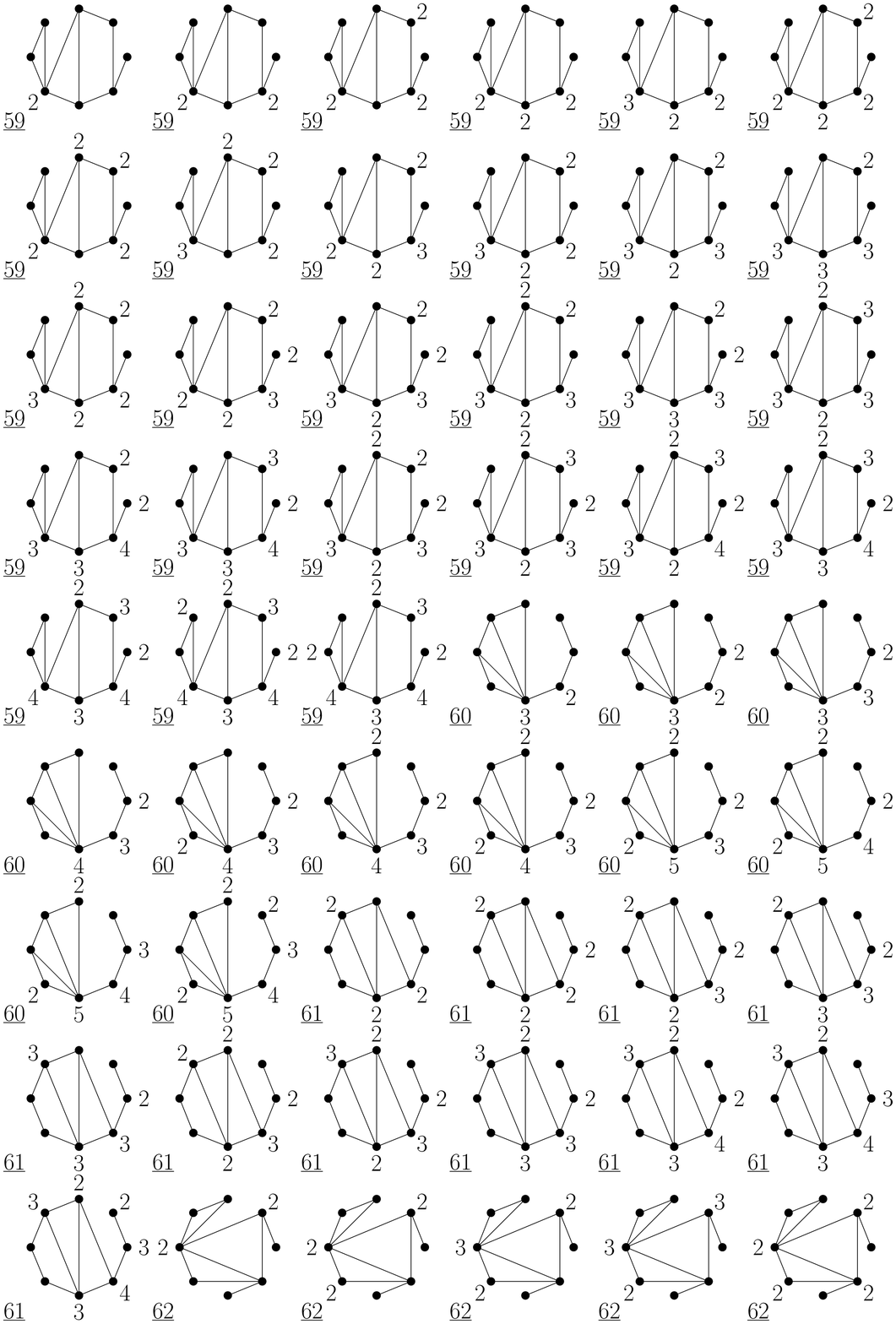}
	\end{center}
	\caption{The set $\mathcal{W}(E_8)$ (continued).}
\end{figure}
\begin{figure}[H]
	\begin{center}
		\includegraphics[scale=\scalingconstant]{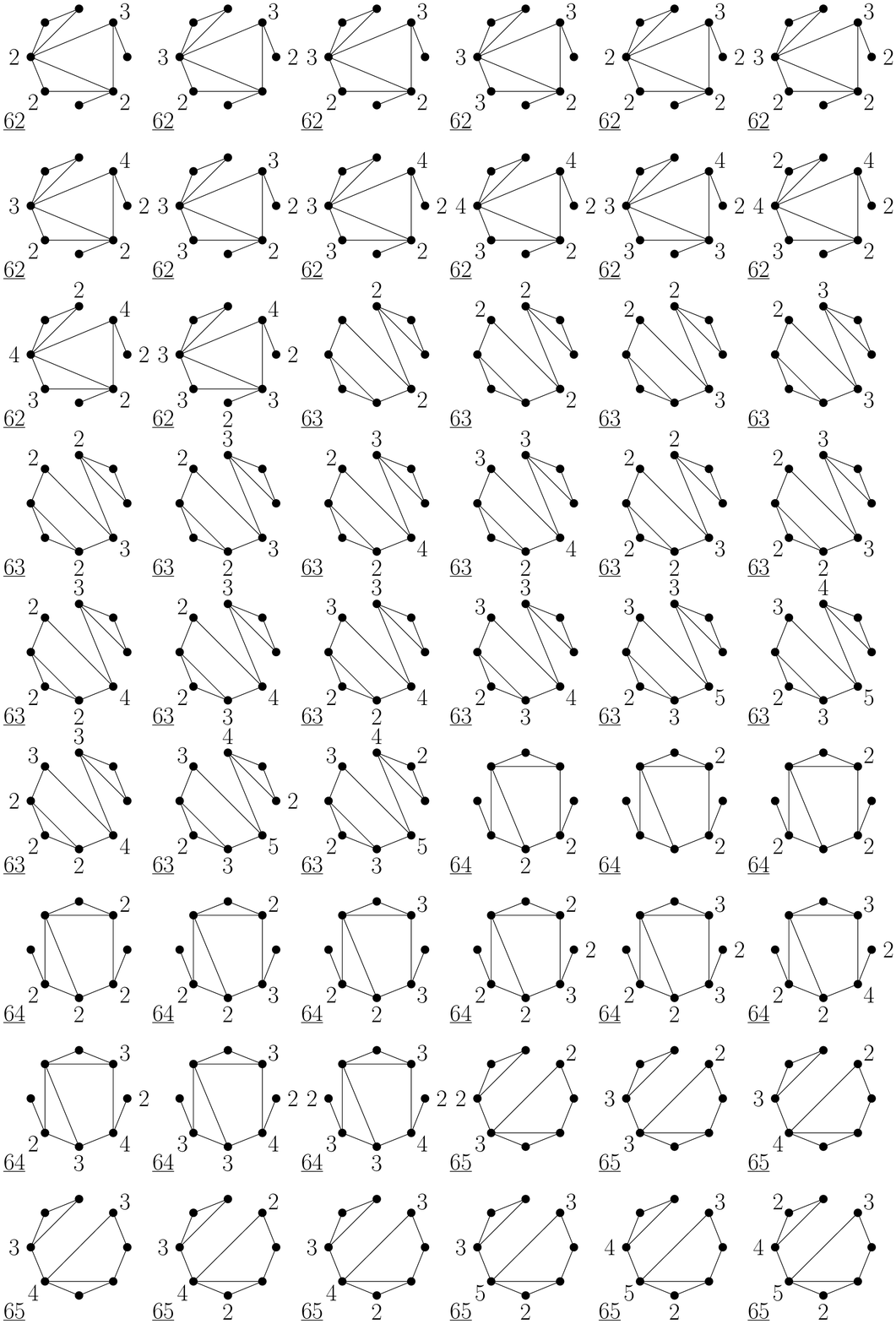}
	\end{center}
	\caption{The set $\mathcal{W}(E_8)$ (continued).}
\end{figure}
\begin{figure}[H]
	\begin{center}
		\includegraphics[scale=\scalingconstant]{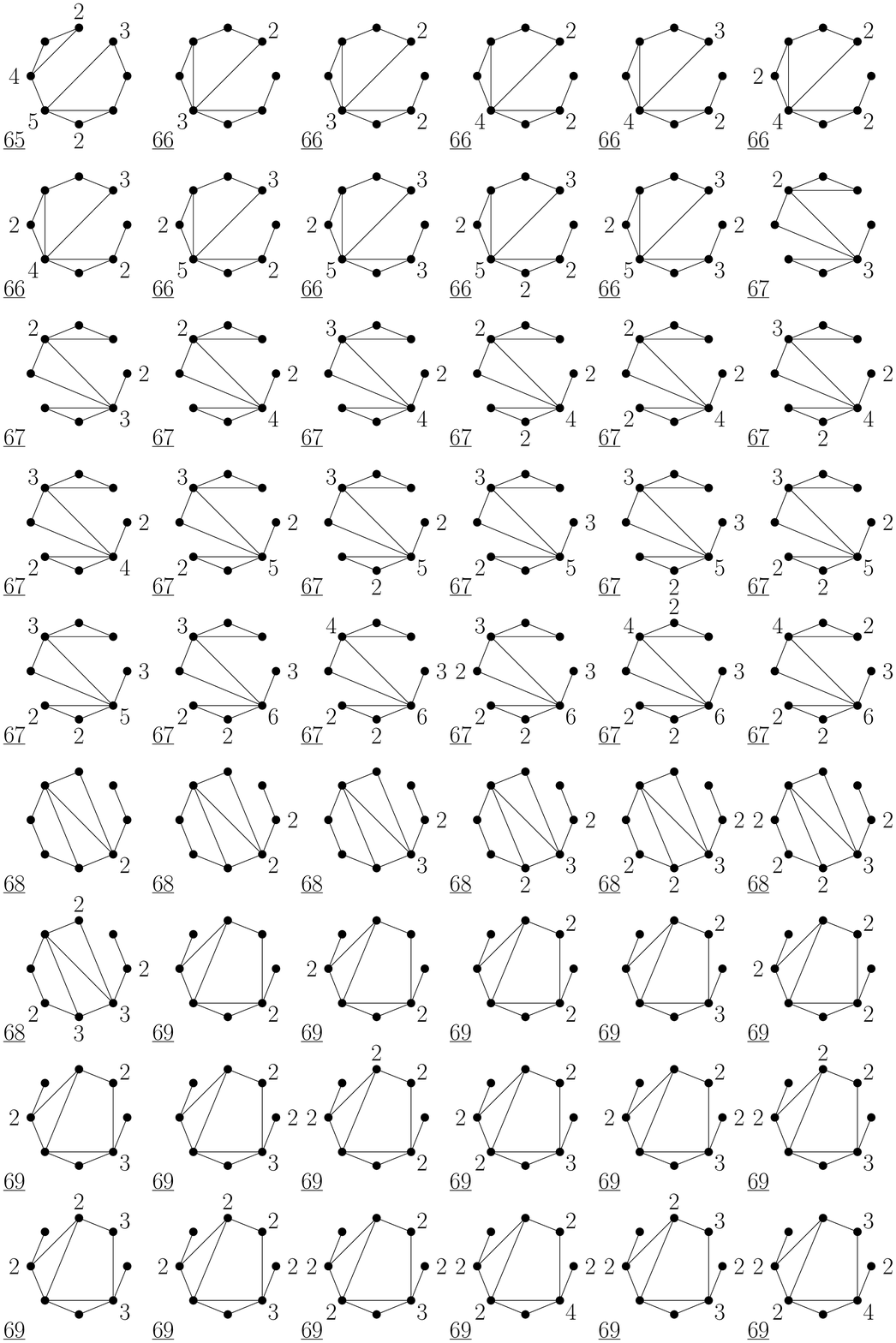}
	\end{center}
	\caption{The set $\mathcal{W}(E_8)$ (continued).}
\end{figure}
\begin{figure}[H]
	\begin{center}
		\includegraphics[scale=\scalingconstant]{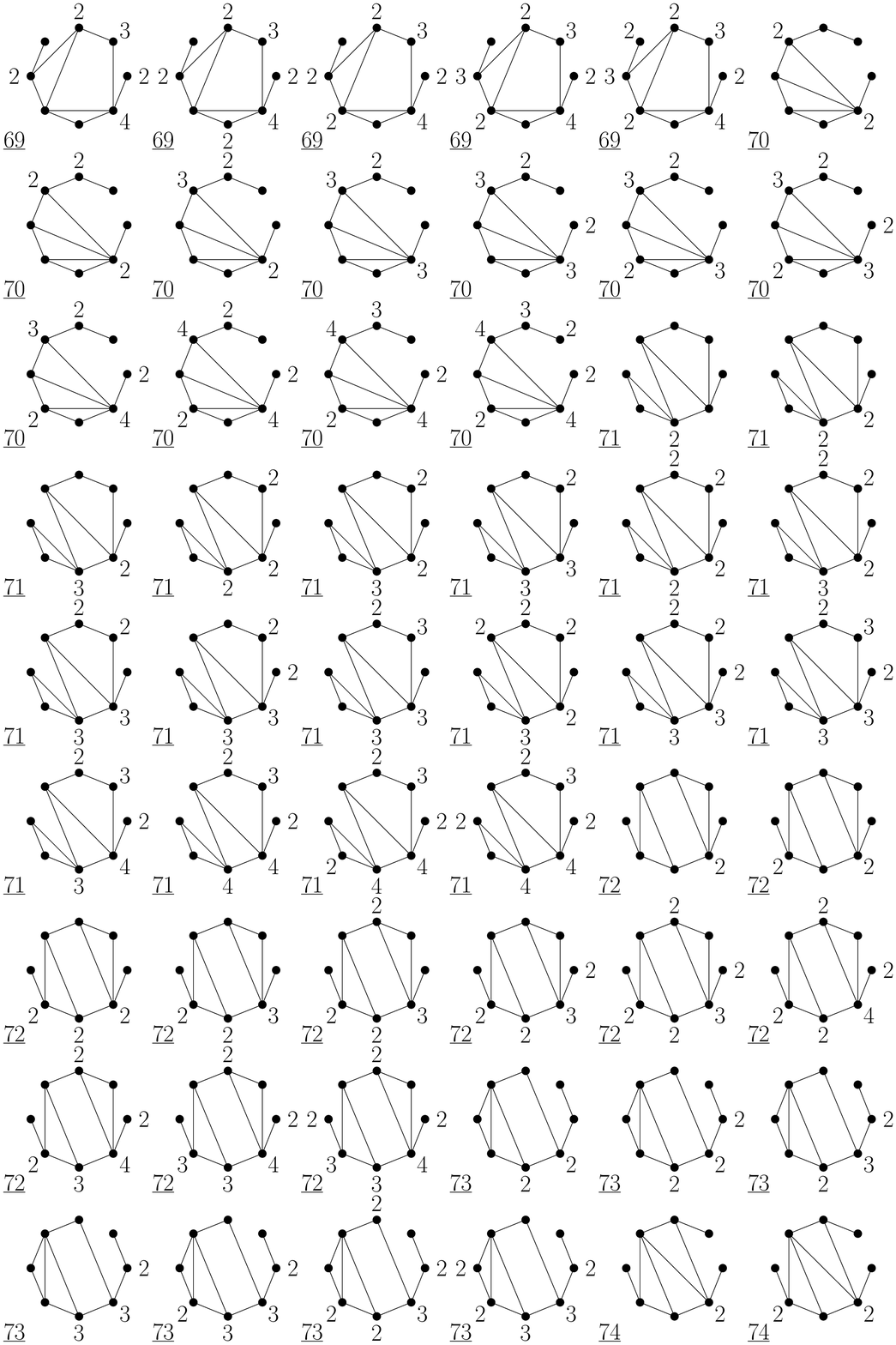}
	\end{center}
	\caption{The set $\mathcal{W}(E_8)$ (continued).}
\end{figure}
\begin{figure}[H]
	\begin{center}
		\includegraphics[scale=\scalingconstant]{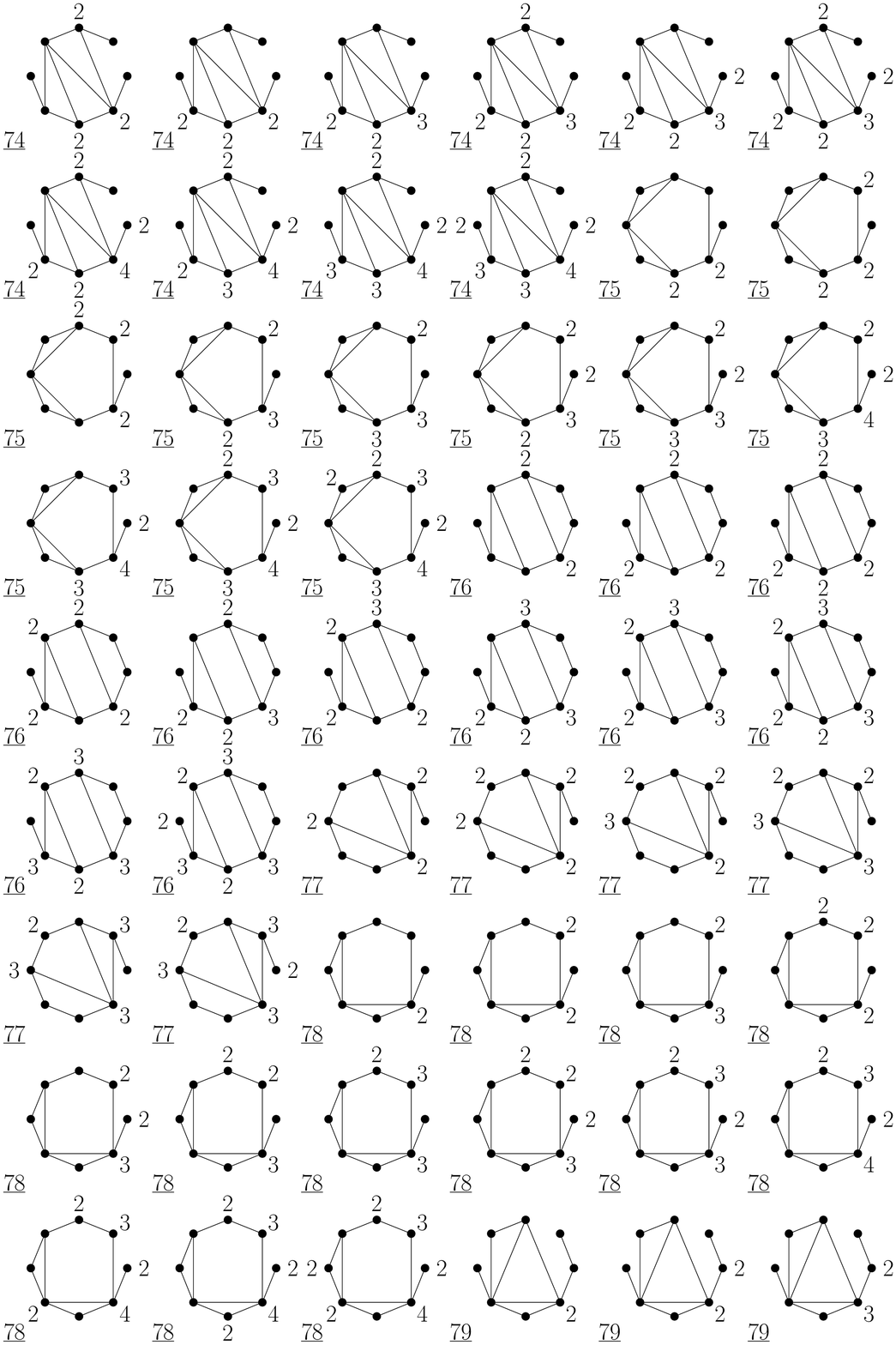}
	\end{center}
	\caption{The set $\mathcal{W}(E_8)$ (continued).}
\end{figure}
\begin{figure}[H]
	\begin{center}
		\includegraphics[scale=\scalingconstant]{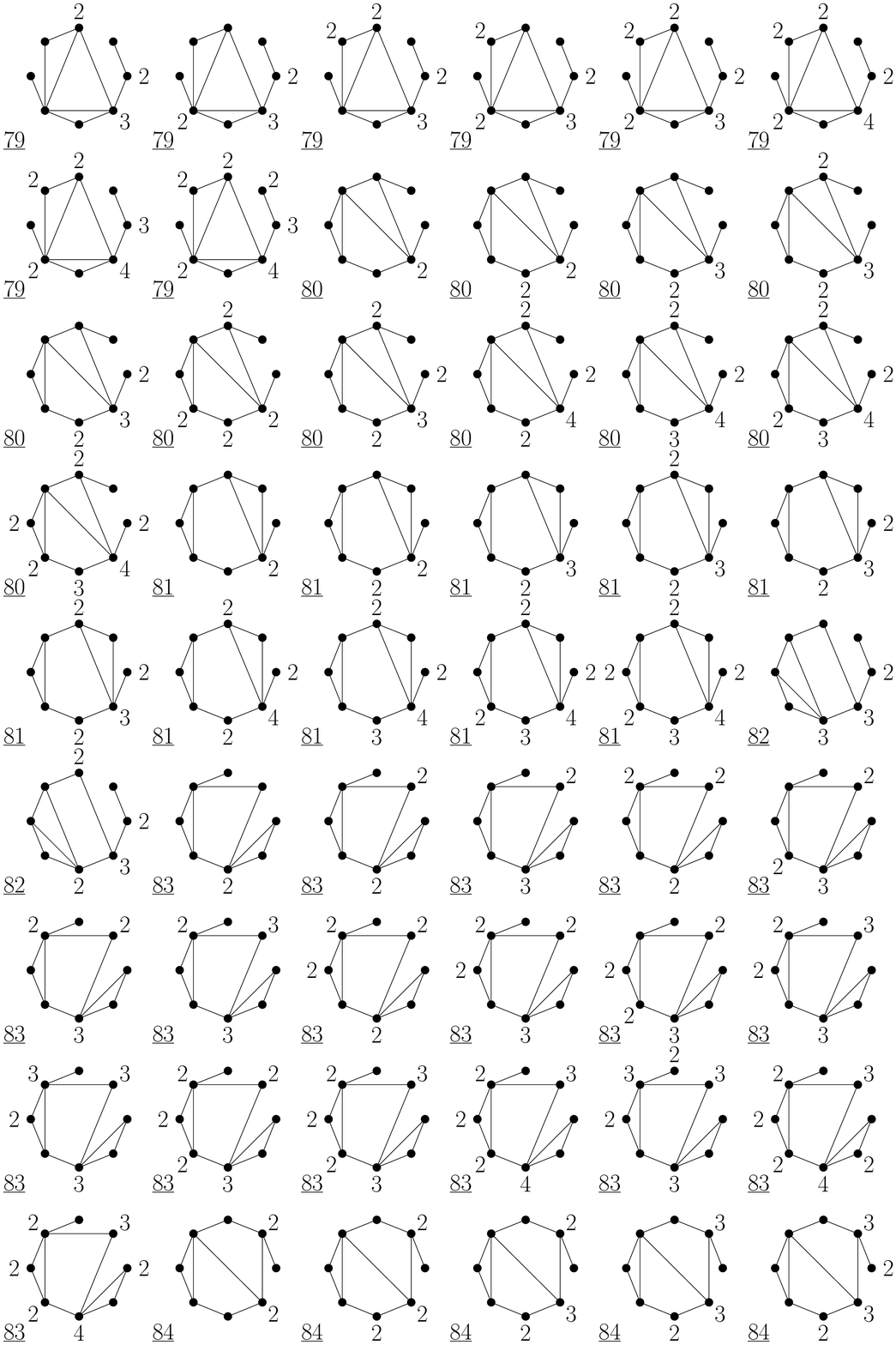}
	\end{center}
	\caption{The set $\mathcal{W}(E_8)$ (continued).}
\end{figure}
\begin{figure}[H]
	\begin{center}
		\includegraphics[scale=\scalingconstant]{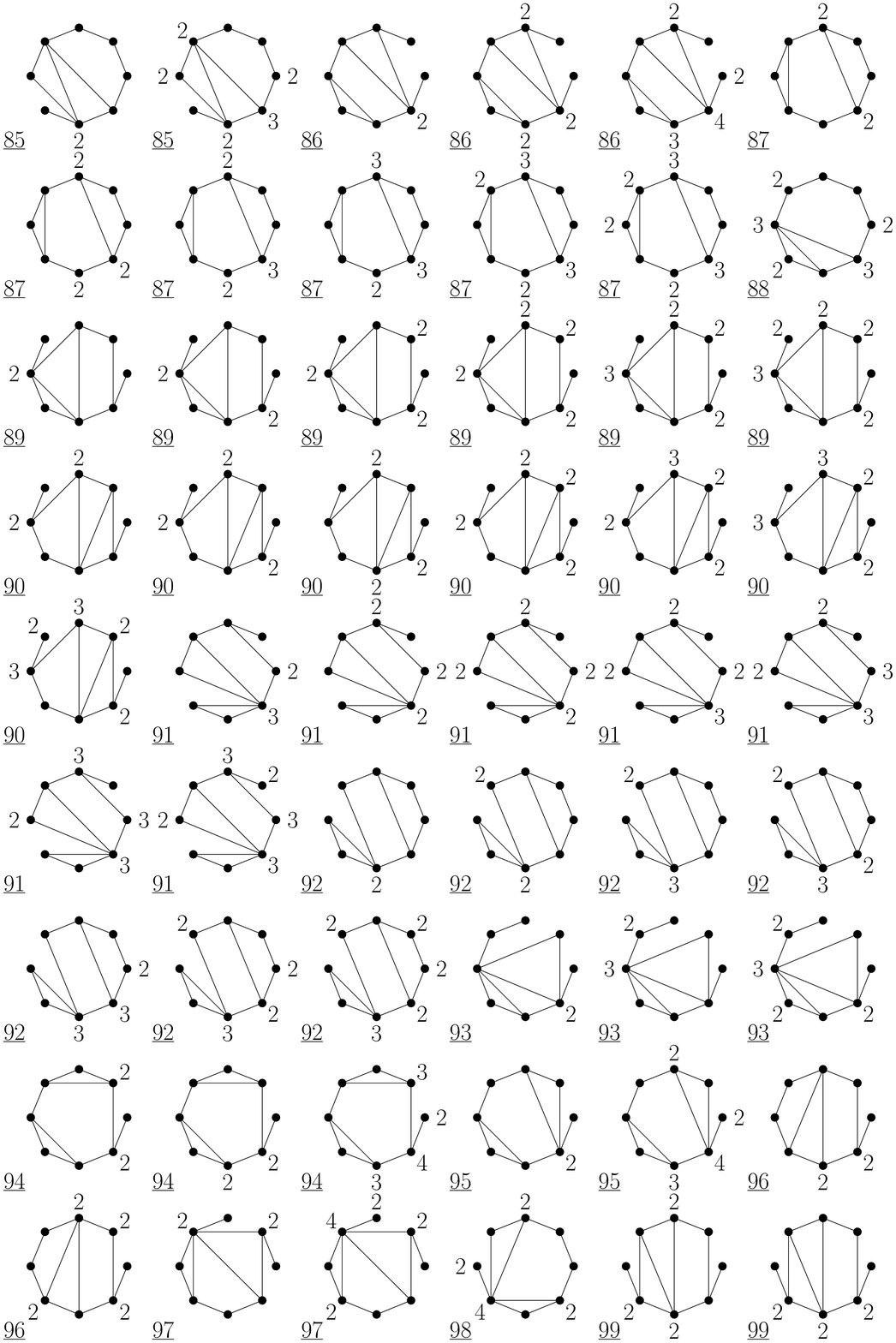}
	\end{center}
	\caption{The set $\mathcal{W}(E_8)$ (continued).}
\end{figure}
\begin{figure}[H]
	\begin{center}
		\includegraphics[scale=\scalingconstant]{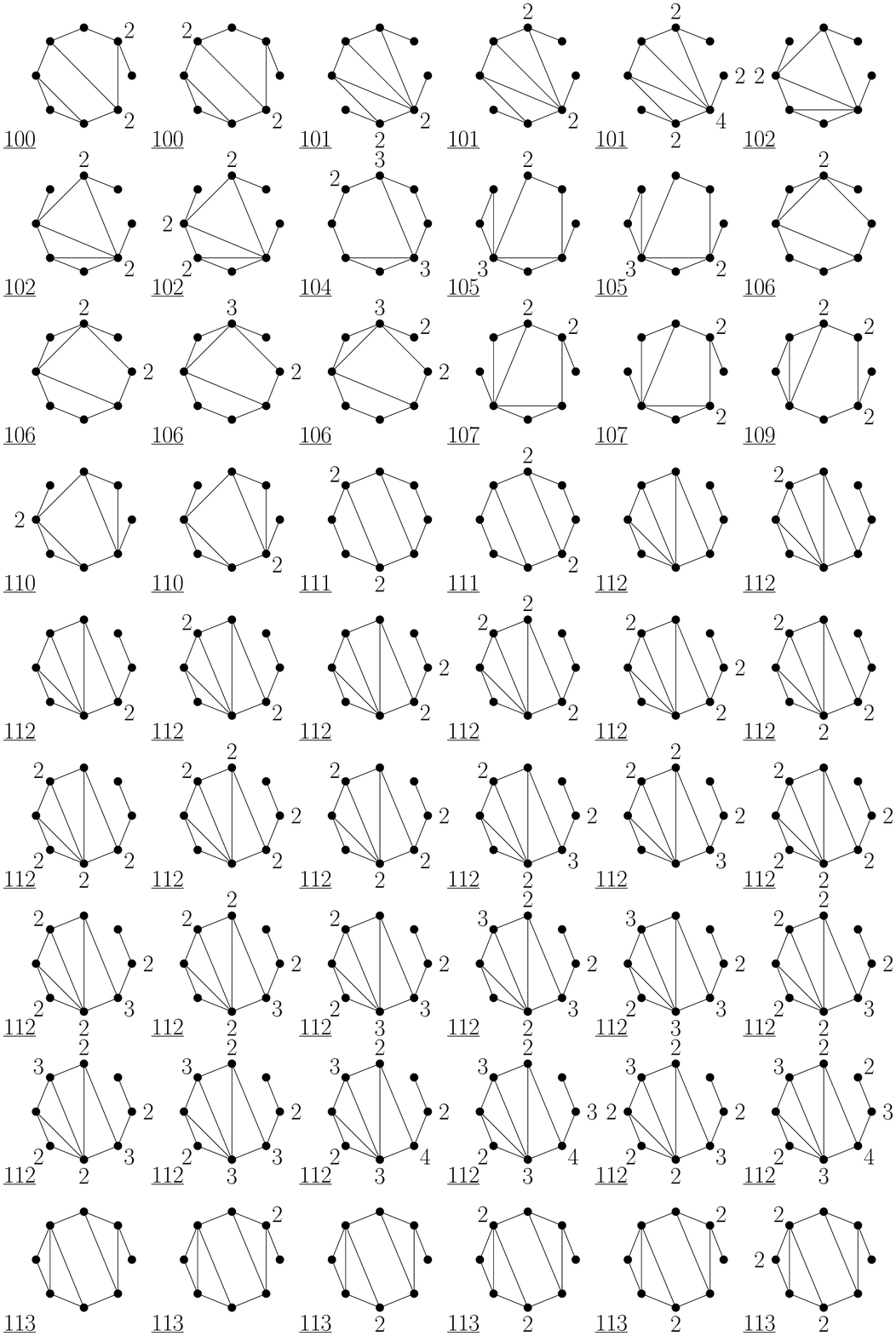}
	\end{center}
	\caption{The set $\mathcal{W}(E_8)$ (continued).}
\end{figure}
\begin{figure}[H]
	\begin{center}
		\includegraphics[scale=\scalingconstant]{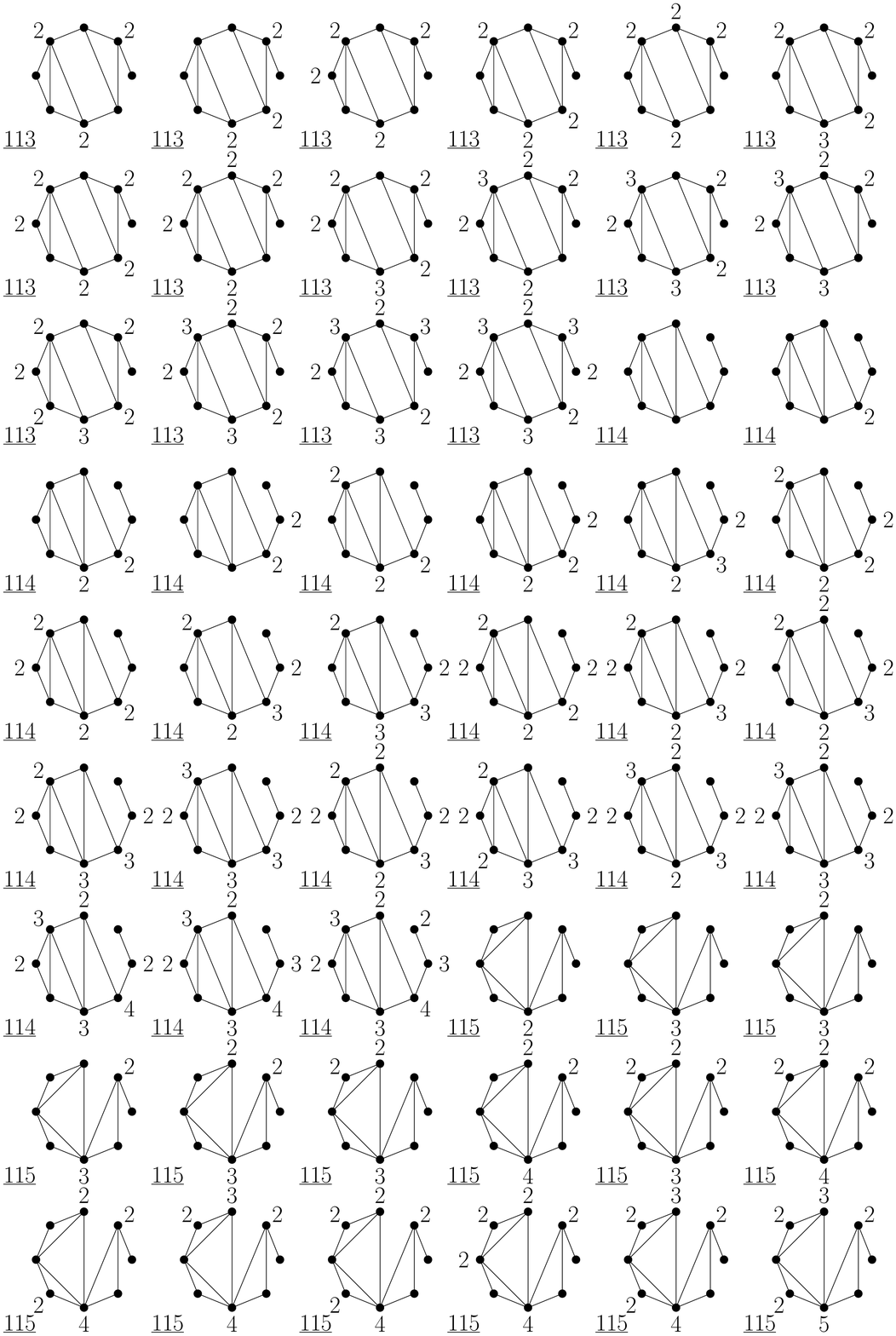}
	\end{center}
	\caption{The set $\mathcal{W}(E_8)$ (continued).}
\end{figure}
\begin{figure}[H]
	\begin{center}
		\includegraphics[scale=\scalingconstant]{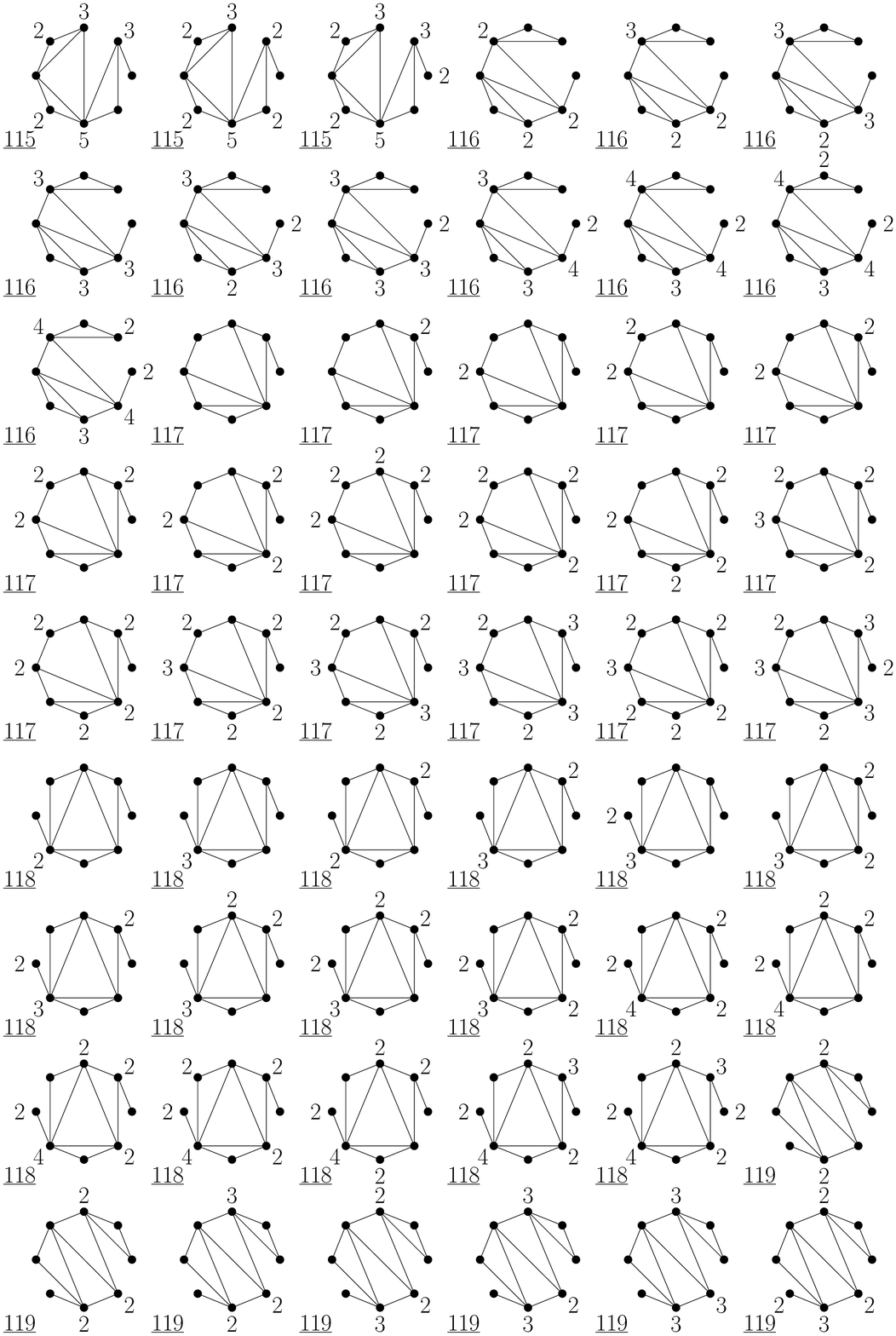}
	\end{center}
	\caption{The set $\mathcal{W}(E_8)$ (continued).}
\end{figure}
\begin{figure}[H]
	\begin{center}
		\includegraphics[scale=\scalingconstant]{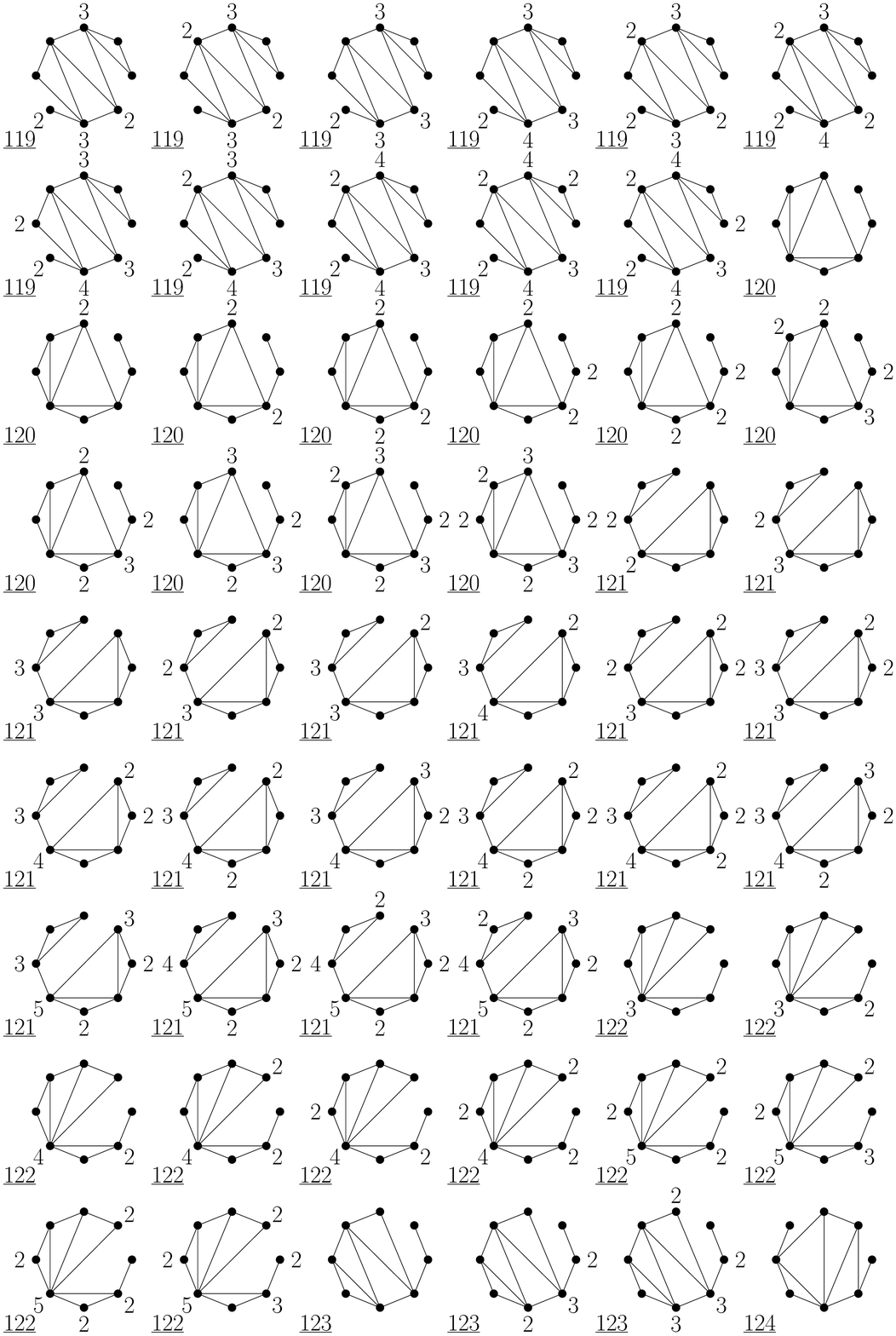}
	\end{center}
	\caption{The set $\mathcal{W}(E_8)$ (continued).}
\end{figure}
\begin{figure}[H]
	\begin{center}
		\includegraphics[scale=\scalingconstant]{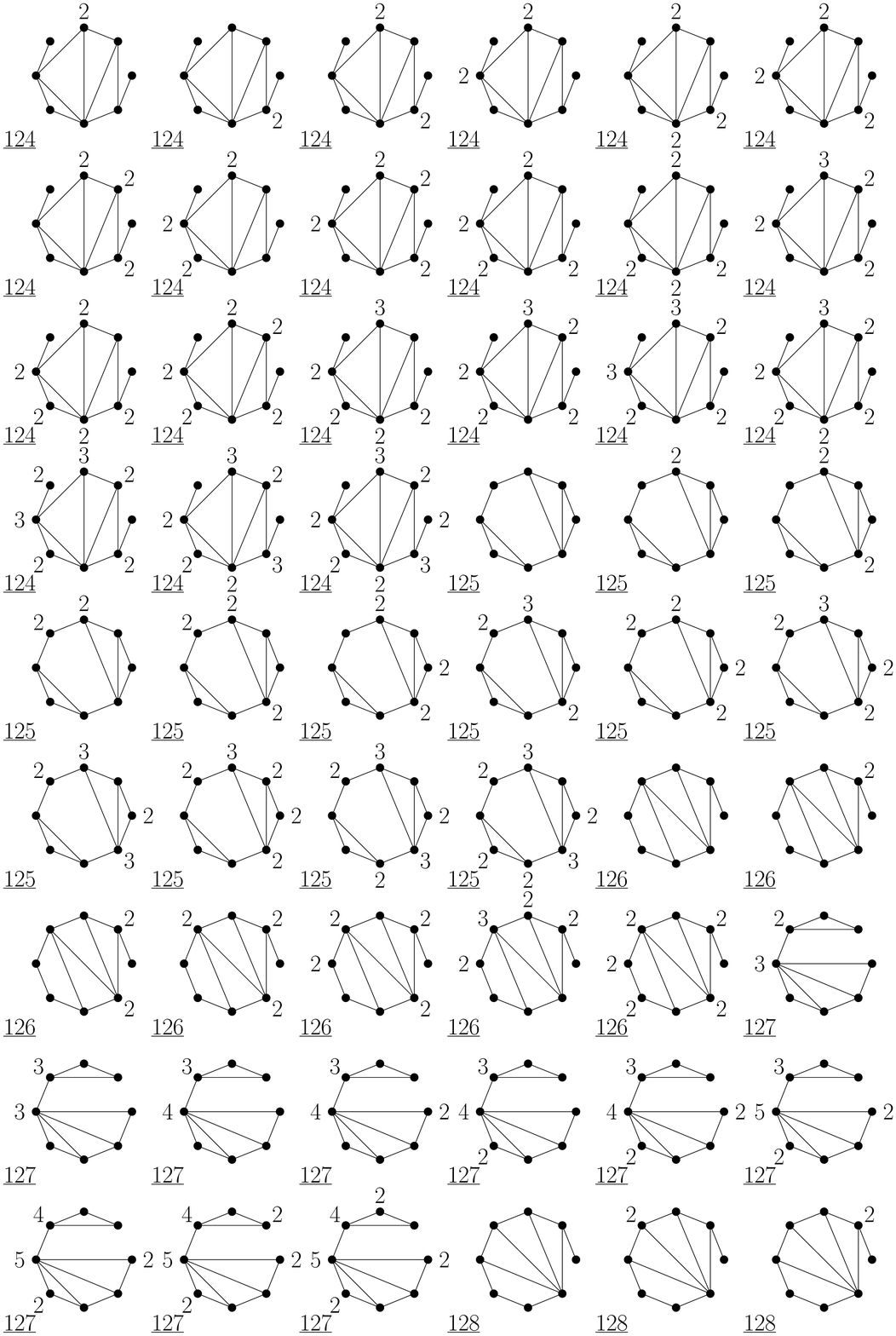}
	\end{center}
	\caption{The set $\mathcal{W}(E_8)$ (continued).}
\end{figure}
\begin{figure}[H]
	\begin{center}
		\includegraphics[scale=\scalingconstant]{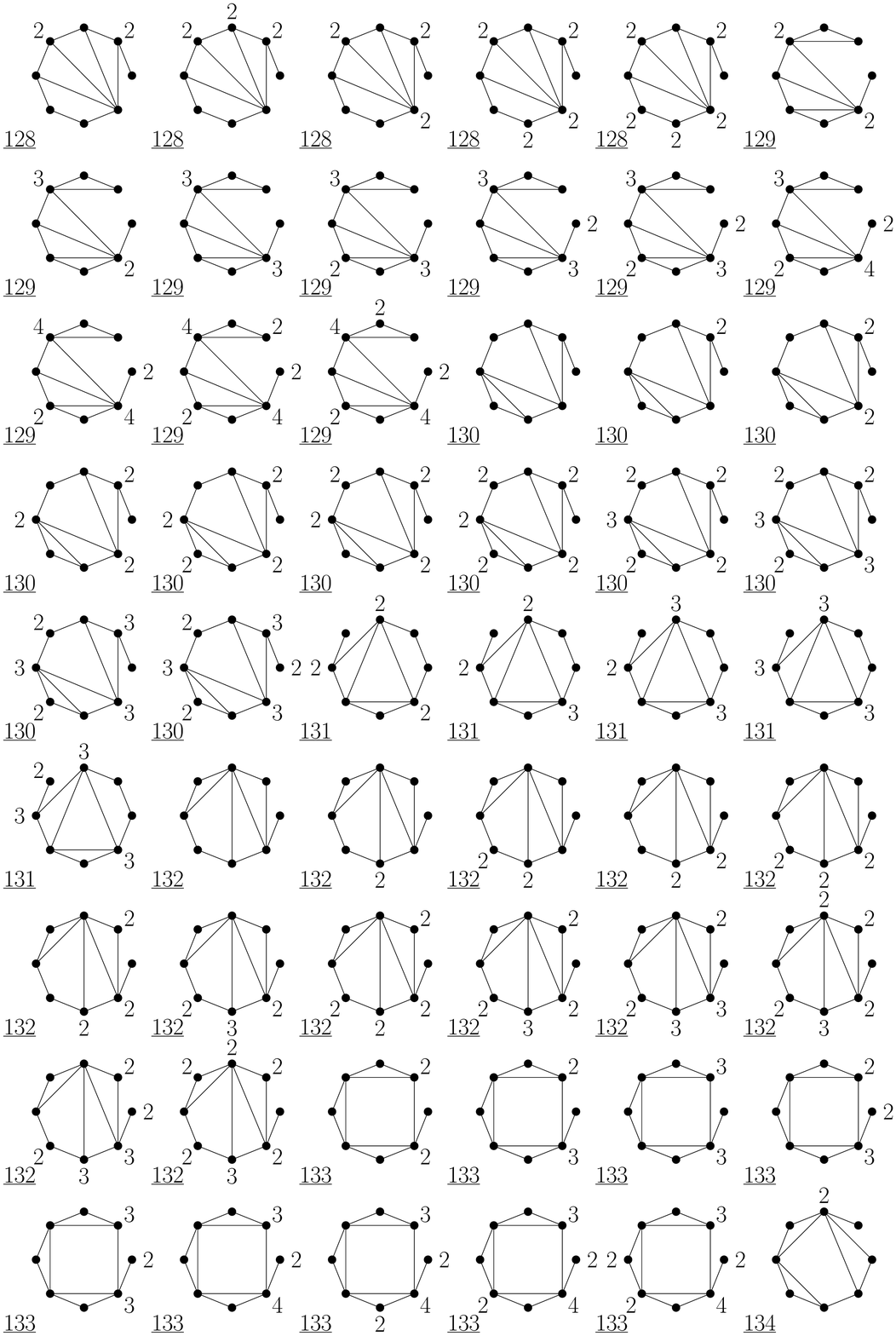}
	\end{center}
	\caption{The set $\mathcal{W}(E_8)$ (continued).}
\end{figure}
\begin{figure}[H]
	\begin{center}
		\includegraphics[scale=\scalingconstant]{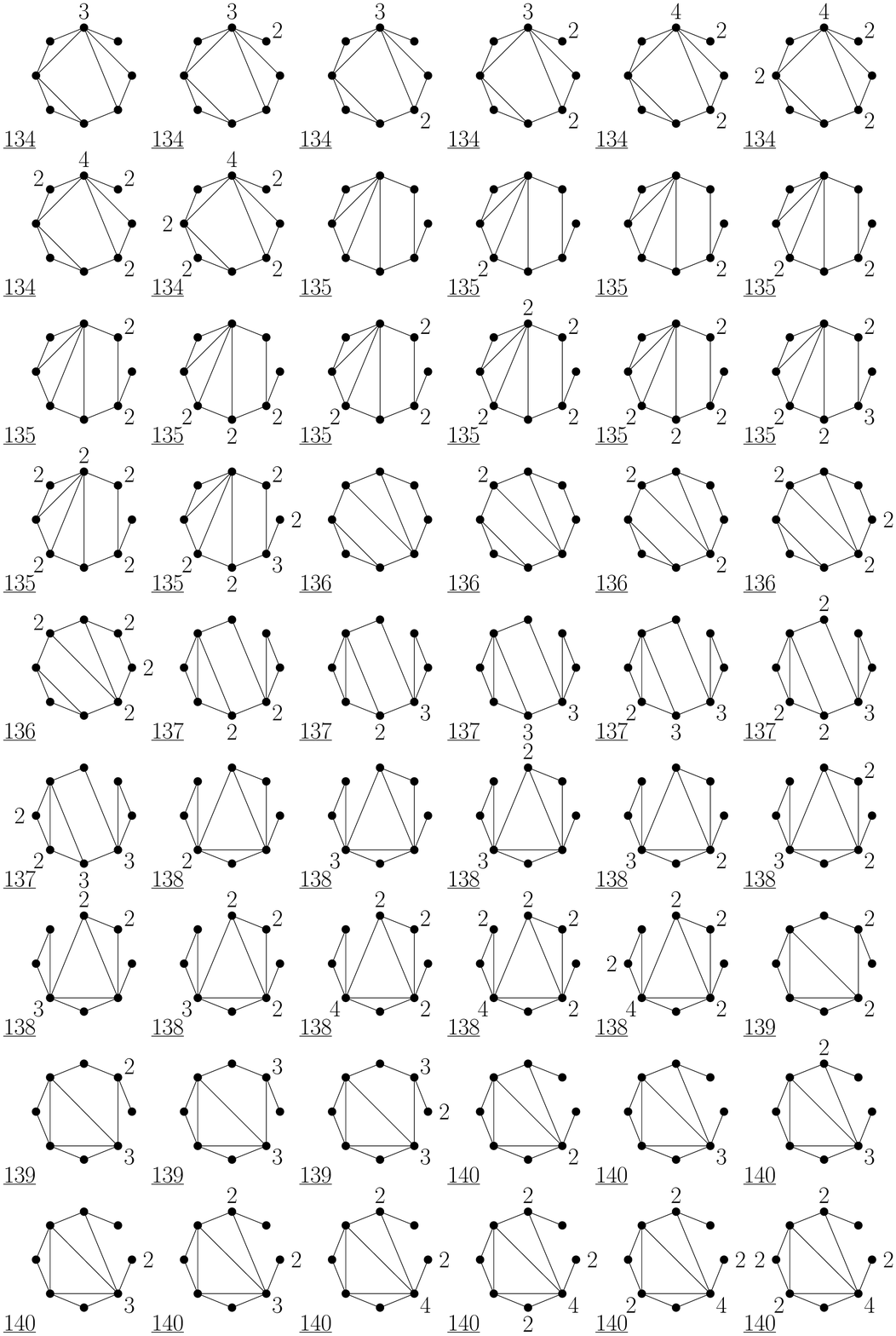}
	\end{center}
	\caption{The set $\mathcal{W}(E_8)$ (continued).}
\end{figure}
\begin{figure}[H]
	\begin{center}
		\includegraphics[scale=\scalingconstant]{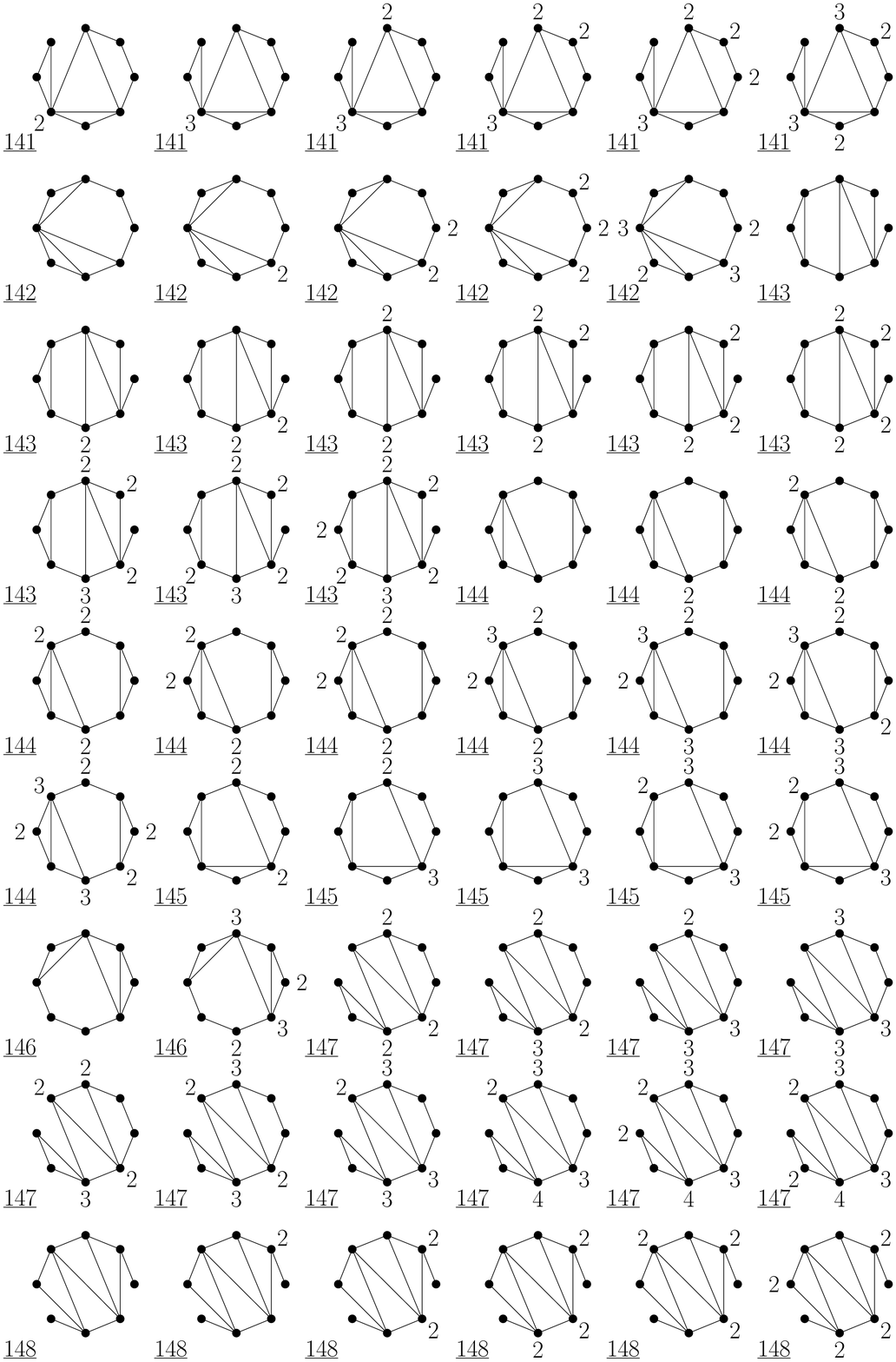}
	\end{center}
	\caption{The set $\mathcal{W}(E_8)$ (continued).}
\end{figure}
\begin{figure}[H]
	\begin{center}
		\includegraphics[scale=\scalingconstant]{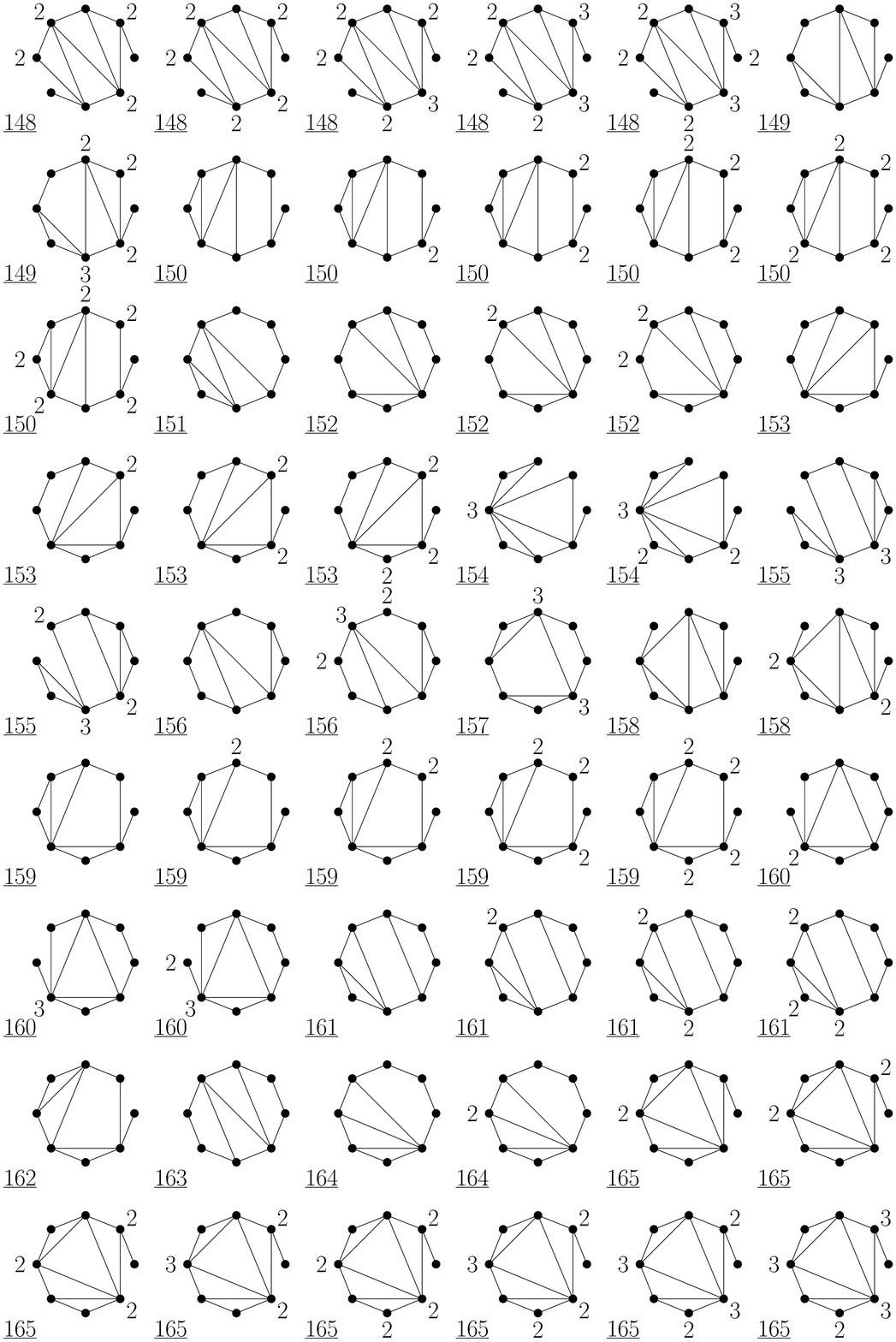}
	\end{center}
	\caption{The set $\mathcal{W}(E_8)$ (continued).}
\end{figure}
\begin{figure}[H]
	\begin{center}
		\includegraphics[scale=\scalingconstant]{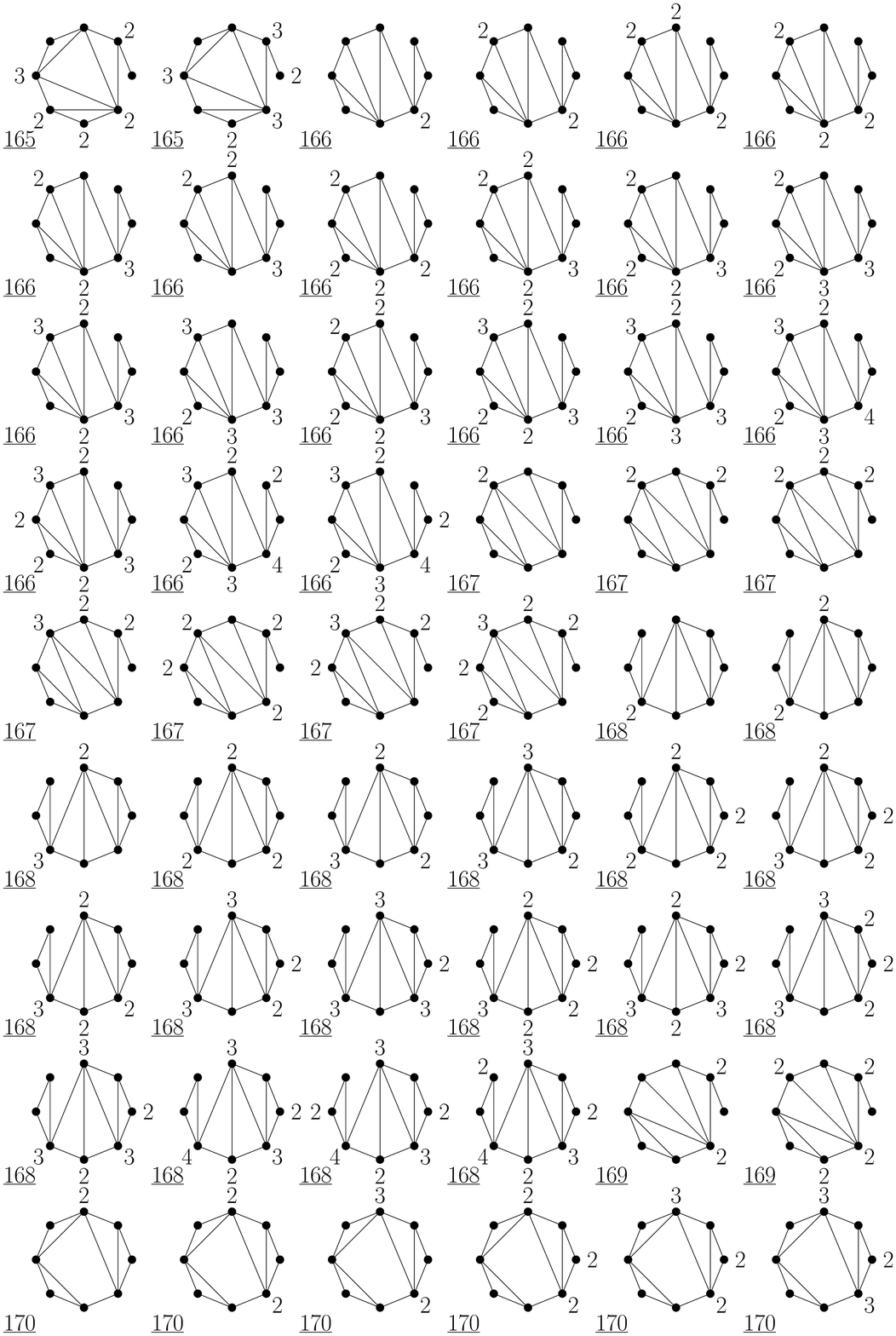}
	\end{center}
	\caption{The set $\mathcal{W}(E_8)$ (continued).}
\end{figure}
\begin{figure}[H]
	\begin{center}
		\includegraphics[scale=\scalingconstant]{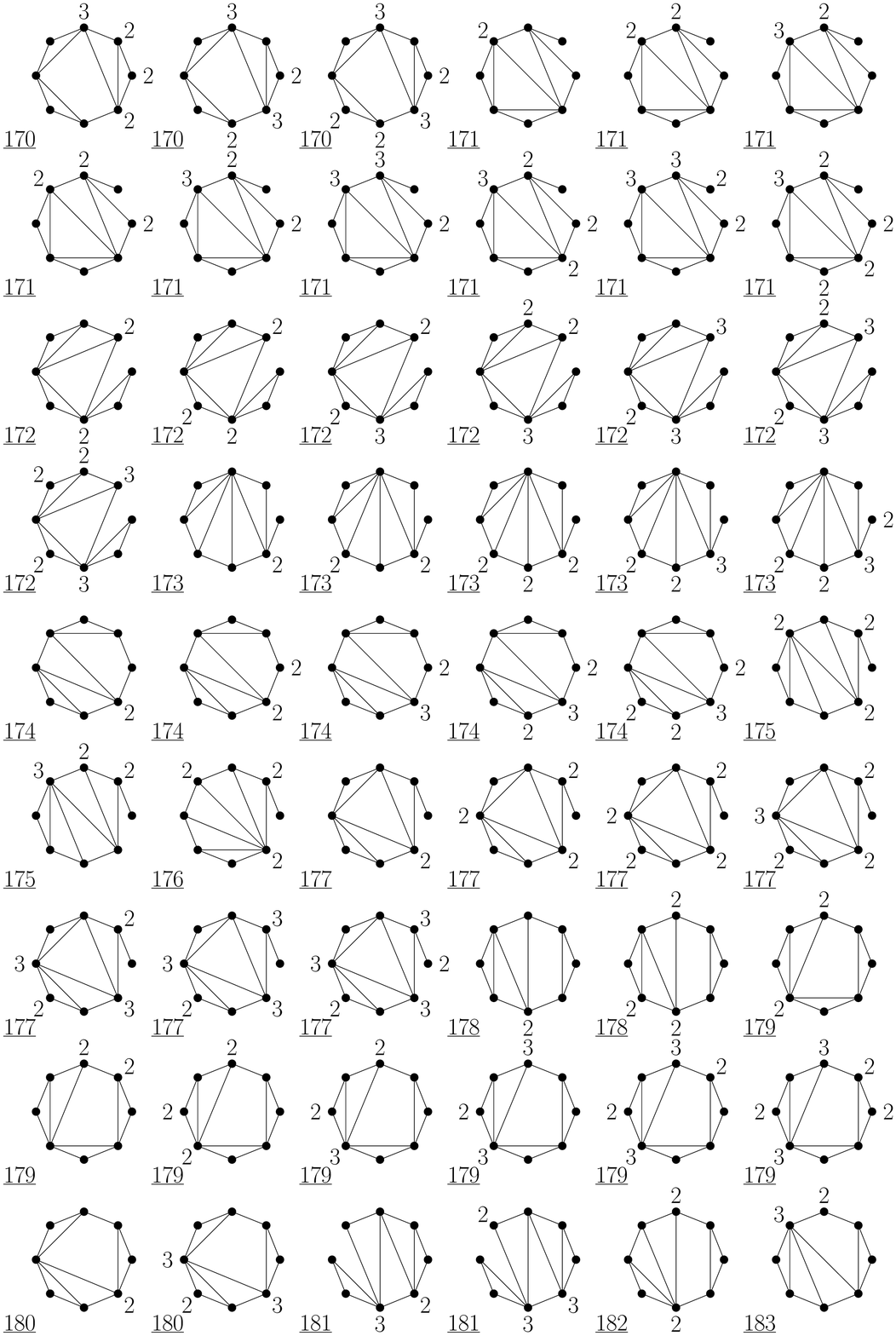}
	\end{center}
	\caption{The set $\mathcal{W}(E_8)$ (continued).}
\end{figure}
\begin{figure}[H]
	\begin{center}
		\includegraphics[scale=\scalingconstant]{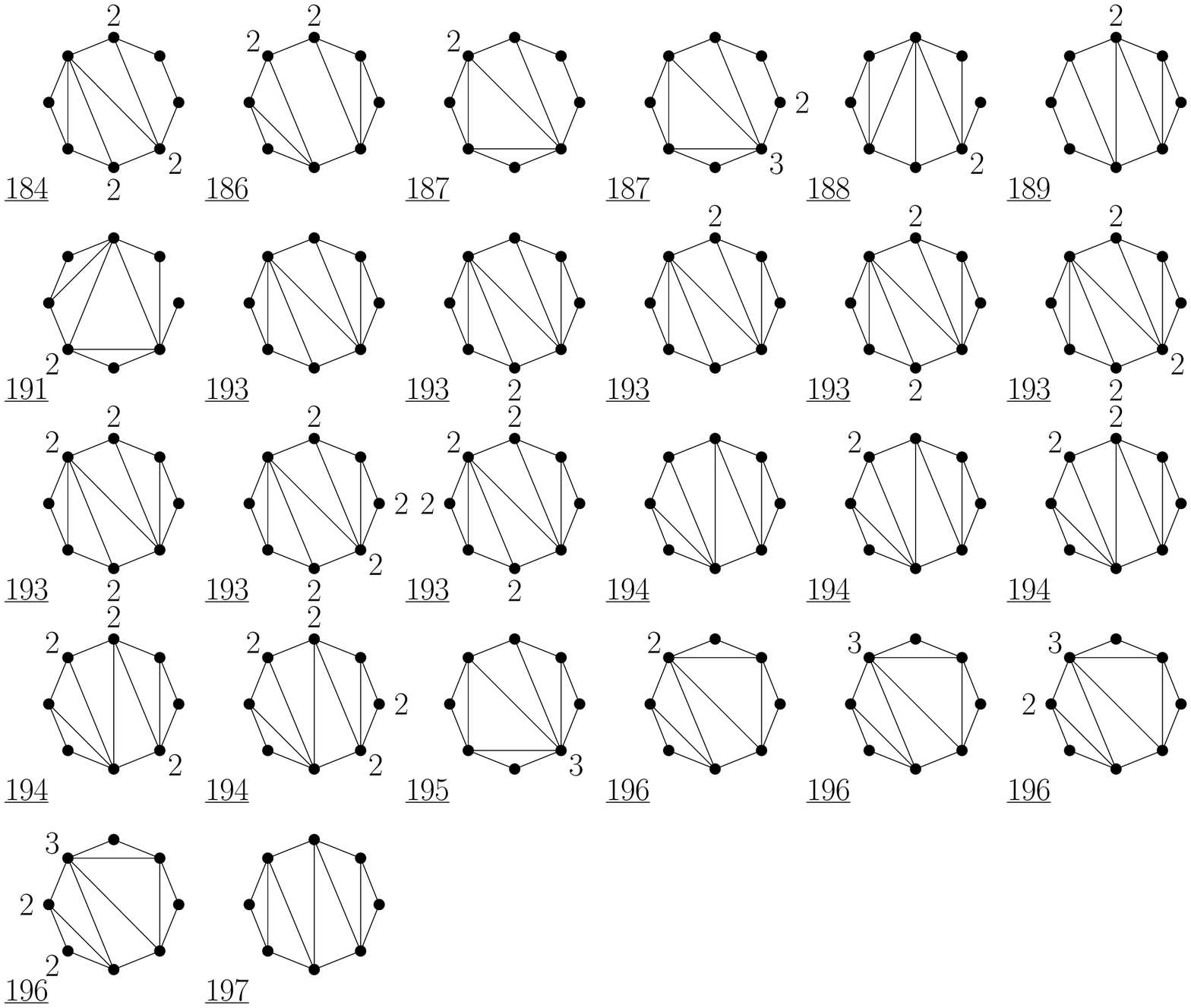}
	\end{center}
	\caption{The set $\mathcal{W}(E_8)$ (continued).}
  \label{fig:allowed_diagrams_E8-last}
	%lot of extraspace here
	\hfill
	\vspace{3in}
\end{figure}

\end{document}